\documentclass{article}
\usepackage[utf8]{inputenc}
\usepackage{amsmath}
\usepackage{amsthm}
\usepackage{amssymb}
\usepackage{esint}
\usepackage[bookmarks=false,
 breaklinks=false,pdfborder={0 0 1},backref=false,colorlinks=false]
 {hyperref}

\makeatletter
\usepackage{amsfonts}
\usepackage{makeidx}
\usepackage{graphicx}
\usepackage{latexsym}
\usepackage{amsthm}
\numberwithin{equation}{section}

\newtheorem{theorem}{Theorem}[section]
\newtheorem{axiom}{Axiom}\newtheorem{corollary}[theorem]{Corollary}\newtheorem{definition}[theorem]{Definition}\newtheorem{lemma}[theorem]{Lemma}\newtheorem{proposition}[theorem]{Proposition}\newtheorem{remark}[theorem]{Remark}

\title{Generalized functions\\
in a Non-Archimedean field}
\author{Vieri Benci \thanks{
Dipartimento di Matematica, Universit\`{a} degli Studi di Pisa, Via F.
Buonarroti 1/c, Pisa, ITALY}}

\makeatother

\begin{document}
\maketitle
\begin{abstract}
Ultrafunctions are generalized functions defined on a non-Archimedean
field extending $\mathbb{R}$. They provide a framework for treating
problems that fall outside the classical theory of distributions.
In this paper we introduce an improved notion of ultrafunctions which
refines the previous constructions. This new approach allows a more
flexible functional framework than the one used in (\cite{ultra},\cite{BBG},\cite{belu2012},..,\cite{bls})
and yields a more flexible functional setting. In particular, this
approach allows us to prove the existence of ultrafunction solutions
for several classes of ill-posed evolution problems arising in partial
differential equations. \medskip{}

\noindent\textbf{Keywords.} Partial differential equations, generalized
functions, ultrafunctions, delta functions, non-Archimedean mathematics,
nonstandard analysis, ill-posed evolution problems.

\smallskip{}

\noindent\textbf{Mathematics Subject Classification (2020).} Primary
35A01, 35D99; Secondary 03H05, 46F30, 46T30. 
\end{abstract}
\tableofcontents{}

\section{Introduction}

Generalized functions play a fundamental role in the analysis of partial
differential equations, especially when classical solutions fail to
exist or when singularities appear. The theory of distributions introduced
by Schwartz provides a powerful linear framework for dealing with
such situations and has become a standard tool in modern analysis.
However, it is well known that the space of distributions $\mathcal{D}'(\Omega)$
does not possess the structure of an associative and commutative algebra
compatible with the classical product of functions. In particular,
the product of two distributions is not defined in general, a limitation
established by the Schwartz impossibility theorem \cite{Schwartz}.

Several approaches have been proposed in order to overcome this difficulty.
Among them we mention Colombeau algebras, which embed distributions
into a differential algebra constructed through regularization procedures,
and various theories based on non-Archimedean or nonstandard frameworks.
These approaches aim at extending the class of admissible nonlinear
operations while preserving the consistency of classical calculus.

The theory of ultrafunctions provides an alternative framework for
generalized functions based on a non-Archimedean field $\mathbb{E}$
containing the real numbers. Ultrafunctions are functions defined
on a hyperfinite set $\Gamma^{N}$ satisfying 
\[
\mathbb{R}^{N}\subset\Gamma^{N}\subset\mathbb{E}^{N},
\]
and taking values in $\mathbb{E}$. The space of ultrafunctions, denoted
by $V(\Gamma^{N})$, forms an algebra over $\mathbb{E}$ and contains
natural extensions of classical functions and distributions.

This framework combines several desirable features. First, ultrafunctions
are defined pointwise on the hyperfinite grid $\Gamma^{N}$, which
allows one to retain a pointwise interpretation even for highly singular
objects. Second, the space $V(\Gamma^{N})$ is closed under algebraic
operations, so nonlinear expressions involving generalized functions
are well defined. Third, ultrafunctions are equipped with a generalized
derivative 
\[
D_{i}:V(\Gamma^{N})\to V(\Gamma^{N})
\]
and with a pointwise integral 
\[
\int^{\circ}:V(\Gamma^{N})\to\mathbb{E},
\]
which extend the classical notions of derivative and integral.

More precisely, every function $f:\mathbb{R}^{N}\to\mathbb{R}$ admits
a natural extension 
\[
f^{\circ}:\Gamma^{N}\to\mathbb{E},
\]
and the pointwise integral satisfies 
\[
\int^{\circ}f^{\circ}(x)\,dx=\int f(x)\,dx
\]
for every integrable standard function. Moreover, distributions can
be canonically represented within this framework: to every distribution
$T\in\mathcal{D}'(\mathbb{R}^{N})$ corresponds an ultrafunction $T^{\circ}$
satisfying 
\[
\int^{\circ}T^{\circ}(x)\varphi^{\circ}(x)\,dx=\langle T,\varphi\rangle
\]
for all test functions $\varphi$.

An important feature of the ultrafunction framework is the hyperfinite
structure of the grid $\Gamma^{N}$. This structure introduces a form
of compactness which allows one to obtain generalized solutions for
a large class of equations, including problems that are ill posed
in the classical sense. Roughly speaking, an \emph{a priori} bound
in a suitable function space is often sufficient to guarantee the
existence of an ultrafunction solution.

Ultrafunctions were introduced in \cite{ultra} and further developed
in \cite{BBG,belu2012,benci22,bls}. Several models and applications
have been studied in these works, particularly in connection with
nonlinear PDEs and variational problems.

The purpose of the present paper is to introduce an improved model
of ultrafunctions which preserves additional structural properties
of classical differential calculus. In particular, the model constructed
here allows the generalized derivatives to satisfy symmetry properties
analogous to the classical Schwarz theorem, namely 
\[
\partial_{x}\partial_{y}=\partial_{y}\partial_{x}.
\]

The paper is organized as follows. In Section~\ref{BA} we recall
the basic axioms of the theory of ultrafunctions. Section~\ref{SA}
presents several applications to partial differential equations and
variational problems. In Section~\ref{CSU} we construct an explicit
model of the theory based on a hyperfinite grid and prove that the
axioms are satisfied. Finally, an appendix discusses the relation
between ultrafunctions and other theories of generalized functions.

\section{Preliminaries}

\subsection{Non Archimedean Fields}\label{naf}

Here we recall the basic definitions and some well-known facts regarding non-Archimedean fields.

\begin{definition} A field $\mathbb{K}$ is called ordered if there
exists a set $\mathbb{K}^{+}\subset\mathbb{K}$ such that
\begin{enumerate}
\item $x,y\in\mathbb{K}^{+}\Rightarrow x+y,xy\in\mathbb{K}^{+};$
\item $\mathbb{K}=\mathbb{K}^{+}\cup\left\{ 0\right\} \cup\mathbb{K}^{-}$
where $\mathbb{K}^{-}=\left\{ x\in\mathbb{K}\ |\ -x\in\mathbb{K}^{+}\right\} .$ 
\end{enumerate}
\end{definition}

In an ordered field the order relation is defined as follows: 
\[
x<y\Leftrightarrow y-x\in\mathbb{K}^{+}.
\]

In the following, $\mathbb{K}$ will denote an ordered field. Its
elements will be called numbers. It is well known that every ordered
field contains (a copy of) the rational numbers; hence the following
definitions make sense:

\begin{definition} Let $\mathbb{K}$ be an ordered field. Let $\xi\in\mathbb{K}$.
We say that:
\begin{itemize}
\item $\xi$ is infinitesimal if, for all positive $n\in\mathbb{N}$, $|\xi|<\frac{1}{n}$;
\item $\xi$ is finite if there exists $n\in\mathbb{N}$ such that $|\xi|<n$;
\item $\xi$ is infinite if, for all $n\in\mathbb{N}$, $|\xi|>n$ (equivalently,
if $\xi$ is not finite). 
\end{itemize}
\end{definition}

\begin{definition} An ordered field $\mathbb{K}$ is called non-Archimedean
if it contains an infinite element. \end{definition}

It is easily seen that all infinitesimals are finite and that the
inverse of an infinite number is a nonzero infinitesimal number. Infinitesimal
numbers can be used to formalize a new notion of ``closeness'':

\begin{definition} \label{def infinite closeness} We say that two
numbers $\xi,\zeta\in\mathbb{K}$ are infinitely close if $\xi-\zeta$
is infinitesimal. In this case, we write $\xi\sim\zeta$. \end{definition}

Clearly, the relation ``$\sim$'' of infinite closeness is an equivalence
relation.

\begin{theorem} \label{na}If $\mathbb{K}\supseteq\mathbb{R}$ is
an ordered field, every finite number $\xi\in\mathbb{K}$ is infinitely
close to a unique real number $r\sim\xi$. $r$ is called the \textbf{standard
part} of $\xi$ and denoted by $st(\xi)$. \end{theorem}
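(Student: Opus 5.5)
We use the completeness of $\mathbb{R}$ through a Dedekind-cut (supremum) argument, since the statement only uses that $\mathbb{K}$ is an ordered field extending $\mathbb{R}$. The proof splits into existence and uniqueness, with existence the only nontrivial part.

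For existence, let $\xi\in\mathbb{K}$ be finite, so $|\xi|<n$ for some $n\in\mathbb{N}$. Consider
\[
A=\{a\in\mathbb{R}\ |\ a\le\xi\},
\]
where the order is the one of $\mathbb{K}$ restricted to $\mathbb{R}$. We check two things:
\begin{itemize}
\item $A$ is nonempty, since $-n\in A$.
\item $A$ is bounded above in $\mathbb{R}$ by $n$, since $a\le\xi<n$ for every $a\in A$.
\end{itemize}
By completeness of $\mathbb{R}$, $r=\sup A\in\mathbb{R}$ exists.

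We claim $\xi\sim r$. Suppose not. Then there is a positive $m\in\mathbb{N}$ with $|\xi-r|\ge\frac1m$, and there are two cases.
\begin{itemize}
\item If $\xi-r\ge\frac1m$, then $r+\frac1m\le\xi$. Hence $r+\frac1m\in A$, which contradicts $r=\sup A$.
\item If $r-\xi\ge\frac1m$, then every $a\in A$ satisfies $a\le\xi\le r-\frac1m$. So $r-\frac1m$ is a smaller real upper bound of $A$, which is again a contradiction.
\end{itemize}
Therefore $|\xi-r|<\frac1m$ for all positive $m$, that is, $\xi-r$ is infinitesimal.

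For uniqueness, suppose $r,s\in\mathbb{R}$ with $r\sim\xi$ and $s\sim\xi$. Since $\sim$ is an equivalence relation (as remarked before the theorem; it follows from the triangle inequality and the fact that a sum of two infinitesimals is infinitesimal), we get $r\sim s$. Thus the real number $r-s$ is infinitesimal. A nonzero real $t$ satisfies $|t|>\frac1n$ for some $n$ by the Archimedean property of $\mathbb{R}$, so $r-s=0$.

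The main point needing care is the existence step. There we must make sure that the order of $\mathbb{K}$ restricted to $\mathbb{R}$ agrees with the usual order of $\mathbb{R}$; this holds because $\mathbb{R}$ carries a unique ordering, as its positive elements are exactly the squares. We must also apply the supremum property of $\mathbb{R}$, not of $\mathbb{K}$, which fails in a non-Archimedean field. The remaining steps are routine manipulations of inequalities.
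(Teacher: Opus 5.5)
Your proof is correct and follows the same route as the paper, which uses the Dedekind cut $A=\{r\in\mathbb{R}\mid r<\xi\}$, $B=\{r\in\mathbb{R}\mid r\ge\xi\}$ and the completeness of $\mathbb{R}$ to obtain a separating real $c$ that coincides with your $\sup A$. You also supply the two details the paper leaves implicit: the check that $c\sim\xi$, and uniqueness via the Archimedean property of $\mathbb{R}$.
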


\textbf{Proof}: Given a finite number $\xi\in\mathbb{K}$, we set
\[
A:=\{r\in\mathbb{R}\ |\ r<\xi\},\quad B:=\{r\in\mathbb{R}\ |\ r\geq\xi\}.
\]

We have that $(A,B)$ is a Dedekind cut of $\mathbb{R}$; moreover,
since $\xi$ is finite, $A\neq\varnothing$ and $B\neq\varnothing$.
Then, by the completeness of the reals there exists $c\in\mathbb{R}$
such that 
\[
\forall a\in A,\forall b\in B,\ a\leq c\leq b.
\]

Now it is not difficult to check that $c\sim\xi$.
$\square$

\bigskip{}

\begin{corollary} \label{co1}If $\mathbb{K}\supset\mathbb{R}$ ($\mathbb{K}\neq\mathbb{R}$)
is an ordered field, then it is non-Archimedean. \end{corollary}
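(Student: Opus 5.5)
The plan is to argue by contradiction, using Theorem \ref{na}. Since $\mathbb{K}\neq\mathbb{R}$ and $\mathbb{K}\supset\mathbb{R}$, there is an element $\xi\in\mathbb{K}\setminus\mathbb{R}$. Suppose, towards a contradiction, that $\mathbb{K}$ is Archimedean, i.e.\ it contains no infinite element. Then every element of $\mathbb{K}$ is finite, and in particular $\xi$ is finite.

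By Theorem \ref{na}, $\xi$ is infinitely close to a unique real number $r=st(\xi)$. Set $\varepsilon:=\xi-r$. Then $\varepsilon$ is infinitesimal, and $\varepsilon\neq0$ because $\xi\notin\mathbb{R}$ while $r\in\mathbb{R}$.

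The key step is to pass from the nonzero infinitesimal $\varepsilon$ to an infinite element, namely $1/\varepsilon$. Since $0<|\varepsilon|<\frac{1}{n}$ for every positive $n\in\mathbb{N}$, we get $|1/\varepsilon|>n$ for every $n$. This uses only the ordered-field axioms: multiplying by positive elements preserves inequalities, and $0<a<b$ implies $1/a>1/b$. So $1/\varepsilon$ is infinite. This is exactly the remark made in the paper that the inverse of a nonzero infinitesimal is infinite, read in the other direction.

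This contradicts the assumption that $\mathbb{K}$ has no infinite element, so $\mathbb{K}$ is non-Archimedean. Alternatively, the argument can be read directly without contradiction: either $\xi$ is already infinite, or it is finite and $1/(\xi-st(\xi))$ is infinite. There is no real obstacle. The only point needing care is ensuring $\varepsilon\neq0$, which is precisely where the hypothesis $\mathbb{K}\neq\mathbb{R}$ enters.
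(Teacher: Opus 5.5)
Your proof is correct and is essentially the paper's argument: take $\xi\in\mathbb{K}\setminus\mathbb{R}$, and if $\xi$ is finite then $1/(\xi-st(\xi))$ is infinite by Theorem~\ref{na}. The contradiction framing only wraps the same case split. Your explicit check that the inverse of a nonzero infinitesimal is infinite supplies a step the paper leaves implicit.
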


\textbf{Proof}: Take $\xi\in\mathbb{K}\backslash\mathbb{R}$. If $\xi$
is infinite, then $\mathbb{K}$ is non-Archimedean by definition.
If $\xi$ is finite then 
\[
\zeta:=\frac{1}{\xi-st\left(\xi\right)}
\]
is infinite, and hence $\mathbb{K}$ is non-Archimedean.

$\square$

\bigskip{}

Now we collect some basic properties of the function $st(\cdot)$.

\begin{proposition} \label{PS}Let $\xi$ and $\zeta$ be finite
numbers. Then
\begin{enumerate}
\item if $\xi\in\mathbb{R}$, $st\left(\xi\right)=\xi$;
\item \label{d}$\xi\leq\zeta\Rightarrow st\left(\xi\right)\leq st\left(\zeta\right)$;
\item $st\left(\xi+\zeta\right)=st\left(\xi\right)+st\left(\zeta\right)$;
\item $st\left(\xi\cdot\zeta\right)=st\left(\xi\right)\cdot st\left(\zeta\right)$;
\item if $st\left(\zeta\right)\neq0,$ then 
\[
st\left(\frac{\xi}{\zeta}\right)=\frac{st\left(\xi\right)}{st\left(\zeta\right)}.
\]
\end{enumerate}
\end{proposition}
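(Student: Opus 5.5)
The whole proof rests on the characterization in Theorem \ref{na}: for finite $\xi$, $st(\xi)$ is the \emph{unique} real number $r$ with $\xi\sim r$. So for each item I will exhibit a real number infinitely close to the relevant expression and conclude by uniqueness. Alongside this I use some elementary closure facts about infinitesimals in an ordered field:
\begin{itemize}
\item sums of infinitesimals are infinitesimal;
\item the product of an infinitesimal and a finite number is infinitesimal;
\item a real infinitesimal is $0$.
\end{itemize}
Each follows directly from the definitions. For example, if $|\varepsilon|<1/n$ for all $n$ and $|\eta|<m$, then $|\varepsilon\eta|<m/n$ for all $n$, so $\varepsilon\eta$ is infinitesimal. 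Throughout, write $\xi=a+\varepsilon$ and $\zeta=b+\eta$, where $a=st(\xi)$, $b=st(\zeta)$, and $\varepsilon,\eta$ are infinitesimal.

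Item 1 is immediate, since $\xi\sim\xi$ and $\xi$ is real. For item 3, $\xi+\zeta-(a+b)=\varepsilon+\eta$ is infinitesimal, so $st(\xi+\zeta)=a+b$. For item 4, expand
\[
\xi\zeta-ab=a\eta+b\varepsilon+\varepsilon\eta .
\]
Each term is an infinitesimal times a finite number, so the sum is infinitesimal. Uniqueness then gives $st(\xi\zeta)=ab$.

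For item 5, I first show that $1/\zeta$ is finite with $st(1/\zeta)=1/b$. Since $b\neq0$ and $\eta$ is infinitesimal, $|\eta|<|b|/2$, hence $|\zeta|>|b|/2$ and $|1/\zeta|<2/|b|$, so $1/\zeta$ is finite. Then
\[
\frac1\zeta-\frac1b=\frac{-\eta}{b\zeta}
\]
is an infinitesimal times a finite number, hence infinitesimal. Item 5 now follows from item 4 applied to $\xi\cdot(1/\zeta)$.

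Item 2 is the only step needing care, because order is not preserved under adding infinitesimals in a naive way. I argue by contradiction. Suppose $\xi\le\zeta$ but $a>b$, and set $d=a-b$, a positive real. Then
\[
\zeta-\xi=(b-a)+(\eta-\varepsilon)=-d+(\eta-\varepsilon).
\]
Since $\eta-\varepsilon$ is infinitesimal, $|\eta-\varepsilon|<d/2$. This forces $\zeta-\xi<-d/2<0$, contradicting $\xi\le\zeta$. This step, together with verifying the finiteness of $1/\zeta$, is where the argument actually uses the order structure. The remaining items are routine algebra with infinitesimals.
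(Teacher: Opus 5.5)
Your proof is correct. The paper gives no argument of its own and only refers to a standard nonstandard-analysis text (Keisler); what you wrote is essentially that standard proof: write each finite number as its standard part plus an infinitesimal, use the closure properties of infinitesimals, and conclude by the uniqueness in Theorem~\ref{na}.

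One further point in your favour: the paper's proof of Theorem~\ref{na} only checks existence. Your remark that a real infinitesimal is $0$ supplies the missing uniqueness, so your write-up is self-contained.
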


\textbf{Proof:}  See any book of NSA (e.g.  \cite{keisler76}).
$\square$

\begin{definition} \label{MG}Let $\mathbb{K}$ be a non-Archimedean
field, and $\xi\in\mathbb{K}$ a number. The monad of $\xi$ is the
set of all numbers that are infinitely close to it: 
\[
\mathfrak{mon}(\xi)=\{\zeta\in\mathbb{K}:\xi\sim\zeta\},
\]
and the galaxy of $\xi$ is the set of all numbers that are finitely
close to it: 
\[
\mathfrak{gal}(\xi)=\{\zeta\in\mathbb{K}:\xi-\zeta\ \text{is finite}\}.
\]
\end{definition}

By definition it follows that the set of infinitesimal numbers is
$\mathfrak{mon}(0)$ and that the set of finite numbers is $\mathfrak{gal}(0)$.
Moreover, the standard part can be regarded as a function: 
\begin{equation}
st:\mathfrak{gal}(0)\rightarrow\mathbb{R}.\label{sh}
\end{equation}

\subsection{The $\Lambda$-limit and the Euclidean numbers}\label{lt}

In this section we present the notion of $\Lambda$-limit.  The $\Lambda$-limit
provides a simplified approach to Nonstandard Analysis (see also \cite{benci99,ultra});
it avoids the most difficult (and beautiful) features of NSA; however
it is sufficient for the purposes of this paper.

Let 
\begin{equation}
\mathfrak{L}=\mathfrak{\wp}_{fin}(\Lambda)\label{elle}
\end{equation}
be the family of finite subsets of $\Lambda$, where $\Lambda$ is
a set sufficiently large. For the purposes of this paper it is sufficient
to assume that 
\begin{equation}
\Lambda\supset\mathbb{R}\cup\mathfrak{F}(\mathbb{R})\cup\mathfrak{F}(\mathfrak{F}(\mathbb{R})).\label{esse}
\end{equation}

$\mathfrak{L}$ equipped with the partial order structure "$\subset$"
is a directed set. A function $\varphi:\mathfrak{L}\rightarrow E$
is called \textit{net} (with values in $E$) and the set of
such nets is denoted by $\mathfrak{F}(\mathfrak{L},E)$.

The usual (Cauchy) limit of a net is defined as follows:
\begin{equation}
L:=\lim_{\lambda\rightarrow\Lambda}\varphi(\lambda)\label{lim+}
\end{equation}
if and only if, $\forall\varepsilon\in\mathbb{R}^{+}$, $\exists\lambda_{0}\in\mathfrak{L}$ such that  , 
\[
|\varphi(\lambda)-L|\le\varepsilon.
\]
Notice that in the notation (\ref{lim+}),  $\Lambda$ can be regarded
as the ``point at infinity'' of $\mathfrak{L}$.  A typical example
of a limit of a net defined on $\mathfrak{L}$ is provided by the
definition of the Cauchy integral: 
\[
\int^{b}_{a}f(x)\,dx=\lim_{\lambda\rightarrow\Lambda}\sum_{x\in[a,b]\cap\lambda}f(x)(x^{+}-x),\qquad x^{+}:=\min\{y\in[a,b]\cap\lambda\mid y>x\}.
\]

We now define a new notion of limit called the $\Lambda$-\textit{limit}.
It is defined for every net $\varphi\in\mathfrak{F}(\mathfrak{L},\mathbb{R})$
and the limit value is a number in a non-Archimedean field $\mathbb{E}\supset\mathbb{R}$
called the field of the \textbf{Euclidean numbers}. Its peculiarity
consists in the fact that every net $\varphi:\mathfrak{L}\rightarrow\mathbb{R}$
always has a unique limit
\[
L=\lim_{\lambda\uparrow\Lambda}\varphi(\lambda)\in\mathbb{E}.
\]
In order to distinguish the limit (\ref{lim+}) (which
we will call the \textit{Cauchy limit}) from the $\Lambda$-limit,
we use the notations "$\lambda\rightarrow\Lambda$" and "$\lambda\uparrow\Lambda$"
respectively.

The field $\mathbb{E}$ and the $\Lambda$-limit are based on a fine
ultrafilter $\mathcal{U}$ over $\mathfrak{L}$, namely a filter such
that

\[
Q\in\mathcal{U}\Leftrightarrow\mathfrak{L}\setminus Q\notin\mathcal{U}\qquad(\mathit{ultra\;property})
\]

\[
\forall\lambda\in\mathfrak{L},\;Q[\lambda]\in\mathcal{U}\qquad(\mathit{fineness\;property})
\]
where
\[
Q[\lambda]:=\{\mu\in\mathfrak{L}\mid\mu\supseteq\lambda\}
\]
is a \textit{cone}  with \textit{vertex}  in $\lambda$.  You may think of $\mathcal{U}$ as a family of neighborhoods
of the point at infinity "$\Lambda$".  A set $Q\in\mathcal{U}$
is called \textbf{qualified}.

\begin{definition} If $\mathcal{R}$ is a relation, we say that ``$\mathcal{U}$-eventually
$\varphi(\lambda)\mathcal{R}\psi(\lambda)$'' if 
\[
\exists Q\in\mathcal{U},\ \forall\lambda\in Q,\ \varphi(\lambda)\mathcal{R}\psi(\lambda).
\]
\end{definition}

Now we set 
\[
I_{\mathcal{U}}=\{\psi\in\mathfrak{F}(\mathfrak{L},\mathbb{R})\mid\mathcal{U}\text{-eventually }\psi=0\}.
\]
Since $\mathcal{U}$ is an ultrafilter, it is well known and not difficult
to prove that $I_{\mathcal{U}}$ is a maximal ideal of the ring $\mathfrak{F}(\mathfrak{L},\mathbb{R})$.
Hence
\[
\mathfrak{F}(\mathfrak{L},\mathbb{R})/I_{\mathcal{U}}=\{[\varphi]_{\mathcal{U}}\mid\varphi\in\mathfrak{F}(\mathfrak{L},\mathbb{R})\}
\]
where

\[
[\varphi]_{\mathcal{U}}=\varphi+I_{\mathcal{U}}
\]
is a field.  Now we can define the field of Euclidean numbers $\mathbb{E}$: it
is a field such that there is an isomorphism

\begin{equation}
J:\mathfrak{F}(\mathfrak{L},\mathbb{R})/I_{\mathcal{U}}\rightarrow\mathbb{E}\label{EEE}
\end{equation}
satisfying the following relation: for all $r\in\mathbb{R}$,  if $C_{r}$
is the net identically equal to $r$,
\[
J([C_{r}]_{\mathcal{U}})=r.
\]

\begin{remark} The field of Euclidean numbers is a hyperreal field
in the sense of Nonstandard Analysis. We do not use the name ``hyperreal
numbers''\ in order to emphasize the fact that $\mathbb{E}$ is
a peculiar hyperreal field that satisfies some properties, such as
the choice of $\mathfrak{L}$, which are not shared by a generic hyperreal
field. These properties are relevant in the definition of ultrafunctions.
The explanation of the choice of the name ``Euclidean numbers''\ can
be found for example in \cite{BL2021}. \end{remark}

Now we can define the notion of $\Lambda $-limit.

\begin{definition}
\label{EU}Given a net $\varphi \in \mathfrak{F}\left( \mathfrak{L},\mathbb{R}%
\right) ,$ we set%
\begin{equation*}
\lim_{\lambda \uparrow \Lambda }\varphi (\lambda )=J\left( \left[ \varphi %
\right] _{\mathcal{U}}\right)
\end{equation*}
\end{definition}

We list some properties of the $\Lambda $-limit which easily follow from
its definition:

\begin{proposition}
\label{p2}The $\Lambda $-limit satisfies the following properties:

\begin{enumerate}
\item \label{EU1}\textit{if }$\mathcal{U}$-eventually, $\varphi (\lambda
)=\psi (\lambda )$, then 
\begin{equation*}
\lim_{\lambda \uparrow \Lambda }\varphi (\lambda )=\lim_{\lambda \uparrow
\Lambda }\psi (\lambda );
\end{equation*}

\item \label{EU1+}\textit{if }$\mathcal{U}$-eventually,\textit{\ }$\varphi
(\lambda )=r\in \mathbb{R}$, then 
\begin{equation*}
\lim_{\lambda \uparrow \Lambda }\varphi (\lambda )=r;
\end{equation*}

\item \label{pippa}if $\mathcal{U}$-eventually,$\ \varphi (\lambda )>\psi
(\lambda ),$ then%
\begin{equation*}
\lim_{\lambda \uparrow \Lambda }\varphi (\lambda )>\lim_{\lambda \uparrow
\Lambda }\psi (\lambda );
\end{equation*}

\item \label{EU2}\textit{for all }$\varphi ,\psi \in \mathfrak{F}\left( 
\mathfrak{L},\mathbb{R}\right) ,$\emph{\ }%
\begin{eqnarray*}
\lim_{\lambda \uparrow \Lambda }\varphi (\lambda )+\lim_{\lambda \uparrow
\Lambda }\psi (\lambda ) &=&\lim_{\lambda \uparrow \Lambda }\left( \varphi
(\lambda )+\psi (\lambda )\right) ; \\
\lim_{\lambda \uparrow \Lambda }\varphi (\lambda )\cdot \lim_{\lambda
\uparrow \Lambda }\psi (\lambda ) &=&\lim_{\lambda \uparrow \Lambda }\left(
\varphi (\lambda )\cdot \psi (\lambda )\right) .
\end{eqnarray*}
\end{enumerate}
\end{proposition}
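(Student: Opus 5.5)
All four properties follow directly from the definition $\lim_{\lambda\uparrow\Lambda}\varphi(\lambda)=J([\varphi]_{\mathcal{U}})$, using three facts: $J$ is a field isomorphism, $\mathcal{U}$ is closed under finite intersections and supersets, and $J$ fixes the classes of constant nets. The plan is to work in the quotient $\mathfrak{F}(\mathfrak{L},\mathbb{R})/I_{\mathcal{U}}$ and then transport each statement to $\mathbb{E}$ by $J$.

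For property \ref{EU1}, suppose $\varphi=\psi$ on some $Q\in\mathcal{U}$. Then $\varphi-\psi$ vanishes on $Q$, so $\varphi-\psi\in I_{\mathcal{U}}$ and $[\varphi]_{\mathcal{U}}=[\psi]_{\mathcal{U}}$. Applying $J$ gives equality of the limits.

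Property \ref{EU1+} is the special case $\psi=C_r$ of property \ref{EU1}, combined with the normalization $J([C_r]_{\mathcal{U}})=r$.

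For property \ref{EU2}, the quotient ring operations are defined representative-wise: $[\varphi]_{\mathcal{U}}+[\psi]_{\mathcal{U}}=[\varphi+\psi]_{\mathcal{U}}$ and $[\varphi]_{\mathcal{U}}\cdot[\psi]_{\mathcal{U}}=[\varphi\psi]_{\mathcal{U}}$. Since $J$ is a ring isomorphism, it preserves sums and products, and the two identities follow at once.

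The only step needing genuine care is property \ref{pippa}, because the statement refers to the order of $\mathbb{E}$. I take this order to be the one transported from the quotient via $J$: declare $[\eta]_{\mathcal{U}}$ positive iff $\mathcal{U}$-eventually $\eta(\lambda)>0$, and let $\mathbb{E}^{+}$ be the image of these classes under $J$.

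I would first check that this makes the quotient an ordered field in the sense of the definition in Section \ref{naf}. Positivity is well defined, since changing the representative on a non-qualified set does not matter, by intersecting qualified sets. Closure of positives under $+$ and $\cdot$ again follows by intersecting the two qualified sets. For the trichotomy $\mathbb{K}=\mathbb{K}^{+}\cup\{0\}\cup\mathbb{K}^{-}$, split $\mathfrak{L}$ into the three sets where $\eta>0$, $\eta=0$, $\eta<0$. Exactly one of them is qualified: by the ultra property at least one is, since otherwise all three complements would be qualified and their intersection would be empty, and at most one is, because the sets are disjoint.

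With this order, if $\mathcal{U}$-eventually $\varphi>\psi$, then $[\varphi-\psi]_{\mathcal{U}}$ is positive. By property \ref{EU2} applied to differences, $\lim\varphi-\lim\psi=J([\varphi-\psi]_{\mathcal{U}})\in\mathbb{E}^{+}$. This is exactly $\lim\varphi>\lim\psi$.

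I expect this last point, making explicit that the order on $\mathbb{E}$ is the ultrafilter order so that the isomorphism $J$ is order-preserving, to be the main obstacle. Everything else is bookkeeping with the ideal $I_{\mathcal{U}}$.
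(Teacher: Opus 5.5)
Your argument is correct and is exactly the routine unpacking of the definition $\lim_{\lambda\uparrow\Lambda}\varphi(\lambda)=J([\varphi]_{\mathcal{U}})$ that the paper intends when it says these properties ``easily follow from its definition''; the paper gives no further proof. For the third item, your use of the order transported from the ultrafilter is the right reading, and it is in fact forced: the elements $J([\eta]_{\mathcal{U}})$ with $\eta>0$ $\mathcal{U}$-eventually are precisely the nonzero squares of $\mathbb{E}$, so this is the only field order $\mathbb{E}$ admits.
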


If a real net $x_{\lambda }$ admits the Cauchy limit, the relation between
the two limits is given by the following identity:%
\begin{equation}
\lim_{\lambda \rightarrow \Lambda }\ x_{\lambda }=st\left( \lim_{\lambda
\uparrow \Lambda }\ x_{\lambda }\right)  \label{bn}
\end{equation}
Other important relations between the two limits are the following:

\begin{proposition}
\begin{enumerate}
\item \label{lim1}If 
\[
\lim_{n\rightarrow \infty }\ x_{n}=L\in \mathbb{R}
\]
then, the limit of the net $n\mapsto x_{\left\vert \lambda \right\vert }$,
satisfies the relation%
\[
\lim_{\lambda \uparrow \Lambda }\ x_{\left\vert \lambda \right\vert }\sim L.
\]

\item \label{lim2}If
\[
\lim_{\lambda \uparrow \Lambda }\ x_{\lambda }=\xi \in \mathbb{E}
\]
and $\xi $ is bounded, then there exists a sequence $\lambda _{n}\in 
\mathfrak{L}$ such that%
\begin{equation*}
\lim_{n\rightarrow \infty }\ x_{\lambda _{n}}=st(\xi ).
\end{equation*}
\end{enumerate}
\end{proposition}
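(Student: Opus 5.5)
For part \ref{lim1} I work directly from the definition of the $\Lambda$-limit and the properties in Proposition \ref{p2}. Fix a real $\varepsilon>0$. Since $x_{n}\to L$, there is $n_{0}\in\mathbb{N}$ with $|x_{n}-L|\le\varepsilon$ for all $n\ge n_{0}$. I then show that the set
\[
Q_{n_{0}}:=\{\lambda\in\mathfrak{L}\mid|\lambda|\ge n_{0}\}
\]
is qualified. The fineness property does this: choose any $\lambda_{0}\in\mathfrak{L}$ with $|\lambda_{0}|=n_{0}$, which is possible because $\Lambda$ is infinite. Then the cone $Q[\lambda_{0}]$ is contained in $Q_{n_{0}}$, and $Q[\lambda_{0}]\in\mathcal{U}$. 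Consequently, $\mathcal{U}$-eventually $L-\varepsilon\le x_{|\lambda|}\le L+\varepsilon$. Proposition \ref{p2}, items \ref{EU1+} and \ref{pippa}, gives strict inequalities. To use them I apply them with $2\varepsilon$ in place of $\varepsilon$, which yields $|\lim_{\lambda\uparrow\Lambda}x_{|\lambda|}-L|<2\varepsilon$. Since $\varepsilon$ is an arbitrary positive real, the difference is infinitesimal, so $\lim_{\lambda\uparrow\Lambda}x_{|\lambda|}\sim L$.

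For part \ref{lim2}, I read ``bounded'' as ``finite'', so that $r:=st(\xi)$ exists by Theorem \ref{na}. For each $n\ge1$, consider the set
\[
A_{n}:=\{\lambda\in\mathfrak{L}\mid|x_{\lambda}-r|<1/n\}.
\]
The key claim is that every $A_{n}$ is qualified, and in particular nonempty. Suppose instead that $A_{n}\notin\mathcal{U}$. The ultra property then gives $\mathfrak{L}\setminus A_{n}\in\mathcal{U}$, so $\mathcal{U}$-eventually $|x_{\lambda}-r|\ge1/n$. Split this set into the parts where $x_{\lambda}-r\ge1/n$ and where $x_{\lambda}-r\le-1/n$. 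One of these parts is qualified, since a union of two non-qualified sets is not qualified; this follows from the ultra property together with closure of filters under intersection. Using Proposition \ref{p2} (order and constants, with a strict version obtained by halving $1/n$), I get $\xi-r\ge1/(2n)$ or $\xi-r\le-1/(2n)$. Either way this contradicts $\xi\sim r$. Hence each $A_{n}$ is nonempty, and I pick $\lambda_{n}\in A_{n}$. Then $|x_{\lambda_{n}}-r|<1/n$, so $x_{\lambda_{n}}\to st(\xi)$ in the usual sense.

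I expect the main care to be needed in passing from non-strict to strict inequalities in Proposition \ref{p2}, and in the step showing that a qualified set split into two pieces has a qualified piece. Neither is a real obstacle. If desired, I can additionally make the $\lambda_{n}$ increasing by intersecting $A_{n}$ with the cone $Q[\lambda_{n-1}]$, which is still qualified.
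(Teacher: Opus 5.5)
Your proof is correct and follows the same route as the paper. In part 1 the paper also passes from a $\mathcal{U}$-eventual bound $|x_{|\lambda|}-L|<\varepsilon$ to $|\lim_{\lambda\uparrow\Lambda}x_{|\lambda|}-L|<\varepsilon$ via Proposition~\ref{p2}; in part 2 it simply picks $\lambda_n$ with $|x_{\lambda_n}-st(\xi)|<1/n$. You additionally supply the steps the paper leaves implicit: the set $\{\lambda\in\mathfrak{L}\mid|\lambda|\ge n_0\}$ is qualified by fineness, and the set $\{\lambda\in\mathfrak{L}\mid|x_\lambda-st(\xi)|<1/n\}$ is qualified, hence nonempty, by the ultra property, which is what makes the paper's choice of $\lambda_n$ legitimate.
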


\textbf{Proof: }\ref{lim1} - For every $\varepsilon \in \mathbb{R}^{+},$
eventually%
\begin{equation*}
\left\vert x_{\left\vert \lambda \right\vert }-L\right\vert <\varepsilon
\end{equation*}%
and by Prop. \ref{p2}.(\ref{pippa}), 
\begin{equation*}
\left\vert \lim_{\lambda \uparrow \Lambda }\ x_{\left\vert \lambda
\right\vert }-L\right\vert <\varepsilon ;
\end{equation*}
hence, since $\varepsilon $ is arbitrary, 
\begin{equation*}
\lim_{\lambda \uparrow \Lambda }\ x_{\left\vert \lambda \right\vert }-L\sim
0.
\end{equation*}
\ref{lim2} - Set $x_{0}=st(\xi )$ and for every $n\in \mathbb{N}$, take $%
\lambda _{n}$ such that $\ x_{\lambda _{n}}\in B_{1/n}(x_{0}).$
$\square $

\subsection{Some remarks on the Euclidean numbers}

In this section, we will make some remarks for the readers who are
not used to hyperreal fields. These remarks are not relevant for the
rest of the paper,  but they can be useful to familiarize the reader
with the Euclidean field.

The new (and, for someone, surprising) fact is that every net has
a $\Lambda$-limit.  Probably the first question a newcomer would ask
is the following: what is the limit of the net 
\[
\vartheta(\lambda):=(-1)^{\left\vert \lambda\right\vert }
\]
since it takes the values $+1$ if $\left\vert \lambda\right\vert $ is even or $-1$ if $\left\vert \lambda\right\vert $ is odd.

Let us see what Def.~\ref{EU} tells us. If we set 
\[
Q^{+}:=\left\{ \lambda\in\mathfrak{L}\ |\ \vartheta(\lambda)=+1\right\} ,\qquad Q^{-}:=\left\{ \lambda\in\mathfrak{L}\ |\ \vartheta(\lambda)=-1\right\} 
\]
we have that either $Q^{+}\in\mathcal{U}$ or $Q^{-}\in\mathcal{U}$.
Hence $\vartheta(\lambda)$ is $\mathcal{U}$-eventually equal to
$+1$ or to $-1$ and therefore 
\[
\lim_{\lambda\uparrow\Lambda}(-1)^{\left\vert \lambda\right\vert }=1\quad\text{or}\quad\lim_{\lambda\uparrow\Lambda}(-1)^{\left\vert \lambda\right\vert }=-1.
\]
Which alternative occurs depends on the choice of $\mathcal{U}$.
Since this and similar questions are not relevant for this paper,
it is not necessary to choose $\mathcal{U}$ in a more specific way.
For other purposes, a more accurate choice of $\mathcal{U}$ can be
found for example in \cite{BL2021}. With that choice we have that
\[
\lim_{\lambda\uparrow\Lambda}(-1)^{\left\vert \lambda\right\vert }=1.
\]
The second question a newcomer might ask concerns the limit of the
divergent net defined by 
\[
\varphi(\lambda):=\left\vert \lambda\cap\mathbb{N}\right\vert .
\]
Let us put 
\begin{equation}
\alpha:=\lim_{\lambda\uparrow\Lambda}\ \left\vert \lambda\cap\mathbb{N}\right\vert \label{alfa}
\end{equation}
What can we say about $\alpha$? By Prop.~\ref{p2}-(\ref{pippa}),
we have $\alpha\notin\mathbb{R}$. In order to give a feeling of the ``meaning'' of $\alpha$, we relate
it to other infinite numbers. If $E\subset\Lambda$, we put
\begin{equation}
\mathfrak{num}\left(E\right)=\lim_{\lambda\uparrow\Lambda}\ |\lambda\cap E|.\label{num}
\end{equation}
Here $\left\vert F\right\vert \in\mathbb{N}$ denotes the number of
elements of the finite set $F$. 
If $E$ is a finite set, the net is eventually equal to the number
of elements of $E$; then, by Prop.~\ref{EU}-(\ref{EU1}),
\[
\mathfrak{num}\left(E\right)=\left\vert E\right\vert \in\mathbb{N}.
\]
If $E$ is an infinite set, $\mathfrak{num}\left(E\right)\notin\mathbb{N}$.
Hence limits like (\ref{num}) define a mathematical entity that extends
the notion of ``number of elements of a set'' to infinite sets.
The infinite number $\mathfrak{num}(E)$ is called the \textit{numerosity}
of $E$. The theory of numerosities can be considered as an extension of the
Cantorian theory of cardinal and ordinal numbers. The reader interested
in the details and developments of the theory is referred to \cite{benci95b,BDN2003,BDNF1,BL2021,BF}.

\begin{remark} In order to familiarize a newcomer with hyperreal
fields, in section \ref{lt} we have presented a construction of the
Euclidean numbers and the $\Lambda$-limit similar to a construction
of the real numbers and the Cauchy limit. In fact, we have the following
similarities:
\begin{itemize}
\item in order to construct $\mathbb{R}$, we start from $\mathbb{Q}$;
in order to construct $\mathbb{E}$, we start from $\mathbb{R}$;
\item in the first case we take the ring $\mathfrak{C}\left(\mathbb{N},\mathbb{Q}\right)$
of the Cauchy sequences; in the second case we take the ring of all
nets $\mathfrak{F}\left(\mathfrak{L},\mathbb{R}\right)$;
\item we have that 
\[
I_{\mathcal{C}}:=\left\{ \{x_{n}\}\in\mathfrak{C}\left(\mathbb{N},\mathbb{Q}\right)\ |\ \forall\varepsilon>0,\ \text{eventually }|x_{n}|<\varepsilon\right\} 
\]
is a maximal ideal in $\mathfrak{C}\left(\mathbb{N},\mathbb{Q}\right)$
and similarly we have that
\[
I_{\mathcal{U}}:=\left\{ \psi\in\mathfrak{F}\left(\mathfrak{L},\mathbb{R}\right)\ |\ \mathcal{U}\text{-eventually }\psi=0\right\} 
\]
is a maximal ideal in $\mathfrak{F}\left(\mathfrak{L},\mathbb{R}\right)$;
\item hence 
\[
\mathfrak{C}\left(\mathbb{N},\mathbb{Q}\right)/I_{\mathcal{C}}\qquad\text{and}\qquad\mathfrak{F}\left(\mathfrak{L},\mathbb{R}\right)/I_{\mathcal{U}}
\]
are fields;
\item if we want to consider the points of $\mathbb{R}$ and $\mathbb{E}$
as atoms\footnote{In set theory, an atom $a$ is any entity that is not a set, namely
$a$ is an atom if and only if $a\neq\varnothing$ and 
\[
\forall x,\ x\notin a.
\]
In NSA it is relevant that the elements of $\mathbb{E}$ and $\mathbb{R}$
be atoms and not equivalence classes.}, we define two field isomorphisms
\[
J_{\mathbb{R}}:\mathfrak{C}\left(\mathbb{N},\mathbb{Q}\right)/I_{\mathcal{C}}\rightarrow\mathbb{R}\qquad\text{and}\qquad J_{\mathbb{E}}:\mathfrak{F}\left(\mathfrak{L},\mathbb{R}\right)/I_{\mathcal{U}}\rightarrow\mathbb{E};
\]
\item finally we define the Cauchy limit and the $\Lambda$-limit as follows:
\[
\lim_{n\rightarrow\infty}x_{n}=J_{\mathbb{R}}\left(\{x_{n}\}+I_{\mathcal{C}}\right)\qquad\text{and}\qquad\lim_{\lambda\uparrow\Lambda}x_{\lambda}=J_{\mathbb{E}}\left(\{x_{\lambda}\}+I_{\mathcal{U}}\right).
\]

\end{itemize}
\end{remark}

\subsection{$\Lambda$-limit of sets and functions}

\label{hs}

In this section we extend the notion of $\Lambda$-limit to sets and
functions.

\begin{definition} Given a net of sets $E_{\lambda}\subset\mathbb{R}^{N}$,
the $\Lambda$-limit is defined as follows: 
\begin{equation}
\lim_{\lambda\uparrow\Lambda}E_{\lambda}:=\left\{ \lim_{\lambda\uparrow\Lambda}x_{\lambda}\ \big|\ \forall\lambda\in\mathfrak{L},\ x_{\lambda}\in E_{\lambda}\right\} \label{inter}
\end{equation}
and, for short, it will usually be denoted by $E_{\Lambda}$.

If a set is the $\Lambda$-limit of a net of sets, it will be called
\textbf{internal}. If the net $E_{\lambda}$ is $\mathcal{U}$-eventually
constant, then the $\Lambda$-limit will be denoted by $E^{\ast}$.
\end{definition}

\begin{remark} Since the $\Lambda$-limit of a constant net of sets
$E_{\lambda}\equiv E$ is different from $E$, it is not a limit in
the topological sense, but rather a sort of ``algebraic'' limit.
\end{remark}

If a real function is identified with its graph, the $\Lambda$-limit
of a net of functions is well defined. In this case we write
\[
f_{\Lambda}=\lim_{\lambda\uparrow\Lambda}f_{\lambda}.
\]
Clearly, $f_{\Lambda}$ is defined for every $x\in\mathbb{E}$ and
it takes values in $\mathbb{E}$. If $x=\lim_{\lambda\uparrow\Lambda}x_{\lambda}$,
we have
\begin{equation}
f_{\Lambda}(x)=\lim_{\lambda\uparrow\Lambda}f_{\lambda}(x_{\lambda}).\label{42}
\end{equation}
If $f_{\lambda}$ is $\mathcal{U}$-eventually constant, the following
notation is usually used:
\begin{equation}
f^{\ast}(x):=\lim_{\lambda\uparrow\Lambda}f(x_{\lambda}).\label{star}
\end{equation}
However, if the meaning is clear from the context, we will simply
write $f$ instead of $f^{\ast}$.

If $W(\mathbb{R})\subset\mathfrak{F}(\mathbb{R})$ is a vector space
of functions, we will use the following notation

\[
W(\mathbb{E}):=W(\mathbb{R})^{\ast}=\left\{ \lim_{\lambda\uparrow\Lambda}f_{\lambda}\ \big|\ \forall\lambda\in\mathfrak{L},\ f_{\lambda}\in W(\mathbb{R})\right\} .
\]

\begin{definition} \label{hs1} Given a net of sets $F_{\lambda}$
such that all the sets are finite, we say that the set $F_{\Lambda}$
is \textbf{hyperfinite}. \end{definition}

Hyperfinite sets share many properties of finite sets. For example,
a hyperfinite set $F_{\Lambda}\subset\mathbb{E}$ has a maximum $x_{\text{\textsc{max}}}$
and a minimum $x_{\text{\textsc{min}}}$, respectively given by
\[
x_{\text{\textsc{max}}}=\lim_{\lambda\uparrow\Lambda}\max(F_{\lambda}),\qquad x_{\text{\textsc{min}}}=\lim_{\lambda\uparrow\Lambda}\min(F_{\lambda}).
\]
Moreover, it is possible to ``add'' the elements of a hyperfinite
set; the \textbf{hyperfinite sum} of the elements of $F_{\Lambda}$
is defined as follows:
\begin{equation}
\sum_{x\in F_{\Lambda}}x=\lim_{\lambda\uparrow\Lambda}\sum_{x\in F_{\lambda}}x.\label{sum}
\end{equation}

\begin{definition} A hyperfinite set is called a \textbf{hyperfinite
grid} if for every $x\in\mathbb{R}$ there exists $\xi\in\Gamma$
such that $\xi\sim x$. \end{definition}

From now on we shall use peculiar grids, namely grids which satisfie
the following assumption:
\begin{equation}
\mathbb{R}\subset\Gamma.\label{linda}
\end{equation}
Moreover, given $\Omega\subset\mathbb{R}^{N}$, we set
\[
\Omega^{\circ}:=\Omega^{\ast}\cap\Gamma^{N}.
\]
Then, by (\ref{linda}), we have
\[
\Omega\subset\Omega^{\circ}\subset\Omega^{\ast}.
\]
Namely $\Omega^{\circ}$ is a hyperfinite set which contains $\Omega$;
it can be considered as a sort of ``compactification'' of $\Omega$.

\begin{definition} A space of grid functions is a family $\mathfrak{F}(\Gamma)$
of internal functions 
\[
u:\Gamma\rightarrow\mathbb{R}.
\]
\end{definition}

If $f\in\mathfrak{F}(\mathbb{E})$, the restriction of $f$ to $\Gamma$
is a grid function which we denote by $f^{\circ}$. Namely, if $f=\lim_{\lambda\uparrow\Lambda}f_{\lambda}$
and $x=\lim_{\lambda\uparrow\Lambda}x_{\lambda}\in\Gamma$, we have

\begin{equation}
f^{\circ}(x)=\lim_{\lambda\uparrow\Lambda}f_{\lambda}(x_{\lambda}).\label{giusi}
\end{equation}
For every $a\in\Gamma$,
\[
\chi_{a}(x)\in\mathfrak{F}(\Gamma)
\]
is a grid function, and hence every grid function can be represented
by the sum
\[
f(x)=\sum_{a\in\Gamma}f(a)\chi_{a}(x).
\]
Given $f\in\mathfrak{F}(\mathbb{R})$, we will write $f^{\circ}$
instead of $(f^{\ast})^{\circ}$, namely

\begin{equation}
f^{\circ}(x):=(f^{\ast})^{\circ}(x)=\lim_{\lambda\uparrow\Lambda}f(x_{\lambda})=\sum_{a\in\Gamma}f^{\ast}(a)\chi_{a}(x).\label{lina}
\end{equation}
Clearly, $\mathfrak{F}(\Gamma)$ contains a unique copy $f^{\circ}$
of every function $f\in\mathfrak{F}(\mathbb{R})$.

\section{Ultrafunctions}\label{PN}

In this paper, we introduce ultrafunctions axiomatically. The consistency
of these axioms will be proved in Section \ref{CSU}.

\subsection{The basic axioms}\label{BA}

The notion of ultrafunction is based on the following elements:
\begin{itemize}
\par 
\item a suitable hyperfinite grid $\Gamma$ such that 
\[
\omega:=\lim_{\lambda\uparrow\Lambda}\omega_{\lambda}=\max(\Gamma)=-\min(\Gamma),
\]
where $\omega$  is a basic parameter of the theory;
\item a suitable vector space $V(\mathbb{R})\subset\mathfrak{F}(\mathbb{R})$.
For reasons which will be discussed in Section \ref{EF}, the best
choice is the space of \textit{epilogic functions} defined
as follows: 
\[
V(\mathbb{R}):=\left\{ f\in BV_{loc}(\mathbb{R})\ \big|\ \forall x\in\mathbb{R},\ f(x)=\lim_{\varepsilon\to0^{+}}\frac{1}{2}\big[f(x+\varepsilon)+f(x-\varepsilon)\big]\right\} .
\]
Hence a function in the $BV_{loc}(\mathbb{R})$\footnote{Here $BV_{loc}(\mathbb{R})$ denotes the space of functions of locally bounded variation and not the space of equivalence classes of functions}  is equal to a function
in  $V(\mathbb{R})$.
\item a suitable net of finite-dimensional subspaces $V_{\lambda}(\mathbb{R})$
satisfying the following properties:
\begin{itemize}
\par 
\item $\forall\lambda\in\mathfrak{L}$, 
\[
V(\mathbb{R})\cap\lambda\subseteq V_{\lambda}(\mathbb{R});
\]hence 
\[
V(\mathbb{R})=\bigcup_{\lambda\in\mathfrak{L}}V_{\lambda}(\mathbb{R}).
\]
\item $\forall \lambda\in\mathfrak{L}$,  $V_{\lambda}(\mathbb{R})$ is large enough so that 
\begin{equation}
u,v\in C^{0,1}(\mathbb{R})\cap\lambda\Rightarrow uv\in V_{\lambda}(\mathbb{R});\label{gazza}
\end{equation}
this is a technical assumption used, for example, in the proof of
Th.\ref{TU}-(\ref{U7}).
\end{itemize}
\end{itemize}
\bigskip{}

The set of \textbf{ultrafunctions} $V(\Gamma)$ is an algebra of functions
$u:\Gamma\rightarrow\mathbb{E}$ over the field $\mathbb{E}$. We
introduce it axiomatically by two axioms presented in this section.
Then we will add two other axioms in Sections \ref{I} and \ref{D}
concerning the integral and the derivative, respectively.

\begin{axiom} \label{A} \textbf{(Approximation Axiom)} $u\in V(\Gamma)$
if and only if there exists a net of functions $u_{\lambda}\in V_{\lambda}(\mathbb{R})$
such that, for every point $x=\lim_{\lambda\uparrow\Lambda}x_{\lambda}\in\Gamma$,
\[
u(x)=\lim_{\lambda\uparrow\Lambda}u_{\lambda}(x_{\lambda}).
\]
\end{axiom}

Thus every ultrafunction can be seen as the pointwise $\Lambda$-limit
of a net of functions constrained by the limitations imposed by the
choice of the net $V_{\lambda}(\mathbb{R})$. From now on, if $u\in V(\Gamma)$,
$u_{\lambda}$ will denote a net converging to $u$ in this sense.

\begin{axiom} ($\mathbf{\chi}$-\textbf{Axiom}) For every point $a\in\Gamma$,
$\chi_{a}\in V(\Gamma)$, namely there exists a net $\sigma_{a_{\lambda}}\in V_{\lambda}(\mathbb{R})$
such that 
\[
\chi_{a}(x)=\lim_{\lambda\uparrow\Lambda}\sigma_{a_{\lambda}}(x_{\lambda}).
\]
\end{axiom}

\begin{remark} If $a\in\mathbb{R}$ the definition of $\chi_{a}(x)$
is ambiguous, but in general it is clear from the context whether
$\chi_{a}(x)$ is defined only for $x\in\mathbb{R}$ or for all $x\in\Gamma$.
\end{remark}

By this axiom every ultrafunction can be written as follows: 
\begin{equation}
u(x)=\sum_{a\in\Gamma}u(a)\chi_{a}(x),\qquad u(a)\in\mathbb{E}.\label{pu}
\end{equation}
where the sum is defined by (\ref{sum}). Moreover, this axiom implies
that every real function can be extended to an ultrafunction. Given
any function $f$, we can define 
\[
f^{\circ}(x)=\sum_{a\in\Gamma}f(a)\chi_{a}(x)=\lim_{\lambda\uparrow\Lambda}\sum_{a\in\Gamma_{\lambda}}f(a_{\lambda})\sigma_{a_{\lambda}}(x_{\lambda}).
\]
Clearly, if $x\in\mathbb{R}$, $f^{\circ}(x)=f(x)$ and hence $f^{\circ}$
extends $f$ to $\Gamma$. However there are ultrafunctions which
are not extensions of real functions; for example $\chi_{a}$ when
$a\in\Gamma\setminus\mathbb{R}$.

We can distinguish two main types of ultrafunctions extending real
functions:

\bigskip{}

(1)   $[V(\mathbb{R})]^{\circ}:=\{f^{\circ}\mid f\in V(\mathbb{R})\};$

\bigskip{}

(2)  $[\mathfrak{F}(\mathbb{R})]^{\circ}:=\{f^{\circ}\mid f\in\mathfrak{F}(\mathbb{R})\}.$

\bigskip{}

If $f^{\circ}\in[V(\mathbb{R})]^{\circ}$, then $\mathcal{U}$-eventually
$f\in V_{\lambda}(\mathbb{R})$, namely 
\begin{equation}
\exists Q\in\mathcal{U},\ \forall\lambda\in Q,\ \forall x\in\mathbb{R},\ f_{\lambda}(x)=f(x).\label{ruben}
\end{equation}

If $f^{\circ}\in[\mathfrak{F}(\mathbb{R})]^{\circ}$ and $x\in\mathbb{R}$,
then $\mathcal{U}$-eventually $f_{\lambda}(x)=f(x)$, namely 
\[
\forall x\in\mathbb{R},\ \exists Q\in\mathcal{U},\ \forall\lambda\in Q,\ f_{\lambda}(x)=f(x).
\]

Notice that $[V(\mathbb{R})]^{\circ}$ and $[\mathfrak{F}(\mathbb{R})]^{\circ}$
are vector spaces over $\mathbb{R}$ but not over $\mathbb{E}$. As
we will see, it is useful to define also suitable subspaces of $V(\Gamma)$
over $\mathbb{E}$. If $W(\mathbb{R})\subset\mathfrak{F}(\mathbb{R})$
is a real vector space, we set 
\begin{equation}
W(\Gamma):=\left\{ \lim_{\lambda\uparrow\Lambda}u_{\lambda}(x_{\lambda})\ \big|\ \forall\lambda,\ u_{\lambda}\in W(\mathbb{R})\cap V_{\lambda}(\mathbb{R})\right\} =\left\{ u_{|\Gamma}\mid u\in W(\mathbb{E})\right\} .\label{W}
\end{equation}

Hence, if $W(\mathbb{R})\supseteq V(\mathbb{R})$, we have $W(\Gamma)=V(\Gamma)$,
while if $W(\mathbb{R})\nsupseteq V(\mathbb{R})$, it follows that
$W(\Gamma)\subset V(\Gamma)$.

\subsection{The pointwise integral}\label{I}

Since 
\begin{equation}
V(\mathbb{R})\subset\mathcal{L}^{1}_{loc}(\mathbb{R}),\label{VL1}
\end{equation}
the following definition is well posed.

\begin{definition} \label{DI} The \textbf{pointwise integral} 
\[
\int^{\circ}:V(\Gamma)\rightarrow\mathbb{E}
\]
is defined as follows: 
\[
\int^{\circ}u(x)\,dx=\lim_{\lambda\uparrow\Lambda}\int^{\omega_{\lambda}}_{-\omega_{\lambda}}u_{\lambda}(x)\,dx.
\]
\end{definition}

The reason for this name is given by the following result.

\begin{proposition} If $u\in V(\Gamma)$, then 
\begin{equation}
\int^{\circ}u(x)\,dx=\sum_{a\in\Gamma}u(a)d(a),\label{int}
\end{equation}
where 
\begin{equation}
d(a):=\int^{\circ}\chi_{a}(x)\,dx.\label{int4}
\end{equation}
\end{proposition}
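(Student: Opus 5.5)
The plan is to combine the representation (\ref{pu}) with linearity of the pointwise integral. Everything will be done at the level of the approximating nets and then passed to the $\Lambda$-limit with Prop.~\ref{p2}.

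First, I check that $\int^{\circ}$ is well defined and $\mathbb{E}$-linear on $V(\Gamma)$. If two nets $u_\lambda$, $v_\lambda\in V_\lambda(\mathbb{R})$ represent the same ultrafunction, the Approximation Axiom only says they agree at the points of $\Gamma$. To get equality of the integrals, I would use the fact that $V_\lambda(\mathbb{R})$ is finite dimensional and is determined by its values on the grid $\Gamma_\lambda$. In other words, I assume (as the construction in Section~\ref{CSU} should guarantee) that the restriction map $V_\lambda(\mathbb{R})\to\mathfrak{F}(\Gamma_\lambda)$ is injective. Under that assumption the two nets coincide $\mathcal{U}$-eventually, and Prop.~\ref{p2}(\ref{EU1}) gives equal integrals. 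Linearity then follows from linearity of the classical integral $\int_{-\omega_\lambda}^{\omega_\lambda}$ on each $V_\lambda(\mathbb{R})$, together with Prop.~\ref{p2}(\ref{EU2}). For scalars $\xi=\lim_{\lambda\uparrow\Lambda}\xi_\lambda\in\mathbb{E}$, the net $\xi_\lambda u_\lambda$ represents $\xi u$.

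Second, I write $u(x)=\sum_{a\in\Gamma}u(a)\chi_a(x)$. By (\ref{sum}) this hyperfinite sum is the $\Lambda$-limit of the finite sums
\[
u_\lambda(x)=\sum_{a\in\Gamma_\lambda}u_\lambda(a)\,\sigma_{a}(x),
\]
where $\sigma_a\in V_\lambda(\mathbb{R})$ is the net given by the $\chi$-Axiom, with the index $\lambda$ suppressed. For fixed $\lambda$ the classical integral is finitely additive, so
\[
\int_{-\omega_\lambda}^{\omega_\lambda}u_\lambda\,dx=\sum_{a\in\Gamma_\lambda}u_\lambda(a)\int_{-\omega_\lambda}^{\omega_\lambda}\sigma_{a}\,dx .
\]
Taking $\Lambda$-limits of both sides, the left side is $\int^{\circ}u$ by Definition~\ref{DI}. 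The right side is by definition the hyperfinite sum $\sum_{a\in\Gamma}u(a)d(a)$, where $d(a)=\lim_{\lambda\uparrow\Lambda}\int_{-\omega_\lambda}^{\omega_\lambda}\sigma_{a_\lambda}\,dx=\int^{\circ}\chi_a$.

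The main obstacle is justifying that the net $\sum_{a\in\Gamma_\lambda}u_\lambda(a)\sigma_a$ may be used in place of the given $u_\lambda$. Both represent $u$ pointwise on $\Gamma$, so this reduces to the well-definedness issue from the first step. I would handle it by invoking injectivity of restriction to $\Gamma_\lambda$ (equivalently, that $\{\sigma_a\}_{a\in\Gamma_\lambda}$ is a basis of $V_\lambda(\mathbb{R})$ with $\sigma_a(b)=\delta_{ab}$). Then $\mathcal{U}$-eventually the two nets are literally equal functions. If that property is not yet available from the axioms at this point, I would state it as part of the axiom's intended meaning and verify it in the model of Section~\ref{CSU}.
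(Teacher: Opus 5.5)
Your proposal is correct and follows the paper's own argument. You expand $u_\lambda=\sum_{a\in\Gamma_\lambda}u_\lambda(a_\lambda)\sigma_{a_\lambda}$, integrate term by term over $[-\omega_\lambda,\omega_\lambda]$, and pass to the $\Lambda$-limit. The paper simply asserts that expansion in its equation (\ref{urano}), and your explicit injectivity (nodal basis) hypothesis, which the model supplies through Theorem~\ref{ames}-(\ref{sb}), is exactly the tacit assumption the paper's proof and the well-posedness of Definition~\ref{DI} rest on.
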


\textbf{Proof}. By the $\mathbf{\chi}$-axiom there exists a net $\sigma_{a_{\lambda}}\in V_{\lambda}(\mathbb{R})$
such that 
\begin{equation}
\forall a=\lim_{\lambda\uparrow\Lambda}a_{\lambda}\in\Gamma,\qquad\chi_{a}=\lim_{\lambda\uparrow\Lambda}\sigma_{a_{\lambda}}.\label{chu}
\end{equation}
Hence, if $u\in V(\Gamma)$, 
\begin{equation}
u_{\lambda}(x)=\sum_{a_{\lambda}\in\Gamma_{\lambda}}u_{\lambda}(a_{\lambda})\sigma_{a_{\lambda}}(x).\label{urano}
\end{equation}
We set 
\begin{equation}
d(a_{\lambda}):=\int^{\omega_{\lambda}}_{-\omega_{\lambda}}\sigma_{a_{\lambda}}(x)\,dx.\label{chu+}
\end{equation}
Then, by (\ref{urano}), 
\begin{eqnarray*}
\int^{\omega_{\lambda}}_{-\omega_{\lambda}}u_{\lambda}(x)\,dx & = & \int^{\omega_{\lambda}}_{-\omega_{\lambda}}\sum_{a_{\lambda}\in\Gamma_{\lambda}}u_{\lambda}(a_{\lambda})\sigma_{a_{\lambda}}(x)\,dx\\
 & = & \sum_{a_{\lambda}\in\Gamma_{\lambda}}u_{\lambda}(a_{\lambda})\int^{\omega_{\lambda}}_{-\omega_{\lambda}}\sigma_{a_{\lambda}}(x)\,dx\\
 & = & \sum_{a_{\lambda}\in\Gamma_{\lambda}}u_{\lambda}(a_{\lambda})d(a_{\lambda}).
\end{eqnarray*}
Hence 
\[
\int^{\circ}u(x)\,dx=\lim_{\lambda\uparrow\Lambda}\int^{\omega_{\lambda}}_{-\omega_{\lambda}}u_{\lambda}(x)\,dx=\lim_{\lambda\uparrow\Lambda}\sum_{a_{\lambda}\in\Gamma_{\lambda}}u_{\lambda}(a_{\lambda})d(a_{\lambda})=\sum_{a\in\Gamma}u(a)d(a).
\]
$\square$

\bigskip{}

We may think of $d(a)$ as the ``measure'' of the point $a\in\Gamma$.
The pointwise integral extends the usual Lebesgue integral from $V_{c}(\mathbb{R})$
to $V(\Gamma)$. In fact, if $f\in V_{c}(\mathbb{R})$, $\mathcal{U}$-eventually
$\mathrm{supp}(f)\subset[-\omega_{\lambda},\omega_{\lambda}]$ and
hence
\begin{equation}
\int^{\circ}f^{\circ}(x)\,dx=\lim_{\lambda\uparrow\Lambda}\int^{\omega_{\lambda}}_{-\omega_{\lambda}}f(x)\,dx=\lim_{\lambda\uparrow\Lambda}\int f(x)\,dx=\int f(x)\,dx.\label{mara}
\end{equation}

Of course, if $f$ does not have compact support, the pointwise integral
may assume infinite values. For example

\[
\int^{\circ}e^{x}\,dx=\lim_{\lambda\uparrow\Lambda}\int^{\omega_{\lambda}}_{-\omega_{\lambda}}e^{x}\,dx=\lim_{\lambda\uparrow\Lambda}\left(e^{\omega_{\lambda}}-e^{-\omega_{\lambda}}\right)=e^{\omega}-e^{-\omega}.
\]
Equality (\ref{mara}) cannot hold for every Lebesgue integrable function
with compact support. In fact, if $a\in\mathbb{R}$,
\[
\int\chi_{a}(x)\,dx=0
\]
but, by (\ref{int}) and (\ref{int4}),
\[
\int^{\circ}\chi_{a}(x)\,dx=d(a)\neq0
\]
at least for some $a$. This fact is quite natural: when we work in
a non-Archimedean framework infinitesimals matter and cannot be neglected,
as happens in the Riemann and Lebesgue integrals. This also shows
that it is necessary to use a different symbol in order to distinguish
the pointwise integral from the Lebesgue integral (here we use $\int^{\circ}$).

\begin{theorem} \label{LI} If $f\in\mathcal{L}^{1}(\mathbb{R})$,
then 
\[
\int^{\circ}f^{\circ}(x)\,dx\sim\int f(x)\,dx.
\]
\end{theorem}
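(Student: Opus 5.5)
The plan is to reduce the statement to a finite-level estimate and then pass to the $\Lambda$-limit. By the $\chi$-axiom, $f^{\circ}=\sum_{a\in\Gamma}f(a)\chi_{a}$ is approximated by the net
\[
u_{\lambda}(x)=\sum_{a_{\lambda}\in\Gamma_{\lambda}}f(a_{\lambda})\sigma_{a_{\lambda}}(x)\in V_{\lambda}(\mathbb{R}).
\]
By Definition \ref{DI} and the computation in the proof of (\ref{int}),
\[
\int^{\circ}f^{\circ}(x)\,dx=\lim_{\lambda\uparrow\Lambda}\sum_{a_{\lambda}\in\Gamma_{\lambda}}f(a_{\lambda})d(a_{\lambda}).
\]
So the whole statement becomes a claim about these finite ``Riemann-type'' sums. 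It suffices to show that, for every real $\varepsilon>0$, $\mathcal{U}$-eventually
\[
\Big|\sum_{a_{\lambda}\in\Gamma_{\lambda}}f(a_{\lambda})d(a_{\lambda})-\int f(x)\,dx\Big|<\varepsilon .
\]
Then Prop.~\ref{p2}-(\ref{pippa}) gives $\big|\int^{\circ}f^{\circ}-\int f\big|<\varepsilon$ for every $\varepsilon\in\mathbb{R}^{+}$, exactly as in the proof of Prop.~\ref{lim1}. This yields $\int^{\circ}f^{\circ}(x)\,dx\sim\int f(x)\,dx$.

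The key observation is that the axioms stated so far do not force this estimate. A function $f\in\mathcal{L}^{1}(\mathbb{R})$ is a genuine pointwise function, and it may be modified arbitrarily on a null set, for instance on the countable set $\lambda\cap\mathbb{R}$, which lies in $\Gamma_{\lambda}$ by (\ref{linda}). Such a modification does not change $\int f$ but does change $\sum f(a_{\lambda})d(a_{\lambda})$. The estimate must therefore come from the specific choice of grids $\Gamma_{\lambda}$ and weights $d(a_{\lambda})$ made in the model of Section \ref{CSU}. The essential point is that, by (\ref{esse}), $f$ itself belongs to $\Lambda$. Hence for every $\lambda$ in the qualified set $Q[\{f\}]$ the grid $\Gamma_{\lambda}$ may be chosen with knowledge of $f$.

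The argument I will run, after verifying it against the construction, has three steps. First, for $\lambda\ni f$, the finite set $\lambda\cap\mathcal{L}^{1}(\mathbb{R})$ is handled together. Since a Lebesgue integrable function is approximable in $L^{1}$ by step functions, and its integral is a limit of sums $\sum f(t_{i})|I_{i}|$ over suitably tagged partitions of $[-\omega_{\lambda},\omega_{\lambda}]$ (tags avoiding bad points, via Lusin/Vitali-type arguments), we can choose, for the finitely many $g\in\lambda\cap\mathcal{L}^{1}$ simultaneously, tag points and weights with $\big|\sum g(a_{\lambda})d(a_{\lambda})-\int^{\omega_{\lambda}}_{-\omega_{\lambda}}g\big|<1/|\lambda|$. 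Second, the forced points $\lambda\cap\mathbb{R}$ receive weights so small that $\sum_{a\in\lambda\cap\mathbb{R}}|g(a)|d(a)<1/|\lambda|$ for every such $g$. Third, $\int_{|x|>\omega_{\lambda}}|f|\to0$ by dominated convergence, since $\omega\notin\mathbb{R}$ forces $\omega_{\lambda}\to\infty$ $\mathcal{U}$-eventually in the relevant sense. Combining the three steps, and since $1/|\lambda|<\varepsilon$ $\mathcal{U}$-eventually by fineness, we obtain the required bound.

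The main obstacle is the first step. It requires compatibility between the simultaneous choice of tagged points for finitely many $L^{1}$ functions and the other structural requirements on $\Gamma_{\lambda}$ and $\sigma_{a_{\lambda}}$: the $\chi$-axiom, (\ref{gazza}), and symmetry properties needed later for derivatives. I would first try to build $d(a_{\lambda})$ as lengths of cells of a partition, with the cell representatives chosen as ``good'' tags for all $g\in\lambda\cap\mathcal{L}^{1}$. This uses the classical fact that for $g\in L^{1}$ and random tags in fine partitions, the Riemann sums converge in mean to $\int g$. Hence a deterministic good choice exists for finitely many $g$ at once.
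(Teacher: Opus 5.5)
Your reduction is correct: by the $\chi$-axiom and (\ref{int}), $\int^{\circ}f^{\circ}\,dx=\lim_{\lambda\uparrow\Lambda}\sum_{a_{\lambda}\in\Gamma_{\lambda}}f(a_{\lambda})\,d(a_{\lambda})$. So the statement is a claim about Riemann-type sums on the grid, and you are right that the axioms give no control over these sums for a pointwise $f\in\mathcal{L}^{1}(\mathbb{R})$.

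The gap is that the proposal stops exactly where the work is. Step~1 is not carried out. Its premise, that the grid of Section~\ref{CSU} is chosen with knowledge of the $L^{1}$ functions in $\lambda$, is false for the paper's model. There $\Gamma_{\eta}$ is the uniform grid $G_{\eta}=\{m\eta\}$, $\eta=1/\beta$, shifted only to absorb the points of $\mathbb{R}^{\circledast}$. By Corollary~\ref{prim}, every grid point, real or not, has $d(a)\ge(1-\gamma^{2})\eta$. Nothing is tagged with knowledge of $f$.

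In that model the estimate you need actually fails. Take $f=\chi_{\mathbb{Q}\cap[0,1]}$. Every $m\eta$ is hyperrational, so $f^{*}(a)=1$ at every grid point of $[0,1]^{*}$ except the hyperirrational points of $\mathbb{R}^{\circledast}$, of which there is at most one per cell. Hence $\int^{\circ}f^{\circ}\,dx\ge(1-\gamma^{2})\,\eta\,(\beta+1-|\mathbb{R}^{\circledast}|)\ge(1-\gamma^{2})(1-\eta\,|\mathbb{R}^{\circledast}|)$. This is not infinitesimal as soon as $\beta$ is large enough that $\eta\,|\mathbb{R}^{\circledast}|\sim0$, which is allowed, since the construction only asks $\beta$ to be a sufficiently large multiple of $\tau$. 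Meanwhile $\int f\,dx=0$.

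So your plan cannot be ``verified against the construction''. It needs a different model, with $\lambda$-adapted tagged cells, and for that model the $\chi$-, integral and derivative axioms must all be re-proved. The hardest is (\ref{A1}), whose proof via Lemma~\ref{lulu} and Corollary~\ref{cor1} relies on the uniform mesh and on $2\omega\eta<1$.

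The paper argues along a completely different line. It uses density of $V_{c}(\mathbb{R})$ in $\mathcal{L}^{1}(\mathbb{R})$ to pick $f_{\lambda}\in V_{\lambda}(\mathbb{R})$ with $\int_{-\omega_{\lambda}}^{\omega_{\lambda}}|f_{\lambda}-f|\,dx\to0$. It then writes $\int^{\circ}f^{\circ}\,dx=\lim_{\lambda\uparrow\Lambda}\int_{-\omega_{\lambda}}^{\omega_{\lambda}}f_{\lambda}\,dx$ and concludes with (\ref{bn}).

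You cannot close step~1 by borrowing that density argument, and your reduction shows why. By the approximation and $\chi$-axioms, $f^{\circ}$ is represented by the interpolating net $\sum f(a_{\lambda})\sigma_{a_{\lambda}}$. An $L^{1}$-approximant $f_{\lambda}$ represents $f^{\circ}$ only if $f_{\lambda}(a_{\lambda})=f(a_{\lambda})$ on $\Gamma_{\lambda}$, and density does not provide that. So the paper's identification of $\int^{\circ}f^{\circ}$ with $\lim_{\lambda\uparrow\Lambda}\int f_{\lambda}$ is exactly the point your reduction puts in question. The example above shows this control of $f$ at grid points is genuinely missing for a general pointwise $f\in\mathcal{L}^{1}(\mathbb{R})$. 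The statement therefore needs either an extra hypothesis on $f$ or an $f$-adapted model of the kind you sketch.
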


\textbf{Proof}. Since $V_{c}(\mathbb{R})$ is dense in $\mathcal{L}^{1}(\mathbb{R})$,
there exists a net $f_{\lambda}\in V_{\lambda}(\mathbb{R})$ such
that
\[
\lim_{\lambda\rightarrow\Lambda}\int^{\omega_{\lambda}}_{-\omega_{\lambda}}|f_{\lambda}-f|\,dx=0.
\]

Then,
\begin{eqnarray*}
\left|\int^{\circ}f^{\circ}(x)\,dx-\int f(x)\,dx\right| & = & \left|\lim_{\lambda\uparrow\Lambda}\int^{\omega_{\lambda}}_{-\omega_{\lambda}}f_{\lambda}(x)\,dx-\lim_{\lambda\rightarrow\Lambda}\int^{\omega_{\lambda}}_{-\omega_{\lambda}}f_{\lambda}(x)\,dx\right|\\
 & \sim & \left|\lim_{\lambda\rightarrow\Lambda}\int^{\omega_{\lambda}}_{-\omega_{\lambda}}f_{\lambda}(x)\,dx-\lim_{\lambda\rightarrow\Lambda}\int^{\omega_{\lambda}}_{-\omega_{\lambda}}f_{\lambda}(x)\,dx\right|=0.
\end{eqnarray*}
Hence 
\[
\int^{\circ}f^{\circ}(x)\,dx\sim\int f(x)\,dx.
\]
$\square$

\bigskip{}

Now we assume the following axiom, which is independent of the basic
axioms but is completely natural.

\begin{axiom} \label{IA} \textbf{(Integral axiom)} We require that
\[
\forall a\in\Gamma,\qquad\int^{\circ}\chi_{a}(x)\,dx>0.
\]
\end{axiom}

Since $d(a)=\int^{\circ}\chi_{a}(x)\,dx>0$, the pointwise integral
allows us to define the scalar product

\begin{equation}
\int^{\circ}u(x)v(x)\,dx=\sum_{x\in\Gamma}u(x)v(x)d(x)\label{psc}
\end{equation}
and the norm of an ultrafunction

\[
\|u\|=\left(\sum_{a\in\Gamma}|u(a)|^{2}d(a)\right)^{1/2}=\left(\int^{\circ}|u(x)|^{2}dx\right)^{1/2}.
\]

Moreover, the integral axiom also allows the definition of the \textbf{delta
(Dirac) ultrafunction}
\begin{equation}
\delta_{a}(x)=\frac{\chi_{a}(x)}{d(a)}.\label{dirac2}
\end{equation}
As expected, we have
\begin{equation}
\int^{\circ}u(x)\delta_{a}(x)\,dx=\sum_{x\in\Gamma}u(x)\frac{\chi_{a}(x)}{d(a)}d(x)=u(a).\label{delta}
\end{equation}
By (\ref{dirac2}), the delta ultrafunctions are mutually orthogonal
with respect to the pointwise scalar product. Indeed,
\[
\int^{\circ}\delta_{a}(x)\delta_{b}(x)\,dx=\frac{1}{d(a)d(b)}\int^{\circ}\chi_{a}(x)\chi_{b}(x)\,dx=\frac{\delta^{b}_{a}}{d(a)^{2}}.
\]
Hence, if normalized, they provide an orthonormal basis, called the\textbf{delta-basis},
\[
\left\{ \frac{\delta_{a}(x)}{\sqrt{\delta_{a}(a)}}\right\} _{a\in\Gamma}=\left\{ \frac{\chi_{a}(x)}{\sqrt{d(a)}}\right\} _{a\in\Gamma}.
\]

\subsection{The generalized derivative}\label{D}

The derivative of a function $f\in C^{1}(\mathbb{R})$ will be denoted
by $\partial f$. If $f\in C^{1}(\mathbb{R})\cap V(\mathbb{R})=C^{1,1}(\mathbb{R})$,
then $\partial f\in V(\mathbb{R})$ and hence it is natural to define
the generalized derivative $D$ as follows: 
\begin{equation}
Du(x)=\lim_{\lambda\uparrow\Lambda}\partial u_{\lambda}(x_{\lambda}).\label{11}
\end{equation}

However, this restriction is too limiting for many applications: we
need a notion of \textit{generalized} derivative that includes, in
some sense, the notion of \textit{weak} derivative. In particular,
we require that $\forall u\in W^{1,1}_{loc}(\mathbb{R})\cap V(\mathbb{R})$
and $\forall v\in V(\Gamma)$ 
\begin{equation}
\int^{\circ}Du^{\circ}(x)v(x)\,dx=\lim_{\lambda\uparrow\Lambda}\int^{\omega_{\lambda}}_{-\omega_{\lambda}}\partial u_{\lambda}v_{\lambda}\,dx.\label{l2}
\end{equation}

Moreover, we would like the derivative of the Heaviside function,
$H(x):=\frac{1}{2}[sign(x)+1]$, to satisfy 
\begin{equation}
DH(x)=\delta_{0}(x).\label{13}
\end{equation}

For this reason we have assumed assumed that 
\begin{equation}
V(\mathbb{R})\subset BV_{loc}(\mathbb{R})\label{V3}
\end{equation}
This assumption allows us to define a generalized derivative satisfying
(\ref{11}) and (\ref{l2}) and which is defined for \textit{every}
ultrafunction.

We recall that $BV_{loc}(\mathbb{R})$ denotes the set of locally
bounded variation functions. It is well known that if $f\in BV_{loc}(\mathbb{R})$,
its derivative $\partial f$ is a Radon measure $\mu_{\partial f}$.
For every Radon measure $\mu$ and every Borel function $\varphi$,
we use the notation 
\begin{equation}
\langle\mu,\varphi\rangle:=\int\varphi(x)\,d\mu.\label{1923}
\end{equation}

We are therefore led to the following definition.

\begin{definition} \label{DA} The \textbf{generalized derivative}
\[
D:V(\Gamma)\rightarrow V(\Gamma)
\]
is defined as follows: for every $u\in V(\Gamma)$ and every $a\in\Gamma$
\begin{equation}
Du(a)=\lim_{\lambda\uparrow\Lambda}\langle\partial u_{\lambda},\delta_{a_{\lambda}}\rangle,\label{lillina}
\end{equation}
where $\delta_{a_{\lambda}}\in V_{\lambda}(\mathbb{R})$ is a net
of functions such that 
\[
\lim_{\lambda\uparrow\Lambda}\delta_{a_{\lambda}}=\delta_{a}.
\]
\end{definition}

If $f\in C^{1,1}_{c}(\mathbb{R})$, then $\partial f\in V(\mathbb{R})$
and we have 
\[
Df^{\circ}(a)=\lim_{\lambda\uparrow\Lambda}\langle\delta_{a_{\lambda}},\partial f\rangle=\lim_{\lambda\rightarrow\Lambda}\int^{\omega_{\lambda}}_{-\omega_{\lambda}}\partial f(x)\delta_{a_{\lambda}}\,dx=(\partial f)^{\circ}(a).
\]

Hence our definition agrees with the usual notion of derivative, at
least for functions in $C^{1,1}_{c}(\mathbb{R})$. Moreover it is
easy to check that also (\ref{l2}) and (\ref{13}) are satisfied.

It is useful to require that the generalized derivative satisfy some
other properties which cannot be deduced from the basic axioms. Thus
we assume the following axiom.

\begin{axiom} \label{AD} \textbf{(Derivative axiom)} The \textbf{generalized
derivative} satisfies the following properties:
\begin{enumerate}
\par 
\item \label{2+} for every point $a\in\Gamma$, 
\begin{equation}
\mathfrak{supp}[D\chi_{a}]\subset\mathfrak{mon}(a);\label{loc}
\end{equation}
\item \label{4} for every $u\in V(\Gamma)$ 
\begin{equation}
Du=0\Leftrightarrow u=c\mathbf{1}^{\circ},\qquad c\in\mathbb{E}.\label{A1}
\end{equation}
where $\mathbf{1}$ denotes the ultrafunction identically equal to
$1$.
\end{enumerate}
\end{axiom}

Let us now make some remarks on this axiom. If $f\in C^{1}(\mathbb{R})$,
then 
\begin{equation}
supp(\partial f)\subseteq supp(f).\label{88}
\end{equation}

However it is not possible to require this relation for all ultrafunctions.
Indeed, if it were true, we would obtain 
\[
D\delta_{a}=k\delta_{a},\qquad k\in\mathbb{E},
\]
which is clearly contradictory. Hence (\ref{loc}) can be seen as
a weak formulation of (\ref{88}) guaranteeing a form of locality
of the generalized derivative. By virtue of this requirement it follows
that 
\[
Df^{\circ}(a)=(\partial f)^{\circ}(a)
\]
not only when $f\in C^{1,1}_{c}(\mathbb{R})$, but it is sufficient
that $f\in C^{1,1}$ in a neighborhood of $a$.

As far as (\ref{A1}) is concerned, clearly the implication "$\Leftarrow$"
is an immediate consequence of Def.~\ref{DA}, whereas the implication
"$\Rightarrow$" is not trivial. Indeed, if $f\in C^{1}(\Omega)$,
"$\Rightarrow$" holds only if $\Omega$ is connected. Hence this
implication cannot be deduced from the basic axioms and must be stated
as an independent axiom.

One might also wish to require the Leibniz rule 
\[
D(uv)=Duv+uDv
\]
but it is easy to check that it cannot be satisfied by the idempotent
functions $\chi_{E}\neq\mathbf{1}^{\circ}$. Indeed, by the Leibniz
rule we would have 
\[
D\chi^{2}_{E}=2\chi_{E}D\chi_{E}
\]
and since $\chi^{2}_{E}=\chi_{E}$ we deduce 
\[
D\chi_{E}=2\chi_{E}D\chi_{E}.
\]

Hence for every $x\in E$ 
\begin{equation}
D\chi_{E}(x)=0,\label{schw}
\end{equation}
which contradicts (\ref{A1}) and any reasonable generalization of
the notion of derivative. In fact the Schwartz impossibility theorem
states that the Leibniz rule cannot be satisfied by any differential
algebra containing the continuous functions (see \cite{Schwartz},
\cite{algebra}).

Therefore, if we want a notion of generalized functions suitable for
nonlinear problems, we need an algebra of functions and hence we are
forced to renounce the Leibniz rule for \textit{all} pairs of functions.
Nevertheless, this rule is satisfied by sufficiently regular ultrafunctions
(see Sec.~\ref{RU}).

\subsection{The space of epilogic functions}\label{EF}

In this section we discuss some properties of $V(\mathbb{R})$. First
of all we equip $V(\mathbb{R})$ with the following family of seminorms:

\begin{equation}
\left\Vert f\right\Vert _{V([-n,n])}:=\left\Vert \partial f\right\Vert _{\mathfrak{M}([-n,n])}+\|f\|_{C^{0}([-n,n])}
\label{topology}
\end{equation}

where $\mathfrak{M}([-n,n])$ denotes the vector space of Radon measures
on $[-n,n]$ endowed with the seminorm

\[
\left\Vert \mu\right\Vert _{\mathfrak{M}([-n,n])}:=\sup\left\{ \frac{\langle\mu,\varphi\rangle}{\|\varphi\|_{C^{0}([-n,n])}}\ \big|\ \varphi\in C^{0}([-n,n])\right\} .
\]

Given a function $f\in BV_{loc}(\mathbb{R})$, the limits

\[
\lim_{\varepsilon\to0^{+}}f(x+\varepsilon),\qquad\lim_{\varepsilon\to0^{+}}f(x-\varepsilon)
\]

exist. Hence the operator

\[
(\cdot)^{\text{\textsc{epl}}}:BV_{loc}(\mathbb{R})\rightarrow V(\mathbb{R}),\qquad f(x)\mapsto f^{\text{\textsc{epl}}}(x)=\lim_{\varepsilon\to0^{+}}\frac{1}{2}\big[f(x+\varepsilon)+f(x-\varepsilon)\big]
\]

is well defined and the space of epilogic functions can be characterized
as

\[
V(\mathbb{R})=\left\{ f\in BV_{loc}(\mathbb{R})\ \big|\ f^{\text{\textsc{epl}}}(x)=f(x)\right\} .
\]

We now list some properties of $V(\mathbb{R})$ that will be useful
in the following.

\begin{theorem} \label{diana} The following properties hold:
\begin{enumerate}
\par 
\item \label{b2} If $f\in V(\mathbb{R})$, every point is a Lebesgue point,
namely

\[
f(x)=\lim_{\varepsilon\to0^{+}}\left(\frac{1}{\varepsilon}\int^{x+\varepsilon/2}_{x-\varepsilon/2}f(y)\,dy\right).
\]

\item \label{b3} $V(\mathbb{R})$ is a module over the ring $C^{0,1}(\mathbb{R})$,
namely $\varphi\in C^{0,1}(\mathbb{R})$ and $f\in V(\mathbb{R})$
imply $\varphi f\in V(\mathbb{R})$.
\item \label{UC} $V(\mathbb{R})$ is closed with respect to the topology
of uniform convergence.
\item \label{U4} $V(\mathbb{R})$ is complete with respect to the seminorms  (\ref{topology}).
\item \label{UR} If $f\in V(\mathbb{R})$, then it is Riemann integrable.
\item \label{US}   If $f\in V(\mathbb{R})$, then\[f(b)-f(a)=\left\langle \partial f,\chi_{[a,b]}\right\rangle \] 
\end{enumerate}
\end{theorem}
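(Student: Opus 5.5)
The plan is to treat the six items separately. Each reduces to two standard facts about $BV_{loc}(\mathbb{R})$: every $f\in BV_{loc}$ has one-sided limits $f(x^{\pm})$ at every point, and its distributional derivative is a Radon measure $\mu=\partial f$ with $\mu((a,b))=f(b^{-})-f(a^{+})$ and $\mu(\{x\})=f(x^{+})-f(x^{-})$. For $f\in V(\mathbb{R})$ the defining condition reads $f(x)=\frac12[f(x^{+})+f(x^{-})]$.

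\textbf{Items (\ref{b2}), (\ref{b3}), (\ref{UR}), (\ref{US}).}
\begin{itemize}
\item For (\ref{b2}), split $\frac1\varepsilon\int_{x-\varepsilon/2}^{x+\varepsilon/2}f$ into its left and right halves. These tend to $\frac12 f(x^{-})$ and $\frac12 f(x^{+})$ respectively, and their sum is $f(x)$ by the epilogic condition.
\item For (\ref{b3}), a locally Lipschitz $\varphi$ times a locally bounded $BV$ function is locally $BV$, since $\partial(\varphi f)=\varphi\,\partial f+f\,\partial\varphi$ locally. Because $\varphi$ is continuous, $(\varphi f)(x^{\pm})=\varphi(x)f(x^{\pm})$, so the average of the one-sided limits is $\varphi(x)f(x)$.
\item For (\ref{UR}), $f$ is bounded on every compact interval. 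Its discontinuities are countable, hence of measure zero, and Lebesgue's criterion gives Riemann integrability.
\item For (\ref{US}), $\chi_{[a,b]}$ must be read as the epilogic indicator, equal to $\frac12$ at $a$ and $b$, as is natural in this framework. Then
\[
\langle\partial f,\chi_{[a,b]}\rangle=\mu((a,b))+\tfrac12\mu(\{a\})+\tfrac12\mu(\{b\})=\tfrac12[f(b^{+})+f(b^{-})]-\tfrac12[f(a^{+})+f(a^{-})],
\]
which equals $f(b)-f(a)$ by the epilogic condition.
\end{itemize}

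\textbf{Item (\ref{UC}).} Let $f_n\in V(\mathbb{R})$ converge uniformly to $f$. I would prove that $f$ again has one-sided limits everywhere and satisfies the epilogic identity.
\begin{itemize}
\item \emph{One-sided limits.} Given $\eta>0$, choose $n$ with $\|f_n-f\|_\infty<\eta$. Since $f_n(x^{+})$ exists, the oscillation of $f$ on $(x,x+\delta)$ is at most $2\eta$ plus the oscillation of $f_n$ there, which tends to $0$ as $\delta\to0^{+}$. So $f$ satisfies the Cauchy criterion for $f(x^{+})$, and likewise for $f(x^{-})$.
\item \emph{Passing limits through.} Interchanging the uniform limit with the one-sided limits (the usual $\eta/3$ argument) gives $f(x^{\pm})=\lim_n f_n(x^{\pm})$.
\item \emph{Epilogic identity.} Letting $n\to\infty$ in $f_n(x)=\frac12[f_n(x^{+})+f_n(x^{-})]$ gives the identity for $f$.
\end{itemize}

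\textbf{Main obstacle.} The hard part is the membership $f\in BV_{loc}$. A uniform limit of $BV$ functions need not be $BV$: a uniform limit of smooth functions can be continuous and nowhere of bounded variation. So the argument above must be supplemented by a local variation bound. I would use lower semicontinuity of total variation under pointwise convergence,
\[
\mathrm{Var}_{[-m,m]}f\le\liminf_n\mathrm{Var}_{[-m,m]}f_n .
\]
This gives $f\in BV_{loc}$ whenever the sequence has locally bounded variation, which is automatic for convergence in the seminorms (\ref{topology}). Without such a bound I would state (\ref{UC}) in the form actually proved: the epilogic (regulated, symmetric-average) property is closed under uniform convergence, and the limit lies in $V(\mathbb{R})$ as soon as it is locally $BV$. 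I expect this to be the only delicate point of the theorem.

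\textbf{Item (\ref{U4}).} Let $(f_n)$ be Cauchy for the seminorms (\ref{topology}).
\begin{itemize}
\item The $C^0([-m,m])$ parts give a locally uniform limit $f$.
\item The measures $\partial f_n$ are Cauchy in total variation on each $[-m,m]$, so they converge to a Radon measure $\mu$.
\item Testing against $\varphi\in C_c^\infty$ shows $\partial f=\mu$. Hence $f\in BV_{loc}$, with the variation bound supplied automatically.
\item The epilogic property follows from the argument for (\ref{UC}), so $f\in V(\mathbb{R})$ and $\|f_n-f\|_{V([-m,m])}\to0$.
\end{itemize}
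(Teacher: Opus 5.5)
Your proof is correct, including your handling of (\ref{UC}), and on two items it is more accurate than the paper's own argument.

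\textbf{Items (\ref{b2}), (\ref{b3}), (\ref{UR}).} Here you follow essentially the paper's route. For (\ref{b3}) the paper only pulls $\varphi(x)$ out of the averages of (\ref{b2}); unlike you, it never checks that $\varphi f\in BV_{loc}$. For (\ref{UR}) it uses the Jordan decomposition into monotone functions instead of Lebesgue's criterion, which is equivalent.

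\textbf{Item (\ref{US}).} Here the routes genuinely differ. The paper approximates $f$ by $C^1$ functions $f_n$ in the seminorms (\ref{topology}) and passes to the limit in $f_n(b)-f_n(a)=\langle\partial f_n,\chi_{[a,b]}\rangle$. Such an approximation cannot exist when $f$ has a jump, since the $C^0$ part of the seminorm forces a continuous limit. Even for continuous $f$ it requires $\partial f$ to be absolutely continuous, which fails for the Cantor function. Your direct computation with $\mu((a,b))$ and the atoms at $a,b$ works for every epilogic $f$. It also shows that the endpoint weights $\frac12$ are indispensable: with the literal indicator, $f=H$, $a=0$, $b=1$ gives $f(1)-f(0)=\frac12$, while $\langle\partial H,\chi_{[0,1]}\rangle=1$. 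Your reading is the one the paper itself uses later, in the proof of Lemma \ref{lulu}, where the identity is applied with $\theta_a$. If one repairs the paper's idea with symmetric mollifiers $f_n=f*\rho_n$, then $f_n(x)\to f(x)$ at every point and $\chi_{[a,b]}*\rho_n\to\theta_{(a,b)}$ boundedly, so the limit is again your formula.

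\textbf{Item (\ref{UC}).} Your diagnosis is right: the item is false as stated. Take $g(x)=\psi(x)\,x\sin(1/x)$ with $\psi\in C^\infty_c(\mathbb{R})$ equal to $1$ near $0$. It is the uniform limit on $\mathbb{R}$ of its mollifications, which lie in $C^\infty_c(\mathbb{R})\subset V(\mathbb{R})$, yet $g\notin BV_{loc}(\mathbb{R})$. The paper's proof only shows $u_n\to u_0^{\text{\textsc{epl}}}$ uniformly, i.e.\ the epilogic identity for the limit, and never addresses membership in $BV_{loc}$. Your restricted statement is the correct one, and it is exactly what (\ref{U4}) needs.

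\textbf{Item (\ref{U4}).} The inference ``$\partial f=\mu$, hence $f\in BV_{loc}$'' concerns the pointwise representative, so it is worth saying why the variation bound is automatic. For epilogic $f_n$ your formula from (\ref{US}) gives $f_n(y)-f_n(x)=\partial f_n((x,y))+\frac12\partial f_n(\{x\})+\frac12\partial f_n(\{y\})$. Hence the pointwise variation of $f_n$ on $[-m,m]$ is at most $\|\partial f_n\|_{\mathfrak{M}([-m,m])}$, which is bounded along a Cauchy sequence. Your lower-semicontinuity inequality then applies.
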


\textbf{Proof}.

(\ref{b2}) This easily follows from the definition.

(\ref{b3}) Given $f\in V([-n,n])$ and $\varphi\in C^{0,1}([-n,n])$,
\begin{eqnarray*}
(\varphi f)^{\text{\textsc{epl}}}(x) 
& = & \lim_{\varepsilon\to0^{+}}\left(\frac{1}{\varepsilon}\int^{x+\varepsilon/2}_{x-\varepsilon/2}\varphi(y)f(y)\,dy\right)\\
 & = & \lim_{\varepsilon\to0^{+}}\left(\frac{\varphi(x)}{\varepsilon}\int^{x+\varepsilon/2}_{x-\varepsilon/2}f(y)\,dy\right)\\
 & = & \varphi(x)f^{\text{\textsc{epl}}}(x)=\varphi(x)f(x).
\end{eqnarray*}

(\ref{UC}) Let $u_{n}$ be a sequence of epilogic functions uniformly
convergent to $u_{0}$. If we choose $\varepsilon_{0}>0$ and take
$n$ sufficiently large, then

\[
\|u_{n}-u_{0}\|_{C^0}\le\varepsilon_{0}.
\]

Hence 
\begin{eqnarray*}
\|u_{n}-u^{\text{\textsc{epl}}}_{0}\|_{C^0} 
& = & \sup\left|\lim_{\varepsilon\to0^{+}}\left(\frac{1}{\varepsilon}\int^{x+\varepsilon/2}_{x-\varepsilon/2}u_{n}(y)\,dy\right)-\lim_{\varepsilon\to0^{+}}\left(\frac{1}{\varepsilon}\int^{x+\varepsilon/2}_{x-\varepsilon/2}u_{0}(y)\,dy\right)\right|\\
 & = & \sup\left|\lim_{\varepsilon\to0^{+}}\left(\frac{1}{\varepsilon}\int^{x+\varepsilon/2}_{x-\varepsilon/2}|u_{n}(y)-u_{0}(y)|\,dy\right)\right|\\
 & \le & \|u_{n}-u_{0}\|_{C^0}\le\varepsilon_{0}.
\end{eqnarray*}

Thus

\[
\lim_{n\to\infty}\|u_{n}-u^{\text{\textsc{epl}}}_{0}\|_{\sup}=0.
\]
(\ref{U4}) It follows from the completeness of $BV$  and (\ref{UC})

(\ref{UR}) Every $f\in BV_{loc}(\mathbb{R})$ is the difference of
two monotone increasing functions. The conclusion follows from the
fact that monotone functions are Riemann integrable. 

(\ref{US}) Let $\{f_{n}\}$ be a sequence of $C^{1}$-functions converging to $f$ in the norm (\ref{topology});  then we have that
\[f_{n}(b)-f_{n}(a)=\int^{b}_{a}f_{n}(x)\,dx=\left\langle \partial f_{n},\chi_{[a,b]}\right\rangle .\]
Taking the limit we get the result.

$\square$

If $f\in W^{1,\infty}_{loc}(\mathbb{R}^{N})$, its weak partial derivative
$[g]_{L^{\infty}_{loc}}\in L^{\infty}_{loc}(\mathbb{R}^{N})$, and
hence it makes sense to define $\partial f$ pointwise by setting

\begin{equation}
\partial f=g^{\text{\textsc{epl}}}.\label{poppa+}
\end{equation}

From now on $\partial_{i}f$ will denote such a function. Using this
notation, by (\ref{l2}) we have

\[
Df^{\circ}=(\partial f)^{\circ}
\]
for every $f\in W^{1,\infty}_{loc}(\mathbb{R})$ (and not only for
$f\in C^{1,1}_{loc}(\mathbb{R})$).
\bigskip{}

\textbf{Example}. If $f(x):=\max\{0,x\}$, then

\[
\partial f(x)=H(x)=\frac{1}{2}[sign(x)+1].
\]

\begin{remark} Since $\|f\|_{BV([-n,n])}=0$ does not imply $f(x)=0$,
usually one works with equivalence classes $[f]_{BV_{loc}}$. Unfortunately
this space is not suitable here since a function $f\in[f]_{BV_{loc}}$
is not pointwise defined, and hence the operator $(\cdot)^{\circ}$
is not well defined.

For example, if we want to define $\left(\frac{x}{|x|}\right)^{\circ}$
we are forced to choose an arbitrary real value at $x=0$. We have
overcome this difficulty by taking the space of \textbf{epilogic functions}
\[
V(\mathbb{R})=\{f\in BV_{loc}(\mathbb{R})\mid f^{\text{\textsc{epl}}}(x)=f(x)\}.
\]

The name comes from the Greek $\varepsilon\pi\iota\lambda o\gamma\eta$
(= ``choice''), since each function in $V(\mathbb{R})$ is a selected
representative of an equivalence class in $[f]_{BV_{loc}}$. \end{remark}

\begin{remark} $V(\mathbb{R})$ can also be characterized as the
largest space of pointwise defined functions such that

\[
V'([-n,n])\subseteq V([-n,n]).
\]

This shows that $V(\mathbb{R})$ is the largest space in which it
is possible to define a generalized derivative as in Def.~\ref{DA}.
\end{remark} 

\subsection{Regular and smooth ultrafunctions}\label{RU}

As we already remarked, the Leibniz rule does not hold for ultrafunctions
and it is not possible to define a generalized derivative which satisfies
this property. Hence it is interesting to investigate the subspaces
of ultrafunctions for which the Leibniz rule holds and, more generally,
to determine spaces in which many of the usual properties of smooth
functions are satisfied.

\begin{definition} \label{ddd} For every $m\in\mathbb{N}\cup\{\infty\}$
we set 
\begin{equation}
V^{m}(\Gamma):=\left\{ u\in V(\Gamma)\ \big|\ u_{\lambda}\in C^{m,1}(\mathbb{R})\cap\lambda\right\} .\label{pipi+}
\end{equation}

If $u\in V^{m}(\Gamma)$ we say that $u$ is \textit{$m$-regular}.
If $u\in V^{\infty}(\Gamma)$ we say that $u$ is \textit{smooth}.
\end{definition}

Let us now describe the main properties of the spaces $V^{m}(\Gamma)$.

\begin{theorem} \label{TU} The spaces of regular ultrafunctions
satisfy the following properties:
\begin{enumerate}
\par 
\item \label{U3} If $f\in C^{m,1}(\mathbb{R})$, then $f^{\circ}\in V^{m}(\Gamma)$.
\item \label{U1} If $u\in V^{m}(\Gamma)$ and $m\ge1$, then $Du\in V^{m-1}(\Gamma)$.
\item \label{U00} If $u,v\in V^{m}(\Gamma)$, then 
\[
\int^{\circ}uv\,dx=\lim_{\lambda\uparrow\Lambda}\int^{\omega_{\lambda}}_{-\omega_{\lambda}}u_{\lambda}v_{\lambda}\,dx.
\]
\item \label{U0} If $u\in V^{m}(\Gamma)$, then 
\[
Du=\lim_{\lambda\uparrow\Lambda}(\partial u_{\lambda}).
\]
\item \label{U8} If $u\in V^{1}(\Gamma)$, then 
\[
\mathfrak{supp}(Du)\subset\mathfrak{supp}(u).
\]
\item \label{U7} If $u,v\in V^{1}(\Gamma)$, then the Leibniz rule holds:
\[
D(uv)=Duv+uDv.
\]
\end{enumerate}
\end{theorem}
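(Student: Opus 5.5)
The plan is to work throughout at the level of the approximating nets. Each claim will be reduced to a classical statement about functions in $C^{m,1}(\mathbb{R})\cap V_{\lambda}(\mathbb{R})$ holding $\mathcal{U}$-eventually, and then passed to the $\Lambda$-limit using Prop.~\ref{p2}. The items will be proved in the order (\ref{U3}), (\ref{U0}), (\ref{U1}), (\ref{U00}), (\ref{U8}), (\ref{U7}), since the later ones lean on the representation $Du=\lim_{\lambda\uparrow\Lambda}\partial u_{\lambda}$.

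For (\ref{U3}), take $f\in C^{m,1}(\mathbb{R})$. By fineness, $Q[\{f\}]\in\mathcal{U}$, and on this cone $f\in V(\mathbb{R})\cap\lambda\subseteq V_{\lambda}(\mathbb{R})$. The constant net $u_{\lambda}=f$ is therefore admissible $\mathcal{U}$-eventually, which gives $f^{\circ}\in V^{m}(\Gamma)$ as in (\ref{ruben}).

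For (\ref{U0}), let $u\in V^{m}(\Gamma)$ with $u_{\lambda}\in C^{m,1}$. Then $\partial u_{\lambda}$ is an epilogic Lipschitz or $BV$ function, so the measure pairing in Def.~\ref{DA} becomes an honest integral: $\langle\partial u_{\lambda},\delta_{a_{\lambda}}\rangle=\int_{-\omega_{\lambda}}^{\omega_{\lambda}}\partial u_{\lambda}\,\delta_{a_{\lambda}}\,dx$. The key step is to show that this equals $\partial u_{\lambda}(a_{\lambda})$ $\mathcal{U}$-eventually. For that I would use (\ref{gazza}), by which products of Lipschitz elements of $\lambda$ lie in $V_{\lambda}$, so that $\partial u_{\lambda}$ belongs to (or is the $L^{2}$-projection-compatible element of) $V_{\lambda}$. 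Combined with the reproducing property (\ref{delta}) of $\delta_{a_{\lambda}}$ on $V_{\lambda}$, this gives the equality, and taking the $\Lambda$-limit yields (\ref{U0}).

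I expect this to be the main obstacle. The delta property (\ref{delta}) only reproduces elements of $V_{\lambda}$, so one must make sure that $\partial u_{\lambda}\in V_{\lambda}(\mathbb{R})$ eventually. If the hypothesis $u_{\lambda}\in\lambda$ does not directly give this, I would fall back on (\ref{gazza}) applied with one factor equal to $1$, possibly after enlarging $\lambda$ within the cone.

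Item (\ref{U1}) then follows, because $\partial u_{\lambda}\in C^{m-1,1}$. For (\ref{U00}), the product $u_{\lambda}v_{\lambda}$ of two Lipschitz functions lies in $V_{\lambda}$ by (\ref{gazza}), so it is an admissible net representing $uv$. Definition~\ref{DI} then gives the formula directly.

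For (\ref{U8}), take $a$ with $u(a)=0$ on a neighbourhood. Here I would argue classically, using $\mathrm{supp}\,\partial u_{\lambda}\subseteq\mathrm{supp}\,u_{\lambda}$ together with (\ref{U0}), and transfer through the $\Lambda$-limit.

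Finally, for (\ref{U7}), $u_{\lambda}v_{\lambda}\in C^{1,1}\cap V_{\lambda}$ represents $uv$. By (\ref{U0}) applied to this net, $D(uv)=\lim_{\lambda\uparrow\Lambda}\partial(u_{\lambda}v_{\lambda})=\lim_{\lambda\uparrow\Lambda}(\partial u_{\lambda}v_{\lambda}+u_{\lambda}\partial v_{\lambda})$. The classical Leibniz rule holds for Lipschitz functions with epilogic derivatives, since $\partial u_{\lambda}$ is continuous for $C^{1,1}$ functions. Prop.~\ref{p2}(\ref{EU2}) then splits the limit into $Du\,v+u\,Dv$.
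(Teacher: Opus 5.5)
The gap is in item~(\ref{U0}), on which your items~(\ref{U1}) and~(\ref{U7}) also rest. You need $\langle\partial u_{\lambda},\delta_{a_{\lambda}}\rangle=\partial u_{\lambda}(a_{\lambda})$ $\mathcal{U}$-eventually, and you take it from ``the reproducing property (\ref{delta}) of $\delta_{a_\lambda}$ on $V_\lambda$''. No such property is available. In this paper $\delta_a=\chi_a/d(a)$ is defined through its grid values, not as an $L^{2}$ reproducing kernel of $V_\lambda(\mathbb{R})$. Identity (\ref{delta}) concerns the pointwise integral, $\int^{\circ}w\,\delta_a\,dx=\sum_{x\in\Gamma}w(x)\delta_a(x)d(x)=w(a)$, and it holds trivially because $\delta_a$ vanishes on $\Gamma\setminus\{a\}$. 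It says nothing about the Lebesgue integral $\int^{\omega_\lambda}_{-\omega_\lambda}w_\lambda\,\delta_{a_\lambda}\,dx$. The two agree only if $w_\lambda\delta_{a_\lambda}$ is itself an admissible net for $w\delta_a$, i.e.\ $w_\lambda\delta_{a_\lambda}\in V_\lambda(\mathbb{R})$; this is the mechanism behind item~(\ref{U00}). The only hypothesis that places products in $V_\lambda(\mathbb{R})$ is (\ref{gazza}), and it covers only products of two Lipschitz functions lying in $\lambda$, which is not the situation of $\delta_{a_\lambda}$. So the obstacle you single out, $\partial u_\lambda\in V_\lambda(\mathbb{R})$, is not the real one. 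Even granting it, the pointwise identity does not follow, and in the model of Section~\ref{CSU} it is false. There, for $a\in\Gamma_Z$, one has $\delta_a=\theta_a/\eta$, so the pairing is the average of $\partial u$ over the cell of length $\eta$ carrying $\theta_a$ (this is the computation in the proof of Lemma~\ref{lulu}). If $\partial u(x)=x^{2}$ on that cell and $a$ is its midpoint, the pairing equals $a^{2}+\eta^{2}/12\neq\partial u(a)$.

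The paper does not evaluate the pairing against $\delta_{a_\lambda}$ pointwise; it argues by duality. From (\ref{lillina}) it gets $\int^{\circ}Du\,v\,dx=\lim_{\lambda\uparrow\Lambda}\int^{\omega_\lambda}_{-\omega_\lambda}\partial u_\lambda v_\lambda\,dx$. For $v\in V^{0}(\Gamma)$ it treats both $\partial u_\lambda$ and $v_\lambda$ as Lipschitz elements of $\lambda$, so (\ref{gazza}) puts $\partial u_\lambda v_\lambda$ in $V_\lambda(\mathbb{R})$. Def.~\ref{DI} then converts the right-hand side into $\int^{\circ}(\lim_{\lambda\uparrow\Lambda}\partial u_\lambda)\,v\,dx$, and item~(\ref{U0}) is read off from this identity. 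In other words, (\ref{gazza}) is only ever applied to products of Lipschitz nets, never to a product with $\delta_{a_\lambda}$.

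Two caveats remain. First, the paper's final read-off tacitly requires $V^{0}(\Gamma)$ to separate ultrafunctions, which it does not check; the computation above shows this is exactly where the difficulty sits. Second, both routes need $\partial u_\lambda\in C^{0,1}(\mathbb{R})\cap\lambda$, which forces $m\ge1$ and does not follow from $u_\lambda\in\lambda$. ``Enlarging $\lambda$ within the cone'' cannot supply it, since $\partial u_\lambda$ changes with $\lambda$. Apart from their dependence on item~(\ref{U0}), your arguments for items~(\ref{U3}), (\ref{U00}), (\ref{U8}) and (\ref{U7}) coincide with the paper's.
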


\textbf{Proof}.

(\ref{U3}) It follows directly from (\ref{pipi+}).

(\ref{U1}) Trivial.

(\ref{U00}) If $u,v\in V^{m}(\Gamma)$ then $u_{\lambda},v_{\lambda}\in C^{0,1}(\mathbb{R})\cap\lambda$
and by (\ref{gazza}) we have $u_{\lambda}v_{\lambda}\in V_{\lambda}(\mathbb{R})$.
Hence, by Def.~\ref{DI},

\[
\lim_{\lambda\uparrow\Lambda}\int^{\omega_{\lambda}}_{-\omega_{\lambda}}u_{\lambda}v_{\lambda}\,dx=\int^{\circ}uv\,dx.
\]

(\ref{U0}) By (\ref{lillina}), for every $v\in V^{0}(\Gamma)$,

\[
\int^{\circ}Du(x)v(x)\,dx=\lim_{\lambda\uparrow\Lambda}\int^{\omega_{\lambda}}_{-\omega_{\lambda}}\partial u_{\lambda}v_{\lambda}\,dx.
\]

Moreover, since $\partial u_{\lambda},v_{\lambda}\in C^{0,1}(\mathbb{R})\cap\lambda$,
by (\ref{gazza}) we have $\partial u_{\lambda}v_{\lambda}\in V_{\lambda}(\mathbb{R})$.
Hence

\[
\int^{\circ}Du(x)v(x)\,dx=\int^{\circ}\left(\lim_{\lambda\uparrow\Lambda}\partial u_{\lambda}(x)v_{\lambda}(x)\right)dx=\int^{\circ}\left(\lim_{\lambda\uparrow\Lambda}\partial u_{\lambda}(x)\right)v(x)\,dx.
\]

From this the conclusion follows.

(\ref{U8}) If $x\in\mathfrak{supp}(Du)$, by (\ref{U0}), $\mathcal{U}$-eventually
$x_{\lambda}\in supp(\partial u_{\lambda})$ and hence $x_{\lambda}\in supp(u_{\lambda})$.
Therefore $x\in\mathfrak{supp}(u)$.

(\ref{U7}) By (\ref{gazza}), $u_{\lambda}v_{\lambda}\in V_{\lambda}(\mathbb{R})$.
Then by (\ref{U0})

\begin{eqnarray*}
D(uv) & = & \lim_{\lambda\uparrow\Lambda}\left[\partial(u_{\lambda}v_{\lambda})\right]\\
 & = & \lim_{\lambda\uparrow\Lambda}\left[\partial u_{\lambda}v_{\lambda}+u_{\lambda}\partial v_{\lambda}\right]\\
 & = & \lim_{\lambda\uparrow\Lambda}(\partial u_{\lambda})\,v+u\,\lim_{\lambda\uparrow\Lambda}(\partial v_{\lambda})\\
 & = & Duv+uDv.
\end{eqnarray*}

$\square$

\bigskip{}

A special role is played by the space of smooth ultrafunctions $V^{\infty}(\Gamma)$;
in fact, as we will see in Section \ref{ud}, it contains a ``copy''
of every distribution.

\begin{remark} \label{fss} It is possible to define different types
of regular ultrafunctions. Namely we may choose different subspaces
of $V(\Gamma)$ satisfying suitable conditions. For example we can
consider

\[
W^{m}(\Gamma^{N})=\left\{ u\in V(\Gamma)\ \big|\ u_{\lambda}\in W^{m}(\mathbb{R})\cap V_{\lambda}(\mathbb{R})\right\} .
\]

We have $V^{m}(\Gamma)\subset W^{m}(\Gamma)$ and hence the functions
in $W^{m}(\Gamma)$ satisfy fewer properties. Of course the choice
of a particular space depends on the problems we want to study. An
analogy can be made with the theory of distributions: in that case
the spaces $C^{m}$ and the Sobolev spaces $W^{m,p}$ can be viewed
as subspaces of $\mathcal{D}'$ exhibiting different kinds of regularity.
\end{remark}

\subsection{The interior part of the grid}

If $f,g\in C^{1}_{c}(\mathbb{R})$, then 
\begin{equation}
\int\partial f\,g\,dx=-\int f\,\partial g\,dx.\label{A2+}
\end{equation}

This equality is of primary importance in the theory of weak derivatives,
distributions, the calculus of variations, etc. Usually (\ref{A2+})
is deduced from the Leibniz rule 
\[
\partial(fg)=\partial f\,g+f\,\partial g.
\]

However, it is inconsistent to assume that the Leibniz rule holds
for every pair of ultrafunctions (see the discussion at the end of
Section \ref{RU}). Hence we need to investigate when an analogue
of (\ref{A2+}) holds for ultrafunctions which are not regular. Namely,
we need to translate the expression ``$f$ has compact support''
into the framework of ultrafunctions.

To this aim, it is useful to define the \textbf{interior part of the
grid} as follows:

\begin{equation}
\Gamma_{\mathfrak{int}}=\left\{ a\in\Gamma\ \big|\{a\}\cup\ supp(\partial\delta_{a_{\lambda}})\subset(-\omega_{\lambda},\omega_{\lambda})\right\} .\label{bd}
\end{equation}

\begin{lemma} If $a,b\in\Gamma_{\mathfrak{int}}$, then 
\[
\int^{\circ}D\delta_{a}(x)\delta_{b}(x)\,dx=-\int^{\circ}D\delta_{b}(x)\delta_{a}(x)\,dx.
\]
\end{lemma}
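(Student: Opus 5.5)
We reduce both sides to $\Lambda$-limits of classical pairings on $[-\omega_\lambda,\omega_\lambda]$, and then apply the integration-by-parts formula of Theorem \ref{diana}-(\ref{US}) to a product of two epilogic functions.

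\textbf{Step 1: express each side as a pairing.} By definition of the generalized derivative (Def.~\ref{DA}) and by (\ref{delta}),
\[
\int^{\circ}D\delta_{a}(x)\delta_{b}(x)\,dx=D\delta_{a}(b)=\lim_{\lambda\uparrow\Lambda}\langle\partial\delta_{a_{\lambda}},\delta_{b_{\lambda}}\rangle .
\]
Symmetrically,
\[
\int^{\circ}D\delta_{b}(x)\delta_{a}(x)\,dx=\lim_{\lambda\uparrow\Lambda}\langle\partial\delta_{b_{\lambda}},\delta_{a_{\lambda}}\rangle .
\]
Here $\delta_{a_\lambda},\delta_{b_\lambda}\in V_\lambda(\mathbb{R})$ are the representing nets. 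By Prop.~\ref{p2}, it therefore suffices to show that $\mathcal{U}$-eventually
\[
\langle\partial\delta_{a_{\lambda}},\delta_{b_{\lambda}}\rangle+\langle\partial\delta_{b_{\lambda}},\delta_{a_{\lambda}}\rangle=0 .
\]

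\textbf{Step 2: integration by parts for epilogic functions.} For $f,g\in V(\mathbb{R})$ we use the product rule for BV functions in the form
\[
\partial(fg)=f^{\textsc{epl}}\,\partial g+g^{\textsc{epl}}\,\partial f=f\,\partial g+g\,\partial f ,
\]
as measures. This rule holds precisely because the precise representative is the average of the one-sided limits, and epilogic functions coincide with that average. To justify it, approximate $f,g$ by $C^1$ functions in the seminorms (\ref{topology}), via Theorem \ref{diana}-(\ref{U4}), and check the jump atoms by hand: at a jump point $x_0$ the atom of $\partial(fg)$ is $f^+g^+-f^-g^-$, which equals $\tfrac12(f^++f^-)(g^+-g^-)+\tfrac12(g^++g^-)(f^+-f^-)$. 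Then Theorem \ref{diana}-(\ref{US}) applied to $fg$ gives
\[
\langle\partial f,g\rangle+\langle\partial g,f\rangle=(fg)(\beta)-(fg)(\alpha)
\]
when the pairings are taken on $[\alpha,\beta]$. The endpoint contributions require care about whether atoms at the endpoints are counted; we handle this by choosing $\alpha,\beta$ outside the supports.

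\textbf{Step 3: use the interior hypothesis.} Since $a,b\in\Gamma_{\mathfrak{int}}$, $\mathcal{U}$-eventually $\mathrm{supp}(\partial\delta_{a_\lambda})$ and $\mathrm{supp}(\partial\delta_{b_\lambda})$ lie in $(-\omega_\lambda,\omega_\lambda)$. Hence each pairing is unchanged if it is computed on a slightly smaller interval $[\alpha_\lambda,\beta_\lambda]\subset(-\omega_\lambda,\omega_\lambda)$ that contains both supports in its interior.

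We then need the boundary term $(\delta_{a_\lambda}\delta_{b_\lambda})(\beta_\lambda)-(\delta_{a_\lambda}\delta_{b_\lambda})(\alpha_\lambda)$ to vanish. Outside the support of $\partial\delta_{a_\lambda}$, the function $\delta_{a_\lambda}$ is constant on each component, so we must rule out nonzero constants at the ends. The main obstacle is this step, showing that $\delta_{a_\lambda}$ vanishes near $\pm\omega_\lambda$. We would argue as follows. The ultrafunction $\delta_a$ vanishes at all grid points other than $a$, so the net satisfies $\delta_{a_\lambda}(x_\lambda)=0$ eventually at grid points $x_\lambda\neq a_\lambda$. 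Since $\delta_{a_\lambda}$ is constant on $[\beta_\lambda,\omega_\lambda]$ and $\omega_\lambda=\max\Gamma_\lambda$ is a grid point different from $a_\lambda$ (as $a_\lambda<\omega_\lambda$ by the interior condition), that constant is $0$; similarly on the left. Hence the boundary term vanishes, the sum in Step 1 is eventually zero, and taking $\Lambda$-limits yields the lemma.
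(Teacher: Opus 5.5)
Your proposal is correct and follows the paper's own route: both sides are rewritten as $\Lambda$-limits of $\langle\partial\delta_{a_\lambda},\delta_{b_\lambda}\rangle$ and $\langle\partial\delta_{b_\lambda},\delta_{a_\lambda}\rangle$, and these are eventually opposite by integration by parts because everything is supported in $(-\omega_\lambda,\omega_\lambda)$. Your argument that $\delta_{a_\lambda}$ vanishes near $\pm\omega_\lambda$ (because $\delta_a(\pm\omega)=0$) also proves the claim $\mathrm{supp}(\delta_{a_\lambda})\subset(-\omega_\lambda,\omega_\lambda)$, which the paper only asserts.

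The one step to repair is your justification of the product rule $\partial(fg)=f\,\partial g+g\,\partial f$. $C^1$ functions are not dense in $V(\mathbb{R})$ for the seminorms (\ref{topology}), since these contain the sup norm, and Theorem \ref{diana}-(\ref{U4}) is only a completeness statement. Cite instead the standard Lebesgue--Stieltjes integration-by-parts formula, whose atomic term is exactly what your jump computation accounts for.
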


\textbf{Proof}. If $a,b\in\Gamma_{\mathfrak{int}}$, then $supp(\delta_{a_{\lambda}})\;\text{and}\;supp(\partial\delta_{a_{\lambda}})\subset(-\omega_{\lambda},\omega_{\lambda})$
and hence
\begin{eqnarray*}
\int^{\circ}D\delta_{a}(x)\delta_{b}(x)\,dx & = & D\delta_{a}(b)\\
 & = & \lim_{\lambda\uparrow\Lambda}\langle\partial\delta_{b_{\lambda}},\delta_{a_{\lambda}}\rangle\\
 & = & -\lim_{\lambda\uparrow\Lambda}\langle\partial\delta_{a_{\lambda}},\delta_{b_{\lambda}}\rangle\\
 & = & -D\delta_{b}(a)\\
 & = & -\int^{\circ}D\delta_{b}(x)\delta_{a}(x)\,dx.
\end{eqnarray*}

$\square$

\bigskip{}

An immediate consequence of the above lemma is the following result.

\begin{proposition} If $a\in\Gamma_{\mathfrak{int}}$, then 
\begin{equation}
D\delta_{a}(a)=0,\qquad D^{2}\delta_{a}(a)<0.\label{Dd}
\end{equation}
\end{proposition}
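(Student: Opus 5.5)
The first identity is the lemma applied with $b=a$. The second comes from reading $D$ as a skew-symmetric operator on the span of interior deltas, so that $-D^{2}$ becomes the Gram quantity $\int^{\circ}|D\delta_{a}|^{2}dx$.

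For the first identity, take $b=a\in\Gamma_{\mathfrak{int}}$ in the preceding lemma:
\[
\int^{\circ}D\delta_{a}(x)\delta_{a}(x)\,dx=-\int^{\circ}D\delta_{a}(x)\delta_{a}(x)\,dx ,
\]
so this quantity is $0$. By (\ref{delta}) the left-hand side equals $D\delta_{a}(a)$, which gives $D\delta_{a}(a)=0$.

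For the second identity, first write $D^{2}\delta_{a}(a)=\int^{\circ}D(D\delta_{a})(x)\,\delta_{a}(x)\,dx$, using (\ref{delta}) again. Next expand $D\delta_{a}$ in the delta basis via (\ref{pu}):
\[
D\delta_{a}=\sum_{b\in\Gamma}D\delta_{a}(b)\,d(b)\,\delta_{b}.
\]
By linearity of $D$ and of the hyperfinite sum,
\[
D^{2}\delta_{a}(a)=\sum_{b\in\Gamma}D\delta_{a}(b)\,d(b)\,D\delta_{b}(a).
\]
The lemma gives $D\delta_{b}(a)=-D\delta_{a}(b)$ whenever $b\in\Gamma_{\mathfrak{int}}$. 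Substituting this,
\[
D^{2}\delta_{a}(a)=-\sum_{b}|D\delta_{a}(b)|^{2}d(b)=-\int^{\circ}|D\delta_{a}|^{2}dx\le0 ,
\]
where the inequality uses the integral axiom $d(b)>0$.

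The sign substitution needs $b\in\Gamma_{\mathfrak{int}}$ for every $b$ with $D\delta_{a}(b)\neq0$. I would get this from the derivative axiom (\ref{loc}): $D\delta_{a}$ is supported in $\mathfrak{mon}(a)$. I would then argue that points of $\mathfrak{mon}(a)$ with $a$ interior are themselves interior. More directly, I would go back to the net level: $D\delta_{a}(b)=\lim\langle\partial\delta_{a_{\lambda}},\delta_{b_{\lambda}}\rangle$, and the condition $supp(\partial\delta_{a_{\lambda}})\subset(-\omega_{\lambda},\omega_{\lambda})$ is exactly what makes the integration by parts in the lemma work with only one of the two points interior. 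This boundary bookkeeping is the main obstacle. If it fails, I would restrict the sum to $b\in\Gamma_{\mathfrak{int}}$ and show the remaining terms vanish.

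Strict negativity then reduces to $D\delta_{a}\neq0$. I would get this from the derivative axiom (\ref{A1}): if $D\delta_{a}=0$, then $\delta_{a}=c\mathbf{1}^{\circ}$. That is impossible, since $\delta_{a}$ vanishes off $a$ while $\Gamma$ has more than one point, and $\delta_{a}\neq0$ rules out $c=0$. Hence $\int^{\circ}|D\delta_{a}|^{2}dx>0$, again by the integral axiom, and therefore $D^{2}\delta_{a}(a)<0$.
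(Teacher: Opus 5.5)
Your argument is the paper's. The lemma with $b=a$ together with (\ref{delta}) gives $D\delta_a(a)=0$, and your termwise expansion in the delta basis is exactly the integration by parts $\int^{\circ}D^{2}\delta_{a}\,\delta_{a}\,dx=-\int^{\circ}|D\delta_{a}|^{2}dx$ that the paper writes in one line. You also supply two points the paper leaves implicit: that $D\delta_a$ need not lie in $V_0(\Gamma)$, and that $D\delta_a\neq0$. Your use of (\ref{A1}) to get this strictness is correct.

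Of your three options for the boundary issue, only the net-level one is sound. Integration by parts over $[-\omega_\lambda,\omega_\lambda]$ needs only the factor $\delta_{a_\lambda}$ to vanish near $\pm\omega_\lambda$, and $a\in\Gamma_{\mathfrak{int}}$ guarantees this. The claim that points of $\mathfrak{mon}(a)$ are interior fails for interior $a\sim\omega$, since $\omega\notin\Gamma_{\mathfrak{int}}$. The fallback of showing the non-interior terms vanish fails too: with step functions as in Section~\ref{CSU}, one finds $\langle\partial\theta_{\omega-\eta},\theta_{\omega}\rangle=-\frac{1}{2}$, so $D\delta_{\omega-\eta}(\omega)\neq0$.
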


\textbf{Proof}. By (\ref{delta}) we have

\[
D\delta_{a}(a)=\int^{\circ}D\delta_{a}(x)\delta_{a}(x)\,dx.
\]

On the other hand, by the previous lemma,

\[
\int^{\circ}D\delta_{a}(x)\delta_{a}(x)\,dx=-\int^{\circ}\delta_{a}(x)D\delta_{a}(x)\,dx=-\int^{\circ}D\delta_{a}(x)\delta_{a}(x)\,dx=0.
\]

Moreover,

\[
D^{2}\delta_{a}(a)=\int^{\circ}D^{2}\delta_{a}(x)\delta_{a}(x)\,dx=-\int^{\circ}[D\delta_{a}(x)]^{2}\,dx<0.
\]

$\square$

\bigskip{}

We now set\footnote{ Here we introduce the notation $\mathfrak{span}$ meaning that every
element in $\mathfrak{span}(X)$ is a hyperfinite sum (see (\ref{sum}))
of elements of $X$ (and not a finite sum). }

\begin{equation}
V_{0}(\Gamma)=\mathfrak{span}\left\{ \delta_{a}\mid a\in\Gamma_{\mathfrak{int}}\right\} .\label{V0}
\end{equation}

\begin{theorem} \label{ilona} If $u,v\in V_{0}(\Gamma)$, then 
\[
\int^{\circ}Du(x)v(x)\,dx=-\int^{\circ}u(x)Dv(x)\,dx.
\]
\end{theorem}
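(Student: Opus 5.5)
The plan is to reduce the statement to the preceding Lemma by bilinearity, using the representation of elements of $V_{0}(\Gamma)$ as hyperfinite combinations of delta ultrafunctions. By (\ref{V0}), write
\[
u=\sum_{a\in\Gamma_{\mathfrak{int}}}\alpha_{a}\delta_{a},\qquad v=\sum_{b\in\Gamma_{\mathfrak{int}}}\beta_{b}\delta_{b},\qquad\alpha_{a},\beta_{b}\in\mathbb{E},
\]
where the sums are hyperfinite in the sense of (\ref{sum}). In fact, by (\ref{delta}) the coefficients are simply $\alpha_{a}=u(a)d(a)$ and $\beta_{b}=v(b)d(b)$.

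\textbf{Step 1 (linearity of $D$ and of the integral over hyperfinite sums).} I first check that $D$ is $\mathbb{E}$-linear and commutes with hyperfinite sums. At the level of nets, for fixed $\lambda$ the sum is finite, and $\langle\partial(\cdot),\delta_{a_{\lambda}}\rangle$ in Definition~\ref{DA} is linear. Passing to the $\Lambda$-limit via Proposition~\ref{p2}-(\ref{EU2}) then gives $Du=\sum_{a}\alpha_{a}D\delta_{a}$. The same argument applies to $\int^{\circ}$, using Definition~\ref{DI}, or more simply the representation (\ref{int}), which is manifestly a hyperfinite linear functional. Combining the two, I obtain
\[
\int^{\circ}Du\,v\,dx=\sum_{a,b\in\Gamma_{\mathfrak{int}}}\alpha_{a}\beta_{b}\int^{\circ}D\delta_{a}\,\delta_{b}\,dx .
\]

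\textbf{Step 2 (apply the Lemma termwise).} Since $a,b\in\Gamma_{\mathfrak{int}}$, the preceding Lemma gives
\[
\int^{\circ}D\delta_{a}\,\delta_{b}\,dx=-\int^{\circ}D\delta_{b}\,\delta_{a}\,dx .
\]
Substituting this into the double sum and reassembling, again by hyperfinite linearity, yields
\[
-\sum_{b}\beta_{b}\int^{\circ}u\,D\delta_{b}\,dx=-\int^{\circ}u\,Dv\,dx,
\]
which is the claim. It is cleanest to carry out this rearrangement of the double hyperfinite sum at the level of $\lambda$, where it is a finite double sum, and then take the $\Lambda$-limit.

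\textbf{Main obstacle.} The delicate point is Step 1, namely justifying that $D$ and $\int^{\circ}$ pass through a hyperfinite (not finite) sum with $\mathbb{E}$-valued coefficients. For this I will choose representing nets $\alpha_{a_{\lambda}}$, $\beta_{b_{\lambda}}$ and $\delta_{a_{\lambda}}\in V_{\lambda}(\mathbb{R})$, so that $u_{\lambda}=\sum_{a_{\lambda}\in\Gamma_{\mathfrak{int},\lambda}}\alpha_{a_{\lambda}}\delta_{a_{\lambda}}$. All identities then become finite-sum identities for each $\lambda$, and internal definitions, i.e.\ Proposition~\ref{p2}, transfer them to the limit. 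I also need $\Gamma_{\mathfrak{int}}$ to be an internal (hyperfinite) set. This holds because it is defined by (\ref{bd}) through $\lambda$-wise conditions, so it is the $\Lambda$-limit of the finite sets $\Gamma_{\mathfrak{int},\lambda}$.
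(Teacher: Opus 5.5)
Your proposal is correct and follows the paper's proof: you expand $u$ and $v$ bilinearly over grid points in $\Gamma_{\mathfrak{int}}$, apply the antisymmetry lemma term by term, and reassemble. The only difference is cosmetic. The paper rescales the lemma to the $\chi_a$ and sums over all of $\Gamma$, tacitly using that $u(a)v(b)=0$ unless $a,b\in\Gamma_{\mathfrak{int}}$; your expansion in the $\delta_a$ with $a\in\Gamma_{\mathfrak{int}}$ makes that restriction explicit.
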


\textbf{Proof}. If $a,b\in\Gamma_{\mathfrak{int}}$, by the previous
lemma and (\ref{dirac2}) we have

\begin{eqnarray*}
\int^{\circ}D\chi_{b}(x)\chi_{a}(x)\,dx & = & \frac{1}{\delta_{a}(a)\delta_{b}(b)}\int^{\circ}D\delta_{a}(x)\delta_{b}(x)\,dx\\
 & = & -\frac{1}{\delta_{a}(a)\delta_{b}(b)}\int^{\circ}D\delta_{b}(x)\delta_{a}(x)\,dx\\
 & = & -\int^{\circ}D\chi_{a}(x)\chi_{b}(x)\,dx.
\end{eqnarray*}

By (\ref{pu}) we have

\begin{eqnarray*}
\int^{\circ}Du(x)v(x)\,dx & = & \int^{\circ}D\left(\sum_{a\in\Gamma}u(a)\chi_{a}(x)\right)\left(\sum_{b\in\Gamma}v(b)\chi_{b}(x)\right)dx\\
 & = & \int^{\circ}\left(\sum_{a\in\Gamma}u(a)D\chi_{a}(x)\right)\left(\sum_{b\in\Gamma}v(b)\chi_{b}(x)\right)dx\\
 & = & \sum_{a,b\in\Gamma}u(a)v(b)\int^{\circ}D\chi_{a}(x)\chi_{b}(x)\,dx\\
 & = & -\sum_{a,b\in\Gamma}u(a)v(b)\int^{\circ}\chi_{a}(x)D\chi_{b}(x)\,dx\\
 & = & -\int^{\circ}u(x)Dv(x)\,dx.
\end{eqnarray*}

$\square$

\subsection{The antiderivative}

We now investigate the problem of the antiderivative of an ultrafunction.
Since $\ker D=span\{\mathbf{1}^{\circ}\}$, the operator $D$ is not
invertible and the equation 
\[
DF=f,\qquad f\in V(\Gamma)
\]
in general does not admit a solution. However we have the following
result.

\begin{theorem} \label{TT} Let 
\[
D_{0}:V_{0}(\Gamma)\rightarrow V_{0}(\Gamma)
\]
denote the generalized derivative restricted to $V_{0}(\Gamma)$,
namely 
\[
D_{0}u(x)=\sum_{a\in\Gamma_{\mathfrak{int}}}Du(a)\chi_{a}(x).
\]
Then $D_{0}$ is invertible. \end{theorem}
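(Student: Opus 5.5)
Since $V_{0}(\Gamma)$ is a hyperfinite-dimensional space, spanned by $\{\chi_{a}\}_{a\in\Gamma_{\mathfrak{int}}}$, I would reduce invertibility of $D_{0}$ to injectivity. The reduction goes through the $\Lambda$-limit. For $\mathcal{U}$-almost every $\lambda$, the operator $D_{0}$ is the limit of a linear map $D_{0,\lambda}$ on the finite-dimensional space
\[
V_{0,\lambda}=\mathrm{span}\{\sigma_{a_{\lambda}}\mid a_{\lambda}\in\Gamma_{\mathfrak{int},\lambda}\},
\]
given by the matrix $\left(\langle\partial\sigma_{a_{\lambda}},\delta_{b_{\lambda}}\rangle\right)$ with $a,b$ ranging over the interior points. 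If $D_{0}$ is injective, then by transfer through Prop.~\ref{p2} the maps $D_{0,\lambda}$ are $\mathcal{U}$-eventually injective. Being injective endomorphisms of finite-dimensional spaces, they are bijective, and the $\Lambda$-limit of their inverses is an inverse of $D_{0}$. Equivalently, one can argue directly that the hyperfinite square matrix $M_{ab}=D\chi_{a}(b)$, $a,b\in\Gamma_{\mathfrak{int}}$, has nonzero determinant, since the determinant is a $\Lambda$-limit of finite determinants.

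The real content is therefore injectivity: if $u\in V_{0}(\Gamma)$ and $D_{0}u=0$, then $u=0$. Here I would first use the skew-symmetry of Theorem~\ref{ilona}. For $u,v\in V_{0}(\Gamma)$ we have
\[
\int^{\circ}D_{0}u\,v\,dx=\int^{\circ}Du\,v\,dx,
\]
because $v$ is supported in $\Gamma_{\mathfrak{int}}$ and the integral is the weighted sum (\ref{int}). Hence $D_{0}$ is skew-adjoint with respect to the scalar product (\ref{psc}) on $V_{0}(\Gamma)$.

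Suppose $D_{0}u=0$, that is, $Du(a)=0$ for all $a\in\Gamma_{\mathfrak{int}}$. I want to show that $Du=0$ on all of $\Gamma$. Then Axiom~\ref{AD}-(\ref{4}) gives $u=c\mathbf{1}^{\circ}$, and since $u$ vanishes at points of $\Gamma\setminus\Gamma_{\mathfrak{int}}$ (for instance at $\pm\omega$, which are not interior), we get $c=0$ and so $u=0$.

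The obstacle is controlling $Du$ at the boundary points $\Gamma\setminus\Gamma_{\mathfrak{int}}$, which lie near $\pm\omega$. Two facts are available. First, $u$ is supported in $\Gamma_{\mathfrak{int}}$. Second, by the locality axiom (\ref{loc}), $Du(b)$ only receives contributions $u(a)D\chi_{a}(b)$ with $a\sim b$. So only interior points infinitely close to the edge of the grid matter.

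I would try to avoid this obstacle rather than compute at the edge. The idea is to work at level $\lambda$, where the question becomes a classical one. A function $u_{\lambda}\in V_{\lambda}(\mathbb{R})$ whose support, together with the support of its derivative, lies in $(-\omega_{\lambda},\omega_{\lambda})$ is compactly supported there. The condition $D_{0}u=0$ says that $\partial u_{\lambda}$ is orthogonal to all $\delta_{b_{\lambda}}$ with $b_{\lambda}$ interior. I would use this to show that $\partial u_{\lambda}$, as a measure, pairs to zero with every element of $V_{0,\lambda}$, and in particular with $u_{\lambda}$ itself and with approximations of $\chi_{[-\omega_{\lambda},x]}$. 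Then the identity
\[
f(b)-f(a)=\langle\partial f,\chi_{[a,b]}\rangle
\]
of Theorem~\ref{diana}-(\ref{US}) would force $u_{\lambda}$ to be constant, and the compact support would force that constant to be zero.

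If this does not close cleanly, the fallback is to use the explicit grid model of Section~\ref{CSU}. There the matrix $D\chi_{a}(b)$ is a difference operator, essentially a centered difference, and one checks directly that a discrete function vanishing outside the interior with vanishing discrete derivative at all interior points must vanish identically.
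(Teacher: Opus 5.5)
The gap is exactly the step you flag: going from $Du(a)=0$ for $a\in\Gamma_{\mathfrak{int}}$ to $Du=0$ at the points of $\Gamma\setminus\Gamma_{\mathfrak{int}}$. Your level-$\lambda$ argument does not supply it.

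\begin{itemize}
\item The hypothesis $D_0u=0$ only says that the measure $\partial u_\lambda$ annihilates the finite-dimensional space $V_{0,\lambda}$.
\item Pairing with $u_\lambda$ itself gives nothing new: $\langle\partial u_\lambda,u_\lambda\rangle=0$ for every compactly supported epilogic function, by the same integration by parts behind Theorem~\ref{ilona}.
\item $\chi_{[-\omega_\lambda,x]}$ is not in $V_{0,\lambda}$. It is not a limit of elements of it either, since a finite-dimensional subspace is closed. So Theorem~\ref{diana}-(\ref{US}) has nothing to act on.
\end{itemize}

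The fallback also fails, because the claim it would verify is false. For the pure step functions $\theta_a$ of Section~\ref{CSU} your description is exact. Since $\theta_a$ equals $\frac12$ at the edges of its cell, $\frac1\eta\langle\partial u,\theta_a\rangle=\frac{u(a^+)-u(a^-)}{2\eta}$, where $a^\pm$ are the neighbours of $a$ in $\Gamma$ and $u:=0$ beyond $\pm\omega$. Here $\Gamma_{\mathfrak{int}}=\Gamma\setminus\{\pm\omega\}$ has $2\beta\omega-1$ points, an odd number. Take the checkerboard $u$ equal to $1$ at the first, third, fifth, \dots\ point of $\Gamma_{\mathfrak{int}}$ (counting from $-\omega$) and $0$ elsewhere. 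It lies in $V_0(\Gamma)$ and satisfies $D_0u=0$, yet $Du(\pm\omega)=\mp\frac{1}{2\eta}\neq0$.

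This is structural, not an artifact of the example. You derived skew-adjointness from Theorem~\ref{ilona}. It means that in the orthonormal basis $\{\chi_a/\sqrt{d(a)}\}_{a\in\Gamma_{\mathfrak{int}}}$, $D_0$ is represented by a hyperfinite skew-symmetric matrix. Hence $\det D_0=(-1)^{|\Gamma_{\mathfrak{int}}|}\det D_0$, and $D_0$ is singular whenever $|\Gamma_{\mathfrak{int}}|$ is odd. So injectivity cannot be extracted from locality and Axiom~\ref{AD}-(\ref{4}) alone; some additional hypothesis is required.

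For comparison, the paper's proof uses the same two ingredients as your final step: Axiom~\ref{AD}-(\ref{4}), and testing against $\delta_\omega$ with $\omega\notin\Gamma_{\mathfrak{int}}$. With these it shows that $\{D\delta_a\}_{a\in\Gamma_{\mathfrak{int}}}$ is linearly independent. It then calls this family a basis of $V_0(\Gamma)$, which tacitly assumes $D\delta_a\in V_0(\Gamma)$, i.e.\ $D(V_0(\Gamma))\subseteq V_0(\Gamma)$. That containment is precisely the boundary control you were missing. Granting it, $D_0=D|_{V_0(\Gamma)}$ and your argument closes immediately. But it does not follow from the definition (\ref{bd}) of $\Gamma_{\mathfrak{int}}$, and by the parity argument it can only hold when $|\Gamma_{\mathfrak{int}}|$ is even.
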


This theorem is based on the following lemma.

\begin{lemma} The set $\{\delta'_{a}\}_{a\in\Gamma_{\mathfrak{int}}}$
is a basis of $V_{0}(\Gamma)$, where for brevity we have set 
\[
\delta'_{a}=D\delta_{a}.
\]
\end{lemma}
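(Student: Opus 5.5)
Since $V_{0}(\Gamma)$ is spanned by the hyperfinite family $\{\delta_{a}\}_{a\in\Gamma_{\mathfrak{int}}}$, its dimension (in the internal, hyperfinite sense) equals the hyperfinite cardinality of $\Gamma_{\mathfrak{int}}$. The family $\{\delta'_{a}\}_{a\in\Gamma_{\mathfrak{int}}}$ has the same number of elements. So, by transfer of elementary finite-dimensional linear algebra (work $\lambda$-wise and take $\Lambda$-limits), it suffices to prove two things. First, each $\delta'_{a}$ projected onto $V_{0}(\Gamma)$ lies in $V_{0}(\Gamma)$. Second, the family is linearly independent. Here I read $\delta'_{a}$ as $D_{0}\delta_{a}$, in accordance with the definition of $D_{0}$ in Theorem \ref{TT}, so membership in $V_{0}(\Gamma)$ is automatic. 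Everything therefore reduces to linear independence, which is the same as injectivity of $D_{0}$ on $V_{0}(\Gamma)$.

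To prove injectivity, take $u=\sum_{a\in\Gamma_{\mathfrak{int}}}c_{a}\delta_{a}\in V_{0}(\Gamma)$ with $D_{0}u=0$, that is, $Du(b)=0$ for all $b\in\Gamma_{\mathfrak{int}}$. There are two routes.

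The first route goes through Axiom \ref{AD}(\ref{4}). I would try to show that $Du$ vanishes also at the points of $\Gamma\setminus\Gamma_{\mathfrak{int}}$. Then $Du=0$, so $u=c\mathbf{1}^{\circ}$. But $u$ vanishes off $\Gamma_{\mathfrak{int}}$, and in particular at $\pm\omega$, since these are not interior by (\ref{bd}). Hence $c=0$ and $u=0$.

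The second route is an energy-type argument using Theorem \ref{ilona}. If $D_{0}u=0$, then $\int^{\circ}Du\,v\,dx=0$ for every $v\in V_{0}(\Gamma)$. This gives antisymmetry, but it does not directly give vanishing. So the first route seems necessary.

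The main obstacle is controlling $Du$ at the boundary points $\Gamma\setminus\Gamma_{\mathfrak{int}}$. Locality, Axiom \ref{AD}(\ref{2+}), gives $\mathfrak{supp}[D\chi_{a}]\subset\mathfrak{mon}(a)$. However, interior points can be infinitely close to $\pm\omega$, so this alone does not exclude contributions there. What I would try first is to use the one-dimensional structure at the $\lambda$ level. Write $u_{\lambda}$ with support in $(-\omega_{\lambda},\omega_{\lambda})$ (by the definition of $\Gamma_{\mathfrak{int}}$). By Theorem \ref{diana}(\ref{US}), $u_{\lambda}(x)=\langle\partial u_{\lambda},\chi_{[-\omega_{\lambda},x]}\rangle$, so $u_{\lambda}$ is recovered from $\partial u_{\lambda}$. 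Then test $\partial u_{\lambda}$ against the $\delta_{b_{\lambda}}$, $b\in\Gamma_{\mathfrak{int}}$, and show that these functionals determine $\partial u_{\lambda}$ on the relevant finite-dimensional space. This amounts to a dimension count at level $\lambda$: the map $u_{\lambda}\mapsto(\langle\partial u_{\lambda},\delta_{b_{\lambda}}\rangle)_{b}$ is a square linear map, and its kernel consists of compactly supported elements with vanishing projected derivative. I would show that this kernel is constant, hence zero by compact support.

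If this direct argument stalls, I would fall back on the construction of Section \ref{CSU}, which the axioms are designed to reflect. There $D$ is realized by an explicit difference operator on the grid, and invertibility of its restriction to interior nodes with zero boundary values is a transferred finite linear-algebra fact: a triangular or nonsingular tridiagonal-type matrix.
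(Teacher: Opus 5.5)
The step you flag as the main obstacle is where the proposal breaks down, and it cannot be proved from the paper's framework. Under your reading $\delta'_a=D_0\delta_a$, the lemma is equivalent to injectivity of $D_0$ on $V_0(\Gamma)$. You never show that $D_0u=0$ forces $Du=0$ on $\Gamma\setminus\Gamma_{\mathfrak{int}}$, and in general this is false.

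Every $v\in V_0(\Gamma)$ vanishes off $\Gamma_{\mathfrak{int}}$. Hence Theorem \ref{ilona} gives, for $u,v\in V_0(\Gamma)$,
\[
\int^{\circ}(D_0u)\,v\,dx=\int^{\circ}Du\,v\,dx=-\int^{\circ}u\,Dv\,dx=-\int^{\circ}u\,(D_0v)\,dx .
\]
So $D_0$ is skew-adjoint for the scalar product $\int^{\circ}uv\,dx$, which is positive definite by Axiom \ref{IA}. In the orthonormal basis $\{\chi_a/\sqrt{d(a)}\}_{a\in\Gamma_{\mathfrak{int}}}$ its matrix satisfies $A^{T}=-A$. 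By internal linear algebra, $\det A=(-1)^{|\Gamma_{\mathfrak{int}}|}\det A$. Whenever the hypernatural $|\Gamma_{\mathfrak{int}}|$ is odd, there is therefore some $u\neq0$ in $V_0(\Gamma)$ with $D_0u=0$.

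For such a $u$, $Du$ cannot vanish identically: Axiom \ref{AD}-(\ref{4}) together with $u(\omega)=0$ would give $u=0$. So $Du\neq 0$ somewhere off $\Gamma_{\mathfrak{int}}$, which is exactly what your Route 1 needs to exclude. The antisymmetry of your Route 2 is not merely unhelpful; it is the actual obstruction.

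Your $\lambda$-level ``square map'' argument is the same unproved claim. The fallback to Section \ref{CSU} does not supply it either. There $D$ is defined by (V-5), not as a difference operator. The step derivative (\ref{lisa}) it is compared with is, at unshifted nodes, the centred difference $\frac{z(a+\eta)-z(a-\eta)}{2\eta}$. Restricted to interior nodes with zero boundary values, this is skew-symmetric with zero diagonal, not triangular; with three nodes its kernel contains $(1,0,1)$.

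The paper avoids the boundary step by not projecting. It takes $\delta'_a=D\delta_a$ literally, so a relation $\sum_a k_a\delta'_a=0$ already says $Du=0$ on all of $\Gamma$ for $u=\sum_a k_a\delta_a$. Then:
\begin{itemize}
\item Axiom \ref{AD}-(\ref{4}) gives $u=c\mathbf{1}^{\circ}$;
\item pairing with $\delta_\omega$, where $\omega\notin\Gamma_{\mathfrak{int}}$, gives $c=0$;
\item pairing with $\delta_b$ gives $k_b=0$.
\end{itemize}
This is your Route 1 with the missing step made unnecessary, because the hypothesis is global from the start. By projecting, you discarded exactly the information Axiom \ref{AD}-(\ref{4}) needs.

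You were right, though, to worry about membership. The paper's dimension count tacitly uses $D\delta_a\in V_0(\Gamma)$, which it never checks. By the computation above, this cannot hold for every $a\in\Gamma_{\mathfrak{int}}$ when $|\Gamma_{\mathfrak{int}}|$ is odd; otherwise $D_0$ would map the basis $\{\delta_a\}$ onto a basis. What the paper's argument actually establishes from the axioms is linear independence of $\{D\delta_a\}_{a\in\Gamma_{\mathfrak{int}}}$ in $V(\Gamma)$. The claim that they form a basis of $V_0(\Gamma)$ needs an extra hypothesis under either reading, for instance that $D\delta_a$ vanishes off $\Gamma_{\mathfrak{int}}$ for all interior $a$.
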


\textbf{Proof}. Since $V_{0}(\Gamma)$ is a space of hyperfinite dimension
and $\{\delta'_{a}\}_{a\in\Gamma_{\mathfrak{int}}}$ is a set of $|\Gamma_{\mathfrak{int}}|$
vectors, it is sufficient to prove that these vectors are linearly
independent.

Assume that 
\[
\sum_{a\in\Gamma_{\mathfrak{int}}}k_{a}\delta'_{a}=0
\]
and we have to prove that $\forall a\in\Gamma_{\mathfrak{int}}$,
$k_{a}=0$. We have 
\[
\sum_{a\in\Gamma_{\mathfrak{int}}}k_{a}\delta'_{a}=D\left(\sum_{a\in\Gamma_{\mathfrak{int}}}k_{a}\delta_{a}\right)
\]
and hence, by Axiom \ref{AD}-(\ref{4}), 
\[
\sum_{a\in\Gamma_{\mathfrak{int}}}k_{a}\delta_{a}=c\mathbf{1}^{\circ}.
\]

Multiplying both sides by $\delta_{\omega}$ and integrating, we obtain
\[
\sum_{a\in\Gamma_{\mathfrak{int}}}k_{a}\int^{\circ}\delta_{a}\delta_{\omega}\,dx=c\int^{\circ}\mathbf{1}^{\circ}\delta_{\omega}\,dx=c.
\]

Since $\omega\notin\Gamma_{\mathfrak{int}}$, the left-hand side of
the above equation is $0$, hence $c=0$. Therefore 
\[
\sum_{a\in\Gamma_{\mathfrak{int}}}k_{a}\delta_{a}=0.
\]

Multiplying again both sides by $\delta_{b}$, with $b\in\Gamma_{\mathfrak{int}}$,
and integrating, we obtain 
\[
0=\sum_{a\in\Gamma_{\mathfrak{int}}}k_{a}\int^{\circ}\delta_{a}(x)\delta_{b}(x)\,dx=k_{b}\,\delta_{b}(b).
\]

Hence for every $b\in\Gamma_{\mathfrak{int}}$ we have $k_{b}=0$.

\[
\square
\]

\textbf{Proof of Th.~\ref{TT}}. For every $u\in V_{0}(\Gamma)$
we have to solve the equation 
\begin{equation}
D_{0}U=u.\label{57}
\end{equation}

By the lemma above, $u$ can be written as 
\[
u(x)=\sum_{a\in\Gamma_{\mathfrak{int}}}u_{a}\delta'_{a}(x),\qquad u_{a}\in\mathbb{E}.
\]

Then 
\[
U(x)=\sum_{a\in\Gamma_{\mathfrak{int}}}u_{a}\delta_{a}(x)
\]
solves equation (\ref{57}).

\[
\square
\]

\begin{corollary} \label{CO} For every $f\in V(\Gamma)$ and every
$c\in\mathbb{E}$ there exists a function $F\in V(\Gamma)$ such that
for all $x\in\Gamma_{\mathfrak{int}}$ 
\[
DF(x)=f(x),
\]
and 
\[
F(0)=c.
\]
\end{corollary}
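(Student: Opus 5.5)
The plan is to reduce the corollary to Theorem~\ref{TT}, and then adjust the value at $0$ using the kernel of $D$ described in Axiom~\ref{AD}-(\ref{4}).

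First, given $f\in V(\Gamma)$, I project it onto $V_{0}(\Gamma)$ by setting
\[
f_{0}(x):=\sum_{a\in\Gamma_{\mathfrak{int}}}f(a)\chi_{a}(x).
\]
Since $\chi_{a}=d(a)\delta_{a}$, we have $f_{0}\in V_{0}(\Gamma)$. By Theorem~\ref{TT} there exists $U\in V_{0}(\Gamma)$ with $D_{0}U=f_{0}$. Unwinding the definition of $D_{0}$, this means
\[
\sum_{a\in\Gamma_{\mathfrak{int}}}DU(a)\chi_{a}=\sum_{a\in\Gamma_{\mathfrak{int}}}f(a)\chi_{a}.
\]
Evaluating at $x\in\Gamma_{\mathfrak{int}}$, and using that $\chi_{a}(x)=\delta_{a}^{x}$, gives $DU(x)=f(x)$ for every $x\in\Gamma_{\mathfrak{int}}$. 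This is the key step. The only point to check carefully is that $D_{0}$ agrees with $D$ at interior points and ignores $D U$ at the remaining points of $\Gamma$; this follows directly from the formula for $D_{0}$.

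Second, I fix the constant. Set
\[
F:=U+\bigl(c-U(0)\bigr)\mathbf{1}^{\circ}.
\]
This lies in $V(\Gamma)$, which is a vector space over $\mathbb{E}$ by the Approximation Axiom. By the trivial direction of (\ref{A1}), $D\mathbf{1}^{\circ}=0$. By linearity of $D$, which follows from Definition~\ref{DA} since $\partial$ and the $\Lambda$-limit are linear, $DF=DU$, so $DF(x)=f(x)$ on $\Gamma_{\mathfrak{int}}$. Moreover $F(0)=U(0)+c-U(0)=c$, because $\mathbf{1}^{\circ}(0)=1$.

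I do not expect a real obstacle. Note that $U\in V_{0}(\Gamma)$ may well vanish at $0$ (e.g.\ if $0\notin\Gamma_{\mathfrak{int}}$, although $0$ should be interior). The additive-constant trick handles this either way, so no case distinction is needed.
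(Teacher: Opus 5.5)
Your proof is correct and is essentially the paper's argument: project $f$ onto $V_{0}(\Gamma)$, invert $D_{0}$ by Theorem~\ref{TT}, and fix the value at $0$ by adding $(c-U(0))\mathbf{1}^{\circ}$. Your normalization $f_{0}=\sum_{a\in\Gamma_{\mathfrak{int}}}f(a)\chi_{a}$ is the right one, whereas the paper writes $\sum_{a\in\Gamma_{\mathfrak{int}}}f(a)\delta_{a}$, which would give $DF(x)=f(x)/d(x)$ at interior points; your version therefore quietly corrects that slip.
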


\textbf{Proof}. Set 
\[
u(x)=\sum_{a\in\Gamma_{\mathfrak{int}}}f(a)\delta_{a}(x).
\]

If $U(x)$ is an antiderivative of $u(x)$, then the function 
\[
F(x)=U(x)+[c-U(0)]\,\mathbf{1}^{\circ}(x)
\]
satisfies the requirements.

\begin{remark} In conclusion, the structure introduced in this section
can be interpreted as a hyperfinite model of distribution theory.
The space $V(\Gamma)$ plays the role of a nonlinear algebra of generalized
functions, while the subspace $V_{0}(\Gamma)$ behaves as an analogue
of the space of test functions. In this framework the derivative and
the integral admit a purely algebraic formulation. \end{remark}

\section{Several-variable ultrafunctions}\label{SVU}

\subsection{Basic Definitions}

The space of ultrafunctions in $N$ variables is defined as follows:
\[
V(\Gamma^{N})=\bigotimes^{N}_{i=1}V(\Gamma)=\underbrace{V(\Gamma)\otimes\cdots\otimes V(\Gamma)}_{N\text{ times}}.
\]

The following facts follow immediately from the basic axioms.
\begin{itemize}
\par 
\item \textbf{(Approximation Property)} $u\in V(\Gamma^{N})$ if and only
if there exists a net 
\[
u_{\lambda}\in V_{\lambda}(\mathbb{R})\otimes\cdots\otimes V_{\lambda}(\mathbb{R})
\]
such that, for every point 
\[
x=\lim_{\lambda\uparrow\Lambda}x_{\lambda}\in\Gamma^{N},
\]
we have 
\[
u(x)=\lim_{\lambda\uparrow\Lambda}u_{\lambda}(x_{\lambda}).
\]
\item \textbf{($\boldsymbol{\chi}$-Property)} For every point 
\[
a=(a_{1},\ldots,a_{N})\in\Gamma^{N},\qquad x=(x_{1},\ldots,x_{N})\in\Gamma^{N},
\]
we set 
\[
\chi_{a}(x):=\chi_{a_{1}}(x_{1})\cdots\chi_{a_{N}}(x_{N})\in V(\Gamma^{N}),
\]
and a generic ultrafunction in $N$ variables can be written as 
\[
u(x)=\sum_{a\in\Gamma^{N}}u(a)\chi_{a}(x)=\sum_{a\in\Gamma^{N}}u(a_{1},\ldots,a_{N})\chi_{a_{1}}(x_{1})\cdots\chi_{a_{N}}(x_{N}).
\]
\end{itemize}
If $f$ is a real function defined on $\mathbb{R}^{N}$, then $f^{\circ}$
can be written as 
\[
f^{\circ}(x)=f^{\circ}(x_{1},\ldots,x_{N})=\sum_{a\in\Gamma^{N}}f^{\circ}(a_{1},\ldots,a_{N})\chi_{a_{1}}(x_{1})\cdots\chi_{a_{N}}(x_{N}).
\]

The main difference with the one-variable case is that the space 
\[
\bigotimes^{N}_{i=1}V(\mathbb{R})
\]
is not complete with respect to the seminorms 
\[
\|f\|_{V(Q_{n})}:=\|f_{|_{Q_{n}}}\|_{\sup},\qquad Q_{n}=[-n,n]^{N}.
\]
We denote by $V(\mathbb{R}^{N})$ the completion of 
\[
\bigotimes^{N}_{i=1}V(\mathbb{R}).
\]
If 
\[
f^{\circ}\in[V(\mathbb{R}^{N})]^{\circ}:=\{\,f^{\circ}\mid f\in V(\mathbb{R}^{N})\,\},
\]
the relation (\ref{ruben}) is no longer satisfied. However it can
be replaced by the following one: 
\[
\exists Q\in\mathcal{U},\;\forall\lambda\in Q,\;\forall x\in\mathbb{G}_{\lambda},\;f_{\lambda}(x)=f(x),
\]
where 
\[
\mathbb{G}_{\lambda}:=\{(x_{1},\ldots,x_{N})\in\mathbb{R}^{N}\mid\exists i,\;x_{i}\in\Gamma_{\lambda}\}.
\]

Nevertheless, this fact does not prevent the development of the theory.

\subsection{The integral and the derivative in several variables}

The pointwise integral with respect to the variable $x_{N}$ is defined
in the obvious way: 
\begin{equation}
\int^{\circ}u(x)\,dx_{N}=\sum_{x_{N}\in\Gamma}u(x_{1},\ldots,x_{N})\,d(x_{N})=U(x_{1},\ldots,x_{N-1})\in V(\Gamma^{N-1}).\label{pico}
\end{equation}

Similarly one defines the integral with respect to the other variables.
The integral of $u$ over the whole grid $\Gamma^{N}$ is given by

\begin{equation}
\int^{\circ}u(x)\,dx=\int^{\circ}\!\cdots\!\int^{\circ}u(x)\,dx_{1}\cdots dx_{N}\in\mathbb{E}.\label{pico2}
\end{equation}

\begin{theorem} If $u\in V(\mathbb{R}^{N})$, then 
\[
\int^{\circ}u(x)\,dx=\lim_{\lambda\uparrow\Lambda}\int_{Q_{\lambda}}u_{\lambda}(x)\,dx,
\]
where 
\begin{equation}
u_{\lambda}\in\bigotimes^{N}_{i=1}V_{\lambda}(\mathbb{R}),\qquad Q_{\lambda}=[-\omega_{\lambda},\omega_{\lambda}]^{N}.\label{QL}
\end{equation}
\end{theorem}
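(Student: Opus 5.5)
The plan is to reduce the statement to the one-variable identity (\ref{int}), which gives $\int^{\circ}u\,dx=\lim_{\lambda\uparrow\Lambda}\int^{\omega_{\lambda}}_{-\omega_{\lambda}}u_{\lambda}\,dx$ for $u\in V(\Gamma)$. I will use the tensor structure of $V(\Gamma^{N})$ and Fubini at level $\lambda$, then pass to the $\Lambda$-limit. I read the hypothesis as $u\in V(\Gamma^{N})$ with an approximating net $u_{\lambda}$ as in (\ref{QL}), which is what the Approximation Property provides.

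The first step is to expand $u$ in the $\chi$-basis. By the $\chi$-Property, $u(x)=\sum_{a\in\Gamma^{N}}u(a)\chi_{a_{1}}(x_{1})\cdots\chi_{a_{N}}(x_{N})$. At level $\lambda$ this corresponds to
\[
u_{\lambda}(x)=\sum_{a_{\lambda}\in\Gamma_{\lambda}^{N}}u_{\lambda}(a_{\lambda})\,\sigma_{a_{1,\lambda}}(x_{1})\cdots\sigma_{a_{N,\lambda}}(x_{N}),
\]
exactly as in (\ref{urano}). This step uses the $\chi$-Axiom in each factor. It also uses the fact that $\{\sigma_{b_{\lambda}}\}_{b_{\lambda}\in\Gamma_{\lambda}}$ is, $\mathcal{U}$-eventually, a basis of $V_{\lambda}(\mathbb{R})$ whose interpolation property matches $\chi_{b}$ on the grid. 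The elementary tensors $\sigma_{a_{1,\lambda}}\otimes\cdots\otimes\sigma_{a_{N,\lambda}}$ then form a basis of $\bigotimes V_{\lambda}(\mathbb{R})$, which justifies the displayed expansion $\mathcal{U}$-eventually.

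The second step is to integrate over $Q_{\lambda}$. The sum is finite and each summand is a product of one-variable functions in $\mathcal{L}^{1}_{loc}$ by (\ref{VL1}), so Fubini applies. Using the notation (\ref{chu+}), this gives
\[
\int_{Q_{\lambda}}u_{\lambda}\,dx=\sum_{a_{\lambda}\in\Gamma^{N}_{\lambda}}u_{\lambda}(a_{\lambda})\,d(a_{1,\lambda})\cdots d(a_{N,\lambda}).
\]
Taking $\Lambda$-limits and using Prop.~\ref{p2} together with the definition (\ref{sum}) of hyperfinite sums, the right side becomes $\sum_{a\in\Gamma^{N}}u(a)d(a_{1})\cdots d(a_{N})$.

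The third step is to compute the left side of the statement from definitions (\ref{pico})--(\ref{pico2}). Integrating successively in $x_{N},\dots,x_{1}$ and using $\chi_{a_{i}}(x_{i})$ evaluated on the grid, each iterated integral picks out the weight $d(a_{i})$. The iterated pointwise integral therefore equals the same hyperfinite sum, which completes the proof.

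The main obstacle is the first step: identifying the $\lambda$-level representative of $u$ with the tensor expansion in the $\sigma$'s. The net $u_{\lambda}$ is not unique, and the integral must not depend on the choice. I would handle this as the paper does in the one-variable Proposition. Any two representatives agree $\mathcal{U}$-eventually at the grid points $\Gamma_{\lambda}^{N}$. Since functions in $\bigotimes V_{\lambda}(\mathbb{R})$ are determined by their values there, via the $\sigma$-basis, the two representatives coincide $\mathcal{U}$-eventually, and so do their integrals.
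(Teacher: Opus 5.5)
Your proof is correct, but it takes a different route from the paper's.

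\textbf{The paper's route (slicing).} The paper fixes $(a_1,\ldots,a_{N-1})\in\Gamma^{N-1}$ and notes that $x_N\mapsto u(a_1,\ldots,a_{N-1},x_N)$ is a one-variable ultrafunction. It then applies Definition~\ref{DI} to get $U(a_1,\ldots,a_{N-1})=\lim_{\lambda\uparrow\Lambda}\int_{-\omega_\lambda}^{\omega_\lambda}u_\lambda(a_\lambda,x_N)\,dx_N$. It takes $U_\lambda(x_1,\ldots,x_{N-1})=\int_{-\omega_\lambda}^{\omega_\lambda}u_\lambda(x_1,\ldots,x_N)\,dx_N$ as the representative of $U$ and iterates over the remaining variables. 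Fubini enters only at the end, to identify the iterated integral of $u_\lambda$ with $\int_{Q_\lambda}u_\lambda\,dx$. The paper never expands $u_\lambda$ in a basis.

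\textbf{Your route (full expansion).} You prove the $N$-dimensional analogue of (\ref{int}) directly. The tensor expansion of $u_\lambda$ in the $\sigma$'s, plus Fubini on elementary tensors, gives $\int_{Q_\lambda}u_\lambda\,dx=\sum u_\lambda(a_\lambda)\,d(a_{1,\lambda})\cdots d(a_{N,\lambda})$. Definitions (\ref{pico})--(\ref{pico2}) produce the same hyperfinite sum on the pointwise side.

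\textbf{What each route buys.} Yours gives, as a by-product, the explicit formula $\int^{\circ}u\,dx=\sum_{a\in\Gamma^N}u(a)\,d(a_1)\cdots d(a_N)$, so the point weights visibly factor as products. It also makes the comparison with the iterated pointwise integral purely algebraic. The price is that you rely on the interpolation expansion (\ref{urano}) in each factor. That is, you need $\{\sigma_{b_\lambda}\}_{b_\lambda\in\Gamma_\lambda}$ to be $\mathcal{U}$-eventually a basis of $V_\lambda(\mathbb{R})$, with $\sigma_{b_\lambda}(c_\lambda)=\delta_{bc}$ on $\Gamma_\lambda$. This is not literally one of the axioms. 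However, the paper uses it in the proof of (\ref{int}), and it holds in the model of Section~\ref{CSU} by Theorem~\ref{ames}.

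The paper's slicing argument needs the same fact implicitly, in two places. Definition~\ref{DI} is only well posed if the integral is independent of the representative net. And $U_\lambda$ must be an admissible representative of $U$ in $\bigotimes^{N-1}V_\lambda(\mathbb{R})$. You make this independence-of-representative issue explicit and resolve it with the grid-values argument, which the paper's version leaves unstated.
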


\textbf{Proof}. Fix $(a_{1},\ldots,a_{N-1})\in\Gamma^{N-1}$. The
ultrafunction

\[
x_{N}\mapsto u(a_{1},\ldots,a_{N-1},x_{N})=\sum_{x_{N}\in\Gamma}u(a_{1},\ldots,a_{N-1},a_{N})\chi_{a_{N}}(x_{N})
\]

belongs to $V(\Gamma)$. Hence, putting $a_{\lambda}=(a_{1,\lambda},\ldots,a_{N-1,\lambda})$,
by Axiom~\ref{IA}

\[
U(a_{1},\ldots,a_{N-1})=\lim_{\lambda\uparrow\Lambda}\int^{\omega_{\lambda}}_{-\omega_{\lambda}}u_{\lambda}(a_{\lambda},x_{N})\,dx_{N}.
\]

Renaming the variables we obtain

\[
U_{\lambda}(x_{1},\ldots,x_{N-1})=\int^{\omega_{\lambda}}_{-\omega_{\lambda}}u_{\lambda}(x_{\lambda},x_{N})\,dx_{N}.
\]

Iterating this procedure we obtain

\begin{eqnarray*}
\int^{\circ}\!\cdots\!\int^{\circ}u(x)\,dx_{N-1}dx_{N} & = & \int^{\circ}U(x_{1},\ldots,x_{N-1})\,dx_{N-1}\\
 & = & \lim_{\lambda\uparrow\Lambda}\int^{\omega_{\lambda}}_{-\omega_{\lambda}}U_{\lambda}(x_{\lambda})\,dx_{N-1}\\
 & = & \lim_{\lambda\uparrow\Lambda}\int^{\omega_{\lambda}}_{-\omega_{\lambda}}\!\!\cdots\!\int^{\omega_{\lambda}}_{-\omega_{\lambda}}u_{\lambda}(x)\,dx_{N-1}dx_{N}.
\end{eqnarray*}

Iterating the same argument for all variables we obtain

\begin{eqnarray*}
\int^{\circ}u(x)\,dx & = & \int^{\circ}\!\cdots\!\int^{\circ}u(x)\,dx_{1}\cdots dx_{N}\\
 & = & \lim_{\lambda\uparrow\Lambda}\int^{\omega_{\lambda}}_{-\omega_{\lambda}}\!\!\cdots\!\int^{\omega_{\lambda}}_{-\omega_{\lambda}}u_{\lambda}(x)\,dx_{1}\cdots dx_{N}\\
 & = & \lim_{\lambda\uparrow\Lambda}\int_{Q_{\lambda}}u_{\lambda}(x)\,dx.
\end{eqnarray*}

$\square$

\begin{corollary} If $f\in V_{c}(\mathbb{R}^{N})$, then 
\[
\int^{\circ}f^{\circ}(x)\,dx=\int f(x)\,dx.
\]
\end{corollary}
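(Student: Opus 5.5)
The plan mirrors the one-variable identity (\ref{mara}), combining it with the preceding theorem on iterated pointwise integrals. Let $f\in V_{c}(\mathbb{R}^{N})$, so $\mathrm{supp}(f)\subset[-R,R]^{N}$ for some real $R$.

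The first step is to apply the theorem just proved, which gives
\[
\int^{\circ}f^{\circ}(x)\,dx=\lim_{\lambda\uparrow\Lambda}\int_{Q_{\lambda}}f_{\lambda}(x)\,dx,
\]
where $f_{\lambda}\in\bigotimes^{N}_{i=1}V_{\lambda}(\mathbb{R})$ is a net representing $f^{\circ}$. Two facts then need to be established:
\begin{itemize}
\item[(i)] $\mathcal{U}$-eventually $[-R,R]^{N}\subset Q_{\lambda}$;
\item[(ii)] the integrals $\int_{Q_{\lambda}}f_{\lambda}\,dx$ and $\int f\,dx$ agree, at least in the limit.
\end{itemize}
Fact (i) is immediate: $\omega=\lim\omega_{\lambda}$ is infinite because $\omega=\max\Gamma$ and $\mathbb{R}\subset\Gamma$. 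Hence $\{\lambda\mid\omega_{\lambda}>R\}$ is qualified, since otherwise Proposition \ref{p2}-(\ref{pippa}) would give $\omega\le R$.

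Fact (ii) is the main obstacle. In one variable, (\ref{ruben}) lets one take $f_{\lambda}=f$ eventually. In several variables this fails: (\ref{ruben}) is only replaced by $f_{\lambda}=f$ on the set $\mathbb{G}_{\lambda}$. So $f_{\lambda}$ agrees with $f$ on a union of finitely many hyperplanes and need not agree with it elsewhere.

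To handle this, I would first treat the dense subclass. Take $f=\sum_{k=1}^{m}g^{k}_{1}\otimes\cdots\otimes g^{k}_{N}$ with $g^{k}_{i}\in V_{c}(\mathbb{R})$. Eventually every $g^{k}_{i}\in V_{\lambda}(\mathbb{R})$, because $V(\mathbb{R})\cap\lambda\subseteq V_{\lambda}(\mathbb{R})$ and the cone $Q[\{g^{k}_{i}\}]$ is qualified. So we may take $f_{\lambda}=f$, and Fubini together with (\ref{mara}) in each variable gives the exact equality.

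For a general $f$ in the completion $V(\mathbb{R}^{N})$ with compact support, I would argue in one of two ways. The first is to use the freedom in choosing the net: the theorem above holds for any representing net, so we may choose $f_{\lambda}$ to be a tensor approximation of $f$ within $\lambda$. Equality of the $\Lambda$-limit with $\int f$ then requires that $f_{\lambda}$ can be taken equal to $f$ eventually; this holds exactly when $f$ eventually lies in $\bigotimes V_{\lambda}$, which I would try to secure from the choice of the nets $V_{\lambda}$. The second is the fallback. Uniform approximation on $[-R,R]^{N}$ by tensor sums, combined with the iterated pointwise integral (\ref{pico2}) and Theorem \ref{LI}, at least yields $\int^{\circ}f^{\circ}\,dx\sim\int f\,dx$. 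In writing this up I would therefore make explicit that the exact equality relies on $f$ being eventually representable in $\bigotimes V_{\lambda}(\mathbb{R})$.
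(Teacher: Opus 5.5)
Your fact (i) is the paper's entire proof (``if $\lambda$ is sufficiently large, then $\mathrm{supp}(f)\subset Q_{\lambda}$''). The paper passes from $\lim_{\lambda\uparrow\Lambda}\int_{Q_\lambda}f_\lambda\,dx$ to $\int f\,dx$ without comment, i.e.\ it tacitly assumes your fact (ii), even though the remark right after the corollary concedes $f_\lambda\neq f$. You have located the real issue correctly, and your argument for finite tensor sums is sound. But for a general $f$ in the completion your proposal does not reach the stated equality, and neither fallback can.

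\emph{Route 1 is impossible.} Since $V_\lambda(\mathbb{R})\subseteq V(\mathbb{R})$, every element of $\bigotimes^{N}_{i=1}V_\lambda(\mathbb{R})$ is a finite sum of products of one-variable functions. So an $f$ that is not such a finite sum never lies in it, whatever net $V_\lambda(\mathbb{R})$ is chosen.

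\emph{There is also no freedom in the representing net.} It must reproduce $f^{\circ}$ at every point of $\Gamma^N$, so eventually $f_\lambda=f$ on $\Gamma_\lambda^N$. By the nodal expansion (\ref{urano}) in each variable, $f_\lambda$ is then the grid interpolant $\sum_{a\in\Gamma_\lambda^N}f(a)\,\sigma_{a_1}(x_1)\cdots\sigma_{a_N}(x_N)$.

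\emph{Route 2 gives only $\int^{\circ}f^{\circ}dx\sim\int f\,dx$}, which is weaker than the claim.

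The idea you are missing is that the $\mathbb{G}_\lambda$-relation, which you quote and then set aside, already yields (ii) exactly. Agreement on the hyperplanes $\{x_1=b\}$, $b\in\Gamma_\lambda$, is all that is needed. By (\ref{urano}) in the first variable, $f_\lambda(x_1,x')=\sum_{b\in\Gamma_\lambda}\sigma_b(x_1)f_\lambda(b,x')$. Since $(b,x')\in\mathbb{G}_\lambda$, we have $f_\lambda(b,x')=f(b,x')$ for all $x'\in\mathbb{R}^{N-1}$. Hence, once $\mathrm{supp}(f)\subset Q_\lambda$,
\[
\int_{Q_\lambda}f_\lambda\,dx=\sum_{b\in\Gamma_\lambda}F(b)\int^{\omega_\lambda}_{-\omega_\lambda}\sigma_b(x_1)\,dx_1,\qquad F(x_1):=\int f(x_1,x')\,dx'.
\]
Here $F$ is one fixed function with compact support. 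Suppose it lies in $V(\mathbb{R})$; this is what Theorem \ref{diana}-(\ref{UC}) is supposed to give, $F$ being a locally uniform limit of marginals of tensor sums. Then eventually $F\in V_\lambda(\mathbb{R})$, so $F=\sum_b F(b)\sigma_b$, and the right-hand side equals $\int F\,dx_1=\int f\,dx$.

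This argument is only as good as the $\mathbb{G}_\lambda$-relation, which the paper asserts without proof. That relation forces every section $f(b,\cdot)$, $b\in\Gamma_\lambda$, to lie in $\bigotimes^{N-1}V_\lambda(\mathbb{R})$. This does not follow from $V(\mathbb{R})\cap\lambda\subseteq V_\lambda(\mathbb{R})$: the sections move with $\lambda$, so fineness does not put them in $\lambda$. For $N\ge3$ it cannot hold at all if such a section is not a finite tensor sum. Without an extra requirement of this kind on the net $V_\lambda(\mathbb{R})$, your hedged conclusion is what is actually provable: exact equality for finite tensor sums, $\sim$ in general. The paper's one-line proof does not do better.
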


\textbf{Proof}. If $\lambda$ is sufficiently large, then $\,supp(f)\subset Q_{\lambda}$.

$\square$

\begin{remark} This result is not trivial. Indeed, since (\ref{ruben})
is not satisfied in $\mathbb{R}^{N}$, for some $x\in\mathbb{R}^{N}$
, we have that $f_{\lambda}(x)\neq f(x)$. \end{remark}

The generalized partial derivative is defined as follows:

\[
D_{i}u(a)=\lim_{\lambda\uparrow\Lambda}\langle\partial_{i}u_{\lambda},\delta_{a_{\lambda}}\rangle,
\]
where 
\[
\delta_{a_{\lambda}}(x)=\delta_{{a_{\lambda,1}}}(x_{1})\cdots\delta{a_{\lambda,N}}.
\]
By the linearity of $D_{i}$ we also have 
\begin{eqnarray*}
D_{i}u(x) & = & D_{i}\left(\sum_{a\in\Gamma^{N}}u(a)\chi_{a_{1}}(x_{1})\cdots\chi_{a_{N}}(x_{N})\right)\\
 & = & \sum_{a\in\Gamma^{N}}u(a)D_{i}\left[\chi_{a_{1}}(x_{1})\cdots\chi_{a_{N}}(x_{N})\right]\\
 & = & \sum_{a\in\Gamma^{N}}u(a)\chi_{a_{1}}(x_{1})\cdots D_{i}\chi_{a_{i}}(x_{i})\cdots\chi_{a_{N}}(x_{N}).
\end{eqnarray*}

\begin{theorem} \label{dery} If $f\in C^{1,1}_{loc}(\mathbb{R}^{N})$,
then 
\[
D_{i}f^{\circ}(x)=(\partial_{i}f)^{\circ}(x).
\]
\end{theorem}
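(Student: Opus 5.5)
The plan is to reduce the statement to the one-variable identity $Df^{\circ}=(\partial f)^{\circ}$, which the paper has already justified for $f\in W^{1,\infty}_{loc}(\mathbb{R})$ (hence for $C^{1,1}_{loc}$) using (\ref{l2}) and the locality in Axiom~\ref{AD}-(\ref{2+}). The tool for the reduction is the tensor-product structure of $V(\Gamma^{N})$. Without loss of generality take $i=N$.

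Fix $a=(a_{1},\ldots,a_{N})\in\Gamma^{N}$. The expansion of $D_{i}u$ given just before the statement, together with $\chi_{b}(a)=\prod_{j}\chi_{b_{j}}(a_{j})$, gives
\[
D_{N}f^{\circ}(a)=\sum_{b_{N}\in\Gamma}f^{\circ}(a_{1},\ldots,a_{N-1},b_{N})\,D\chi_{b_{N}}(a_{N}).
\]
Thus $D_{N}f^{\circ}(a)=Dg^{\circ}(a_{N})$, where $g^{\circ}$ is the one-variable ultrafunction $x_{N}\mapsto f^{\circ}(a',x_{N})$ and $a'=(a_{1},\ldots,a_{N-1})$.

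Next, $g^{\circ}$ is the $\Lambda$-limit of the nets $g_{\lambda}(t)=f(a'_{\lambda},t)$. For each $\lambda$, this $g_{\lambda}$ is a real function of class $C^{1,1}_{loc}(\mathbb{R})$, with derivative $\partial_{N}f(a'_{\lambda},\cdot)$. Its local Lipschitz constants are controlled by those of $f$ on compact sets containing $a'_{\lambda}$. Applying the one-variable result to each $g_{\lambda}$ and passing to the $\Lambda$-limit (transfer of the one-variable statement), we get
\[
Dg^{\circ}(a_{N})=\lim_{\lambda\uparrow\Lambda}\partial_{N}f(a'_{\lambda},a_{N,\lambda})=(\partial_{N}f)^{\circ}(a).
\]
To make this concrete, use Def.~\ref{DA}: $D_{N}f^{\circ}(a)=\lim_{\lambda\uparrow\Lambda}\langle\partial_{N}f_{\lambda},\delta_{a_{\lambda}}\rangle$. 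By the product form of $\delta_{a_{\lambda}}$, this splits into the pairing in the $N$-th variable of $\partial_{N}f_{\lambda}$ with $\delta_{a_{N,\lambda}}$, followed by the pairing in the other variables with $\delta_{a'_{\lambda}}$. Since $\partial_{N}f$ is locally Lipschitz, and by (\ref{gazza}) and Theorem~\ref{TU}-(\ref{U00}),(\ref{U0}), these pairings reproduce the point evaluation $(\partial_{N}f)^{\circ}(a)$.

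The main obstacle is that $f\in C^{1,1}_{loc}(\mathbb{R}^{N})$ is generally not an element of $\bigotimes V(\mathbb{R})$, but only of its completion. So the net $f_{\lambda}$ agrees with $f$ only on $\mathbb{G}_{\lambda}$, not everywhere, and the remark after the Corollary warns exactly about this. I would handle it by observing that the formula above evaluates $f_{\lambda}$ only at points with all but one coordinate on the grid $\Gamma_{\lambda}$, i.e. inside $\mathbb{G}_{\lambda}$, where $f_{\lambda}=f$. I would also use the locality axiom (\ref{loc}) in the $N$-th variable, so that only the behaviour of $f(a'_{\lambda},\cdot)$ near $a_{N,\lambda}$ enters. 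That behaviour is genuinely $C^{1,1}$ in a neighbourhood, which is precisely the case the paper notes is sufficient after Axiom~\ref{AD}.
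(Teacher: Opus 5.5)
Your proposal takes essentially the same route as the paper. The paper also freezes the coordinates other than the $i$-th, applies the one-variable result (via Th.~\ref{TU}) to the slice $x\mapsto f(a_{1,\lambda},\ldots,x,\ldots,a_{N,\lambda})$, and uses the $\chi$-basis expansion of $D_i$ to reassemble $D_if^{\circ}=(\partial_if)^{\circ}$; you do the same steps in the opposite order. Your remark that the slices lie in $\mathbb{G}_\lambda$, where $f_\lambda=f$, makes explicit a point the paper's proof passes over silently.
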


\textbf{Proof}. For fixed $(a_{1,\lambda},\ldots,a_{N,\lambda})$
the function 
\[
x\mapsto f(a_{1,\lambda},\ldots,x,\ldots,a_{N,\lambda})
\]
belongs to $C^{1,1}_{loc}(\mathbb{R})$. Hence, by Th.~\ref{TU}-(\ref{U3}),

\[
(\partial_{i}f)^{\circ}(a_{1},\ldots,x_{i},\ldots,a_{N})=D_{i}f^{\circ}(a_{1},\ldots,x_{i},\ldots,a_{N}).
\]

Using the expansion in the $\chi$-basis we obtain

\begin{eqnarray*}
(\partial_{i}f)^{\circ}(x) & = & \sum_{a\in\Gamma^{N}}f^{\circ}(a)\chi_{a_{1}}(x_{1})\cdots D_{i}\chi_{a_{i}}(x_{i})\cdots\chi_{a_{N}}(x_{N})\\
 & = & D_{i}f^{\circ}(x).
\end{eqnarray*}

$\square$

\begin{corollary} \label{CC} If $f\in C^{1,1}_{loc}(\mathbb{R}^{N})$,
then

\[
\sum_{a\in\Gamma^{N}}f^{\circ}(a)\,D_{i}\chi_{a}(x)=\sum_{a\in\Gamma^{N}}(\partial_{i}f)^{\circ}(a)\,\chi_{a}(x).
\]
\end{corollary}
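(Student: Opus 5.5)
The corollary should follow by combining Theorem \ref{dery} with the $\chi$-expansion of ultrafunctions in several variables. The left-hand side is, by linearity of $D_i$ over hyperfinite sums, exactly $D_i f^{\circ}(x)$. The right-hand side is the $\chi$-basis expansion of the ultrafunction $(\partial_i f)^{\circ}$. Once the linearity step is justified, the identity is a rewriting of Theorem \ref{dery}.

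\textbf{Right-hand side.} I will use the representation (\ref{pu}) in its several-variable form, the $\chi$-Property: every $u\in V(\Gamma^{N})$ satisfies $u(x)=\sum_{a\in\Gamma^{N}}u(a)\chi_{a}(x)$. Apply this to $u=(\partial_i f)^{\circ}$. It is an ultrafunction because $\partial_i f\in C^{0,1}_{loc}$, so by the definition of $g^{\circ}$ it is given by its $\chi$-expansion. This gives $(\partial_i f)^{\circ}(x)=\sum_{a}(\partial_i f)^{\circ}(a)\chi_a(x)$.

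\textbf{Left-hand side.} Apply the expansion to $f^{\circ}$, namely $f^{\circ}=\sum_{a}f^{\circ}(a)\chi_a$, and then apply $D_i$. I need $D_i$ to commute with the hyperfinite sum, and this is the only point requiring care. To justify it, I write $D_i u(b)=\lim_{\lambda\uparrow\Lambda}\langle\partial_i u_\lambda,\delta_{b_\lambda}\rangle$. At level $\lambda$, the net is $u_\lambda=\sum_{a_\lambda\in\Gamma_\lambda^N}f(a_\lambda)\sigma_{a_\lambda}$, which is a finite sum. Linearity of $\partial_i$ and of the pairing $\langle\cdot,\delta_{b_\lambda}\rangle$ then holds at each $\lambda$, and passing to the $\Lambda$-limit via (\ref{sum}) gives $D_i f^{\circ}=\sum_a f^{\circ}(a)D_i\chi_a$. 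The paper has already used this linearity in computing $D_i u(x)$ after defining the partial derivative, so I may simply invoke it.

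\textbf{Conclusion.} By Theorem \ref{dery}, $D_i f^{\circ}=(\partial_i f)^{\circ}$ pointwise on $\Gamma^{N}$. Equating the two expansions gives the claim. The only obstacle I expect is the internal linearity over the hyperfinite index set, and it is handled levelwise as described.
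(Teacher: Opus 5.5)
Your proof is correct and follows the paper's approach. The paper gives no separate proof: it treats the corollary as an immediate consequence of Theorem \ref{dery}, whose last display already identifies $\sum_{a}f^{\circ}(a)D_i\chi_a$ with $D_i f^{\circ}$ by the linearity of $D_i$ over the $\chi$-expansion, and the right-hand side is just the $\chi$-expansion of $(\partial_i f)^{\circ}$. One small correction: that last expansion does not need $\partial_i f\in C^{0,1}_{loc}$, since every $g^{\circ}$ equals $\sum_a g^{\circ}(a)\chi_a$ by definition.
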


\subsection{Ultrafunctions and measures}\label{DM}

By the extension axiom, we have seen that to every real function $f$
we can associate a ultrafunction $f^{\circ};$ next we will see how
we can associate an ultrafunction to every Radon measure $\mu\in\mathfrak{M}(\mathbb{R}^{N})$.

\begin{definition} \label{dual}Given a measure $\mu\in\mathfrak{M}(\mathbb{R}^{N}),$
$\forall v\in V\left(\Gamma^{N}\right),$\ we set 
\begin{equation}
\int^{\circ}\tilde{\mu}\left(x\right)v(x)dx=\lim_{\lambda\uparrow\Lambda}\ \left\langle \mu,v_{\lambda}\right\rangle _{\lambda}\label{98}
\end{equation}
\end{definition}

By this definition, taking $v=\delta_{a}$, we have that
\begin{equation}
\tilde{\mu}\left(a\right)=\int^{\circ}\tilde{\mu}(x)\delta_{a}(x)dx=\lim_{\lambda\uparrow\Lambda}\ \left\langle \mu,\delta_{a_{\lambda}}\right\rangle _{\lambda}\label{97}
\end{equation}

\textbf{Example 1:} If $\mathbf{\delta}_{a}$ is the Dirac measure,
then 
\[
\mathbf{\tilde{\delta}}_{a}=\delta_{a}
\]
where $\delta_{a}\in V(\Gamma^{N})$ is the Dirac ultrafunction defined
by (\ref{dirac2}).

\bigskip{}
\textbf{Example 2:} If $f\in L^{1}_{loc}\left(\mathbb{R}^{N}\right)$,
$f$ can be identified with the measure $\mu$ whose density is $f;$
then, we have that
\begin{equation}
\int^{\circ}\tilde{f}\left(x\right)v(x)dx=\lim_{\lambda\uparrow\Lambda}\int f\left(x\right)v_{\lambda}(x)dx\label{97+}
\end{equation}

\begin{remark} Exanple 2 showws that it is necessary to introduce
the symbol $(\tilde{\cdot})$ to distinguish $\tilde{f}$ from $f^{\circ}$.
\end{remark}

Let us analyze the difference between the operators $(^{\circ})\ $and
$\left(\tilde{\cdot}\right)$ when they are applied to $f\in\mathfrak{F}\left(\mathbb{E}^{N}\right)\cap\mathfrak{M}(\mathbb{R}^{N})=\mathcal{L}^{1}_{loc}\left(\mathbb{R}^{N}\right):$
by the definition, we have that 
\[
f^{\circ}(a)=\lim_{\lambda\uparrow\Lambda}f(a_{\lambda}),
\]
\[
\tilde{f}(a)=\lim_{\lambda\uparrow\Lambda}\int f(x)\delta_{a_{\lambda}}(x)dx;
\]
then, if $f\in\mathcal{L}^{1}_{c}\left(\mathbb{R}^{N}\right),$
\begin{itemize}
\item $\forall x\in\mathbb{R}^{N},$ $f^{\circ}(x)=f(x),$ and $\int^{\circ}f^{\circ}dx\sim\int f\ dx;$
\item $\forall a.e.\ x\in\mathbb{R}^{N},\ \ \tilde{f}(x)\sim f(x),$ and$\ \int^{\circ}\tilde{f}\ dx=\int f\ dx,$
in fact, 
\[
\int^{\circ}\tilde{f}\ dx=\lim_{\lambda\uparrow\Lambda}\int_{Q_{\lambda}}f\ dx=\lim_{\lambda\uparrow\Lambda}\int f\ dx=\int f\ dx.
\]
\end{itemize}
\textbf{Example 3: }If $f\in C^{0,1}\left(\mathbb{R}^{N}\right)=V\left(\mathbb{R}^{N}\right)\cap\mathfrak{M}(\mathbb{R}),$
then
\begin{equation}
\tilde{f}(x)=f^{\circ}(x);\label{bu+}
\end{equation}
in fact, $f^{\circ}\in V^{0}(\Gamma)$ and by Th. \ref{TU}-(\ref{U00}),
$\forall v\in V^{0}(\Gamma)$
\[
\int^{\circ}\tilde{f}\left(x\right)v(x)dx=\lim_{\lambda\uparrow\Lambda}\int f\left(x\right)v_{\lambda}(x)dx=\int^{\circ}f^{\circ}\left(x\right)v(x)dx.
\]

\textbf{Example 4:} If $M\subset\mathbb{R}^{N}$ is a manifold whose
measure is denoted by $\mu_{M},$ $\forall u\in V(\Gamma),$ it makes
sense to define
\begin{equation}
\int^{\circ}_{\widetilde{M}}u(x)dx:=\int^{\circ}\tilde{\mu}_{M}(x)u(x)dx.\label{bi+}
\end{equation}

\textbf{Example 5:} In particular, if $\Omega\in\mathbb{R}^{N}$ is
a bounded measurable set, then 
\begin{equation}
\int^{\circ}_{\widetilde{\Omega}}u(x)dx=\int^{\circ}\tilde{\chi}_{\Omega}(x)u(x)dx=\lim_{\lambda\uparrow\Lambda}\int_{\Omega}u_{\lambda}(x)dx.\label{bi++}
\end{equation}

\subsection{The vicinity of a set}

Given an open set $\Omega\subset\mathbb{R}^{N}$ and $f\in C^{1}\left(\Omega\right)$
(namely, it is the restriction to $\Omega$ of a function $f\in C^{1}\mathfrak{(\mathbb{R})}$),
then the value of $\nabla f(x_{0})$ in a point $x_{0}\in\Omega$
depends only on the values which $f$ takes in $\Omega$ since 
\[
\nabla f(x_{0})=\lim_{\substack{x\in\Omega,\\
x\rightarrow x_{0}
}
}\ \nabla f(x_{0})
\]
If $\nabla f$ is not continuous, $\nabla f$ is not defined, but
$Df{^{\circ}}$ makes sense; however $Df{^{\circ}}(x_{0})$ in a point
$x_{0}\sim\partial\Omega{^{\circ}}$ depends on the values which $f$
takes in suitable points $y\sim x_{0}$ even if $y\notin\Omega{^{\circ}}.$
Hence, if $u\notin V^{1}(\Gamma)$, it might happen that 
\[
\mathfrak{supp}(Du)\nsubseteq\mathfrak{supp}(u).
\]
Thus we are lead to the following definition:

\begin{definition} \label{47}For any internal set $\Theta\subseteq\Gamma^{N}$
(see (\ref{inter})), the \textbf{vicinity} of $\Theta$ is defined
as follows: 
\[
\mathfrak{vic}\left(\Theta\right):=\left[\bigcup\limits_{a\in\Theta}\{a\}\cup\mathfrak{supp}(D\delta_{a})\right];
\]
the \textbf{restricted interior }is defined as follows 
\[
\mathfrak{int}(\Theta):=\left\{ x\in\Theta\ |\;\{x\}\cup\mathfrak{supp}(D\delta_{x})\subset\Theta\right\} ;
\]
the \textbf{enlarged boundary} $\mathfrak{bd}\left(\Theta\right)$
is defined as follows 
\[
\mathfrak{bd}\left(\Theta\right):=\mathfrak{vic}\left(\Theta\right)\backslash\mathfrak{int}(\Theta)
\]
\end{definition}

\textbf{Example 1}: If $\Omega\subset\mathbb{R}^{N}$ is an open set
and $\Theta=\Omega^{\circ},$ then 
\[
\mathfrak{vic}\left(\Omega^{\circ}\right)\supset\overline{\Omega}^{\circ};\ \ \mathfrak{bd}\left(\Omega^{\circ}\right)=\mathfrak{vic}\left(\partial\Omega^{\circ}\right)\supset\partial\Omega^{\circ};\ \ \mathfrak{int}\left(\Omega^{\circ}\right)\subset\Omega^{\circ}.
\]

\textbf{Example 2:} If $\Theta=\left\{ x_{0}\right\} ,$ then, with
some abuse of notation, we will write $\mathfrak{vic}\left(x_{0}\right)$
instead of $\mathfrak{vic}\left(\left\{ x_{0}\right\} \right)$ and
we have that 
\[
\left\{ x_{0}\right\} \subsetneqq\mathfrak{vic}\left(x_{0}\right);\ \mathfrak{bd}\left(\left\{ x_{0}\right\} \right)=\mathfrak{vic}\left(x_{0}\right);\ \mathfrak{int}\left(\left\{ x_{0}\right\} \right)=\varnothing.
\]
Notice that, by Axiom \ref{AD}-(\ref{loc}), $\mathfrak{vic}\left(x_{0}\right)\subset\mathfrak{mon}(x_{0}).$

\textbf{Example 3:} If $\Theta=\Gamma^{N},$ then $\mathfrak{vic}\left(\Gamma^{N}\right)=\Gamma^{N}$;
the set
\begin{equation}
\mathfrak{bd}\left(\Gamma^{N}\right)=\left\{ (x_{1},..,x_{N})\in\Gamma^{N}\ |\ \exists i,\ x_{i}\in\mathfrak{bd}\left(\Gamma\right)\right\} \label{bi}
\end{equation}
will be \textbf{called enlarged boundary of infinity} and the restricted
interior of $\Gamma^{N}$ is given by
\[
\Gamma_{\mathfrak{int}}=\left\{ x\in\Gamma^{N}\ |\ \exists i,\ x_{i}\in\mathfrak{bd}\left(\Gamma\right)\right\} .
\]
If\textbf{\ }$N=1,$ it is not difficult to check that $\Gamma_{\mathfrak{int}}$
agrees with (\ref{bd}).

By the definition of vicinity, we have that $\forall u\in V(\Gamma),$

\[
\mathfrak{supp}(Du)\subseteq\mathfrak{vic}\left(\mathfrak{supp}(u)\right)
\]
and hence
\[
\mathfrak{supp}(D^{m}u)\subseteq\mathfrak{vic(...(vic}\left(\mathfrak{supp}(u)\right):=\mathfrak{vic}^{m}\left(\mathfrak{supp}(u)\right);
\]
in any case $\forall m\in\mathbb{N}$, 
\[
\mathfrak{supp}(D^{m}u)\subseteq\left\{ x\in\Gamma^{N}\ |\ x\sim\mathfrak{supp}(u)\right\} .
\]

\subsection{Some examples of generalized derivative}\label{sgd}

If $f\in C^{1,1}_{loc}(\mathbb{R}^{N})$, by Th.~\ref{TU}-(\ref{U3})
we have 
\begin{equation}
D_{i}f^{\circ}=(\partial_{i}f)^{\circ}.\label{der}
\end{equation}
Notice that (\ref{der}) may fail if $f\in C^{1}(\mathbb{R}^{N})\setminus C^{1,1}_{loc}(\mathbb{R}^{N})$.
For example take 
\[
f(x):=\int^{x}_{0}\left(t\sin\frac{1}{t^{2}}\right)dt.
\]
In this case, since $x\sin\frac{1}{x^{2}}\notin BV$, we have $\partial f\notin V(\mathbb{R})$;
hence for some $x\in\mathfrak{mon}(0)\cap\Gamma$ it may happen that
\[
Df(x)\neq x\sin\frac{1}{x^{2}}.
\]
More generally, if $f\in V(\mathbb{R}^{N})\setminus C^{1,1}_{loc}(\mathbb{R}^{N})$,
then $\partial_{i}f\in\mathfrak{M}(\mathbb{R}^{N})$ and by (\ref{lillina})
and (\ref{97}) we obtain 
\begin{equation}
D_{i}f(a)=\lim_{\lambda\uparrow\Lambda}\langle\partial_{i}f,\delta_{a_{\lambda}}\rangle=\widetilde{\partial_{i}f}(a).\label{pipi}
\end{equation}

Let us now consider some examples.

\textbf{Example 1.} If 
\[
H(x)=\frac{1}{2}\,[\mathrm{sign}(x)+1]
\]
is the Heaviside function, since $H\in V(\mathbb{R})$ we obtain 
\[
DH(x)=\delta_{0}(x).
\]

\textbf{Example 2.} Let $f(x):=\max(1-x^{2},0)$. Since $f\in C^{0,1}_{c}(\mathbb{R})$
we have

\begin{equation}
Df^{\circ}=(\partial f)^{\circ}=\begin{cases}
-2x & \text{if }x\in(-1,1),\\
1 & \text{if }x=-1,\\
-1 & \text{if }x=1,\\
0 & \text{if }x\notin[-1,1].
\end{cases}\label{moro}
\end{equation}
Since $\partial f\in V(\mathbb{R})$, it follows that
\[
D^{2}f=\widetilde{\partial(\partial f)}=\begin{cases}
-2 & \text{if }x\in(-1,1),\\[6pt]
-2\delta_{1}(1)=\frac{-2}{d(1)} & \text{if }x=1,\\[6pt]
2\delta_{-1}(-1)=\frac{2}{d(-1)} & \text{if }x=-1,\\[6pt]
0 & \text{if }x\notin[-1,1].
\end{cases}
\]

\textbf{Example 3.} If 
\[
\theta_{(a,b)}:=\chi^{\text{\textsc{epl}}}_{[a,b]}=\chi_{(a,b)}+\frac{1}{2}\chi_{\{a,b\}},
\]
then

\begin{eqnarray*}
\int^{\circ}D\theta^{\circ}_{(a,b)}v\,dx & = & \lim_{\lambda\uparrow\Lambda}\langle\partial\theta_{(a,b)},v_{\lambda}\rangle\\
 & = & \lim_{\lambda\uparrow\Lambda}\langle\delta_{a}-\delta_{b},v_{\lambda}\rangle\\
 & = & \lim_{\lambda\uparrow\Lambda}\big(v_{\lambda}(a)-v_{\lambda}(b)\big)\\
 & = & v(a)-v(b)\\
 & = & \int^{\circ}(\delta_{a}-\delta_{b})v\,dx.
\end{eqnarray*}
Hence, 
\begin{equation}
D\theta^{\circ}_{(a,b)}=\delta_{a}-\delta_{b}.\label{gru}
\end{equation}

\textbf{Example 4.} Taking $\chi_{[a,b]}$, since $\chi_{[a,b]}\notin V(\mathbb{R})$
the situation is slightly more complicated. Indeed,

\[
\chi_{[a,b]}=\theta_{(a,b)}+\frac{1}{2}\chi_{a}+\frac{1}{2}\chi_{b}.
\]
Using (\ref{dirac2}) we obtain

\begin{eqnarray*}
D\chi^{\circ}_{[a,b]} & = & D\theta^{\circ}_{(a,b)}+\frac{1}{2}D\chi_{a}+\frac{1}{2}D\chi_{b}\\
 & = & \delta_{a}-\delta_{b}+\frac{d(a)}{2}\delta'_{a}+\frac{d(b)}{2}\delta'_{b}.
\end{eqnarray*}

\subsection{The Poincaré inequality}

Let us see a variant of the Poincaré inequality, which will be useful
in the applications to PDEs.

\begin{theorem} Let $\Omega$ be a connected measurable set and let
$x_{0}\in\Omega^{\circ}$; then there exists a constant $C(\Omega,x_{0})$
such that 
\begin{equation}
\left[|u(x_{0})|^{2}+\int^{\circ}_{\Omega}|Du(x)|^{2}dx\right]\geq C(\Omega,x_{0})\int^{\circ}_{\Omega}|u(x)|^{2}dx\label{PI+}
\end{equation}
\end{theorem}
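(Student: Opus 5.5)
The plan is to reduce the statement to a one-dimensional estimate along segments, using the discrete antiderivative of Corollary \ref{CO} and the identity $f(b)-f(a)=\langle\partial f,\chi_{[a,b]}\rangle$ of Th.~\ref{diana}-(\ref{US}) at the level of the approximating nets. Write $u=\lim_{\lambda\uparrow\Lambda}u_{\lambda}$ with $u_{\lambda}\in\bigotimes V_{\lambda}(\mathbb{R})$. By (\ref{bi++}), $\int^{\circ}_{\Omega}|u|^{2}dx$ is the $\Lambda$-limit of $\int_{\Omega}|u_{\lambda}|^{2}dx$, and the derivative term is the analogous limit. By Prop.~\ref{p2}-(\ref{pippa}), it is then enough to prove, $\mathcal{U}$-eventually, a classical inequality
\[
|u_{\lambda}(x_{0,\lambda})|^{2}+\int_{\Omega}|\partial u_{\lambda}|^{2}dx\geq C\int_{\Omega}|u_{\lambda}|^{2}dx ,
\]
with $C=C(\Omega,x_{0})$ a real constant independent of $\lambda$. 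If $u_{\lambda}$ is not regular, $\partial u_{\lambda}$ is only a measure, so I would first treat $u\in V^{1}(\Gamma^{N})$, where Th.~\ref{TU}-(\ref{U00}),(\ref{U0}) give exactly these identifications. The general case I would then handle by expanding in the $\delta$-basis and using the definition (\ref{lillina}) of $D$ pointwise.

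For the classical inequality I would use the fundamental theorem of calculus along paths. For $x\in\Omega$, join $x_{0}$ to $x$ by a path in $\Omega$ (this is where connectedness enters) and write $u_{\lambda}(x)=u_{\lambda}(x_{0})+\int_{\text{path}}\partial u_{\lambda}$. Then apply Cauchy–Schwarz and integrate over $x$. This yields
\[
\int_{\Omega}|u_{\lambda}|^{2}\leq 2|\Omega|\,|u_{\lambda}(x_{0})|^{2}+2K\int_{\Omega}|\nabla u_{\lambda}|^{2},
\]
where $K$ depends only on the geometry, for example through a family of paths whose Jacobian is controlled. In one variable, or for $\Omega$ convex or an interval, this is the elementary estimate with $K\sim\mathrm{diam}(\Omega)^{2}$. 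Dividing gives $C=\left(2\max(|\Omega|,K)\right)^{-1}$, a standard constant, and taking the $\Lambda$-limit transfers the inequality to the ultrafunction.

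The main obstacle is that the statement allows arbitrary connected measurable $\Omega$. For such sets the classical inequality is false in general: there are thin channels, and point evaluation is not controlled by $H^{1}$ in dimension $N\geq2$. So the honest route is to exploit the hyperfinite structure rather than uniform classical bounds. For fixed $\lambda$ the problem lives on the hyperfinite set $\Omega^{\circ}$, and the left side of the inequality is a positive definite quadratic form. It vanishes only if $Du=0$ on $\Omega$ and $u(x_{0})=0$, and this forces $u=0$ by Axiom \ref{AD}-(\ref{4}) applied on the connected set. Hence the minimum of the Rayleigh quotient over the hyperfinite-dimensional space is positive, and this minimum is the constant $C(\Omega,x_{0})$.

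I would present the argument this way and accept that $C(\Omega,x_{0})\in\mathbb{E}^{+}$ may be infinitesimal. Where $\Omega$ is a bounded Lipschitz domain and $N=1$, I would supplement it with the path argument to show that $C$ is in fact a positive real. The delicate point I expect is justifying that Axiom \ref{AD}-(\ref{4}) localizes to $\Omega^{\circ}$, that is, that $Du=0$ on $\Omega^{\circ}$ implies $u$ is constant there. This is where the enlarged boundary $\mathfrak{bd}(\Omega^{\circ})$ has to be handled.
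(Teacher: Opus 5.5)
Your final ``honest route'' is the paper's proof. The paper declares $u\mapsto|u(x_{0})|^{2}+\int^{\circ}_{\Omega}|Du|^{2}dx$ positive definite because $Dw=0$ and $w(x_{0})=0$ give $w=0$ by (\ref{A1}). It then takes $C(\Omega,x_{0})$ to be the minimum of the Rayleigh quotient over a hyperfinite-dimensional space, noting that this minimum is in general infinitesimal.

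The path-and-transfer half of your proposal plays no role and should be discarded. That includes the claim that $C$ is a positive real when $N=1$. For irregular $u$ (e.g.\ $u=\chi_{a}$), $\partial u_{\lambda}$ is a measure, so $\int^{\circ}_{\Omega}|Du|^{2}dx$ is not $\lim_{\lambda\uparrow\Lambda}\int_{\Omega}|\partial u_{\lambda}|^{2}dx$. Also, (\ref{bi++}) concerns $\tilde{\chi}_{\Omega}$, not the weight $\theta^{\circ}_{\Omega}$ of Definition \ref{pippo}.

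The step you call delicate is a genuine gap, and the paper's proof has the same gap, since it simply quotes (\ref{A1}). Write $Q(u)=|u(x_{0})|^{2}+\int^{\circ}_{\Omega}|Du|^{2}dx$ and $R(u)=\int^{\circ}_{\Omega}|u|^{2}dx$. Since $Du$ on $\Omega^{\circ}$ depends on values of $u$ in $\mathfrak{vic}(\Omega^{\circ})$, these forms live on $V(\Gamma^{N})$, not on restrictions to $\Omega^{\circ}$. For two semidefinite forms, a positive infimum of $Q/R$ is equivalent to $\ker Q\subseteq\ker R$. So you need: if $Du=0$ wherever $\theta^{\circ}_{\Omega}>0$ and $u(x_{0})=0$, then $u=0$ there.

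Axiom \ref{AD}-(\ref{4}) only speaks of $Du=0$ on all of $\Gamma^{N}$, and it cannot give this local statement. You can see this because connectedness is never used. Take $\Omega=(0,1)\cup(2,3)$, $x_{0}=\frac{1}{2}$, and $h$ smooth with $h=0$ on $(-\infty,\frac{7}{5}]$ and $h=1$ on $[\frac{8}{5},\infty)$. Property (\ref{loc}) together with $D\mathbf{1}^{\circ}=0$ gives $Dh^{\circ}(a)=0$ whenever $\theta^{\circ}_{\Omega}(a)>0$. Hence $Q(h^{\circ})=0<R(h^{\circ})$.

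So a localized form of (\ref{A1}) on connected sets must either be added as an assumption or verified in the model. In the model it is not automatic: at grid points, the step derivative (\ref{lisa}) of a step function is the centred difference $\frac{u(a^{+})-u(a^{-})}{2\eta}$. That operator annihilates the mode alternating between $0$ and $1$ along the grid.
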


\textbf{Proof}: We set 
\[
V(\Omega^{\circ}):=\left\{ u_{|_{\Omega^{\circ}}}\ |\ u\in V(\Gamma^{N})\right\} 
\]
and 
\[
W(\Omega^{\circ}):=\left\{ \left[u(x)-u(x_{0})\right]_{|_{\Omega^{\circ}}}\ |\ u\in V(\Gamma^{N})\right\} .
\]

If $w\in W(\Omega^{\circ})$ and $Dw=0$, by (\ref{A1}) $w$ is a
constant function. Then, since $w(x_{0})=0$, we have that $w=0$.
Hence the quadratic form 
\[
u\mapsto|u(x_{0})|^{2}+\int^{\circ}_{\Omega}|Du(x)|^{2}dx
\]
is positive definite; then, since $\dim\left[W(\Omega^{\circ})\right]$
is hyperfinite, 
\[
C(\Omega,x_{0}):=\min_{u\neq0}\frac{|u(x_{0})|^{2}+\int^{\circ}_{\Omega}|Du(x)|^{2}dx}{\int^{\circ}_{\Omega}|u(x)|^{2}dx}>0
\]
and we have that $\forall u\in V(\Omega^{\circ})$ 
\[
C(\Omega,x_{0})\int^{\circ}_{\Omega}|u(x)|^{2}dx\le\left[|u(x_{0})|^{2}+\int^{\circ}_{\Omega}|Du(x)|^{2}dx\right].
\]

$\square$

\medskip{}

Notice that $C(\Omega,x_{0})$, in general, is an infinitesimal number.

\begin{remark} If $\Omega$ is not connected, the Poincaré inequality
can be formulated in the usual way 
\begin{equation}
C(\Omega)\int^{\circ}_{\Omega}|u(x)|^{2}dx\leq\left[\int^{\circ}_{\Omega}|Du(x)|^{2}dx+|\bar{u}|^{2}\right]\label{PI}
\end{equation}
where 
\[
\bar{u}=\frac{\int^{\circ}_{\Omega}u(x)\ dx}{\int^{\circ}_{\Omega}dx}
\]
and 
\[
C(\Omega):=\min_{u\neq0}\frac{|\bar{u}|^{2}+\int^{\circ}_{\Omega}|Du(x)|^{2}dx}{\int^{\circ}_{\Omega}|u(x)|^{2}dx}.
\]
\end{remark}

\begin{remark} The generalized Poincaré inequality holds even when
the measure of $\Omega$ is infinite; in this case also $C(\Omega)$
is an infinitesimal number. \end{remark} 

\subsection{The pointwise integral restricted to a set}

If $\Theta\subset\Gamma$ is an internal set, taking account of (\ref{int}),
it is natural to define 
\[
\int^{\circ}_{\Theta}u(x)\,dx:=\sum_{a\in\Theta}u(a)d(a).
\]
However, if $\Omega\subset\mathbb{R}^{N}$ is an open set and $f$
is a continuous function, in general 
\[
\int^{\circ}_{\Omega^{\circ}}f^{\circ}(x)\,dx\neq\int_{\Omega}f(x)\,dx
\]
since the values of $f$ on $\partial\Omega$ are relevant. Hence
we need a good definition of 
\[
\int^{\circ}_{\Omega}u(x)\,dx,\;\;\Omega\subset \mathbb{R}^N
\]
such that in the smooth case it agrees with $\int_{\Omega}u(x)\,dx$.
There are (at least) two reasonable definitions: 
\[
\int^{\circ}\tilde{\theta}_{\Omega}(x)u(x)\,dx\qquad\text{and}\qquad\int^{\circ}\theta^{\circ}_{\Omega}(x)u(x)\,dx,
\]
where $\theta_{\Omega}=\chi^{\textsc{epl}}_{\Omega}$.  In fact, if $f\in C^{0,1}$, since $\theta_{\Omega}f\in V(\mathbb{R})$ we have 
\[
\int^{\circ}\theta^{\circ}_{\Omega}(x)f^{\circ}(x)\,dx=\int\theta_{\Omega}(x)f(x)\,dx=\int_{\Omega}f(x)\,dx.
\]

On the other hand, $\tilde\theta_{\Omega}=\tilde{\chi}_{\Omega}$ and by (\ref{bi++}) 
\[
\int^{\circ}\tilde{\theta}_{\Omega}(x)f^{\circ}(x)\,dx=\int_{\Omega}f(x)\,dx.
\]

Even if, by (\ref{bi++}), the second definition seems more natural,
the first one is more convenient for applications.

\begin{definition}\label{pippo} If $\Omega\subset\mathbb{R}^{N}$
is a measurable set, for every $u\in V(\Gamma)$ we define 
\[
\int^{\circ}_{\Omega}u(x)\,dx:=\int^{\circ}\theta^{\circ}_{\Omega}(x)u(x)\,dx.
\]
\end{definition}

In conclusion, given $u$ and $\Omega$, we can define three different
quantities 
\[
\int^{\circ}_{\Omega^{\circ}}u(x)\,dx,\qquad\int^{\circ}_{\Omega}u(x)\,dx,\qquad\int^{\circ}_{\tilde\Omega}u(x)\,dx=\int^{\circ}\tilde\theta_{\Omega}(x)u(x)\,dx,
\]
which in general differ by an infinitesimal quantity due to the contribution of the boundary.

In particular, if $\Omega\subset\mathbb{R}^{N}$ is a bounded open set, we have 
\[
\int^{\circ}_{\Omega}u(x)\,dx=\int^{\circ}_{\Omega^{\circ}}u(x)\,dx+\int^{\circ}_{\partial\Omega^{\circ}}\theta^{\circ}_{\Omega}(x)u(x)\,dx=\sum_{a\in\Omega^{\circ}}u(a)d(a)+\sum_{a\in\partial\Omega^{\circ}}\theta^{\circ}_{\Omega}(a)u(a)d(a).
\]
and, if $\partial\Omega$ is smooth, 
\[
\theta_{\Omega}(x)=\begin{cases}
1 & \text{if }x\in\Omega,\\
0 & \text{if }x\notin\Omega,\\
\frac{1}{2} & \text{if }x\in\partial\Omega.
\end{cases}
\]
Hence 
\begin{eqnarray*}
\oint_{\Omega}u(x)\,dx & = & \int^{\circ}_{\Omega^{\circ}}u(x)\,dx+\frac{1}{2}\int^{\circ}_{\partial\Omega^{\circ}}u(x)\,dx\\
 & = & \sum_{a\in\Omega^{\circ}}u(a)d(a)+\frac{1}{2}\sum_{a\in\partial\Omega^{\circ}}u(a)d(a).
\end{eqnarray*}

The choice given by Definition \ref{pippo} allows one to generalize
the fundamental theorem of calculus in the framework of ultrafunctions
(and also the Gauss divergence theorem; see Section \ref{GDT}).

\begin{theorem}[Fundamental Theorem of Calculus]\label{TFC} If $a,b\in\mathbb{R}$,
then for every $v\in V(\Gamma)$ 
\[
\int^{\circ}_{(a,b)}Dv\,dx=v(b)-v(a).
\]
\end{theorem}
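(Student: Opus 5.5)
By Definition \ref{pippo}, the left-hand side equals $\int^{\circ}\theta^{\circ}_{(a,b)}(x)\,Dv(x)\,dx$, where $\theta_{(a,b)}=\chi^{\textsc{epl}}_{[a,b]}$. The strategy is to move the derivative from $Dv$ onto $\theta^{\circ}_{(a,b)}$, whose derivative is already known from (\ref{gru}): $D\theta^{\circ}_{(a,b)}=\delta_{a}-\delta_{b}$. Once the derivative is transferred, the reproducing property (\ref{delta}) gives
\[
-\int^{\circ}D\theta^{\circ}_{(a,b)}\,v\,dx=-\bigl(v(a)-v(b)\bigr)=v(b)-v(a),
\]
which is the claim. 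So the whole proof reduces to an integration-by-parts identity
\[
\int^{\circ}\theta^{\circ}_{(a,b)}\,Dv\,dx=-\int^{\circ}D\theta^{\circ}_{(a,b)}\,v\,dx .
\]

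To prove this identity I would work on the $\chi$-basis, as in Theorem \ref{ilona}. Write $v=\sum_{c\in\Gamma}v(c)\chi_{c}$. By linearity it suffices to check the identity for $v=\chi_{c}$, or equivalently for $v=\delta_{c}$, with $c\in\Gamma$. For such $c$ the right-hand side is $-D\theta^{\circ}_{(a,b)}(c)\,d(c)$, computed from (\ref{gru}). The left-hand side is $\int^{\circ}\theta^{\circ}_{(a,b)}D\chi_{c}\,dx$.

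For the left-hand side I would use Definition \ref{DA}. It gives $D\chi_{c}(y)=\lim_{\lambda\uparrow\Lambda}\langle\partial\sigma_{c_{\lambda}},\delta_{y_{\lambda}}\rangle$. The integrand then becomes a $\Lambda$-limit of $\langle\partial\sigma_{c_{\lambda}},\sum_{y}\theta_{(a,b)}(y_{\lambda})\chi_{y_{\lambda}}\rangle$-type expressions, that is, of $\langle\partial\sigma_{c_{\lambda}},(\theta_{(a,b)})_{\lambda}\rangle$. Since $\theta_{(a,b)}\in V(\mathbb{R})$, by (\ref{ruben}) we have $\mathcal{U}$-eventually $(\theta_{(a,b)})_{\lambda}=\theta_{(a,b)}$. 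Also, $\mathcal{U}$-eventually $[a,b]\subset(-\omega_{\lambda},\omega_{\lambda})$. On this qualified set the classical BV integration by parts on $[-\omega_\lambda,\omega_\lambda]$ applies, with no boundary terms because $\theta$ vanishes near $\pm\omega_\lambda$. It gives
\[
\langle\partial\sigma_{c_{\lambda}},\theta_{(a,b)}\rangle=-\langle\partial\theta_{(a,b)},\sigma_{c_{\lambda}}\rangle=\sigma_{c_{\lambda}}(b)-\sigma_{c_{\lambda}}(a).
\]
This uses Theorem \ref{diana}-(\ref{US}) together with the epilogic convention, which makes the product rule for two BV functions with common jump points hold with the averaged values. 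Taking $\Lambda$-limits yields $\chi_{c}(b)-\chi_{c}(a)$. Summing against $v(c)$ then gives $v(b)-v(a)$ directly. This is really a direct alternative route that bypasses (\ref{gru}).

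The main obstacle is justifying the BV integration by parts $\int\sigma\,d(\partial\theta)=-\int\theta\,d(\partial\sigma)$ when both functions may jump at the same point, for instance when $c=a$. This is precisely where the epilogic (averaged) representatives are needed. I would handle it by approximating $\sigma_{c_{\lambda}}$ in the seminorms (\ref{topology}) by $C^{1}$ functions, as in the proof of Theorem \ref{diana}-(\ref{US}), and checking that the midpoint values at $a$ and $b$ appear correctly in the limit. A secondary point is to make sure the interchange of the hyperfinite sum with the $\Lambda$-limit is legitimate; this follows as in the proof of (\ref{int}).
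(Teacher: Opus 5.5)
Your opening reduction is the paper's proof. The paper writes $\int^{\circ}_{(a,b)}Dv\,dx=\int^{\circ}\theta^{\circ}_{(a,b)}Dv\,dx=-\int^{\circ}D\theta^{\circ}_{(a,b)}v\,dx$ and then uses $\partial\theta_{(a,b)}=\delta_a-\delta_b$, i.e.\ (\ref{gru}). The difference is in the middle integration by parts. The paper asserts it at the ultrafunction level, although Theorem \ref{ilona} requires both factors in $V_0(\Gamma)$ and a general $v$ is not. You instead push it down to the approximating nets. Unpacking the definitions gives $\int^{\circ}\theta^{\circ}_{(a,b)}Dv\,dx=\lim_{\lambda\uparrow\Lambda}\langle\partial v_\lambda,\theta_{(a,b)}\rangle$: the weights $d(y)$ turn $\delta_{y_\lambda}$ into $\sigma_{y_\lambda}$, and $\sum_{y}\theta_{(a,b)}(y_\lambda)\sigma_{y_\lambda}=\theta_{(a,b)}$ eventually by (\ref{ruben}). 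The paper's unproved step is then just the $\Lambda$-limit of the classical identity $\langle\partial v_\lambda,\theta_{(a,b)}\rangle=-\langle\partial\theta_{(a,b)},v_\lambda\rangle$. So your route is the paper's argument with its key step actually justified. The detour through the basis $\chi_c$ is harmless but unnecessary, since you can work with $v_\lambda$ directly.

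The step you flag as the main obstacle needs a different treatment. Approximating $\sigma_{c_\lambda}$ by $C^1$ functions in the seminorms (\ref{topology}) cannot work when $\sigma_{c_\lambda}$ has a jump, because the $C^0$ part of the norm forces any limit of continuous functions to be continuous. Nor should you rely on Theorem \ref{diana}-(\ref{US}) as written: with $\chi_{[a,b]}$ it is off by half-jumps at the endpoints (for $f=H$ on $[0,1]$ it gives $1$ instead of $H(1)-H(0)=\tfrac12$). You need no approximation and no product rule. For epilogic $f$ with $\mu=\partial f$,
\[
\langle\mu,\theta_{(a,b)}\rangle=\tfrac12\mu(\{a\})+\mu((a,b))+\tfrac12\mu(\{b\}).
\]
Substituting $\mu((a,b))=f(b^-)-f(a^+)$, $\mu(\{x\})=f(x^+)-f(x^-)$ and $f(x)=\tfrac12[f(x^+)+f(x^-)]$ gives exactly $f(b)-f(a)$. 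Apply this to $f=v_\lambda$, combine it with your unpacking, and take $\Lambda$-limits; that proves the theorem.
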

\begin{proof}
For every $v\in V(\Gamma)$, since $\theta_{(a,b)}\in V(\mathbb{R})$,
by (\ref{gru}) we have 
\[
\int^{\circ}_{(a,b)}Dv(x)\,dx=\int^{\circ}\theta^{\circ}_{(a,b)}Dv\,dx=-\int^{\circ}D\theta^{\circ}_{(a,b)}v\,dx=-\langle\partial\theta_{(a,b)},v_{\Lambda}\rangle.
\]

Since $\partial\theta_{(a,b)}=\delta_a-\delta_b$, we get
\[
\int_{(a,b)}^{\circ} Dv(x)\,dx
=-\langle \delta_a-\delta_b, v_\Lambda\rangle
=
v(b)-v(a).
\]
$\square$
\end{proof}

Theorem \ref{TFC} is not trivial since it holds even
when $v(x)$ is a very wild function, for example when $v=\sqrt{\delta_{a}}$
with $a\in\Gamma$, or $v=g^{\circ}$ when $g$ is a non-measurable
function.

Theorem \ref{TFC} allows one to write the usual explicit formula for the
unique antiderivative in $V_{0}(\Gamma)$ of a generic ultrafunction
$v$: 
\[
F:=D^{-1}_{0}v.
\]

Indeed, if $DF=v$, integrating from $0$ to $x\in\mathbb{R}$ we
get 
\[
\int^{\circ}_{(0,x)}DF(t)\,dt=\int^{\circ}_{(0,x)}v(t)\,dt
\]
and hence 
\[
F(x)=F(0)+\int^{\circ}_{(0,x)}v(t)\,dt.
\]
$\square$

\subsection{The Gauss' divergence theorem}\label{GDT}

In this section we want to generalize the Gauss' divergence theorem
in the framework of the ultrafunction (see also \cite{gauss}); in
particular it is interesting to analyze the case when $\partial\Omega$
is not smooth.

First, we will examine the smooth case. If $\Omega\subset\mathbb{R}^{N}$
is a bounded set with smooth boundary and $\mathbf{\phi}$ is a smooth
vector field, we have that
\[
\int_{\Omega}\nabla\cdot\mathbf{\phi}\ dx=\int\mathbf{\phi}\cdot\mathbf{n}_{\Omega}\ dS^{N-1}
\]
where $S^{N-1}$ denotes the $(N-1)$-dimensional measure
of $\partial\Omega$ and $\mathbf{n}_{\Omega}(x)$ is the exterior
normal derivative. We have the following result:

\begin{theorem} \label{ruby}Let $\Omega\subset\mathbb{R}^{N}$ be
a bounded open set with a $C^{1}$-boundary and let $\theta_{\Omega}=\chi^{\text{\textsc{epl}}}_{\Omega};$
then
\[
\int^{\circ}_{\Omega}D\cdot\mathbb{\phi}\ dx=\int^{\circ}_{\Omega}\mathbb{\phi}\cdot\mathbf{n}^{\circ}_{\Omega}\ |D\theta^{\circ}_{\Omega}|dx
\]
where  $\mathbf{n}_{\Omega}(x)$  is a $C^1$-extension of the normal derivative  to all $\mathbb{R}^{N}$  and,
$\forall x\in\mathfrak{\partial}\Omega$,  
\[
\mathbf{n}_{\Omega}^\circ(x)=-\frac{D\theta^{\circ}_{\Omega}(x)}{|D\theta^{\circ}_{\Omega}(x)|}.
\]
\end{theorem}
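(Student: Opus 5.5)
The plan is to imitate the proof of Theorem \ref{TFC}: move the derivative from $\phi$ onto $\theta^{\circ}_{\Omega}$ by a weak integration by parts, and then rewrite the vector ultrafunction $-D\theta^{\circ}_{\Omega}$ in polar form as $\mathbf{n}^{\circ}_{\Omega}\,|D\theta^{\circ}_{\Omega}|$.

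\textbf{Step 1 (integration by parts, componentwise).} By Definition \ref{pippo},
\[
\int^{\circ}_{\Omega}D\cdot\phi\,dx=\sum_{i=1}^{N}\int^{\circ}\theta^{\circ}_{\Omega}\,D_{i}\phi_{i}\,dx .
\]
For each $i$ I would prove the identity
\[
\int^{\circ}\theta^{\circ}_{\Omega}\,D_{i}\phi_{i}\,dx=-\int^{\circ}D_{i}\theta^{\circ}_{\Omega}\,\phi_{i}\,dx .
\]
Both sides should equal $-\lim_{\lambda\uparrow\Lambda}\langle\partial_{i}\theta_{\Omega},\phi_{i,\lambda}\rangle$. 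For the right side this is (\ref{pipi}) combined with Definition \ref{dual}, because $\partial_{i}\theta_{\Omega}$ is a Radon measure when $\theta_{\Omega}$ is a bounded $BV$ set function with $C^{1}$ boundary. For the left side I would argue exactly as in the proof of Theorem \ref{TFC}. Since $\Omega$ is bounded, $\mathrm{supp}\,\theta_{\Omega}$ lies inside $Q_{\lambda}$ eventually, so no boundary terms at infinity appear. Expanding $D_{i}\phi_{i}$ in the $\chi$-basis, using Definition \ref{DA} and the relation $\langle\partial_{i}\phi_{i,\lambda},\theta_{\Omega}\rangle=-\langle\partial_{i}\theta_{\Omega},\phi_{i,\lambda}\rangle$ for each $\lambda$, gives the claim. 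This last relation is a classical distributional identity, which holds because $\theta_{\Omega}$ has compact support.

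\textbf{Step 2 (polar decomposition).} Summing over $i$ gives
\[
\int^{\circ}_{\Omega}D\cdot\phi\,dx=-\int^{\circ}\phi\cdot D\theta^{\circ}_{\Omega}\,dx .
\]
At points where $D\theta^{\circ}_{\Omega}(x)\neq0$ we have the pointwise identity
\[
-D\theta^{\circ}_{\Omega}(x)=\mathbf{n}^{\circ}_{\Omega}(x)\,|D\theta^{\circ}_{\Omega}(x)| .
\]
At points where $D\theta^{\circ}_{\Omega}(x)=0$, both sides vanish whatever value $\mathbf{n}^{\circ}_{\Omega}$ takes there, since it is multiplied by $0$. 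The integrands therefore agree at every point of $\Gamma^{N}$, which yields the stated formula. Here I read the right-hand integral as taken against the full grid measure; that is, the factor $|D\theta^{\circ}_{\Omega}|$ already localizes it to $\mathfrak{vic}(\partial\Omega^{\circ})$.

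\textbf{Step 3 (consistency with the $C^{1}$ extension of the normal).} I still have to check that the formula $\mathbf{n}^{\circ}_{\Omega}=-D\theta^{\circ}_{\Omega}/|D\theta^{\circ}_{\Omega}|$ is compatible with $\mathbf{n}_{\Omega}$ being the $\circ$-extension of a $C^{1}$ field. This matters for points of $\mathrm{supp}\,D\theta^{\circ}_{\Omega}$ that lie off $\partial\Omega$ but inside its monad, which can occur by Axiom \ref{AD}-(\ref{loc}). Since $\partial\theta_{\Omega}=-\mathbf{n}_{\Omega}\,dS^{N-1}$, formula (\ref{97}) gives
\[
D\theta^{\circ}_{\Omega}(a)=-\lim_{\lambda\uparrow\Lambda}\int_{\partial\Omega}\mathbf{n}_{\Omega}\,\delta_{a_{\lambda}}\,dS .
\]
Because $\delta_{a_{\lambda}}$ is concentrated near $a_{\lambda}$ and $\mathbf{n}_{\Omega}$ is continuous, this is approximately $-\mathbf{n}_{\Omega}(a)$ times a scalar weight.

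\textbf{Main obstacle.} I expect the main obstacle to be this last point. Continuity of $\mathbf{n}_{\Omega}$ only yields equality of directions up to an infinitesimal error, whereas the theorem asserts exact equality. To get exactness I would take $\mathbf{n}^{\circ}_{\Omega}$ on $\mathrm{supp}\,D\theta^{\circ}_{\Omega}$ to be defined by the quotient itself, which is how the statement phrases it. Then only Steps 1 and 2 are genuinely needed, and the $C^{1}$ extension serves merely to fix the values off that support.
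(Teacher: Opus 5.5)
Your Steps 1 and 2 are sound at the paper's level of rigour. You also read the right-hand side correctly as an integral over the whole grid. Weighting it by $\theta^{\circ}_{\Omega}$, as Definition~\ref{pippo} would literally prescribe, would halve the boundary contribution. This is visible already in one variable, where $|D\theta^{\circ}_{(a,b)}|=\delta_a+\delta_b$ by (\ref{gru}) and $\theta^{\circ}_{(a,b)}=\frac12$ at $a,b$.

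However, Steps 1 and 2 are exactly the paper's proof of the later Theorem~\ref{B} with $E=\Omega$, where $\mathbf{n}^{\circ}_{E}$ is \emph{defined} by (\ref{127}). They use neither the $C^{1}$ hypothesis nor the classical normal. What Theorem~\ref{ruby} adds is that the $\circ$-extension of (a $C^{1}$ extension of) the genuine outer normal satisfies $\mathbf{n}^{\circ}_{\Omega}|D\theta^{\circ}_{\Omega}|=-D\theta^{\circ}_{\Omega}$ on $\mathfrak{supp}(D\theta^{\circ}_{\Omega})$. The integral identity needs this when $\mathbf{n}^{\circ}_{\Omega}$ has its stated meaning, and it gives the displayed formula on $\partial\Omega$. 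Your way around the ``main obstacle'' redefines $\mathbf{n}^{\circ}_{\Omega}$ on that support so that this holds by fiat. That half of the statement is therefore assumed rather than proved, and this is the gap.

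The paper closes it with the very ingredient you wrote in Step 3. It tests the classical divergence theorem in measure form, $\nabla\theta_{\Omega}=-\mathbf{n}_{\Omega}S^{N-1}$, against $\phi_{\lambda}$ and takes the $\Lambda$-limit. It sets $\lim_{\lambda\uparrow\Lambda}\langle S^{N-1},\mathbf{n}_{\Omega}\cdot\phi_{\lambda}\rangle=\int^{\circ}\mathbf{n}^{\circ}_{\Omega}\cdot\phi\,\tilde{S}^{N-1}dx$ for every $\phi$ and concludes $D\theta^{\circ}_{\Omega}=-\tilde{S}^{N-1}\mathbf{n}^{\circ}_{\Omega}$ pointwise. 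From this it reads off both $|D\theta^{\circ}_{\Omega}|=\tilde{S}^{N-1}$ and the normal formula.

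Your hesitation targets precisely the step where exactness enters. By Definition~\ref{dual}, that equality requires $\mathbf{n}_{\Omega}\cdot\phi_{\lambda}$ to be the $V_{\lambda}$-representative of the grid product $\mathbf{n}^{\circ}_{\Omega}\cdot\phi$. For $\phi=\delta_{a}\mathbf{e}_{i}$, the case a pointwise conclusion needs, that representative is $n_{i}(a_{\lambda})\delta_{a_{\lambda}}$, not $n_{i}\,\delta_{a_{\lambda}}$. The discrepancy is your term $\lim_{\lambda\uparrow\Lambda}\int_{\partial\Omega}\bigl(\mathbf{n}_{\Omega}(x)-\mathbf{n}_{\Omega}(a_{\lambda})\bigr)\delta_{a_{\lambda}}(x)\,dS$. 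It vanishes in one variable, where $\partial\Omega$ consists of grid points at which $\delta_{a_{\lambda}}$ is zero unless they coincide with $a_{\lambda}$. For $N\ge2$ it has no reason to vanish, since $\mathbf{n}_{\Omega}$ varies across the support of $\delta_{a_{\lambda}}$.

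So do not present the redefinition as a proof of the statement. One honest option is to state the identification $\widetilde{\mathbf{n}_{\Omega}S^{N-1}}=\mathbf{n}^{\circ}_{\Omega}\,\tilde{S}^{N-1}$ as an explicit input, as the paper implicitly does; your Step 3 then gives the paper's conclusion at once. The other is to keep (\ref{127}) as a definition and say plainly that what you have proved is Theorem~\ref{B} for $E=\Omega$.
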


\textbf{Proof}: Since $\partial\Omega$ is $C^{1}$,\ we can extend
$\mathbf{n}_{\Omega}(x)$ to all $\mathbb{R}^{N}.$ Then, the Gauss divergence theorem can be wrutten as follows:
\[
\left\langle \nabla\cdot\phi,\ \theta_{\Omega}\right\rangle =\left\langle S^{N-1},\ \mathbf{n}_{\Omega}\cdot\phi\right\rangle 
\]
where $\nabla\cdot\phi$ is a vector valued measure.  But
\[
\left\langle \nabla\cdot\phi,\ \theta_{\Omega}\right\rangle =\sum_{i}\left\langle \partial_{i}\phi_{i},\ \theta_{\Omega}\right\rangle =-\sum_{i}\left\langle \partial_{i}\theta_{\Omega},\ \phi_{i}\right\rangle =-\left\langle \nabla\theta_{\Omega},\ \phi\right\rangle 
\]
and hence, $\forall\phi_{\lambda}\in\left[V^{N}_{\lambda}\left(\mathbb{R}\right)\right]^{N}$
\[
-\left\langle \nabla\theta_{\Omega},\ \phi_{\lambda}\right\rangle =\left\langle S^{N-1},\ \mathbf{n}_{\Omega}\cdot\phi_{\lambda}\right\rangle 
\]
Then, taking the $\Lambda$-limit,
\[
\int^{\circ}D\theta^{\circ}_{\Omega}\ \phi\ dx=-\int^{\circ}\mathbf{n}^{\circ}_{\Omega}\cdot\phi\;\tilde{S}^{N-1} dx
\]
Hence, since this equality holds for every $\phi\in \left[ V^{N}\left(
\Gamma \right) \right] ^{N},$  
\[
D\theta _{\Omega }^{\circ }=-\tilde{S}^{N-1}\mathbf{n}%
_{\Omega };
\]
since  $D\theta _{\Omega }^{\circ }$  is parallel to  $\mathbf{n}_{\Omega }$
\[
|D\theta _{\Omega }^{\circ }(x)|=\tilde{S}^{N-1}\ \ \ \text{and }\ \ \mathbf{n}_{\Omega }=-\frac{D\theta _{\Omega }^{\circ
}}{|D\theta _{\Omega }^{\circ }(x)|}.
\]
$\square$
\medskip{}

Theorem \ref{ruby} holds if $\partial \Omega $ is of class $C^{1}$, but
it suggests the "right" generalization of the notion of normal
derivative.  Given a measurable set $E\subset \mathbb{R}^{N},$ we define 
\begin{equation}
\mathbf{n}_{E}^{\circ }(x)=\left\{ 
\begin{array}{cc}
-\frac{D\theta _{E}^{\circ }}{|D\theta _{E}^{\circ }(x)|} & if\ \ \left\vert
D\theta _{E}^{\circ }(x)\right\vert \neq 0\  \\ 
&  \\ 
0 & if\ \ \left\vert D\theta _{E}^{\circ }(x)\right\vert =0%
\end{array}%
\ \right.  \label{127}
\end{equation}%
It is surprizing that $\mathbf{n}_{E}^{\circ }(x)$ makes sense even if $E$
consists of a single point $x_{0}.$ Clearly in this case,  $\mathfrak{supp}%
\left( \mathbf{n}_{\left\{ x_{0}\right\} }^{\circ }\right) \subset \mathfrak{%
vic}\left( \{x_{0}\}\right) \subset \mathfrak{mon}\left( x_{0}\right) .$

\begin{theorem}
\label{B}(\textbf{Generalized Gauss' divergence theorem}) Let $\Phi%
:\Gamma \rightarrow \left[ V(\Gamma )\right] ^{N}$ be a (ultrafunctions)
vector field and let $E\subseteq \Gamma $ be an internal set; then%
\begin{equation*}
\int_{E}^{\circ }D\cdot {\Phi }\ dx=\int_{\mathfrak{bd}\left(
E\right) }^{\circ }{\Phi }\cdot \mathbf{n}_{E}^{\circ }\ |D\theta
_{E}^{\circ }|\ dx
\end{equation*}
\end{theorem}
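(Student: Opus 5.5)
The plan is to reduce the statement to a single integration by parts, with $\theta^{\circ}_{E}$ playing the role of the test function. Then I will show that the resulting integrand is supported in $\mathfrak{bd}(E)$ and rewrite it using the definition (\ref{127}) of $\mathbf{n}^{\circ}_{E}$.

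\emph{Step 1 (integration by parts).} By Definition \ref{pippo}, the left-hand side is
\[
\int^{\circ}\theta^{\circ}_{E}\,D\cdot\Phi\,dx=\sum_{i}\int^{\circ}\theta^{\circ}_{E}\,D_{i}\Phi_{i}\,dx .
\]
For an internal $E$, I read $\theta^{\circ}_{E}$ as the ultrafunction $\chi_{E}=\sum_{a\in E}\chi_{a}$. I then apply, coordinatewise, the antisymmetry of Theorem \ref{ilona} in its several-variable form. This gives
\[
\sum_{i}\int^{\circ}\theta^{\circ}_{E}\,D_{i}\Phi_{i}\,dx=-\sum_{i}\int^{\circ}D_{i}\theta^{\circ}_{E}\,\Phi_{i}\,dx=-\int^{\circ}\Phi\cdot D\theta^{\circ}_{E}\,dx .
\]
The several-variable antisymmetry follows from the product structure $\chi_{a}=\chi_{a_{1}}\cdots\chi_{a_{N}}$ and the one-dimensional lemma on $\int^{\circ}D\delta_{a}\delta_{b}\,dx$.

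\emph{Step 2 (localization).} Next I show that $\mathfrak{supp}(D\theta^{\circ}_{E})\subseteq\mathfrak{bd}(E)$. Off $\mathfrak{vic}(E)$ this is immediate, since $\mathfrak{supp}(Du)\subseteq\mathfrak{vic}(\mathfrak{supp}\,u)$. For $x\in\mathfrak{int}(E)$ I use the duality $D\theta^{\circ}_{E}(x)=-\int^{\circ}\theta^{\circ}_{E}\,D\delta_{x}\,dx$. Because $\{x\}\cup\mathfrak{supp}(D\delta_{x})\subset E$ and $\theta^{\circ}_{E}\equiv1$ there, this equals $-\int^{\circ}\mathbf{1}^{\circ}D\delta_{x}\,dx=\int^{\circ}D\mathbf{1}^{\circ}\,\delta_{x}\,dx=0$ by Axiom \ref{AD}-(\ref{4}). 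Hence the integral may be restricted to $\mathfrak{bd}(E)$.

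\emph{Step 3 (normal).} By (\ref{127}), at points where $D\theta^{\circ}_{E}\neq0$ we have $-D\theta^{\circ}_{E}=\mathbf{n}^{\circ}_{E}|D\theta^{\circ}_{E}|$, and where $D\theta^{\circ}_{E}=0$ both sides vanish. Substituting this identity gives
\[
-\int^{\circ}_{\mathfrak{bd}(E)}\Phi\cdot D\theta^{\circ}_{E}\,dx=\int^{\circ}_{\mathfrak{bd}(E)}\Phi\cdot\mathbf{n}^{\circ}_{E}\,|D\theta^{\circ}_{E}|\,dx,
\]
which is the claim. Here the integral over the internal set $\mathfrak{bd}(E)$ is the hyperfinite sum $\sum_{a\in\mathfrak{bd}(E)}(\cdot)\,d(a)$.

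The main obstacle is Step 1. Theorem \ref{ilona} is only available on $V_{0}$, that is, for functions supported in $\Gamma_{\mathfrak{int}}$. It is also ambiguous whether $\int^{\circ}_{E}$ uses the weight $\theta^{\circ}_{E}$ or the sharp sum over $E$. I would first expand both $\theta^{\circ}_{E}$ and $\Phi$ in the $\chi$-basis and reduce everything to the pairings $\int^{\circ}D_{i}\chi_{a}\,\chi_{b}\,dx$, which the lemma antisymmetrizes for $a,b\in\Gamma_{\mathfrak{int}}$. Where Step 1 fails, the terms with $a$ or $b$ in $\mathfrak{bd}(\Gamma^{N})$ are supported in $\mathfrak{bd}(E)$ or in the boundary at infinity. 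For those terms I would either assume $E\subseteq\mathfrak{int}(\Gamma^{N})$ or check that they cancel directly, using the explicit formula $D\theta^{\circ}_{(a,b)}=\delta_{a}-\delta_{b}$ in the one-dimensional reduction.
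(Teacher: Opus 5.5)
Your three steps are the paper's proof. There, the whole argument is the chain $\int^{\circ}\theta^{\circ}_{E}\,D\cdot\Phi\,dx=-\int^{\circ}\Phi\cdot D\theta^{\circ}_{E}\,dx$, then restriction to $\mathfrak{bd}(E)$, then substitution of (\ref{127}). The integration by parts and the localization are both simply asserted, so your Step 2 supplies an argument the paper omits. With your reading $\theta^{\circ}_{E}=\sum_{a\in E}\chi_{a}$, the two readings of $\int^{\circ}_{E}$ coincide, so that ambiguity is harmless.

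The gap is the one you flag in Step 1, and your fallback of checking that the boundary-at-infinity terms cancel cannot work, because they do not cancel. Take $N=1$, $E=\Gamma$, $\Phi=H^{\circ}$. By (\ref{pu}), $\theta^{\circ}_{\Gamma}=\sum_{a\in\Gamma}\chi_{a}=\mathbf{1}^{\circ}$, so by (\ref{13}) and (\ref{delta}) the left-hand side is $\int^{\circ}\mathbf{1}^{\circ}\delta_{0}\,dx=1$. But $D\mathbf{1}^{\circ}=0$ by Axiom \ref{AD}-(\ref{4}), so $|D\theta^{\circ}_{\Gamma}|\equiv0$ and the right-hand side is $0$. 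Step 1 is off by exactly $1=H(\omega)-H(-\omega)$, the boundary term at infinity. The statement, and the paper's proof, therefore need a hypothesis keeping $E$ (or $\Phi$) away from the boundary at infinity.

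Your other fallback is the right one, but phrased as $E\subseteq\mathfrak{int}(\Gamma^{N})$ it is vacuous. By Definition \ref{47}, every $x\in\Gamma^{N}$ satisfies $\{x\}\cup\mathfrak{supp}(D\delta_{x})\subset\Gamma^{N}$, so $\mathfrak{int}(\Gamma^{N})=\Gamma^{N}$ and the counterexample satisfies the assumption. Use the set $\Gamma_{\mathfrak{int}}$ of (\ref{bd}) instead. For example, require $\mathfrak{mon}(a_{i})\cap\Gamma\subseteq\Gamma_{\mathfrak{int}}$ for every $a\in E$ and every $i$, or the same condition for the points of $\mathfrak{supp}\,\Phi$.

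Under that hypothesis your $\chi$-expansion closes. Write $D_{i}\chi_{c}=d(c)D_{i}\delta_{c}$. The two sides of Step 1 then expand to $\sum_{a\in E}\sum_{c}\Phi_{i}(c)\,d(a)d(c)$ multiplied by $D_{i}\delta_{c}(a)$ and by $-D_{i}\delta_{a}(c)$ respectively. So it suffices that $D_{i}\delta_{c}(a)=-D_{i}\delta_{a}(c)$ for every contributing pair. By the product structure, such pairs have $c_{j}=a_{j}$ for $j\neq i$. By Axiom \ref{AD}-(\ref{2+}), $c_{i}\in\mathfrak{mon}(a_{i})$. Hence $a_{i},c_{i}\in\Gamma_{\mathfrak{int}}$, and the one-dimensional lemma applies to $(a_{i},c_{i})$.

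The same pairwise argument is what justifies the step $-\int^{\circ}\mathbf{1}^{\circ}D\delta_{x}\,dx=\int^{\circ}D\mathbf{1}^{\circ}\,\delta_{x}\,dx$ in your Step 2. Theorem \ref{ilona} does not cover that step, since $\mathbf{1}^{\circ}\notin V_{0}(\Gamma)$.
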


\textbf{Proof}: By (\ref{127}) we have that%
\begin{eqnarray*}
\int^{\circ }D\cdot \Phi \ dx &=&\int^{\circ }D\cdot \Phi ~\theta
_{E}^{\circ }\ dx=-\int^{\circ }\Phi \cdot D\theta _{E}^{\circ }~dx \\
&=&-\int_{\mathfrak{bd}\left( E\right) }^{\circ }\Phi \cdot \frac{D\theta
_{E}^{\circ }}{|D\theta _{E}^{\circ }|}~|D\theta _{E}^{\circ
}|~dx=\int^{\circ }\Phi \cdot \mathbf{n}_{E}^{\circ }\ |D\theta _{E}^{\circ
}|\ dx.
\end{eqnarray*}

$\square $

As we have seen, if $\partial \Omega $ is sufficiently smooth then $|D\theta
_{\Omega }^{\circ }|\ =\tilde{S}^{N-1}$; if not $%
|D\theta _{\Omega }^{\circ }|$ is sort of "generalized" measure such that $%
\int^{\circ }|D\theta _{\Omega }^{\circ }|\varphi ^{\circ }dx$ may assume
infinite values even if $\varphi \in C_{c}^{0}(\mathbb{R}^{N})$.  It is
remarkable that this identity holds for any vector valued ultrafunction $%
\mathbf{\phi };$  in particular, it holds for  ${\Phi }=\phi^\circ$  even when $\phi$ is a vector field with no regularity. It would be
interesting to investigate what happens when $\partial \Omega $ is a fractal
set of dimension $d$ and, in particular, to investigate the relation of the
infinite number $\int^{\circ }|D\theta _{\Omega }^{\circ }(x)|dx$  with the
fractal dimension $d.$

\subsection{Ultrafunctions and distributions}\label{ud}

One of the most important features of ultrafunctions is that they
can be seen (in a sense that we will make precise in this section)
as a generalization of distributions.

\begin{definition} We say that an ultrafunction $u$ is \textbf{distribution-like}
($DL$) if there exists a distribution $T$ such that for every $\varphi\in C^{\infty}_{c}(\mathbb{R}^{N})$
\[
\int^{\circ}u(x)\varphi^{\circ}(x)\,dx=\langle T,\varphi\rangle.
\]
\end{definition}

\textbf{Example 1.} If $f\in V(\mathbb{R}^{N})$, then $f^{\circ}$
is distribution-like since $f\varphi\in V(\mathbb{R}^{N})$ and hence
\[
\int^{\circ}f^{\circ}(x)\varphi^{\circ}(x)\,dx=\int f(x)\varphi(x)\,dx=\langle T_{f},\varphi\rangle
\]
where $T_{f}$ is the distribution induced by $f$.

\textbf{Example 2.} If $f\in\mathcal{L}^{1}_{loc}$, then $\tilde{f}$
is distribution-like since 
\[
\int^{\circ}\tilde{f}(x)\varphi^{\circ}(x)\,dx=\int f(x)\varphi(x)\,dx=\langle T_{f},\varphi\rangle.
\]

In general, if $f\in\mathcal{L}^{1}_{loc}$ we have 
\[
\int^{\circ}f^{\circ}(x)\varphi^{\circ}(x)\,dx\sim\int f(x)\varphi(x)\,dx
\]
and hence $f^{\circ}$ is not distribution-like but it is \emph{almost
distribution-like}.

\textbf{Example 3.} If $f^{\circ}$ is $DL$, then also $D^{m}f^{\circ}$
is $DL$ since 
\begin{eqnarray*}
\int^{\circ}D^{m}f^{\circ}(x)\varphi^{\circ}(x)\,dx & = & (-1)^{m}\int^{\circ}f^{\circ}(x)D^{m}\varphi^{\circ}(x)\,dx\\
 & = & (-1)^{m}\int f(x)\partial^{m}\varphi(x)\,dx=\langle\partial^{m}T_{f},\varphi\rangle.
\end{eqnarray*}

In particular, if $f\in C^{0,1}$, then $D^{m}f^{\circ}$ is $DL$
and we can associate the distribution $T=\partial^{m}T_{f}$ to the
ultrafunction $D^{m}f^{\circ}(x)$.

\bigskip{}

It is easy to see that:

\begin{proposition}\label{P1} For any distribution $T$ there exists
a distribution-like ultrafunction $u_{T}$. \end{proposition}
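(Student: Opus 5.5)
The natural construction is to define $u_{T}$ by duality with $T$, exactly as the measure $\tilde{\mu}$ was defined in Definition \ref{dual}. Since $V_{\lambda}$ is only a space of $BV_{loc}$ functions and $T$ can only be paired with $C^{\infty}_{c}$ test functions, I will first replace $V_{\lambda}(\mathbb{R}^{N})$ by a finite-dimensional space on which $T$ is defined. For each $\lambda\in\mathfrak{L}$ let $E_{\lambda}:=\mathrm{span}\left(C^{\infty}_{c}(\mathbb{R}^{N})\cap\lambda\right)$. This is a finite-dimensional space of test functions.

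Because $\Lambda$ contains $\mathfrak{F}(\mathbb{R}^N)$ by (\ref{esse}), fineness gives that $\mathcal{U}$-eventually $E_{\lambda}\subset V_{\lambda}(\mathbb{R}^{N})$. Here I use that $C^{\infty}_{c}\cap\lambda\subset V(\mathbb{R})^{\otimes N}\cap\lambda$ up to the tensor structure, or, if needed, I enlarge $V_{\lambda}$ via (\ref{gazza}). On the finite-dimensional space $V_{\lambda}$, equipped with the $L^{2}(Q_{\lambda})$ scalar product, I define the linear functional $\ell_{\lambda}$ by $\ell_{\lambda}(\varphi)=\langle T,\varphi\rangle$ for $\varphi\in E_{\lambda}$. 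I extend $\ell_\lambda$ by $0$ on the orthogonal complement of $E_{\lambda}$ in $V_{\lambda}$. By Riesz representation in finite dimension there is a unique $u_{\lambda}\in V_{\lambda}$ with $\int_{Q_{\lambda}}u_{\lambda}v\,dx=\ell_{\lambda}(v)$ for all $v\in V_{\lambda}$. This requires the scalar product to be nondegenerate on $V_{\lambda}$, which is the finite-level analogue of Axiom \ref{IA}; I would ensure it by passing to equivalence classes, or by noting that elements of $V$ vanishing a.e. vanish identically by the epilogic property.

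I then set $u_{T}:=\lim_{\lambda\uparrow\Lambda}u_{\lambda}$, which is an ultrafunction by the Approximation Axiom. Now fix $\varphi\in C^{\infty}_{c}(\mathbb{R}^{N})$. By fineness, $\mathcal{U}$-eventually $\varphi\in\lambda$, hence $\varphi\in E_{\lambda}$, and $\varphi_{\lambda}=\varphi$ represents $\varphi^{\circ}$. Moreover $\mathrm{supp}\,\varphi\subset Q_{\lambda}$ eventually. Using the integral formula for several variables (the theorem preceding Corollary on $V_c$), and since $u_{\lambda}\varphi$ is a legitimate integrand as in Th.~\ref{TU}-(\ref{U00}) because $\varphi$ is Lipschitz, I get
\[
\int^{\circ}u_{T}\varphi^{\circ}\,dx=\lim_{\lambda\uparrow\Lambda}\int_{Q_{\lambda}}u_{\lambda}\varphi\,dx=\lim_{\lambda\uparrow\Lambda}\langle T,\varphi\rangle=\langle T,\varphi\rangle,
\]
where the last equality holds because the net is eventually the real constant $\langle T,\varphi\rangle$ (Prop.~\ref{p2}-(\ref{EU1+})).

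The main obstacle is the bookkeeping that justifies the pointwise integral of the product $u_{T}\varphi^{\circ}$ via the nets. For a non-regular $u$ the product net $u_{\lambda}\varphi$ need not lie in $V_{\lambda}$, and Th.~\ref{TU}-(\ref{U00}) only covers regular pairs. I would handle this by taking $u_{\lambda}$ in the smooth part, that is, by choosing $E_{\lambda}$ itself as the Riesz space. Then $u_\lambda\in C^{\infty}\cap\lambda$ and $u_{T}\in V^{\infty}(\Gamma)$, so (\ref{gazza}) and Th.~\ref{TU}-(\ref{U00}) apply directly. This choice matches the paper's claim that $V^{\infty}(\Gamma)$ contains a copy of every distribution.
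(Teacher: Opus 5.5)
Your final construction (Riesz representation on $E_{\lambda}$, not on $V_{\lambda}$) is sound in one variable, where $V(\mathbb{R})\cap\lambda\subseteq V_{\lambda}(\mathbb{R})$ and (\ref{gazza}) hold. It takes a genuinely different route from the paper. The paper does a single Riesz representation at the hyperfinite level, using the pointwise scalar product $\sum_{a}u(a)v(a)d(a)$, which is positive definite by Axiom \ref{IA}. It chooses an arbitrary complement $W$ of $C^{\infty}_{c}(\Gamma^{N})$ with projection $P_{\infty}$, and defines $u_{T}$ by $\int^{\circ}u_{T}v\,dx=\lim_{\lambda\uparrow\Lambda}\langle T,(P_{\infty}v)_{\lambda}\rangle$ for all $v\in V(\Gamma^{N})$. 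Since $P_{\infty}\varphi^{\circ}=\varphi^{\circ}$, the identity $\int^{\circ}u_{T}\varphi^{\circ}dx=\langle T,\varphi\rangle$ holds by construction. So the step you flag as the main obstacle, comparing $\int^{\circ}$ of a product with Lebesgue integrals of the nets, never arises there. The price is that $u_{T}$ depends on the choice of $W$, as the paper notes right after the proof. Your route uses the Lebesgue $L^{2}$ product at each finite level, needs no Integral Axiom, and gives a $u_{T}$ carried by test-function nets, close in spirit to the later Definition \ref{cina} of $T^{\circ}$. The price is that you need (\ref{gazza}) to turn $\lim\int u_{\lambda}\varphi\,dx$ into $\int^{\circ}u_{T}\varphi^{\circ}dx$. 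You are right that this forces you to abandon your first variant: for non-regular $u_{\lambda}\in V_{\lambda}$ the two quantities differ, just as $\tilde f$ and $f^{\circ}$ differ in Section \ref{DM}.

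Three details need fixing.

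(i) An element of $\mathrm{span}(C^{\infty}_{c}\cap\lambda)$ is not itself in $\lambda$, so (\ref{gazza}) and Th.~\ref{TU}-(\ref{U00}) do not apply ``directly''. Write $u_{\lambda}=\sum_{i}c_{i}\psi_{i}$ with $\psi_{i}\in C^{\infty}_{c}\cap\lambda$, and apply (\ref{gazza}) to each $\psi_{i}\varphi$ once $\varphi\in\lambda$. Then $u_{\lambda}\varphi\in V_{\lambda}$ is a representing net of $u_{T}\varphi^{\circ}$, and Definition \ref{DI} gives the identity.

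(ii) The $L^{2}(Q_{\lambda})$ product may be degenerate on $E_{\lambda}$. A test function in $\lambda$ can be supported outside $Q_{\lambda}$, and if $T$ does not vanish on it, no $u_{\lambda}$ exists. Your epilogic remark concerns functions vanishing a.e.\ on all of $\mathbb{R}$ and does not cover this case. Use the $L^{2}(\mathbb{R}^{N})$ product on $E_{\lambda}$ instead, which is positive definite on continuous functions, together with $\mathrm{supp}\,\varphi\subset Q_{\lambda}$ eventually.

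(iii) For $N>1$, $C^{\infty}_{c}(\mathbb{R}^{N})\cap\lambda\not\subset V_{\lambda}(\mathbb{R})^{\otimes N}$ (a radial bump is not a finite sum of products), and (\ref{gazza}) is only a one-variable hypothesis. You also may not ``enlarge $V_{\lambda}$'': it is part of the fixed structure, and its nodal basis (\ref{urano}) is what makes $\int^{\circ}$ well defined. The paper's proof tacitly needs the same membership, to take $\varphi$ as the net of $P_{\infty}\varphi^{\circ}$, so this caveat is shared by both arguments.
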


\textbf{Proof.} We split $V(\Gamma^{N})$ as follows 
\[
V(\Gamma^{N})=C^{\infty}_{c}(\Gamma^{N})\oplus W
\]
where $W$ is a complementary space of $C^{\infty}_{c}(\Gamma^{N})$.
Let 
\[
P_{\infty}:V(\Gamma^{N})\to C^{\infty}_{c}(\Gamma^{N})
\]
be the corresponding projection.

For every $v\in V(\Gamma^{N})$ we set 
\[
\int^{\circ}u_{T}v\,dx=\lim_{\lambda\uparrow\Lambda}\langle T,(P_{\infty}v)_{\lambda}\rangle.
\]

Then for every $\varphi\in C^{\infty}_{c}(\mathbb{R}^{N})$ 
\[
\int^{\circ}u_{T}(x)\varphi^{\circ}(x)\,dx=\lim_{\lambda\uparrow\Lambda}\langle T,\varphi\rangle=\langle T,\varphi\rangle.
\]

$\square$

\bigskip

Clearly $u_{T}$ is not univocally defined since, in the proof of Prop. \ref%
{P1}, the splitting $C_{c}^{\infty }(\Gamma ^{N})\oplus W$ can be chosen
arbitrarily. So it make sense to set%
\begin{equation*}
\left[ u\right] _{\mathfrak{D}^{\prime }}=\left\{ v\in V(\Gamma ^{N})\ |\
v\approx _{\mathfrak{D}^{\prime }}u\right\}
\end{equation*}%
where%
\begin{equation*}
v\approx _{\mathfrak{D}^{\prime }}u:\Leftrightarrow \forall \varphi \in
C_{c}^{\infty }(\mathbb{R}^{N}),\ \int^{\circ }\left( u-v\right) \varphi 
{{}^\circ}%
dx=0
\end{equation*}%
Then there is a bijective map 
\begin{equation}
\Psi :\mathfrak{D}^{\prime }\rightarrow V_{DL}/\approx _{\mathfrak{D}%
^{\prime }}  \label{psi}
\end{equation}%
where $V_{DL}$ is the set of distribution like ultrafunction and%
\begin{equation*}
\Psi (T)=\left\{ u\in V(\Gamma ^{N})\ |\ \forall \varphi \in C_{c}^{\infty }(%
\mathbb{R}^{N}),\ \int^{\circ }u\varphi 
{{}^\circ}%
dx=\left\langle T,\varphi \right\rangle \right\}
\end{equation*}%
is a class of ultrafunctions equivalent to a distribution.
The linear map is $\Psi $ consistent with the distributional derivative,
namely:

\begin{proposition}
If $\Psi (T)=\left[ u\right] _{\mathfrak{D}^{\prime }}$ then $\Psi (\partial
_{i}T)=\left[ D_{i}u\right] _{\mathfrak{D}^{\prime }}.$
\end{proposition}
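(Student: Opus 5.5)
The plan is to verify directly that $D_iu$ is distribution-like and represents $\partial_iT$. By definition of $\Psi$, this means showing that for every $\varphi\in C^{\infty}_{c}(\mathbb{R}^{N})$
\[
\int^{\circ}D_{i}u(x)\,\varphi^{\circ}(x)\,dx=\langle\partial_{i}T,\varphi\rangle=-\langle T,\partial_{i}\varphi\rangle .
\]
Once this holds, $D_iu\in\Psi(\partial_iT)$, and hence $\Psi(\partial_iT)=[D_iu]_{\mathfrak{D}'}$. This uses only that $\Psi(\partial_iT)$ is an equivalence class under $\approx_{\mathfrak{D}'}$, which is nonempty by Proposition \ref{P1}.

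The key step is an integration by parts that moves $D_i$ from $u$ onto $\varphi^{\circ}$:
\[
\int^{\circ}D_{i}u\,\varphi^{\circ}\,dx=-\int^{\circ}u\,D_{i}\varphi^{\circ}\,dx .
\]
I would obtain this from Theorem \ref{ilona}, applied in the $i$-th variable; the tensor structure of $V(\Gamma^N)$ and of $D_i$ reduces the $N$-dimensional statement to the one-variable antisymmetry on fibres. For Theorem \ref{ilona} to apply, $\varphi^{\circ}$ must lie in $V_{0}(\Gamma)$ in the $i$-th variable, i.e.\ be a hyperfinite combination of $\delta_a$ with $a\in\Gamma_{\mathfrak{int}}$. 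Since $\mathrm{supp}\,\varphi$ is a compact subset of $\mathbb{R}^N$, $\varphi^{\circ}$ vanishes at every grid point outside $(\mathrm{supp}\,\varphi)^{\circ}$. All points of this finite-valued support are far from $\pm\omega$, and by the locality Axiom \ref{AD}-(\ref{loc}) the sets $\mathrm{supp}(\partial\delta_{a_\lambda})$ are eventually contained in $(-\omega_\lambda,\omega_\lambda)$. Therefore every such $a$ belongs to $\Gamma_{\mathfrak{int}}$.

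The antisymmetry of Theorem \ref{ilona} is stated for both factors in $V_0$, but here $u$ is arbitrary. I would check that its proof needs only the identity $\int^{\circ}D\chi_a\chi_b=-\int^{\circ}\chi_aD\chi_b$ for $b\in\Gamma_{\mathfrak{int}}$ and arbitrary $a$. Returning to the Lemma preceding it, the computation $\langle\partial\delta_{b_\lambda},\delta_{a_\lambda}\rangle=-\langle\partial\delta_{a_\lambda},\delta_{b_\lambda}\rangle$ is a one-dimensional integration by parts on $(-\omega_\lambda,\omega_\lambda)$. It has no boundary term as soon as one of the two functions has support inside the open interval. This holds when $b\in\Gamma_{\mathfrak{int}}$, so the identity survives for arbitrary $a$, and expanding $u=\sum_a u(a)\chi_a$ as in that proof gives the desired equality. \textbf{I expect this step to be the main obstacle}: the hypothesis of Theorem \ref{ilona} must be relaxed to one factor in $V_0$, and both the boundary terms and the jump terms arising from the BV nature of $V_\lambda$ must be handled carefully in the pairing $\langle\partial\delta_{a_\lambda},\delta_{b_\lambda}\rangle$.

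It remains to identify $D_i\varphi^{\circ}$. By Theorem \ref{dery}, since $\varphi\in C^{\infty}_c\subset C^{1,1}_{loc}$, we have $D_i\varphi^{\circ}=(\partial_i\varphi)^{\circ}$, and $\partial_i\varphi\in C^\infty_c(\mathbb{R}^N)$ is again a test function. Because $u\in\Psi(T)$,
\[
-\int^{\circ}u\,(\partial_{i}\varphi)^{\circ}\,dx=-\langle T,\partial_{i}\varphi\rangle=\langle\partial_{i}T,\varphi\rangle ,
\]
which concludes the argument.
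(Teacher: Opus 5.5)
Your proposal is the paper's proof: integrate by parts to move $D_i$ onto $\varphi^{\circ}$, replace $D_i\varphi^{\circ}$ by $(\partial_i\varphi)^{\circ}$, and use $u\in\Psi(T)$ with the test function $\partial_i\varphi$. The paper asserts that integration by parts for arbitrary $u$ without comment, whereas you rightly note that the argument of Theorem \ref{ilona} needs only one factor in $V_0(\Gamma)$; the jump terms you worry about cancel exactly, because epilogic functions take the average value at jumps, so $\partial(fg)=f\,\partial g+g\,\partial f$ holds as measures in one variable. One correction: the $i$-th coordinates of the points where $\varphi^{\circ}\neq0$ lie in $\Gamma_{\mathfrak{int}}$ not by Axiom \ref{AD}-(\ref{loc}), which only constrains the grid support of $D\chi_a$, but by Theorem \ref{ames} ($\sigma_a$ is supported in $\mathfrak{mon}(a)$), so for finite $a$ both $\sigma_{a_\lambda}$ and $\partial\sigma_{a_\lambda}$ are eventually supported inside $(-\omega_\lambda,\omega_\lambda)$, as the definition of $\Gamma_{\mathfrak{int}}$ and your vanishing boundary term require.
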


\textbf{Proof}: If $\Psi (T)=\left[ u\right] _{\mathfrak{D}^{\prime }},$ then

\begin{equation*}
\int^{\circ }D_{i}u\varphi 
{{}^\circ}%
~dx=-\int^{\circ }uD_{i}\varphi 
{{}^\circ}%
dx
\end{equation*}%
Since $\varphi \in C_{c}^{\infty }(\mathbb{R}^{N})\cap V(\mathbb{R}^{N}),$
then by Th. \ref{TU}, $D_{i}\varphi 
{{}^\circ}%
=\left( \partial _{i}\varphi \right) 
{{}^\circ}%
$ and so%
\begin{eqnarray*}
\int^{\circ }Du\varphi 
{{}^\circ}%
~dx &=&-\int^{\circ }uD\varphi 
{{}^\circ}%
dx=-\int^{\circ }u\left( \partial _{i}\varphi \right) 
{{}^\circ}%
dx \\
&=&-\left\langle T,\partial _{i}\varphi \right\rangle =\left\langle \partial
_{i}T,\varphi \right\rangle
\end{eqnarray*}%
Hence $\left[ D_{i}u\right] _{\mathfrak{D}^{\prime }}=\Psi (\partial _{i}T).$

$\square $

At this point it is a natural question to ask if there exists a linear map 
\begin{equation*}
\Phi :\mathfrak{D}^{\prime }\rightarrow V(\Gamma ^{N})
\end{equation*}%
which selects in any equivalence class $\left[ u\right] _{\mathfrak{D}%
^{\prime }}=\Psi \left( T\right) $ a distribution-like ultrafunction $\Phi
\left( T\right) $ in a way consistent with the distributional derivative,
namely%
\begin{equation}
\Phi \left( \partial _{i}T\right) =D_{i}\Phi \left( T\right)  \label{mer}
\end{equation}

Actually this goal can be achieved in several ways. For example, using the
argument of Example 3, every distribution of finite order can be univocally
represented by $D^{m+1}f^{\circ }$ with $f\in C^{1}(\mathbb{R}).$

Now, we will achieve this goal by via the splitting%
\begin{equation*}
V(\Gamma ^{N})=V^{\infty }(\Gamma ^{N})\oplus V^{\infty }(\Gamma
^{N})^{\perp }.
\end{equation*}%
where $V^{\infty }(\Gamma )$ has been define by Def. \ref{ddd}.

If we denote by $\Pi _{\infty }u$ and $\Pi _{\infty }^{\perp }u$ the
relative "orthogonal" projection of $u$ on $V^{\infty }(\Gamma )\ $and $%
V^{\infty }(\Gamma )^{\perp },$ every ultrafunction $u$ can be split as
follows%
\begin{equation}
u=\Pi _{\infty }u+\Pi _{\infty }^{\perp }u;  \label{181}
\end{equation}%
$\Pi _{\infty }u$ will be called the \textbf{smooth} \textbf{part} of $u$
and $\Pi _{m}^{\perp }u$ the \textbf{irregular part} of $u.$

\begin{definition}
\label{cina}For every $T\in \mathfrak{D}^{\prime }$, we denote by $T^{\circ
} $ the unique ultrafunction in $V^{\infty }(\Gamma ^{N})$ such that$\
\forall v\in V^{\infty }(\Gamma ^{N})$\ 
\begin{equation}
\int^{\circ }T^{\circ }(x)v(x)dx=\lim_{\lambda \uparrow \Lambda
}\left\langle T,v_{\lambda }\right\rangle .  \label{mer1}
\end{equation}
\end{definition}

This definition makes sense; in fact $v_{\lambda }\mapsto \lim_{\lambda
\uparrow \Lambda }\left\langle T,v_{\lambda }\right\rangle $ is a linear
functional over $V^{\infty }(\Gamma ^{N}),$ and hence there esists a unique
ultrafuncionfunction $T^{\circ }$ in $V^{\infty }(\Gamma ^{N})$ which
satisfies (\ref{mer1}). Clearly, $T^{\circ }$ is a $DL$-ultrafunction since $%
\forall \varphi \in C_{c}^{\infty }(\mathbb{R}^{N})$, $\varphi 
{{}^\circ}%
\in V^{\infty }(\Gamma ^{N})$ and hence 
\begin{equation*}
\int^{\circ }T^{\circ }(x)\varphi 
{{}^\circ}%
dx=\lim_{\lambda \uparrow \Lambda }\ \left\langle T,\varphi \right\rangle
=\left\langle T,\varphi \right\rangle .
\end{equation*}

\begin{theorem}
The map $T\mapsto T^{\circ }$ defined by (\ref{mer1}) satisfies (\ref{mer}),
namely,%
\begin{equation*}
D_{i}T^{\circ }=\left( \partial _{i}T\right) ^{\circ }
\end{equation*}
\end{theorem}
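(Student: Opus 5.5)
Since $T^{\circ}$ is characterized in Definition~\ref{cina} as the unique element of $V^{\infty}(\Gamma^{N})$ satisfying (\ref{mer1}), the plan is to show two things: that $D_{i}T^{\circ}$ lies in $V^{\infty}(\Gamma^{N})$, and that it satisfies (\ref{mer1}) with $T$ replaced by $\partial_{i}T$. Uniqueness then gives $D_{i}T^{\circ}=(\partial_{i}T)^{\circ}$.

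The first step is membership. We have $T^{\circ}\in V^{\infty}(\Gamma^{N})$, and by Th.~\ref{TU}-(\ref{U1}) (applied variable by variable, through the tensor product structure of $V(\Gamma^{N})$) we get $D_{i}T^{\circ}\in V^{\infty-1}(\Gamma^{N})=V^{\infty}(\Gamma^{N})$. Moreover, by Th.~\ref{TU}-(\ref{U0}), if $T^{\circ}=\lim_{\lambda\uparrow\Lambda}t_{\lambda}$ with $t_{\lambda}$ smooth, then $D_{i}T^{\circ}=\lim_{\lambda\uparrow\Lambda}\partial_{i}t_{\lambda}$.

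The second step is the identity. Fix $v\in V^{\infty}(\Gamma^{N})$ with a smooth net $v_{\lambda}$. The classical definition of the distributional derivative gives
\[
\lim_{\lambda\uparrow\Lambda}\langle\partial_{i}T,v_{\lambda}\rangle=-\lim_{\lambda\uparrow\Lambda}\langle T,\partial_{i}v_{\lambda}\rangle .
\]
Since $D_{i}v=\lim_{\lambda\uparrow\Lambda}\partial_{i}v_{\lambda}$ also belongs to $V^{\infty}(\Gamma^{N})$, we may apply (\ref{mer1}) to it, so the right-hand side equals $-\int^{\circ}T^{\circ}D_{i}v\,dx$. It then remains to move $D_{i}$ back onto $T^{\circ}$, that is, to prove
\[
-\int^{\circ}T^{\circ}D_{i}v\,dx=\int^{\circ}D_{i}T^{\circ}\,v\,dx
\]
for smooth ultrafunctions. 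For this I would use Th.~\ref{TU}-(\ref{U00}) to write both sides as $\Lambda$-limits of classical integrals over $Q_{\lambda}$, namely $\int_{Q_{\lambda}}t_{\lambda}\partial_{i}v_{\lambda}$ and $\int_{Q_{\lambda}}\partial_{i}t_{\lambda}v_{\lambda}$. Classical integration by parts then leaves the boundary terms on $\partial Q_{\lambda}$.

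The main obstacle is exactly these boundary terms. There are two complications. First, $\langle T,v_{\lambda}\rangle$ only makes sense for compactly supported $v_{\lambda}$. Second, the generic $t_{\lambda}$ and $v_{\lambda}$ need not vanish near $\pm\omega_{\lambda}$. I would first try to exploit the fact that $V^{\infty}$ is defined by $u_{\lambda}\in C^{\infty,1}\cap\lambda$. I would arrange (or read into the model of Section~\ref{CSU}) that the pairing $\langle\cdot,\cdot\rangle_{\lambda}$ is taken on $Q_{\lambda}$ with smooth test nets supported inside $Q_{\lambda}$. In other words, I would work on the interior part of the grid, as in Theorem~\ref{ilona}, where the antisymmetry $\int^{\circ}D_{i}u\,v=-\int^{\circ}u\,D_{i}v$ is already available. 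If that fails in general, the fallback is to prove the identity modulo $\approx_{\mathfrak{D}'}$, which only requires testing against $\varphi^{\circ}$ with $\varphi\in C_{c}^{\infty}$. In that case the boundary terms vanish trivially, and the argument reduces to the computation in the preceding Proposition on $\Psi(\partial_{i}T)$.
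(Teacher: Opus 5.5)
Your plan is essentially the paper's proof. The paper tests only against compactly supported smooth ultrafunctions $v\in V_{c}^{\infty}(\Gamma^{N})$, which is exactly how you dispose of the boundary terms, and runs the same chain $\int^{\circ}D_{i}T^{\circ}v\,dx=-\int^{\circ}T^{\circ}(\partial_{i}v)^{\circ}dx=-\lim_{\lambda\uparrow\Lambda}\langle T,\partial_{i}v_{\lambda}\rangle=\lim_{\lambda\uparrow\Lambda}\langle\partial_{i}T,v_{\lambda}\rangle=\int^{\circ}(\partial_{i}T)^{\circ}v\,dx$. Your explicit membership-and-uniqueness step, and your observation that the class in Definition~\ref{cina} must be read as nets supported inside $Q_{\lambda}$ (so that (\ref{mer1}) makes sense and uniqueness applies to that same test class), make precise what the paper leaves implicit; the $\approx_{\mathfrak{D}'}$ fallback is unnecessary and would only prove a weaker statement.
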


\textbf{Proof}: By Th. \ref{TU}, we have that $\forall v\in V_{c}^{\infty
}(\Gamma ^{N}),$ 
\begin{eqnarray*}
\int^{\circ }D_{i}T^{\circ }v~dx &=&-\int^{\circ }T^{\circ
}D_{i}v~dx=-\int^{\circ }T^{\circ }\left( \partial _{i}v\right) 
{{}^\circ}%
dx \\
&=&-\lim_{\lambda \uparrow \Lambda }\left\langle T,\partial _{i}v_{\lambda
}\right\rangle =\lim_{\lambda \uparrow \Lambda }\left\langle \partial
_{i}T,v_{\lambda }\right\rangle \\
&=&\int^{\circ }\left( \partial _{i}T\right) ^{\circ }v~dx
\end{eqnarray*}%
$\square $

\textbf{Example}: The delta ultrafunction $\delta _{a}$ is distribution-like
since for every $\varphi \in C_{c}^{\infty }(\mathbb{R}^{N})$, we have%
\begin{equation*}
\int^{\circ }\delta _{a}\varphi 
{{}^\circ}%
(x)dx=\varphi (a)=\lim_{\lambda \uparrow \Lambda }\ \left\langle T_{\delta
_{a}},\varphi \right\rangle ;
\end{equation*}%
(here we have used the simbol $T_{\delta _{a}}$ to distinguish the
distribution from the ultrafunction $\delta _{a}$). However 
\begin{equation}
\delta _{a}\neq T_{\delta _{a}}^{\circ }  \label{buti}
\end{equation}%
Actually, according to (\ref{181}),%
\begin{equation*}
\delta _{a}=T_{\delta _{a}}^{\circ }+\Pi _{\infty }^{\perp }\delta _{a},
\end{equation*}%
namely $T_{\delta _{a}}^\circ $ is the smooth part of $\delta _{a}.$ Here, it is
necessary to be careful since the Dirac measure $\mathbf{\delta }_{a}$ and
the Dirac distribution $T_{\delta _{a}}^{\circ }$ produce different
ultrafunctions, since $\delta _{a}\neq T_{\delta
_{a}}^{\circ }$.

\bigskip

Every function $f\in \mathcal{L}^{1}$ defines a distribution $T_{f};$ then,
given $f\in \mathcal{L}^{1}$, we can define three ultrafunctions: $f%
{{}^\circ}%
$,$\ \tilde{f}$ and $T_{f}^{\circ }.$ If $f\in V_{c}^{\infty }(\mathbb{R}),$
then, 
\begin{equation*}
f^{\circ }=\ \tilde{f}=T_{f}^{\circ }.
\end{equation*}

In the other cases, both $\tilde{f}$ and $T_{f}^{\circ }$ can be considered
as "approximations" of $f^{\circ }$. In particular the relation between $\tilde{f}$ and $%
T_{f}^{\circ }$ is described by the following proposition:

\begin{proposition}
\label{pio}If $f\in \mathcal{L}^{1},$ then%
\begin{equation*}
T_{f}^{\circ }=\Pi _{\infty }\tilde{f},
\end{equation*}%
namely $T_{f}^{\circ }$ is the smooth part of $\tilde{f}.$
\end{proposition}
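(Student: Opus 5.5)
The plan is to compare the two ultrafunctions by testing them against smooth ultrafunctions only. By Definition \ref{cina}, $T_f^{\circ}$ is the \emph{unique} element of $V^{\infty}(\Gamma^{N})$ such that
\[
\int^{\circ}T_f^{\circ}(x)v(x)\,dx=\lim_{\lambda\uparrow\Lambda}\langle T_f,v_\lambda\rangle\qquad\text{for all } v\in V^{\infty}(\Gamma^{N}).
\]
Since $\Pi_\infty\tilde f$ lies in $V^{\infty}(\Gamma^{N})$ by construction, it suffices to show that $\Pi_\infty\tilde f$ satisfies the same identity for every $v\in V^{\infty}(\Gamma^{N})$. Uniqueness then gives the equality.

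First, for $v\in V^{\infty}(\Gamma^{N})$ we use the splitting (\ref{181}), $\tilde f=\Pi_\infty\tilde f+\Pi^{\perp}_\infty\tilde f$. Here "orthogonal" refers to the pointwise scalar product (\ref{psc}), which is positive definite by the integral axiom, so $\int^{\circ}\Pi^{\perp}_\infty\tilde f\,v\,dx=0$. This yields
\[
\int^{\circ}\Pi_\infty\tilde f\,v\,dx=\int^{\circ}\tilde f\,v\,dx .
\]
Second, by the definition of $\tilde\mu$ (Definition \ref{dual}, in the form (\ref{97+}) of Example 2, with $\mu$ the measure of density $f$),
\[
\int^{\circ}\tilde f\,v\,dx=\lim_{\lambda\uparrow\Lambda}\int f(x)v_\lambda(x)\,dx .
\]
Third, because $v_\lambda\in C^{\infty,1}\cap\lambda$ is smooth and $f\in\mathcal L^1$, the pairing $\int f v_\lambda\,dx$ coincides with $\langle T_f,v_\lambda\rangle$. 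Chaining these three equalities gives exactly (\ref{mer1}) for $\Pi_\infty\tilde f$.

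The main obstacle is the third step, namely making sense of $\langle T_f,v_\lambda\rangle$. The functions $v_\lambda$ are smooth but need not have compact support, so the distributional pairing is not automatically defined. I would handle this the same way Definition \ref{cina} implicitly does. Either the $v_\lambda$ can be taken supported in $Q_\lambda$, since the pointwise integral only sees $[-\omega_\lambda,\omega_\lambda]^N$. Or $f\in\mathcal L^1$ together with the boundedness of $v_\lambda$ (they are Lipschitz and lie in $\lambda$) makes $\int f v_\lambda\,dx$ finite, and we simply interpret $\langle T_f,v_\lambda\rangle$ as this integral. With this convention, the identity in the third step holds by definition. A secondary point is to check that the net representing $v$ in Definition \ref{dual} can be chosen to be the same smooth net used in (\ref{mer1}). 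This follows because both definitions are stated for an arbitrary representing net $v_\lambda\in V_\lambda$, and I would take the smooth one.
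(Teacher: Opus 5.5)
Your argument is essentially the paper's proof. The paper also tests against $v\in V^{\infty}(\Gamma^{N})$, writes $\int^{\circ}\Pi_{\infty}\tilde f\,v\,dx=\int^{\circ}\tilde f\,\Pi_{\infty}v\,dx=\int^{\circ}\tilde f\,v\,dx$ (equivalent to your $\int^{\circ}\Pi^{\perp}_{\infty}\tilde f\,v\,dx=0$), then passes through $\lim_{\lambda\uparrow\Lambda}\langle f,v_\lambda\rangle=\lim_{\lambda\uparrow\Lambda}\langle T_f,v_\lambda\rangle$ and concludes by the uniqueness in Definition \ref{cina}.

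Your extra care about the pairing goes beyond the paper, which silently treats everything as compactly supported (it ends by placing both sides in $V^{\infty}_{c}(\Gamma^{N})$). One slip there: Lipschitz does not imply bounded (e.g. $v_\lambda(x)=x_1$), so finiteness is better secured by restricting to $Q_\lambda$, as the subscripted pairing $\langle\mu,v_\lambda\rangle_\lambda$ in Definition \ref{dual} suggests.
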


\textbf{Proof}: By (\ref{181}), we have that $\forall v\in V^{\infty
}(\Gamma ^{N}),$ 
\begin{eqnarray*}
\int^{\circ }\Pi _{\infty }\tilde{f}v~dx &=&\int^{\circ }\tilde{f}\Pi
_{\infty }v~dx=\int^{\circ }\tilde{f}v~dx \\
&=&\lim_{\lambda \uparrow \Lambda }\left\langle f,v_{\lambda }\right\rangle
=\lim_{\lambda \uparrow \Lambda }\left\langle T_{f},v_{\lambda
}\right\rangle =\int^{\circ }T_{f}^{\circ }v~dx
\end{eqnarray*}%
Since both $T_{f}^{\circ }$ and $\Pi _{\infty }f%
{{}^\circ}%
\in V_{c}^{\infty }(\Gamma ^{N})$, the conclusion follows.

$\square $

\subsection{Time-dependent ultrafunctions}\label{TDU}

In evolution problems the time variable plays a different role than
the space variables; the functional spaces used in these problems,
such as $C^{k}([0,T],W^{1}_{0}(\Omega))$, reflect this fact. The
same is true in the framework of ultrafunctions. This section is devoted
to the description of the appropriate ultrafunction spaces for evolution
problems.

In static problems, the basic idea is their reduction to a hyperfinite
"discrete space"; in evolution problems this strategy is not optimal
since, under many aspects, discrete dynamical systems are more complex
than continuous dynamical systems. Hence our strategy consists in
keeping the space discrete while using a continuum time variable.
In this case, an evolution partial differential equation is reduced
to an ordinary differential equation with a hyperfinite number of variables.

We recall that (see Sec.  \ref{hs}), for every 
\[
c=\lim_{\lambda\uparrow\Lambda}c_{\lambda}(t_{\lambda})\in C^{k}(\mathbb{E}),
\]
its time derivative is given by
\begin{equation}
\partial_{t}c(t):=\lim_{\lambda\uparrow\Lambda}\partial_{t}c_{\lambda}(t_{\lambda}),\qquad t=\lim_{\lambda\uparrow\Lambda}t_{\lambda}.\label{nd}
\end{equation}

\begin{definition}\label{lella3} The space of \textbf{time-dependent
ultrafunctions} of order $k$ is defined as 
\[
C^{k}(\mathbb{E},V(\Gamma^{N}))=C^{k}(\mathbb{E})\otimes V(\Gamma^{N}).
\]
\end{definition}

Every time-dependent ultrafunction can be represented by the following
hyperfinite sum: 
\begin{equation}
u(t,x)=\sum_{a\in\Gamma^{N}}c_{a}(t)\chi_{a}(x),\qquad c_{a}\in C^{k}(\mathbb{E}).\label{u}
\end{equation}

The notion of generalized derivative in the space variable is defined
by linearity: 
\begin{equation}
D_{i}u(t,x)=D_{i}\!\left(\sum_{a\in\Gamma^{N}}c_{a}(t)\chi_{a}(x)\right)=\sum_{a\in\Gamma^{N}}c_{a}(t)D_{i}\chi_{a}(x).\label{dx}
\end{equation}

The time derivative in $C^{1}(\mathbb{E},V(\Gamma^{N}))$ is defined
by means of (\ref{nd}): 
\begin{equation}
\partial_{t}u(t,x)=\partial_{t}\!\left(\sum_{a\in\Gamma^{N}}c_{a}(t)\chi_{a}(x)\right)=\sum_{a\in\Gamma^{N}}\partial_{t}c_{a}(t)\chi_{a}(x).\label{dt}
\end{equation}

If $f\in\mathfrak{F}(\mathbb{R}^{N+1})$, the corresponding time-dependent
ultrafunction is given by 
\[
f^{\circ}(t,x)=\sum_{a\in\Gamma^{N}}f^{*}(t,a)\chi_{a}(x),
\]
where $f^{*}(t,a)$ is defined by (\ref{star}), namely 
\[
f^{*}(t,x)=\lim_{\lambda\uparrow\Lambda}f(t_{\lambda},x_{\lambda}).
\]

Hence 
\[
f^{\circ}:\mathbb{E}\times\Gamma^{N}\to\mathbb{E}
\]
is an extension of $f$.

In particular, if $f\in C^{1}(\mathbb{R},C^{1}(\mathbb{R}^{N}))\simeq C^{1}(\mathbb{R}^{N+1})$,
the time partial derivative is defined in the natural way: 
\begin{equation}
(\partial_{t}f)^{*}(t,x)=\lim_{\lambda\uparrow\Lambda}\partial_{t}f(t_{\lambda},x_{\lambda}).\label{ddt}
\end{equation}

\begin{theorem}\label{TEV} If $f\in C^{k}(\mathbb{R},C^{m,1}_{c}(\mathbb{R}^{N}))$
and $m\ge0$, then 
\[
\partial_{t}f^{\circ}(t,x)=\sum_{a\in\Gamma^{N}}(\partial_{t}f)^{*}(t,a)\chi_{a}(x).
\]
Moreover, for $i=1,\dots,N$ and $m\ge1$, 
\[
D_{i}f^{\circ}\in C^{k}(\mathbb{E},V^{m-1}(\Gamma^{N}))
\]
and 
\[
D_{i}f^{\circ}(t,x)=\sum_{a\in\Gamma^{N}}(\partial_{i}f)^{\circ}(t,a)\chi_{a}(x).
\]
\end{theorem}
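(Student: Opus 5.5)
The plan is to work coefficientwise in the $\chi$-basis, treating the time variable as a parameter. By definition,
\[
f^{\circ}(t,x)=\sum_{a\in\Gamma^{N}}f^{*}(t,a)\chi_{a}(x),
\]
so $f^{\circ}$ has the form (\ref{u}) with coefficients $c_{a}(t)=f^{*}(t,a)$. First I check that each $c_{a}$ lies in $C^{k}(\mathbb{E})$. Write $a=\lim_{\lambda\uparrow\Lambda}a_{\lambda}$ and set $c_{a,\lambda}(t):=f(t,a_{\lambda})$. Since $f\in C^{k}(\mathbb{R},C^{m,1}_{c}(\mathbb{R}^{N}))$, each $c_{a,\lambda}$ is in $C^{k}(\mathbb{R})$, and $c_{a}=\lim_{\lambda\uparrow\Lambda}c_{a,\lambda}$. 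Definition (\ref{nd}) then gives
\[
\partial_{t}c_{a}(t)=\lim_{\lambda\uparrow\Lambda}\partial_{t}f(t_{\lambda},a_{\lambda})=(\partial_{t}f)^{*}(t,a)
\]
by (\ref{ddt}). Substituting into (\ref{dt}) yields the first formula. This step is essentially bookkeeping: the hyperfinite sum is the $\Lambda$-limit of finite sums, and $\partial_{t}$ commutes with finite sums at each $\lambda$.

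For the spatial part, fix $t=\lim_{\lambda\uparrow\Lambda}t_{\lambda}$ and consider $g_{\lambda}:=f(t_{\lambda},\cdot)\in C^{m,1}_{c}(\mathbb{R}^{N})$. For $m\ge1$ we have $g_{\lambda}\in C^{1,1}$, so Theorem \ref{dery}, applied along the net as in the proof of Th.~\ref{TU}-(\ref{U0}), gives
\[
D_{i}f^{\circ}(t,\cdot)=\lim_{\lambda\uparrow\Lambda}(\partial_{i}g_{\lambda})^{\circ}=(\partial_{i}f)^{\circ}(t,\cdot).
\]
Equivalently, Corollary \ref{CC} converts $\sum_{a}f^{*}(t,a)D_{i}\chi_{a}(x)$, which is (\ref{dx}), into $\sum_{a}(\partial_{i}f)^{*}(t,a)\chi_{a}(x)$. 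Because $\partial_{i}g_{\lambda}\in C^{m-1,1}$, Th.~\ref{TU}-(\ref{U1}) places this in $V^{m-1}(\Gamma^{N})$. Finally, $\partial_{i}f\in C^{k}(\mathbb{R},C^{m-1,1}_{c})$, so the first part applied to $\partial_{i}f$ shows that its coefficients are $C^{k}$ in $t$. Hence $D_{i}f^{\circ}\in C^{k}(\mathbb{E},V^{m-1}(\Gamma^{N}))$.

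I expect the main obstacle to be that Theorem \ref{dery} and Th.~\ref{TU} are stated for a fixed standard $f$, whereas here the spatial function $f(t_{\lambda},\cdot)$ varies with $\lambda$ when $t$ is nonstandard. The first thing I would try is to repeat the proof of Th.~\ref{TU}-(\ref{U0}) directly for the net $u_{\lambda}=f(t_{\lambda},\cdot)$. This requires $u_{\lambda}\in C^{m,1}\cap\lambda$ eventually, which is what membership in $V^{m}(\Gamma^{N})$ asks for. I would justify it by choosing representatives via (\ref{gazza}) and the fineness of $\mathcal{U}$, or alternatively by arguing first for $t\in\mathbb{R}$ and extending to $t\in\mathbb{E}$ by transfer of the identity in the $\Lambda$-limit.
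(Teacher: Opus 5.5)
Your main line is the paper's own proof. The paper reads the first identity off (\ref{dt}) and (\ref{ddt}) exactly as you do. For the second it simply fixes $t$ and invokes Corollary~\ref{CC}. Your remarks on the $C^{k}$ dependence of the coefficients and on membership in $V^{m-1}(\Gamma^{N})$ only make explicit what the paper leaves unsaid.

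The difficulty you single out for nonstandard $t$ is real, and the paper's proof passes over it without comment. However, neither of your proposed repairs removes it.

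\textbf{Choosing representatives.} Fineness of $\mathcal{U}$ only ensures that a \emph{fixed} function eventually lies in $\lambda$, and (\ref{gazza}) only concerns products of elements of $\lambda$. Since each $\lambda$ is finite, as soon as $f$ really depends on $t$ you can choose $t_{\lambda}$ with $f(t_{\lambda},\cdot)\notin\lambda$ for every $\lambda$. Any other representative of the same $t$ agrees with this one $\mathcal{U}$-eventually. So no choice of representative makes $f(t_{\lambda},\cdot)\in C^{m,1}\cap\lambda$ on a qualified set.

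\textbf{Transfer from real $t$.} This fails for the same reason. For real $s$ the identity comes, via Th.~\ref{TU}-(\ref{U3}) and Def.~\ref{ddd}, from $f(s,\cdot)\in\lambda$ on a qualified set that depends on $s$. These sets have empty intersection once $f$ really depends on $t$. Hence you never obtain a $\mathcal{U}$-eventual statement valid for all $s$ simultaneously, which is what transfer would require. Put differently, the set of $t\in\mathbb{E}$ for which the identity holds is internal and contains $\mathbb{R}$. But internal sets containing $\mathbb{R}$, for instance $\mathbb{R}^{\circledast}$, can be proper subsets of $\mathbb{E}$.

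So, as written, both your proof and the paper's establish the spatial statements only for real $t$. The time-derivative formula is fine for all $t$, since it is coefficientwise and definitional. Covering every $t\in\mathbb{E}$ in the spatial part would need a genuinely new argument or a weaker claim.
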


\textbf{Proof.} The first equality follows immediately from (\ref{dt})
and (\ref{ddt}). To prove the second statement, fix $t$; then the
result follows from Corollary~\ref{CC}.

$\square$

\section{Some applications}\label{SA}

In this section we sketch how the theory of fine ultrafunctions can
be used in the study of Partial Differential Equations. In the framework
of ultrafunctions, a very large class of problems is well posed and
has solutions. Very often, difficult \textit{a priori} estimates are
not necessary in order to prove existence, but only to understand
the properties of the solution (qualitative analysis).

In particular, if one considers a problem arising in Physics or in
Geometry, it is interesting to investigate whether the generalized
solutions describe the physical or the geometric phenomenon. We refer
to \cite{ultra},\cite{BBG},\cite{benciISO},\cite{belu2012},\cite{belu2013},
\cite{blc},\cite{milano},..., \cite{bls}, where problems of this
type have been treated in the framework of ultrafunctions.

In this section we limit ourselves to presenting some new examples
in order to illustrate the use of fine ultrafunctions, with particular
emphasis on the study of ill-posed problems. Obviously, each example
is treated superficially; a deeper analysis of each case would probably
deserve a separate paper.

\subsection{Ultrafunction solutions of PDE's}\label{evp copy(1)}

Let $\Omega\subset\mathbb{R}^{N}$ be an open set and let 
\[
A(x,\partial_{i}):\mathcal{D}^{m}_{A}(\Omega)\rightarrow C(\Omega)
\]
be a differential operator of order $m$. Here $\mathcal{D}^{m}_{A}(\Omega)$
denotes the domain of $A(x,\partial_{i})$, including the boundary
conditions.

We consider the following problem: find $u\in\mathcal{D}^{m}_{A}(\Omega)$
such that 
\[
A(x,\partial_{i})[u]=0.
\]

This problem can be translated into the following one: find $u\in\mathcal{D}^{m}_{A}(\Omega^{\circ})$
such that 
\begin{equation}
A^{\circ}(x,D_{i})[u]=0.\label{P}
\end{equation}

Here $\mathcal{D}^{m}_{A}(\Omega^{\circ})$ and 
\[
A^{\circ}:\mathcal{D}^{m}_{A}(\Omega^{\circ})\rightarrow V(\Gamma^{N})
\]
are the natural extensions of $\mathcal{D}^{m}_{A}(\Omega)$ and $A$
respectively (see e.g. (\ref{666+})).

The most general way to treat a problem such as (\ref{P}) is a variant
of the Faedo--Galerkin method. For every $\lambda\in\mathfrak{L}$
we consider the finite dimensional approximation 
\[
u_{\lambda}\in V_{\lambda}(\Omega)\cap\mathcal{D}^{m}_{A}(\Omega)
\]

\begin{equation}
A^{\circ}(x,D_{i,\lambda})u_{\lambda}(x)=0\label{P2}
\end{equation}

where 
\[
D_{i,\lambda}u=\left\langle \partial_{i}u_{\lambda},\delta_{a_{\lambda}}\right\rangle _{\lambda}.
\]

Let $S_{\lambda}$ be the set of approximate solutions of problem
(\ref{P}). If $\mathcal{U}$-eventually $S_{\lambda}\neq\varnothing$,
then also 
\[
S_{\Lambda}:=\{u\mid u_{\lambda}\in S_{\lambda}\}\neq\varnothing
\]
and, taking the $\Lambda$-limit, every $u\in S_{\Lambda}$ is a solution
of (\ref{P}).

\begin{theorem} \label{667} If $w\in\mathcal{D}^{m}_{A}(\Omega)\cap C^{m,1}_{loc}(\Omega)$
is a classical solution of Problem (\ref{P}), then $w^{\circ}$ is
a solution of (\ref{P2}). \end{theorem}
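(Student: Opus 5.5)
The plan is to reduce the statement to the fact that, on sufficiently regular functions, the generalized derivatives act as classical derivatives (Theorem~\ref{TU} and Theorem~\ref{dery}). Since $w\in C^{m,1}_{loc}(\Omega)$, every classical partial derivative $\partial^{\alpha}w$ with $|\alpha|\le m$ is Lipschitz, at least locally, and hence belongs to $V$. By Theorem~\ref{dery}, iterated via Theorem~\ref{TU}-(\ref{U1}), we get $D^{\alpha}w^{\circ}=(\partial^{\alpha}w)^{\circ}$ for all $|\alpha|\le m$ at every point of the interior of $\Omega^{\circ}$. At the finite level the corresponding identity is
\[
D^{\alpha}_{\lambda}w=\langle\partial^{\alpha}w,\delta_{a_{\lambda}}\rangle_{\lambda}.
\]
Here we use that $\mathcal{U}$-eventually the net approximating $w^{\circ}$ can be taken equal to $w$ itself, by (\ref{ruben}) or its $N$-dimensional substitute on $\mathbb{G}_{\lambda}$.

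Next we plug these identities into the operator. Writing $A(x,\partial_i)[u]=F(x,u,\partial u,\dots,\partial^{m}u)$, the natural extension gives
\[
A^{\circ}(x,D_{i,\lambda})w(a_{\lambda})=F\big(a_{\lambda},w,\langle\partial w,\delta_{a_{\lambda}}\rangle,\dots\big).
\]
Because $\partial^{\alpha}w$ is continuous, by Theorem~\ref{TU}-(\ref{U0}) and Example~3 of Section~\ref{DM} (which says $\tilde f=f^{\circ}$ for $f\in C^{0,1}$), the pairing $\langle\partial^{\alpha}w,\delta_{a_\lambda}\rangle$ coincides, after the $\Lambda$-limit, with the pointwise value $(\partial^{\alpha}w)^{\circ}(a)$. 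Hence $A^{\circ}(x,D_i)w^{\circ}(a)=\big(A(x,\partial_i)w\big)^{\circ}(a)$. The right-hand side is the $\Lambda$-limit of $A(x,\partial_i)w(a_{\lambda})=0$, since $w$ is a classical solution, so it vanishes. We must also check that $w^{\circ}$, or $w$ at level $\lambda$, lies in the domain $\mathcal{D}^{m}_{A}(\Omega)\cap V_{\lambda}(\Omega)$. The boundary conditions are pointwise conditions satisfied by $w$, so they transfer directly to $w^{\circ}$. That $w\in V_{\lambda}$ eventually follows from $V(\mathbb{R})\cap\lambda\subseteq V_{\lambda}(\mathbb{R})$ together with (\ref{gazza}).

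The main obstacle is points of $\Omega^{\circ}$ infinitely close to $\partial\Omega$ or at infinity, where $\delta_{a_\lambda}$ or $\partial\delta_{a_\lambda}$ may leave $\Omega$ and $w$ is only $C^{m,1}_{loc}(\Omega)$. My first attempt is to use the locality Axiom~\ref{AD}-(\ref{loc}), $\mathfrak{supp}(D\chi_a)\subset\mathfrak{mon}(a)$, to argue that only values in $\mathfrak{vic}(a)$ matter. I would then interpret the equation on $\mathfrak{int}(\Omega^{\circ})$, with the boundary handled by the domain $\mathcal{D}^{m}_{A}$, and extend $w$ to a $C^{m,1}_{loc}$ function near $\overline{\Omega}$ where the statement implicitly allows it. The remaining steps are routine bookkeeping with Proposition~\ref{p2}.
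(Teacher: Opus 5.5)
Your argument is essentially the paper's. Its proof consists only of the observation that, for $\lambda$ large, $D_{i,\lambda}w=\partial_i w$ at every grid point of $\Omega$, so that $w$ itself solves (\ref{P2}) and $w^{\circ}=\lim_{\lambda\uparrow\Lambda}w\in S_{\Lambda}$; this is your finite-level identity followed by the $\Lambda$-limit.

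The paper's proof does not address the boundary difficulty you raise. Both of your ways out agree with the paper's own remark in Section~\ref{soe} and are more careful than the proof given. The first is a $C^{m,1}$ extension of $w$ across $\partial\Omega$. The second is restricting to points away from the enlarged boundary; for an operator of order $m$, the condition keeping every value entering $D^{m}w^{\circ}(a)$ inside $\Omega^{\circ}$ is $\mathfrak{vic}^{m}(a)\subset\Omega^{\circ}$, which is slightly stronger than $a\in\mathfrak{int}(\Omega^{\circ})$.
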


\textbf{Proof.} If $w$ is a classical solution, then for $\lambda$
sufficiently large and for every $x\in\Gamma^{N}_{\lambda}\cap\Omega$
\[
A(x,D_{i,\lambda})[w(x)]=A(x,\partial_{i})[w(x)]=0.
\]

Hence 
\[
w^{\circ}=\lim_{\lambda\uparrow\Lambda}w(x)\in S_{\Lambda}.
\]

$\square$

\bigskip{}

In general, a problem admits an ultrafunction solution even when the
classical problem has no solution, since the approximate solutions
$u_{\lambda}$ may fail to converge in $\mathcal{D}^{m}_{A}(\Omega)$.

\bigskip{}

\textbf{Example 1.} Probably the simplest example is provided by the
following Dirichlet problem 
\[
-\Delta u=f(x)\quad\text{in }\Omega
\]

\[
u(x)=0\quad\text{for }x\in\partial\Omega
\]

when $f\notin L^{1}(\Omega)$. In this case the problem has no solution,
not even in the sense of distributions.

The problem can be reformulated in the framework of ultrafunctions
as 
\[
-D^{2}u=f^{\circ}(x)\quad\text{in }\Omega^{\circ}
\]

\[
u(x)=0\quad\text{for }x\in\partial\Omega^{\circ}.
\]

Using the generalized Faedo--Galerkin method it is not difficult
to see that this problem admits an ultrafunction solution. The details
when $\partial\Omega$ is not smooth will be discussed in the next
section.

\bigskip{}

Sometimes it is not convenient to employ the Faedo--Galerkin approximation
and it is preferable to use other methods.

\bigskip{}

\textbf{Example 2.} Consider the previous problem in dimension $1$:
\[
-D^{2}u=f^{\circ}(x)\quad\text{in }[0,1]^{\circ}
\]

\[
u(0)=u(1)=0.
\]

In this case we can use the antiderivative as in Theorem \ref{TT}.
The solution is 
\[
u(x)=D^{-2}_{0}f^{\circ}(x)-D^{-2}_{0}f^{\circ}(0)+\left[D^{-2}_{0}f^{\circ}(0)-D^{-2}_{0}f^{\circ}(1)\right]x.
\]

If $f$ is given explicitly we can write $u(x)$ almost explicitly.
For instance, if 
\[
f(x)=\begin{cases}
\frac{1}{x^{p}}, & p>2,\quad x\in(0,1],\\
0, & x=1,
\end{cases}
\]

then 
\[
u(x)=\frac{1}{(p-1)(p-2)}\frac{\chi_{\Theta}}{x^{p-2}}+\psi(x)+A+Bx
\]

where 
\[
\Theta=[0,1]^{\circ}\setminus\mathfrak{vic}^{2}(1),
\]
$\psi(x)$ satisfies $\mathfrak{supp}(\psi)\subset\mathfrak{vic}^{2}(1)$,
and $A,B$ are determined by the boundary conditions: 
\[
A=0,
\]

\[
B=-\psi(1).
\]

Hence 
\[
u(x)=\frac{1}{(p-1)(p-2)}\frac{\chi_{\mathfrak{int}([0,1]^{\circ})}}{x^{p-2}}+\psi(x)-\psi(1)x.
\]

Notice that $\psi(1)$ is an infinite number; hence for $x\in(0,1)$
also $u(x)$ is infinite. Clearly this example is not physically relevant,
but it shows how ultrafunctions can be used to investigate even awkward
situations.

\bigskip{}

\textbf{Example 3.} Sometimes a solution can be seen directly; for
example, by (\ref{Dd}) it is immediate that $\beta\delta_{a}(x)$
($\beta\in\mathbb{E}$, $a\in\Gamma_{\mathfrak{int}}$) is a solution
of 
\[
uDu=0.
\]

Other solutions are 
\begin{equation}
u_{\Theta}(x):=\sum_{a\in\Theta}\beta_{a}\delta_{a}(x)\label{elena}
\end{equation}

where $\Theta\subset\Gamma_{\mathfrak{int}}$ is a hyperfinite set
such that for every $a,b\in\Theta$ 
\begin{equation}
a\neq b\Rightarrow\mathfrak{vic}(a)\cap\mathfrak{vic}(b)=\varnothing.\label{eleonora}
\end{equation}

\subsection{Boundary value problems}\label{soe}

In this section we deal with the following semilinear differential
operator 
\begin{equation}
A(x,\nabla)u:=-\nabla\cdot\left[k(x,u)\nabla u\right]+f(x,u)\label{666}
\end{equation}
where $f(x,u)$ is a function and $k(x,u)$ is either a function or
an $(N\times N)$ matrix. For the moment we assume that $k$ and $f$
are smooth in order to avoid technicalities that are not relevant
for our purposes.

This operator can be easily extended to ultrafunctions by setting
\begin{equation}
A^{\circ}(x,D)u:=-D\cdot\left[k(x,u)Du\right]+f(x,u)=0\qquad\text{in }\Gamma^{N}\label{666+}
\end{equation}
where $k$ and $f$ denote the natural extensions of the corresponding
functions, namely $k=k^{*}$ and $f=f^{*}$ (see (\ref{star})).

Let $\Omega\subseteq\mathbb{R}^{N}$ be an open set and consider the
following equation with Dirichlet boundary conditions 
\begin{equation}
-\nabla\cdot\left[k(x,u)\nabla u\right]+f(x,u)=0\qquad\text{in }\Omega\label{ab}
\end{equation}

\begin{equation}
u(x)=0\qquad\text{for }x\in\partial\Omega.\label{b}
\end{equation}

A function satisfying (\ref{ab}), (\ref{b}) is called a classical
solution if 
\[
u\in C^{2}(\Omega)\cap C^{0}(\overline{\Omega}).
\]

This problem can be translated into the framework of ultrafunctions
by setting 
\begin{equation}
-D\cdot\left[k(x,u)Du\right]+f(x,u)=0\qquad\text{in }\Omega^{\circ}\label{b'}
\end{equation}

\begin{equation}
u(x)=0\qquad\text{if }x\in\partial\Omega^{\circ}.\label{b''}
\end{equation}

If $w\in C^{2,1}_{0}(\overline{\Omega})$, then by Theorem~\ref{TU}-(\ref{U3})
the function $w^{\circ}$ is a solution of (\ref{b'}), (\ref{b''}).

If instead 
\[
w\in C^{2}(\Omega)\cap C^{0}(\overline{\Omega})
\]
but $w$ is not the restriction to $\Omega$ of a $C^{2,1}$ function,
then in general $w^{\circ}$ satisfies equation (\ref{b'}) only for
points 
\[
x\in\mathfrak{int}(\Omega^{\circ}).
\]

Indeed, if 
\[
x\in\mathfrak{bd}(\Omega)\cap\Omega^{\circ},
\]
the value of $Du$ may depend on the values of $u$ at points $y\notin\Omega^{\circ}$
(see also Section~\ref{sgd}).

An ultrafunction solution takes into account the irregularities infinitely
close to $\partial\Omega$, while a classical solution in $C^{2}(\Omega)\cap C^{0}(\overline{\Omega})$
does not detect them. Thus, in general, (\ref{b'}), (\ref{b''})
is not the most appropriate translation of (\ref{ab}), (\ref{b}).
Since the problem has a variational structure, a better translation
is given by the weak formulation.

\begin{definition} \label{FF}

We set 
\[
V_{0}(\Omega^{\circ}):=\left\{ v_{|\overline{\Omega^{\circ}}}\;\middle|\;v\in V(\Gamma^{N}),\;v_{|\partial\Omega^{\circ}}=0\right\} .
\]

We say that $u\in V_{0}(\Omega^{\circ})$ is an ultrafunction solution
of (\ref{ab}), (\ref{b}) if 
\begin{equation}
\forall v\in V_{0}(\Omega^{\circ}),\qquad\int^{\circ}_{\Omega}\left[k(x,u)Du\cdot Dv+f(x,u)v\right]dx=0.\label{ab'}
\end{equation}

\end{definition}

Obviously, if $w$ is a classical weak solution in $C^{1,1}_{loc}(\Omega)$,
then 
\[
w^{\circ}_{|\overline{\Omega^{\circ}}}
\]
is an ultrafunction solution.

If $\partial\Omega$ is not sufficiently regular, in order to investigate
what happens near $\partial\Omega^{\circ}$ it is useful to write
the solution of (\ref{ab'}) in strong form.

For every $a\in\Omega^{\circ}$ we have $\delta_{a}\in V_{0}(\Omega^{\circ})$.
Hence, if $u_{|\Omega^{\circ}}$ is a solution of (\ref{ab'}),

\begin{eqnarray*}
0 & = & \int^{\circ}_{\Omega}\left[k(x,u)Du\cdot D\delta_{a}+f(x,u)\delta_{a}\right]dx\\
 & = & \int^{\circ}\theta^{\circ}_{\Omega}(x)\left[k(x,u)Du\cdot D\delta_{a}+f(x,u)\delta_{a}\right]dx\\
 & = & \int^{\circ}\left(-D\cdot\left[\theta^{\circ}_{\Omega}(x)k(x,u)Du\right]+\theta^{\circ}_{\Omega}(x)f(x,u)\right)\delta_{a}\,dx\\
 & = & -D\cdot\left[\theta^{\circ}_{\Omega}(a)k(a,u)Du\right]+\theta^{\circ}_{\Omega}(a)f(a,u).
\end{eqnarray*}

Hence 
\begin{equation}
-D\cdot\left[\theta^{\circ}_{\Omega}(x)k(x,u)Du\right]+\theta^{\circ}_{\Omega}(x)f(x,u)=0\qquad\text{if }x\in\Omega^{\circ}.\label{dbc}
\end{equation}

\begin{equation}
u(x)=0\qquad\text{if }x\in\partial\Omega^{\circ}.\label{dbc+}
\end{equation}

In general, for some 
\[
x\in\mathfrak{bd}(\Omega^{\circ})\cap\Omega^{\circ}
\]
it may happen that 
\[
D\cdot\left[\theta^{\circ}_{\Omega}k(x,u)Du\right]\neq D\cdot\left[k(x,u)Du\right].
\]
However equation (\ref{b'}) is satisfied for all 
\[
x\in\mathfrak{int}(\Omega^{\circ}).
\]

\bigskip{}

If a problem has no classical solution, usually one looks for weak
solutions in some Sobolev space or in a space of distributions. However,
if there are no weak solutions, we may look for ultrafunction solutions.

For instance, if in (\ref{666}) the function $k(x,u)$ changes sign,
the problem becomes ill posed and generally has no distributional
solution. However in the framework of ultrafunctions we have the following
result.

\begin{theorem} \label{678}

If there exist $c,R\in\mathbb{E}^{+}$ such that 
\begin{equation}
\|u\|=R\Rightarrow\int^{\circ}_{\Omega}\left[k(x,u)|Du|^{2}+f(x,u)u\right]dx\ge c>0,\label{pip+}
\end{equation}
then problem (\ref{ab'}) admits at least one solution.

\end{theorem}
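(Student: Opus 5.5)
The plan is the classical "acute angle" argument (a corollary of Brouwer's fixed point theorem) on the hyperfinite-dimensional space $V_{0}(\Omega^{\circ})$. The transfer from finite dimension is carried out through the $\Lambda$-limit.

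First, I rewrite (\ref{ab'}) as the vanishing of a vector field. Equip $V_{0}(\Omega^{\circ})$ with the scalar product (\ref{psc}), and define $F:V_{0}(\Omega^{\circ})\to V_{0}(\Omega^{\circ})$ by Riesz representation:
\[
\int^{\circ}F(u)\,v\,dx=\int^{\circ}_{\Omega}\left[k(x,u)Du\cdot Dv+f(x,u)v\right]dx\qquad\forall v\in V_{0}(\Omega^{\circ}).
\]
Then $u$ solves (\ref{ab'}) if and only if $F(u)=0$. Taking $v=u$, hypothesis (\ref{pip+}) says exactly that
\[
\int^{\circ}F(u)\,u\,dx\ge c>0\quad\text{on the sphere }\|u\|=R.
\]

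Second, I pass to the approximating nets. Write $R=\lim_{\lambda\uparrow\Lambda}R_{\lambda}$ and $c=\lim_{\lambda\uparrow\Lambda}c_{\lambda}$. The space $V_{0}(\Omega^{\circ})$ is the $\Lambda$-limit of finite-dimensional real spaces $V_{0,\lambda}$, obtained from $V_{\lambda}(\mathbb{R})^{\otimes N}$ by restricting to $\overline{\Omega}\cap\Gamma^N_\lambda$ with zero values on the discrete boundary. The scalar product is the limit of finite sums $\sum u(a_\lambda)v(a_\lambda)d(a_\lambda)$. The map $F$ is the limit of maps $F_{\lambda}$ built from $k$, $f$, $\theta_{\Omega}$ and the finite matrices $D_{i,\lambda}$. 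Since $k$ and $f$ are smooth, each $F_{\lambda}$ is continuous on the finite-dimensional Euclidean space $V_{0,\lambda}$.

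The hypothesis (\ref{pip+}) is an internal statement: "for all $u$ with $\|u\|=R$, $\int^{\circ}F(u)u\ge c$". I need it to hold $\mathcal{U}$-eventually at level $\lambda$, namely
\[
\|u\|_{\lambda}=R_{\lambda}\Rightarrow(F_{\lambda}(u),u)_{\lambda}\ge c_{\lambda}>0.
\]
This is the main obstacle. The $\Lambda$-limit framework of the paper only provides Prop.~\ref{p2} for numbers and not a full transfer (Łoś) principle. I would argue by contradiction. If the set of $\lambda$ where the implication fails were qualified, I would choose for each such $\lambda$ a witness $u_{\lambda}$ with $\|u_{\lambda}\|_\lambda=R_{\lambda}$ and $(F_{\lambda}(u_{\lambda}),u_{\lambda})_\lambda<c_{\lambda}$. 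Then $u=\lim_{\lambda\uparrow\Lambda}u_{\lambda}\in V_{0}(\Omega^{\circ})$ satisfies $\|u\|=R$, and by Prop.~\ref{p2}-(\ref{pippa}) it satisfies $\int^{\circ}F(u)u<c$, contradicting (\ref{pip+}). Hence, by the ultra property, the good set is qualified.

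Finally, on each such $\lambda$ I apply the standard finite-dimensional lemma. If $G:\mathbb{R}^{n}\to\mathbb{R}^{n}$ is continuous and $(G(u),u)>0$ on $\|u\|=R_\lambda$, then $G$ has a zero in the ball $\|u\|<R_\lambda$. The proof is the usual one: otherwise $u\mapsto-R_\lambda G(u)/\|G(u)\|$ maps the ball to itself, so Brouwer gives a fixed point $u$ on the sphere, and then $(G(u),u)=-\|G(u)\|R_\lambda<0$, a contradiction. This yields $u_{\lambda}$ with $F_{\lambda}(u_{\lambda})=0$ on a qualified set. Setting $u=\lim_{\lambda\uparrow\Lambda}u_{\lambda}$, Prop.~\ref{p2} gives $F(u)=0$, i.e. $u$ solves (\ref{ab'}) with $\|u\|<R$.
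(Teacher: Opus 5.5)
Your argument is correct, but it closes the fixed-point step differently from the paper.

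The paper works with the strong-form operator $\mathcal{A}(u)=-D_{\lambda}\cdot[k(x,u)D_{\lambda}u]+f(x,u)$ on $V_{0,\lambda}(\Omega^{\circ})$. It sets $M=\max_{\|u\|\le R}\|\mathcal{A}(u)\|$ and $b=2c/M^{2}$, and applies Brouwer to $\mathcal{B}(u)=u-b\mathcal{A}(u)$ on $B_{R}$, using $\|\mathcal{B}(u)\|^{2}\le\|u\|^{2}-2bc+b^{2}M^{2}=\|u\|^{2}$. That estimate invokes (\ref{pip+}), so it holds only on the sphere $\|u\|=R$. It gives $\mathcal{B}(\partial B_{R})\subset B_{R}$, not $\mathcal{B}(B_{R})\subset B_{R}$. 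Brouwer therefore has to be applied to $\mathcal{B}$ composed with the radial retraction onto $B_{R}$, and that argument is essentially your acute-angle lemma.

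Your version needs only the sign of $\int^{\circ}F(u)u\,dx$ on the sphere, with no quantitative constants $M$, $b$. The paper's version gives an explicit map $u\mapsto u-b\mathcal{A}(u)$ whose fixed points are the solutions.

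You also make explicit two points the paper leaves tacit:
\begin{enumerate}
\item The Riesz reformulation makes $\int^{\circ}F(u)u\,dx$ equal to the quantity in (\ref{pip+}) by definition. The paper's $\mathcal{A}$ matches the weak form (\ref{ab'}) only after an integration by parts involving the weight $\theta^{\circ}_{\Omega}$, cf.\ (\ref{dbc}).
\item The witness argument via Prop.~\ref{p2}-(\ref{pippa}) transfers (\ref{pip+}) to $\mathcal{U}$-eventually every level $\lambda$. The paper uses this implicitly when it applies the hypothesis inside $V_{0,\lambda}(\Omega^{\circ})$.
\end{enumerate}

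One small detail is worth adding. The weighted product $\sum u(a_{\lambda})v(a_{\lambda})d(a_{\lambda})$ is positive definite $\mathcal{U}$-eventually, by the same witness argument applied to Axiom~\ref{IA}. You need this for $V_{0,\lambda}$ to be a Euclidean space on which the finite-dimensional lemma applies.
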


\textbf{Proof}

The result is a standard application of the Brouwer fixed point theorem
and the Faedo--Galerkin method.

We set 
\[
V_{0,\lambda}(\Omega^{\circ}):=\left\{ v_{|\overline{\Omega^{\circ}}}\mid v\in V_{\lambda}(\Gamma^{N}),\;v_{|\partial\Omega^{\circ}}=0\right\} .
\]

Let 
\[
\mathcal{A}(u)=-D_{\lambda}\cdot\left[k(x,u)D_{\lambda}u\right]+f(x,u).
\]

Define 
\[
M:=\max_{\|u\|\le R}\|\mathcal{A}(u)\|,\qquad b:=\frac{2c}{M^{2}}.
\]

Set 
\[
\mathcal{B}(u)=u-b\mathcal{A}(u).
\]

Then $\mathcal{B}$ is continuous on 
\[
V_{0,\lambda}(\Omega^{\circ})\cap B_{R},\qquad B_{R}=\{u\in V(\Gamma^{N}):\|u\|\le R\}.
\]

Indeed

\begin{eqnarray*}
\|\mathcal{B}(u)\|^{2} & = & \|u\|^{2}-2b\int^{\circ}_{\Omega}\mathcal{A}(u)u\,dx+b^{2}\|\mathcal{A}(u)\|^{2}\\
 & \le & \|u\|^{2}-2bc+b^{2}M^{2}\\
 & = & \|u\|^{2}.
\end{eqnarray*}

Hence by the Brouwer fixed point theorem there exists $u_{0,\lambda}$
such that

\[
\mathcal{B}(u_{0,\lambda})=u_{0,\lambda},
\]

namely

\[
u_{0,\lambda}-b\mathcal{A}(u_{0,\lambda})=u_{0,\lambda}
\]

and therefore

\[
\mathcal{A}(u_{0,\lambda})=0.
\]

Taking the $\Lambda$-limit gives the desired ultrafunction solution.

$\square$ 

\subsection{Some examples of boundary value problems}

Let us consider the following simple case: 
\begin{equation}
-\nabla\cdot\left[k(u)\nabla u\right]=h(x)\qquad\text{in }\Omega,\qquad u_{|_{\partial\Omega^{\circ}}}=0.\label{677}
\end{equation}

If $k(u)\geq b>0$ and $k'(u)\geq0$, the operator 
\[
u\mapsto-\nabla\cdot\left[k(u)\nabla u\right]
\]
is a maximal monotone operator in $W^{1}_{0}(\Omega)$. Hence it is
well known that it has a unique solution. Moreover, if $h\in C^{0,1}_{c}(\Omega)$
and $\partial\Omega$ is smooth, then this solution belongs to 
\[
W^{1}_{0}(\Omega)\cap C^{2,1}(\Omega).
\]

By Theorem \ref{667}, the solution $w$ of (\ref{677}) produces
the unique ultrafunction solution $w^{\circ}$.

Next we assume only that 
\begin{equation}
k(u)\geq-b,\qquad h\in\mathfrak{F}(\Omega)\label{679}
\end{equation}
and we are interested in the case in which there exists a non empty
set $\Omega^{-}\subset\Omega$ such that 
\begin{equation}
\forall x\in\Omega^{-},\qquad k(u)<0.\label{671}
\end{equation}

This is a classical ill-posed problem. In the framework of ultrafunctions
we have the following result.

\begin{theorem} If 
\[
k(u)\geq k_{0}>0\qquad\text{for }|u|\geq M
\]
then problem (\ref{677}) has at least one ultrafunction solution.
\end{theorem}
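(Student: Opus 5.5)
The plan is to deduce the statement from Theorem \ref{678}, applied with $k(x,u)=k(u)$ and $f(x,u)=-h^{\circ}(x)$. The problem is then read in the weak form (\ref{ab'}). What has to be checked is the coercivity hypothesis (\ref{pip+}): we must find $R,c\in\mathbb{E}^{+}$ such that $\|u\|=R$ implies
\[
\int^{\circ}_{\Omega}\left[k(u)|Du|^{2}-h^{\circ}u\right]dx\ge c>0 .
\]
The difficulty is that $k$ may be negative on the region where $|u|<M$. There the weight only satisfies $k(u)\ge-b$, so the quadratic term is not sign-definite. We therefore have to use the hyperfinite structure, namely the fact that $R$ may be chosen infinite.

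The first step is to split $\Omega^{\circ}$, for a given $u$, into $\{|u|\ge M\}$ and $\{|u|<M\}$. On the first set, $k(u)|Du|^{2}\ge k_{0}|Du|^{2}$. On the second set, $k(u)|Du|^{2}\ge-b|Du|^{2}$. Since $V_{0}(\Omega^{\circ})$ has hyperfinite dimension, $D$ is a bounded operator on it, and $\int^{\circ}_{\Omega}|Du|^{2}\chi_{\{|u|<M\}}dx$ is bounded by a (possibly infinite) constant depending only on $M$ and the grid. Concretely, I would use $|D\chi_{a}|$ together with the locality in Axiom \ref{AD}-(\ref{2+}) to bound the contribution of points where $|u|<M$. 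The intended bound is $C_{1}M^{2}$, where $C_{1}\in\mathbb{E}$ depends on $\Gamma$ and $\Omega$ but not on $u$. Similarly, by Cauchy--Schwarz, $|\int^{\circ}_{\Omega}h^{\circ}u\,dx|\le\|h^{\circ}\|\,\|u\|$, with $\|h^{\circ}\|$ finite or infinite but in any case a fixed element of $\mathbb{E}$.

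The second step is a lower bound for $k_{0}\int^{\circ}|Du|^{2}$ on the large set, in terms of $\|u\|$. For this I would use the Poincaré inequality (\ref{PI+}), in the Dirichlet form available on $V_{0}(\Omega^{\circ})$ because $u$ vanishes on $\partial\Omega^{\circ}$. This gives $\int^{\circ}_{\Omega}|Du|^{2}dx\ge C\|u\|^{2}$, with $C>0$ possibly infinitesimal. The part of $\|u\|^{2}$ coming from $\{|u|<M\}$ is at most $M^{2}\int^{\circ}_{\Omega}dx$. Combining the pieces yields an estimate of the form
\[
\int^{\circ}_{\Omega}\left[k(u)|Du|^{2}-h^{\circ}u\right]dx\ge k_{0}C R^{2}-C_{2}-\|h^{\circ}\|R ,
\]
where $C_{2}\in\mathbb{E}$ is independent of $u$. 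The right-hand side is a quadratic in $R$ with positive leading coefficient. Since $\mathbb{E}$ is an ordered field, we can choose $R\in\mathbb{E}^{+}$ large enough, typically infinite, so that it exceeds $c:=1$. Theorem \ref{678} then gives the solution.

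I expect the main obstacle to be the splitting in the first step. Because $D$ is nonlocal at the infinitesimal scale, $\int^{\circ}k(u)|Du|^{2}$ does not separate cleanly between the two regions, and the Poincaré inequality controls $\int^{\circ}|Du|^{2}$ over all of $\Omega^{\circ}$, not over $\{|u|\ge M\}$. To handle this, I would first try replacing $u$ by the truncation $T(u)=\mathrm{sign}(u)\max(|u|-M,0)$, compared pointwise on the grid. The alternative is the cruder route of bounding $\|D\|$ on the finite-dimensional space $V_{0,\lambda}$ and absorbing everything into the choice of an infinite $R$. If neither works cleanly, the fallback is to run the Brouwer argument of Theorem \ref{678} directly at each level $\lambda$, where all quantities are real and finite-dimensional, and then take the $\Lambda$-limit.
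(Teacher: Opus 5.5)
Your outline follows the paper's own proof. You apply Theorem~\ref{678} with $f=-h^{\circ}$, split $\Omega^{\circ}$ into $\{|u|\ge M\}$ and $\{|u|<M\}$, bound the bad region by a $u$-independent constant built from $\max_{a}|D\chi_{a}|$, use (\ref{PI+}) at a boundary point, and take $R$ infinite. The gap is the step you flagged yourself, and it cannot be repaired within this approach. There is no bound $\int^{\circ}_{\{|u|<M\}}|Du|^{2}dx\le C_{1}M^{2}$. The reason is that $Du(x)=\sum_{b}u(b)D\chi_{b}(x)$ involves the values $u(b)$ for $b\in\mathfrak{mon}(x)$, and these may be huge even where $|u(x)|<M$. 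The paper's proof makes the same error: its inequality $\int^{\circ}_{E}|Du|^{2}dx\le\|D\|^{2}\int^{\circ}_{E}|u|^{2}dx$ silently drops the terms with $b\notin E$.

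In fact the coercivity hypothesis (\ref{pip+}) is itself false. Take a standard point $a\in\Omega$, so that $a\in\Gamma_{\mathfrak{int}}$ and $\theta^{\circ}_{\Omega}=1$ on $\mathfrak{mon}(a)$, and let $u=s\chi_{a}$. Then $u=0$ off $a$, and $Du(a)=s\,d(a)\,D\delta_{a}(a)=0$ by (\ref{Dd}). Hence
\[
\int^{\circ}_{\Omega}\left[k(u)|Du|^{2}-h^{\circ}u\right]dx=k(0)\,s^{2}\int^{\circ}_{\Omega}|D\chi_{a}|^{2}dx-s\,h(a)\,d(a),
\]
and $\int^{\circ}_{\Omega}|D\chi_{a}|^{2}dx>0$ by (\ref{A1}) and (\ref{loc}). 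The hypothesis of the statement allows $k(0)<0$ (e.g.\ $k(u)=u^{2}-1$ with $M=2$). In that case choose the sign of $s$ so that $s\,h(a)\ge0$, and take $|s|=R/\sqrt{d(a)}$. Then $\|u\|=R$ and the functional is negative, for every $R\in\mathbb{E}^{+}$.

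More generally, suppose $k(u_{*})<0$ for some $u_{*}$. Put the spike on a plateau $u_{*}\varphi^{\circ}$, with $\varphi\in C^{\infty}_{c}(\Omega)$ equal to $1$ near $a$. This gives the same conclusion for all large $R$, which is exactly the regime both you and the paper rely on.

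So none of your fallbacks can work, since no estimate, truncated or not, proves a false inequality. The crude operator-norm bound $\int^{\circ}_{\{|u|<M\}}|Du|^{2}\le\|D\|^{2}R^{2}$ only produces a negative term that is quadratic in $R$. That term outweighs $k_{0}CR^{2}$, because $C\le\|D\|^{2}$, so taking $R$ infinite does not help. The level-$\lambda$ Brouwer route fails too: the same spike taken at level $\lambda$ violates the finite-dimensional acute-angle condition $\mathcal{U}$-eventually.

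The estimate is fine when $\inf k>0$. But the point of the statement is $k$ taking negative values, and there neither your argument nor the paper's proves existence. That case would need a mechanism other than Theorem~\ref{678}.
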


\textbf{Proof}

We set 
\[
\|D\|=\max\left\{ |D\chi_{a}|\;|\;a\in\Gamma^{N}\right\} .
\]

Then for every set $E$ we have 
\begin{eqnarray*}
\int^{\circ}_{E}|Du(x)|^{2}dx & \leq & \int^{\circ}_{E}\left|\sum_{a\in E}u(a)D\chi_{a}(x)\right|^{2}dx\\
 & \leq & \int^{\circ}_{E}\left|\|D\|\sum_{a\in E}u(a)\chi_{a}(x)\right|^{2}dx\\
 & \leq & \|D\|^{2}\int^{\circ}_{E}|u(x)|^{2}dx.
\end{eqnarray*}

Now we can apply Theorem \ref{678}. We set 
\[
-k_{0}:=\min k(u)
\]
and 
\[
\Omega^{+}=\{x\in\Omega^{\circ}\;|\;|u(x)|\geq M\},\qquad\Omega^{-}=\Omega^{\circ}\setminus\Omega^{+}.
\]

By the Poincaré inequality (\ref{PI+}), taking $x_{0}\in\partial\Omega$,

\begin{eqnarray*}
\int^{\circ}_{\Omega}\left[k(u)|Du|^{2}-hu\right]dx & = & \int^{\circ}_{\Omega^{+}}k(u)|Du|^{2}dx+\int^{\circ}_{\Omega^{-}}k(u)|Du|^{2}dx-\int^{\circ}_{\Omega}hu\,dx\\
 & \geq & k_{0}\int^{\circ}_{\Omega^{+}}|Du(x)|^{2}dx-k_{0}\int^{\circ}_{\Omega^{-}}|Du(x)|^{2}dx-\|h\|\,\|u\|\\
 & = & k_{0}\int^{\circ}_{\Omega}|Du(x)|^{2}dx-2k_{0}\int^{\circ}_{\Omega^{-}}|Du(x)|^{2}dx-\|h\|\,\|u\|\\
 & \geq & k_{0}C(\Omega,x_{0})\|u\|^{2}-\|h\|\,\|u\|-2k_{0}\|D\|^{2}M^{2}\int^{\circ}_{\Omega^{-}}dx.
\end{eqnarray*}

If $\|u\|=R$ we obtain

\[
\int^{\circ}_{\Omega}\left[k(u)|Du|^{2}-h(x)u\right]dx\geq k_{0}C(\Omega,x_{0})R^{2}-\|h\|R-2k_{0}\|D\|^{2}M^{2}\int^{\circ}_{\Omega^{-}}dx.
\]

Since the last term does not depend on $R$, for $R$ sufficiently
large we obtain (\ref{pip+}).

$\square$

\bigskip{}

If $h=0$ and $k(S)=0$, some solutions of problem (\ref{677}) can
be written explicitly. They are given by

\begin{equation}
u_{\Theta}(x):=S+\sum_{a\in\Theta}\beta_{a}\delta_{a}(x)\label{tete}
\end{equation}

where $\Theta\subset\Omega^{\circ}$ is a set satisfying (\ref{eleonora}).

Indeed, if $x\in\Theta$, by (\ref{Dd})

\[
Du_{\Theta}(x)=\sum_{a\in\Theta}\beta_{a}D\delta_{a}(x)=0.
\]

If instead $x\notin\Theta$, then $u_{\Theta}(x)=S$ and therefore
$k(u_{\Theta})=k(S)=0$.

Hence 
\[
k(u_{\Theta})Du_{\Theta}=0
\]
and consequently 
\[
D\cdot\left[k(u_{\Theta})Du_{\Theta}\right]=0.
\]

Although these solutions may appear meaningless, they represent some
stationary solutions of the evolution equation

\[
\partial_{t}u=\nabla\cdot\left[k(u)\nabla u\right]
\]

which models certain physical and biological phenomena even when $k(u)$
is not always positive (see Section~\ref{evp}). 

\subsection{Calculus of variations}

In this section we show how the framework of ultrafunctions can be
applied

to problems in the calculus of variations. As a model case we consider
the

minimization of the functional 
\begin{equation}
J(u)=\int_{\Omega}F(x,u,\nabla u)\,dx,\qquad\Omega\subset\mathbb{R}^{N}.\label{J}
\end{equation}
where $\Omega\subseteq\mathbb{R}^{N}$ is a bounded open set. The
natural space where to work is a linear subspace of $C^{1}(\overline{\Omega})$
with suitable boundary conditions which we denote by $C^{1}_{\text{\textsc{bc}}}(\Omega)$.

If we translate this problem into an appropriate subspace of ultrafunctions
$V_{\text{\textsc{bc}}}(\Omega^{\circ})\subseteq\{v_{|_{\overline{\Omega^{\circ}}}}\ |\ v\in V(\Gamma^{N})\}$,
the functional (\ref{J}) becomes 
\begin{equation}
J^{\circ}(u):=\int^{\circ}_{\Omega}F(x,u,Du)\,dx.\label{JJ}
\end{equation}

We have the following result.

\begin{theorem} \label{nadia} If there exists a number $R\in\mathbb{E}^{+}$
such that $\forall u\in V_{\text{\textsc{bc}}}(\Omega^{\circ})$ with
$\|u\|\ge R$, there exists $u_{0}\in V_{\text{\textsc{bc}}}(\Omega^{\circ})$
with $\|u_{0}\|<R$ such that 
\begin{equation}
J^{\circ}(u)>J^{\circ}(u_{0}),\label{dis}
\end{equation}
then $J^{\circ}(u)$ has a minimizer $\bar{u}$ in $V_{\text{\textsc{bc}}}(\Omega^{\circ})$.
Moreover, if $J(u)$ has a minimizer $w\in C^{1}(\overline{\Omega})$,
then for every $x\in\overline{\Omega}$ we have 
\[
\bar{u}(x)=w^{\circ}(x).
\]
\end{theorem}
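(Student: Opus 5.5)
The existence part will follow the hyperfinite compactness argument already used for the Poincaré inequality and Theorem \ref{678}: minimize over the closed ball $\overline{B}_R\cap V_{\text{\textsc{bc}}}(\Omega^{\circ})$, then use hypothesis (\ref{dis}) to show the minimum there is a global minimum. The identification with $w^{\circ}$ is a separate step.

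\textbf{Existence.} I write $J^{\circ}$ as a $\Lambda$-limit of finite-dimensional functionals. For $u=\lim_{\lambda\uparrow\Lambda}u_{\lambda}$ set
\[
J_{\lambda}(u_{\lambda})=\int\theta_{\Omega}\,F\bigl(x,u_{\lambda},D_{\lambda}u_{\lambda}\bigr)\,dx ,
\]
where the integral is the finite sum $\sum_{a}(\cdot)\,d(a_{\lambda})$ over $\Gamma_{\lambda}$ and $D_{\lambda}$ is the finite-dimensional derivative of Definition \ref{DA}. On the finite-dimensional space $V_{\text{\textsc{bc}},\lambda}(\Omega)$, $J_{\lambda}$ is a continuous function of the finitely many coefficients $u_{\lambda}(a_{\lambda})$, because $F$ is continuous. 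The ball $\{\|u_{\lambda}\|_{\lambda}\le R_{\lambda}\}$ is compact, so $J_{\lambda}$ attains a minimum there at some $\bar u_{\lambda}$. Set $\bar u=\lim_{\lambda\uparrow\Lambda}\bar u_{\lambda}$. By Prop.~\ref{p2}(\ref{pippa}), for every internal $u$ with $\|u\|\le R$ we have $J^{\circ}(\bar u)\le J^{\circ}(u)$.

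For $\|u\|\ge R$, hypothesis (\ref{dis}) gives some $u_{0}$ with $\|u_0\|<R$ and $J^{\circ}(u)>J^{\circ}(u_{0})\ge J^{\circ}(\bar u)$. Hence $\bar u$ is a global minimizer.

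\textbf{Main obstacle.} The delicate point is ensuring that $J^{\circ}$ really is the $\Lambda$-limit of continuous functions on hyperfinite-dimensional spaces, so that the transfer of ``attains its minimum'' is legitimate. This holds because $\int^{\circ}_{\Omega}$ and $D$ are $\mathbb{E}$-linear hyperfinite operations, given by (\ref{int}), the $\chi$-expansion and Definition~\ref{pippo}, and because $F$ enters only through its natural extension $F^{*}$.

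\textbf{Identification with $w^{\circ}$.} Here the argument is necessarily weaker, and the statement has to be read accordingly. If $w\in C^{1}(\overline\Omega)$ is a classical minimizer, I would argue as in Theorem \ref{667}. Assume enough regularity, say $w\in C^{1,1}$, so that Theorem \ref{dery} gives $Dw^{\circ}=(\nabla w)^{\circ}$. Then
\[
J^{\circ}(w^{\circ})=\int\theta_\Omega F(x,w,\nabla w)\,dx=J(w)
\]
by the computation after Definition~\ref{pippo}. The next step would show $J^{\circ}(w^{\circ})\le J^{\circ}(u)$ for all $u$. For this I would take $\Lambda$-limits of the approximating minimization problems, using that $\mathcal{U}$-eventually $w\in V_{\lambda}$ and that $w$ minimizes $J$ over $V_{\text{\textsc{bc}}}\cap V_{\lambda}$. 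I would then choose $\bar u_{\lambda}=w$ in the construction above, which is permitted since the minimizer of $J_{\lambda}$ may be selected as $w$ when it is one.

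This last point is what I expect to be genuinely hard. In general $J_{\lambda}$ uses $D_{\lambda}$ rather than $\partial$ on non-regular elements of $V_{\lambda}$, so nonregular competitors could lower the energy. To close the gap I would either restrict to $V^{1}(\Gamma^N)$-type competitors via Theorem~\ref{TU}(\ref{U0}), or interpret ``$\bar u(x)=w^{\circ}(x)$'' as saying that the minimizer can be chosen equal to $w^{\circ}$ at standard points. Uniqueness would require strict convexity, which the statement does not assume.
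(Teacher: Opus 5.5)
Your existence argument follows the paper's route: minimize the finite-dimensional approximations, then take the $\Lambda$-limit. It is, however, more complete than the paper's one-line proof. The paper asserts that $J$ has a minimizer on each finite-dimensional $V_\lambda$ and never uses (\ref{dis}). But a continuous function on a finite-dimensional space need not attain its infimum, so that step would need some version of (\ref{dis}) at level $\lambda$, which the paper does not derive. You minimize over the compact ball $\{\|u_\lambda\|\le R_\lambda\}$ and invoke (\ref{dis}) only after passing to the limit, which closes this cleanly without transferring the hypothesis. Both you and the paper tacitly assume $F$ is continuous.

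The \emph{Moreover} clause is not proved in your proposal. The paper's proof does not treat it either: it stops after existence and never mentions $w$. Your doubts are well founded, because the clause cannot hold as stated.

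Read literally (for every $C^1$ minimizer $w$), it fails without uniqueness. Take $\Omega=(0,1)$, no boundary condition, and $F(x,u,p)=(u^2-1)^2$. Hypothesis (\ref{dis}) holds with $u_0=\mathbf 1^\circ$ and $R=2$: $J^\circ(u)=\sum_a\theta^\circ_\Omega(a)(u(a)^2-1)^2d(a)$ vanishes only if $|u|=1$ on $[0,1]^\circ$, hence only if $\|u\|\sim 1$. Yet $w\equiv 1$ and $w\equiv -1$ are both $C^1$ minimizers of $J$, and $\bar u(\frac12)$ cannot equal both $1$ and $-1$.

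Under uniqueness, or under your weaker reading, the missing step is exactly the one you isolate. One would need $w^\circ$ to minimize $J^\circ$ over all of $V_{\text{\textsc{bc}}}(\Omega^\circ)$, non-regular competitors included. The framework identifies $\int^\circ$ with $\lim_{\lambda\uparrow\Lambda}\int$ only for regular ultrafunctions (Th.~\ref{TU}-(\ref{U00})). The paper's own later examples (minimizers in different spaces differing by infinitesimals, $c\sim\frac12$ rather than $c=\frac12$) suggest that only agreement up to infinitesimals can be expected.

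What can actually be proved is the restricted version you hint at. Suppose $w\in C^{1,1}$ is the unique classical minimizer, and one minimizes only over ultrafunctions with $u_\lambda\in C^{1,1}_{\text{\textsc{bc}}}\cap\lambda$. Suppose also that on this class $J^\circ(u)=\lim_{\lambda\uparrow\Lambda}J(u_\lambda)$; for general $F$ this needs more than (\ref{gazza}). Then $\mathcal U$-eventually the minimizer over the finite set $C^{1,1}_{\text{\textsc{bc}}}\cap\lambda$ is $w$ itself, so $\bar u=w^\circ$.
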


\textbf{Proof.}

For every $\lambda$ the functional $J$ is defined on the finite
dimensional space $V_{\lambda}$. Hence it admits a minimizer $\bar{u}_{\lambda}$.
Passing to the $\Lambda$--limit we obtain a minimizer $\bar{u}$
of $J^{\circ}$. $\square$

\bigskip{}

If $\partial\Omega$ is not sufficiently smooth, a possible minimizer
of $J$ may be achieved in a larger space $W^{1}_{\text{\textsc{bc}}}(\Omega)\supseteq C^{1}_{\text{\textsc{bc}}}(\Omega)$.
If the solution lies in $C^{1}_{\text{\textsc{bc}}}(\Omega)$, then
it coincides with the minimizer in $W^{1}_{\text{\textsc{bc}}}(\Omega)$
in the obvious sense. In the case of ultrafunctions the situation
is different. First of all, if (\ref{dis}) is satisfied, the solution
exists in every reasonable space; however, if we choose different
spaces, infinitesimal quantities have to be taken into account and
we might obtain different minimizers which behave in slightly different
ways.

Let us see an example.

\bigskip{}

\textbf{Example.} Let us consider the functional defined in $C^{1}(0,1)$
\begin{equation}
J(u)=\int_{[0,1]}\left[\left(|\partial u|^{2}-1\right)^{2}+u^{2}\right]dx\label{JL}
\end{equation}
which exhibits the well known Lavrentiev phenomenon: every minimizing

sequence \$u\_n\$ converges uniformly to \$0\$, while

\[
0=\lim_{n\to\infty}J(u_{n})\neq J(0)=1.
\]

In the framework of ultrafunctions a possible choice of the functional
space is $V^{1}_{0}((0,1)^{\circ})$. By Theorem \ref{nadia}, the
minimizer $\bar{u}\in V^{1}_{0}((0,1)^{\circ})$ exists and it is
easy to check that it satisfies the following Euler--Lagrange equation:
\begin{equation}
D\!\left[\theta^{\circ}_{(0,1)}(x)\left(|D\bar{u}|^{2}-1\right)D\bar{u}\right]-\frac{1}{2}\theta^{\circ}_{(0,1)}\bar{u}=\psi(x)\label{glu}
\end{equation}
where $\psi$ appears as a Lagrange multiplier and satisfies 
\[
\forall v\in V^{1}_{0}((0,1)^{\circ}),\qquad\int^{\circ}\psi v\,dx=0.
\]

However, it is possible to minimize the functional in $V_{0}((0,1)^{\circ})\supset V^{1}_{0}((0,1)^{\circ})$.
In this case we obtain a minimizer $\bar{u}_{0}$ such that 
\[
J^{\circ}(\bar{u}_{0})<J^{\circ}(\bar{u})
\]
and 
\[
D\!\left[\theta^{\circ}_{(0,1)}\left(|D\bar{u}_{0}|^{2}-1\right)D\bar{u}_{0}\right]-\frac{1}{2}\theta^{\circ}_{(0,1)}\bar{u}_{0}=0.
\]

Thus the function $\psi$ in equation (\ref{glu}) can be interpreted
as a ``structural force'' which prevents the graph of $u$ from
forming angles. The presence of this force increases the energy level
and we obtain 
\[
J^{\circ}(\bar{u})>J^{\circ}(\bar{u}_{0})>0.
\]

In both cases the energy exceeds by an infinitesimal quantity the
limit $\lim_{n\to\infty}J(u_{n})=1$. This example shows that the
choice of the functional space may affect the

structure of the minimizers. In the framework of ultrafunctions this

dependence appears only through infinitesimal quantities, but it may
still

reflect different physical or geometrical interpretations of the model.
Hence the choice of a particular space depends on the phenomenon that
we want to describe, provided that infinitesimal differences are relevant.

Using the variational structure it is very easy to prove the existence
of solutions for ill posed problems of different types. For example
it is possible to consider an overdetermined problem imposing redundant
boundary conditions such as 
\[
u(x)=0\quad\text{and}\quad Du(x)\cdot\mathbf{n}_{\Omega}(x)=0\quad\text{if }x\in\partial\Omega^{\circ}.
\]

Also in this case the solutions of this problem satisfy equation (\ref{b'})
in $\mathfrak{int}(\Omega^{\circ})$.

\textbf{Example.} Let us consider the following problem 
\[
D^{2}u=1\qquad\text{in }(-1,1)^{\circ}
\]
with boundary conditions 
\[
u(1)=u(-1)=0,\qquad Du(1)\cdot\mathbf{n}_{\Omega}(1)=Du(-1)\cdot\mathbf{n}_{\Omega}(-1)=0.
\]

The solution $\overline{u}$ is the minimizer of 
\[
J^{\circ}(u):={\int^{\circ}_{-1}}^{1}\left(|Du|^{2}+u\right)dx
\]
in 
\[
\{u_{|_{[-1,1]^{\circ}}}\ |\ u(1)=u(-1)=Du(1)=Du(-1)=0\},
\]
but it cannot be written explicitly. We can say that 
\[
u(x)=\frac{1}{2}x^{2}-c,\qquad c\sim\frac{1}{2}
\]
for $x\in\mathfrak{int}([-1,1]^{\circ})$, but we cannot say much
about $x\in\mathfrak{bd}([-1,1]^{\circ})$.

Obviously we have 
\[
J^{\circ}(\bar{u})>\int^{1}_{-1}\left|\partial\!\left(\frac{1}{2}x^{2}-\frac{1}{2}\right)\right|^{2}dx.
\]

The infinitesimal increase of energy is due to the effect of the Neumann
boundary conditions.

\subsection{Evolution problems}\label{evp}

Let $\Omega\subset\mathbb{R}^{N}$ be an open set and let 
\[
A(x,\partial_{i}):\mathcal{D}_{A}(\Omega)\to C(\Omega)
\]
be a differential operator. Here $\mathcal{D}_{A}(\Omega)$ denotes
the domain of $A(x,\partial_{i})$ including the boundary conditions
when $\Omega\neq\mathbb{R}^{N}$.

We consider the following Cauchy problem: given $u_{0}(x)\in\mathcal{D}_{A}(\Omega)$
find 
\begin{equation}
u\in C^{1}(\mathbb{R},\mathcal{D}_{A}(\Omega)),\label{ae}
\end{equation}
\begin{equation}
\partial_{t}u=A(x,\partial_{i})[u],\label{be}
\end{equation}
\begin{equation}
u(0,x)=u_{0}(x).\label{ce}
\end{equation}

A function satisfying (\ref{ae})--(\ref{ce}) is called a classical
solution. We want to translate this problem into the framework of
ultrafunctions.

Because of the nature of the Cauchy problem, as discussed in Section
\ref{TDU}, it is not convenient to work in $V(\Gamma^{N+1})$. It
is better to use the space $C^{1}(\mathbb{E},V(\Gamma^{N}))$ defined
in Definition \ref{lella3}.

Assume that the boundary conditions contained in $\mathcal{D}_{A}(\Omega)$
can be translated into a domain $\mathcal{D}_{A}(\Omega^{\circ})\subset V(\Gamma^{N})$.

Setting 
\[
C^{1}(\mathbb{E},\mathcal{D}^{\circ}_{A}(\Omega))=\{u\in C^{1}(\mathbb{E},V(\Gamma^{N}))\ |\ \forall t\in\mathbb{E},\ u(t,x)\in\mathcal{D}^{\circ}_{A}(\Omega)\}
\]
the problem (\ref{ae})--(\ref{ce}) becomes 
\begin{equation}
u\in C^{1}(\mathbb{E},\mathcal{D}^{\circ}_{A}(\Omega)),\label{aep}
\end{equation}
\begin{equation}
\partial_{t}u=A^{\circ}(x,D_{i})[u],\label{be'}
\end{equation}
\begin{equation}
u(0,x)=u_{0}(x).\label{ce'}
\end{equation}

Here $\partial_{t}$ is the natural extension of the classical time
derivative defined in (\ref{dt}). A solution of (\ref{aep})--(\ref{ce'})
will be called an ultrafunction solution of (\ref{ae})--(\ref{ce}).

If $w$ is a classical solution, then by Theorem \ref{TEV} $w^{\circ}$
is an ultrafunction solution.

Also in the evolution case the conditions guaranteeing the existence
of a solution are very weak.

\begin{theorem} \label{evol} Assume that $A(x,\partial_{i})[u]$
restricted to $V_{\lambda}(\Omega)$ is locally Lipschitz continuous
in $u$. Then there exists $T_{\Lambda}$ such that problem (\ref{aep})--(\ref{ce'})
has a unique ultrafunction solution for $t\in[0,T_{\Lambda})$.

Moreover, if an \emph{a priori} bound exists, then there is a unique
ultrafunction solution in $C^{1}(\mathbb{E},\mathcal{D}^{\circ}_{A}(\Omega))$.
\end{theorem}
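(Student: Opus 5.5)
The plan is to reduce problem (\ref{aep})--(\ref{ce'}) to a Cauchy problem for an ordinary differential equation in a finite dimensional space for each $\lambda$, apply the Picard--Lindelöf theorem there, and then take the $\Lambda$-limit.

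By (\ref{u}) every time-dependent ultrafunction has the form $u(t,x)=\sum_{a\in\Gamma^{N}}c_{a}(t)\chi_{a}(x)$. For each $\lambda$ we use the representation (\ref{urano}) and write
\[
u_{\lambda}(t,x)=\sum_{a_{\lambda}\in\Gamma^{N}_{\lambda}}c_{a_{\lambda}}(t)\sigma_{a_{\lambda}}(x).
\]
Equation (\ref{be'}) at level $\lambda$ then becomes the system
\[
\dot c_{a_{\lambda}}(t)=\langle A(x,\partial_{i})[u_{\lambda}(t,\cdot)],\delta_{a_{\lambda}}\rangle_{\lambda},\qquad a_{\lambda}\in\Gamma^{N}_{\lambda}\cap\Omega,
\]
with $c_{a_{\lambda}}(0)=u_{0,\lambda}(a_{\lambda})$. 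The boundary conditions defining $\mathcal{D}_{A}(\Omega^{\circ})$ are imposed as linear constraints on the coefficients. This is an ODE in $\mathbb{R}^{d_{\lambda}}$, where $d_{\lambda}=\dim V_{\lambda}(\Omega)$ is finite. By hypothesis its right-hand side $F_{\lambda}$ is locally Lipschitz on that space.

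For the first statement, Picard--Lindelöf gives a unique local solution $c^{\lambda}\in C^{1}([0,T_{\lambda}),\mathbb{R}^{d_{\lambda}})$ on a maximal interval $[0,T_{\lambda})$. We then set $T_{\Lambda}=\lim_{\lambda\uparrow\Lambda}T_{\lambda}$ and $u(t,x)=\lim_{\lambda\uparrow\Lambda}u_{\lambda}(t_{\lambda},x_{\lambda})$. By (\ref{nd}), (\ref{dx}) and (\ref{dt}), together with Proposition \ref{p2}, the $\Lambda$-limit commutes with the time derivative and with the generalized derivatives. The limit therefore satisfies (\ref{be'}) and (\ref{ce'}) for $t\in[0,T_{\Lambda})$, and it lies in $C^{1}(\mathbb{E},\mathcal{D}^{\circ}_{A}(\Omega))$. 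If the maximal time is infinite for some $\lambda$, we simply read $T_{\lambda}$ as $+\infty$ there.

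Uniqueness also goes through the limit. If $v$ is another solution, we write it as $\lim v_{\lambda}$ with $v_{\lambda}$ in the same finite dimensional space. Then, $\mathcal{U}$-eventually, $v_{\lambda}$ solves the same ODE with the same initial datum. Uniqueness at level $\lambda$ gives $v_{\lambda}=u_{\lambda}$ on a qualified set, so $v=u$.

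For the second statement, an \emph{a priori} bound means a bound $\|u_{\lambda}(t)\|\le K_{\lambda}$ on any solution, where the constant is allowed to be infinite in $\mathbb{E}$. The standard continuation criterion for finite dimensional ODEs then shows that $T_{\lambda}=+\infty$ $\mathcal{U}$-eventually. The same limiting argument gives a global solution that is unique.

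The main obstacle is the transfer step. We must check that a net of classical $C^{1}$ solutions really defines an element of $C^{1}(\mathbb{E})\otimes V(\Gamma^{N})$, which means the hyperfinite sum has coefficients $c_{a}=\lim c^{\lambda}_{a_{\lambda}}\in C^{1}(\mathbb{E})$. We must also check that the discrete operator $D_{i,\lambda}$ matches $D_{i}$ in the limit. I would argue this coordinatewise, using Definition \ref{DA} and the definition of internal functions. Here $T_{\Lambda}$ may be infinitesimal, since only the hyperfinite Lipschitz constants are controlled. For the same reason I would state the time interval as $[0,T_{\Lambda})\subset\mathbb{E}$ and make no claim that it contains a real interval.
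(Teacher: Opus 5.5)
Your proposal is correct and takes the same route as the paper. The paper's proof is only the remark that $\mathcal{D}^{\circ}_{A}(\Omega)$ is hyperfinite dimensional, so standard ODE theory applies; your level-$\lambda$ Picard--Lindel\"of argument, the $\Lambda$-limit, the pull-back to a qualified set for uniqueness, and the continuation criterion under the a priori bound are the explicit unpacking of that transfer.

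What you call the main obstacle is actually definitional, provided the level-$\lambda$ right-hand side is built from $D_{i,\lambda}u=\langle\partial_{i}u_{\lambda},\delta_{a_{\lambda}}\rangle_{\lambda}$ and grid-pointwise nonlinearities, as in the paper's Faedo--Galerkin formulation $A^{\circ}(x,D_{i,\lambda})$. Then $D_{i}=\lim_{\lambda\uparrow\Lambda}D_{i,\lambda}$ by Definition \ref{DA}, and $C^{1}(\mathbb{E})$ consists by definition of $\Lambda$-limits of $C^{1}$ nets. Pairing the classical $A(x,\partial_{i})[u_{\lambda}]$ with $\delta_{a_{\lambda}}$, as your displayed system does literally, would not work for nonlinear or second-order $A$.
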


\textbf{Proof.} Since $\mathcal{D}^{\circ}_{A}(\Omega)$ is a hyperfinite
dimensional vector space, the result follows from standard results
for ODEs.

$\square$

\begin{theorem} \label{dolores} If 
\begin{equation}
|A(x,D_{i})[u]|\le c_{1}+c_{2}|u|,\label{cielo}
\end{equation}
then problem (\ref{aep})--(\ref{ce'}) has a unique global in time
ultrafunction solution $u(t,x)$. \end{theorem}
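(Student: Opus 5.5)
The plan is to reduce problem (\ref{aep})--(\ref{ce'}) to a Cauchy problem for an ordinary differential equation in a finite-dimensional space, at each level $\lambda$, and then pass to the $\Lambda$-limit. As in Theorem \ref{evol}, for every $\lambda$ the approximating problem is posed in the finite-dimensional space $\mathcal{D}_{A,\lambda}(\Omega)\subset V_{\lambda}(\Omega)$. Writing $u_{\lambda}(t,x)=\sum_{a\in\Gamma^{N}_{\lambda}}c_{a,\lambda}(t)\sigma_{a_{\lambda}}(x)$ as in (\ref{u}), the equation $\partial_{t}u_{\lambda}=A_{\lambda}(x,D_{i,\lambda})[u_{\lambda}]$ becomes a system $\dot c_{\lambda}=F_{\lambda}(c_{\lambda})$ in $\mathbb{R}^{|\Gamma_{\lambda}^{N}|}$. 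By the hypothesis of Theorem \ref{evol}, $F_{\lambda}$ is locally Lipschitz. The Cauchy--Lipschitz theorem then gives a unique maximal solution on $[0,T_{\lambda})$ with initial datum $c_{a,\lambda}(0)=u_{0}(a_{\lambda})$.

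The key step is to show that $T_{\lambda}=+\infty$ for every $\lambda$ in a qualified set, using the growth bound (\ref{cielo}). I read (\ref{cielo}) at level $\lambda$ as a pointwise bound on the grid: $|A(x,D_{i,\lambda})[u](a)|\le c_{1}+c_{2}|u(a)|$ for every $a\in\Gamma_{\lambda}^{N}$. Since the grid is finite and the coordinates $c_{a}$ are exactly the grid values, this yields
\[
\frac{d}{dt}|c_{a,\lambda}(t)|\le c_{1}+c_{2}\max_{b}|c_{b,\lambda}(t)|
\]
almost everywhere. Setting $m(t)=\max_{b}|c_{b,\lambda}(t)|$, which is Lipschitz, we get $m'\le c_{1}+c_{2}m$ a.e. Gronwall's lemma then gives
\[
m(t)\le\left(m(0)+\frac{c_{1}}{c_{2}}\right)e^{c_{2}t}.
\]
So the solution stays bounded on every bounded time interval and cannot blow up, and $T_{\lambda}=\infty$. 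The same estimate works for negative times, since the bound is symmetric in the sign of $t$. If $c_{1},c_{2}$ are allowed to be Euclidean numbers, I take them as $\Lambda$-limits of real nets $c_{1,\lambda},c_{2,\lambda}$ and run the argument at each level with those constants. Uniqueness at level $\lambda$ follows from the local Lipschitz property.

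Finally, I set $u(t,x)=\lim_{\lambda\uparrow\Lambda}u_{\lambda}(t_{\lambda},x_{\lambda})$, that is, $c_{a}(t)=\lim_{\lambda\uparrow\Lambda}c_{a_{\lambda},\lambda}(t_{\lambda})$. Then $c_{a}\in C^{1}(\mathbb{E})$ by (\ref{nd}), and each identity holding $\mathcal{U}$-eventually transfers through the $\Lambda$-limit by Proposition \ref{p2}. This gives (\ref{be'}) and (\ref{ce'}) for all $t\in\mathbb{E}$, hence a global solution. For uniqueness, two ultrafunction solutions are $\Lambda$-limits of nets of solutions of the approximating problems, and these coincide $\mathcal{U}$-eventually by uniqueness at each level, so the limits coincide.

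The main obstacle is interpretive rather than analytic. The estimate in the norm $\|\cdot\|$ must be made uniform at the level $\lambda$, and (\ref{cielo}) must be read pointwise on the grid. If it is instead read as $\|A[u]\|\le c_{1}+c_{2}\|u\|$, I will run the same Gronwall argument on $\|u_{\lambda}(t)\|$ using
\[
\frac{d}{dt}\|u_{\lambda}\|\le\|\partial_{t}u_{\lambda}\|.
\]
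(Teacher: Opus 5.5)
Your proposal is correct and follows the same route as the paper. The paper's proof only observes that $\mathcal{D}^{\circ}_{A}(\Omega)$ is hyperfinite dimensional and invokes the classical ODE theorems; you unpack this into Cauchy--Lipschitz plus a Gronwall bound from (\ref{cielo}) at each level $\lambda$, followed by the $\Lambda$-limit, and you rightly import the local Lipschitz hypothesis of Theorem \ref{evol} for uniqueness (the paper's ``again'' assumes it tacitly), since a growth bound by itself does not give uniqueness.
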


\textbf{Proof.} Since $\mathcal{D}^{\circ}_{A}(\Omega)$ is hyperfinite
dimensional, the result follows again from classical theorems for
ODEs.

$\square$

\subsection{Some examples of evolution problems}

\textbf{Example 1.} Let $\Omega$ be a bounded open set and consider
the following problem: 
\begin{equation}
u\in C^{1}(\mathbb{R},\mathcal{D}_{A}(\Omega)),
\end{equation}
\begin{equation}
\partial_{t}u=\nabla\cdot\left[k(u)\nabla u\right],\label{liliana}
\end{equation}
\begin{equation}
u(0,x)=u_{0}(x).
\end{equation}

If $k(u)\geq k_{0}>0$, then (\ref{liliana}) is a parabolic equation
and the problem, under very weak assumptions, admits a classical solution.
If $k(u)<0$ for some $u\in\mathbb{R}$, then the problem is ill posed
and in general classical solutions do not exist.

Let us translate this problem into the framework of ultrafunctions.
Taking into account the results of Section \ref{soe}, we obtain 
\begin{equation}
u\in C^{1}(\mathbb{E},\mathcal{D}^{\circ}_{A}(\Omega^{\circ})),\label{aa1}
\end{equation}
\begin{equation}
\partial_{t}u=D\cdot\left[\theta^{\circ}_{\Omega}k(u)Du\right]\qquad x\in\Omega^{\circ},\label{aa2}
\end{equation}
\begin{equation}
u(0,x)=u^{\circ}_{0}(x).\label{aa3}
\end{equation}

Hence we can apply Theorem \ref{dolores} and obtain the existence
of a unique global solution.

\begin{theorem} \label{dolores+} If 
\begin{equation}
|k(r)|\leq k_{0},\label{billo}
\end{equation}
then problem (\ref{aa1}), (\ref{aa2}), (\ref{aa3}) has a global
solution. \end{theorem}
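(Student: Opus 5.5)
The plan is to apply Theorem \ref{dolores}: I reduce problem (\ref{aa1})--(\ref{aa3}) to an ODE system on a hyperfinite-dimensional space and check a linear growth bound of the type (\ref{cielo}). Using the expansion (\ref{u}), I write $u(t,x)=\sum_{a\in\Omega^{\circ}}c_{a}(t)\chi_{a}(x)$. Then (\ref{aa2}) becomes $\dot c=F(c)$, where
\[
F(c)_{a}=D\cdot\left[\theta^{\circ}_{\Omega}\,k(u)\,Du\right](a),\qquad a\in\Omega^{\circ}.
\]
This vector field is internal, and it is $\mathcal{U}$-eventually the $\Lambda$-limit of finite-dimensional Galerkin fields $F_{\lambda}$ on $V_{\lambda}$. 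So I work at level $\lambda$, where $F_{\lambda}:\mathbb{R}^{n_{\lambda}}\to\mathbb{R}^{n_{\lambda}}$. If $k$ is smooth (as assumed in Section \ref{soe}), $F_{\lambda}$ is locally Lipschitz, and Cauchy--Lipschitz gives a local solution. Global existence at level $\lambda$ then follows from a bound $|F_{\lambda}(c)|\le c_{1}+c_{2}|c|$. The $\Lambda$-limit of the global solutions $c_{\lambda}(t)$ then gives an element of $C^{1}(\mathbb{E},\mathcal{D}^{\circ}_{A}(\Omega^{\circ}))$ solving (\ref{aa1})--(\ref{aa3}), with initial datum (\ref{aa3}) obtained from $c_{a}(0)=u_{0}^{\circ}(a)$.

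The key step is the linear bound, and this is where (\ref{billo}) enters. I would use the operator norm $\|D\|$ introduced in the proof of the preceding theorem. Because the space is hyperfinite-dimensional, the linear operators $D_{i}$ and multiplication by $\theta^{\circ}_{\Omega}$ are bounded with (possibly infinite) constants in $\mathbb{E}$. The multiplication by $k(u)$ is pointwise on the grid, and $|k(u(x))|\le k_{0}$ for every $x$. Hence
\[
\left\|D\cdot\left[\theta^{\circ}_{\Omega}k(u)Du\right]\right\|\le N\|D\|\cdot k_{0}\cdot\|D\|\,\|u\|=N k_{0}\|D\|^{2}\|u\|.
\]
This is exactly (\ref{cielo}) with $c_{1}=0$ and $c_{2}=Nk_{0}\|D\|^{2}$, so Theorem \ref{dolores} applies. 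At level $\lambda$ the same estimate holds with real constants $\|D_{\lambda}\|$, so Gronwall gives $|c_{\lambda}(t)|\le|c_{\lambda}(0)|e^{c_{2,\lambda}|t|}$ and excludes blow-up for all real $t$.

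The main obstacle is to make sure that the growth estimate really holds in the norm used for the ODE. The nonlinearity acts pointwise, but $D$ mixes grid values, and so I would check that multiplication by $k(u)$ is bounded by $k_{0}$ in the weighted norm $\|\cdot\|$ built from $d(a)$. It is: $\sum|k(u(a))v(a)|^{2}d(a)\le k_{0}^{2}\sum|v(a)|^{2}d(a)$. All norms on the finite space $V_{\lambda}$ are equivalent, so the estimate transfers to the Euclidean norm on the coefficients.

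The second point is regularity of $k$ for local existence. If $k$ is merely bounded and continuous, I would use Peano's theorem at level $\lambda$ in place of Cauchy--Lipschitz. This still gives a global solution thanks to the linear bound, though uniqueness is lost. This is consistent with the statement, which asserts only existence of a global solution.
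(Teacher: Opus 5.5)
Your proposal is correct and follows the paper's route. The paper proves this statement only by invoking Theorem \ref{dolores}, and your bound $\|D\cdot[\theta^{\circ}_{\Omega}k(u)Du]\|\le Nk_{0}\|D\|^{2}\|u\|$, together with the level-$\lambda$ ODE argument and the $\Lambda$-limit, is exactly the check of (\ref{cielo}) that the paper leaves implicit. One small correction: let $\|D\|$ be the genuine operator norm $\max_{u\neq0}\|Du\|/\|u\|\in\mathbb{E}$, which exists by hyperfinite dimensionality as you say. Do not use the paper's $\max_{a}|D\chi_{a}|$, which need not bound $\|Du\|/\|u\|$; for a central-difference $D$ it is off by a factor $2$.
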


In many applications of equation (\ref{liliana}), $u$ represents
a density and $k(u)\nabla u$ its flux. Hence, by the generalized
Gauss theorem \ref{B}, the total mass $\int^{\circ}_{\Omega}u(x)\,dx$
is preserved up to the flux through $\partial\Omega$: 
\begin{eqnarray*}
\partial_{t}\int^{\circ}_{\Omega}u(t,x)dx & = & \int^{\circ}_{\Omega}\partial_{t}u(t,x)dx\\
 & = & \int^{\circ}_{\Omega}D\cdot\left[k(u)Du\right]dx\\
 & = & \int^{\circ}_{\mathfrak{bd}(\Omega)}k(u)Du\cdot\mathbf{n}_{\Omega}\,d\mu_{\partial\Omega}.
\end{eqnarray*}

If we want to model a situation where the flux of $u$ cannot cross
$\partial\Omega$, we impose Neumann boundary conditions. This corresponds
to choosing 
\[
\mathcal{D}^{\circ}_{A}(\Omega^{\circ})=V(\Omega^{\circ}).
\]

If for some $S\in\mathbb{R}$ we have $k(S)=0$, then the ultrafunctions
of the form (\ref{tete}) are stationary solutions of this problem,
and it is possible to analyze their stability and dynamics in several
situations.

\bigskip{}

\textbf{Example 2.} Consider the following conservation law: 
\[
u\in C^{1}(\mathbb{R},C^{1}(\mathbb{R}^{N})),
\]
\[
\partial_{t}u=\nabla\cdot F(x,u),
\]
\[
u(0,x)=u_{0}(x),
\]
where $F:\mathbb{R}^{N+1}\to\mathbb{R}^{N}$ is smooth.

This problem is not well posed and when $N>1$ very little is known.
However it is well posed in the framework of ultrafunctions: 
\[
u\in C^{1}(\mathbb{E},V(\Gamma^{N})),
\]
\begin{equation}
\partial_{t}u=D\cdot F(x,u),\label{x}
\end{equation}
\[
u(0,x)=u^{\circ}_{0}(x).
\]

The existence of a unique solution follows from Theorem \ref{evol}.
Moreover by Theorem \ref{B} we obtain 
\[
\partial_{t}\int^{\circ}u\,dx=\int^{\circ}D\cdot F(x,u)dx=0
\]
provided suitable boundary conditions at infinity are imposed, such
as 
\[
F(x,\cdot)=0\qquad x\in\mathfrak{bd}(\Gamma^{N}),
\]
where $\mathfrak{bd}(\Gamma^{N})$ was defined in (\ref{bi}).

A particular case of (\ref{x}) is Burgers' equation 
\[
\partial_{t}u=-u\partial_{x}u,
\]
\[
u(0,x)=u_{0}(x)\ge0.
\]

It is well known that this equation admits infinitely many weak solutions
preserving the mass. Among them, the entropy solution describes the
phenomena occurring in fluid mechanics.

In the framework of ultrafunctions Burgers' equation can be written
as 
\[
u\in C^{1}(\mathbb{E},V(\Gamma)),
\]
\begin{equation}
\partial_{t}u=-D_{x}\!\left(\frac{u|u|}{2}\right),\label{blue}
\end{equation}
\begin{equation}
u(0,x)=u^{\circ}_{0}(x),\qquad Du(\omega)=Du(-\omega)=0.\label{bluf}
\end{equation}

The right hand side of (\ref{blue}) does not satisfy (\ref{cielo});
however the existence of global solutions can still be established.
Indeed 
\begin{eqnarray*}
\partial_{t}\int^{\circ}|u|^{3}dx & = & 3\int^{\circ}(u|u|)\partial_{t}u\,dx\\
 & = & -\frac{3}{2}\int^{\circ}(u|u|)D_{x}(u|u|)dx\\
 & = & \frac{3}{2}\int^{\circ}D_{x}(u|u|)(u|u|)dx=0.
\end{eqnarray*}

Hence the solutions of (\ref{blue}) preserve also the quantity $\int^{\circ}|u|^{3}dx$.
These solutions are different from the entropy solution. The viscosity
solution can be modeled in the ultrafunction framework by 
\[
\partial_{t}u=-D_{x}\!\left(\frac{u^{2}}{2}\right)+\nu D^{2}_{x}u
\]
where $\nu$ is a suitable infinitesimal (see \cite{blbur}).

Finally we remark that in the ultrafunction framework equation (\ref{blue})
differs from the transport equation 
\begin{equation}
\partial_{t}u=-uD_{x}u\label{bue+}
\end{equation}
even when $u\ge0$. In fact at singular points 
\[
D_{x}(u^{2})\neq2uD_{x}u
\]
and (\ref{bue+}) is not a conservation law since it does not have
the form (\ref{x}). Moreover condition (\ref{cielo}) does not hold,
so global existence is not guaranteed by Theorem \ref{dolores}. Nevertheless
we have the following result.

\begin{theorem} If $u_{0}(x)\ge0$, then there exists a global solution
such that $u(t,x)\ge0$ for every $x\in\mathbb{E}$. Moreover if $u$
has compact support, the mass is preserved. \end{theorem}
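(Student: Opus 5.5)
We have to prove that the transport equation (\ref{bue+}),
\[
\partial_{t}u=-uD_{x}u,\qquad u(0,x)=u^{\circ}_{0}(x)\ge0,
\]
has a global solution in $C^{1}(\mathbb{E},V(\Gamma))$ that stays nonnegative, and that the mass is preserved when $u$ has compact support.

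Writing $u(t,x)=\sum_{a\in\Gamma}c_{a}(t)\chi_{a}(x)$ as in (\ref{u}), equation (\ref{bue+}) becomes a hyperfinite system of ODEs
\[
\dot c_{a}(t)=-c_{a}(t)\,(D_{x}u)(t,a)=-c_{a}(t)\sum_{b\in\Gamma}c_{b}(t)\,D\chi_{b}(a),\qquad a\in\Gamma .
\]
The right-hand side is a polynomial in the $c_{b}$, hence locally Lipschitz. Theorem \ref{evol} therefore gives a unique local solution on some interval $[0,T_{\Lambda})$. Concretely, one solves the finite system at each level $\lambda$ and takes the $\Lambda$-limit. All remaining statements are proved at level $\lambda$ and transferred by Prop.~\ref{p2}.

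\textbf{Positivity.} Each equation has the form $\dot c_{a}=-g_{a}(t)\,c_{a}$ with $g_{a}(t)=D_{x}u(t,a)$ continuous on the existence interval. Hence
\[
c_{a}(t)=c_{a}(0)\exp\Big(-\int_{0}^{t}g_{a}(s)\,ds\Big)\ge0,
\]
since $c_{a}(0)=u_{0}(a)\ge0$. In particular the coordinate hyperplane $\{c_{a}=0\}$ is invariant, so sign is preserved and zeros of $u$ stay zeros. This gives $u(t,x)\ge0$ for every $x\in\Gamma$.

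\textbf{Global existence.} This is the main obstacle, because (\ref{cielo}) fails and a quadratic system can blow up in finite time. My plan is to find a conserved or nonincreasing norm. The natural candidate is the pointwise integral $M(t)=\int^{\circ}u\,dx=\sum_{a}c_{a}(t)d(a)$. Since $u\ge0$, $M(t)$ controls every $c_{a}$ through $c_{a}\le M/d(a)$, and $d(a)>0$ by Axiom \ref{IA}. A bound on $M$ therefore bounds all coordinates, and the level-$\lambda$ ODE extends to all $t$ by the standard continuation argument. I would compute
\[
\dot M=-\int^{\circ}u\,Du\,dx .
\]
For $u\in V_{0}(\Gamma)$, Theorem \ref{ilona} with $v=u$ gives $\int^{\circ}uDu\,dx=-\int^{\circ}uDu\,dx$, hence $\dot M=0$.

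The delicate point is guaranteeing that $u(t)$ remains in $V_{0}(\Gamma)$, i.e.\ supported in $\Gamma_{\mathfrak{int}}$, and this is where I expect the difficulty. I would use the invariance of zero coordinates proved above: if $u_{0}$ has compact support, then $\mathfrak{supp}(u(t))\subseteq\mathfrak{supp}(u_{0}^{\circ})\subset\Gamma_{\mathfrak{int}}$ for all $t$. This makes $M$ constant, which is the mass preservation, and yields global existence in the compactly supported case.

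For general $u_{0}\ge0$, I would instead bound $\dot M$ by the boundary terms at $\pm\omega$ using the boundary conditions (\ref{bluf}), or fall back on a cruder bound. On the finite grid, $\max_{a}c_{a}$ satisfies a Riccati-type differential inequality whose sign I would control using $u\ge0$. If neither estimate closes, the fallback is to restrict the global statement to initial data supported in $\Gamma_{\mathfrak{int}}$, where the argument above is complete.
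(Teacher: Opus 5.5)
Your argument follows the paper's proof. The paper also gets positivity from the invariance of the zero set ($u(t_0,x_0)=0\Rightarrow u(t,x_0)=0$), conservation of $\int^{\circ}u\,dx$ from $\int^{\circ}u\,D_xu\,dx=0$, and global existence from $u\ge0$ together with the conserved mass.

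Your formula $c_a(t)=c_a(0)\exp\bigl(-\int_0^tD_xu(s,a)\,ds\bigr)$ is a cleaner proof of positivity. Your support argument supplies exactly the justification the paper omits: zero coordinates stay zero, so $\mathfrak{supp}(u(t))\subseteq\mathfrak{supp}(u_0^{\circ})\subset\Gamma_{\mathfrak{int}}$ and Theorem~\ref{ilona} applies. This step assumes that finite grid points lie in $\Gamma_{\mathfrak{int}}$, which holds in the model of Section~\ref{CSU} by (\ref{sumo}). For compactly supported $u_0$, your proof of positivity, mass conservation and global existence (via $c_a\le M/d(a)$ and Axiom~\ref{IA}) is complete.

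The gap is the first claim of the statement: global existence for \emph{every} $u_0\ge0$. Compact support is assumed only for mass preservation, and your proposal leaves the general case open.

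If $\mathfrak{supp}(u_0^{\circ})$ meets $\Gamma\setminus\Gamma_{\mathfrak{int}}$, then $\dot M=-\sum_{a,b}c_ac_b\int^{\circ}\chi_aD\chi_b\,dx$. Only the block $a,b\in\Gamma_{\mathfrak{int}}$ of this matrix is known to be antisymmetric (the lemma preceding (\ref{Dd})). Nothing established in the paper gives the other entries a sign, so $\dot M$ is uncontrolled.

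Your fallbacks do not repair this:
\begin{enumerate}
\item Condition (\ref{bluf}) constrains $Du$ only at the two points $\pm\omega$, and it was imposed for (\ref{blue}), not for (\ref{bue+}). The non-antisymmetric entries involve every pair containing a point of $\Gamma\setminus\Gamma_{\mathfrak{int}}$, including $Du(a)$ at interior points $a$ next to such points.
\item A Riccati inequality $\dot m\le Km^2$ for $m=\max_ac_a$, with $K=\max_a\sum_b|D\chi_b(a)|$ (typically infinite), only gives existence on a time interval of length about $1/(Km(0))$. Nonnegativity of $u$ gives no sign to $Du$ at a maximum point.
\end{enumerate}

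The paper does not close this case either. It writes $-\int^{\circ}uD_xu\,dx=\int^{\circ}D_xu\,u\,dx=0$ for an arbitrary solution, which uses Theorem~\ref{ilona} outside $V_0(\Gamma)$. It thereby relies, for global existence, on the very mass identity the statement grants only under compact support.

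What is missing, in your proposal and implicitly in the paper, is one of the following:
\begin{itemize}
\item a proof that $\int^{\circ}u\,Du\,dx\ge0$ for all nonnegative $u\in V(\Gamma)$, or more generally some positive weighted sum $\sum_aw_ac_a$ that is nonincreasing along the flow;
\item a boundary condition that keeps the solution in $V_0(\Gamma)$.
\end{itemize}
Without one of these, what you have proved is the theorem for compactly supported initial data.
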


\textbf{Proof.} Let $[0,T_{\Lambda})$ be the interval of existence
of the solution. If $u(t_{0},x)\ge0$ for every $x$ and some $t_{0}<T_{\Lambda}$,
then $u(t,x)\ge0$ for $t>t_{0}$. Indeed if $u(t_{0},x_{0})=0$ for
some $x_{0}\in\Gamma$, then $u(t,x_{0})=0$ for $t>t_{0}$.

Moreover 
\[
\partial_{t}\int^{\circ}u\,dx=-\int^{\circ}uD_{x}u\,dx=\int^{\circ}D_{x}u\,u\,dx=0,
\]
hence 
\[
\int^{\circ}u(t,x)\,dx=\int^{\circ}u_{0}(x)\,dx.
\]

Since $u(t,x)\ge0$, also $\int^{\circ}|u|dx$ remains bounded and
therefore the solution exists for all $t\in[0,T]$.

$\square$ 

\section{A model for ultrafunctions}\label{CSU}

In order to prove the consistency of Axioms $1,\ldots,4$ we construct
a model.
This construction is rather involved since there are many details to take account of. Hence we outline the general strategy:
\begin{enumerate}
    \item We construct a suitable grid  $\Gamma_\eta$ of mesh $\eta.$
    \item We define a hyperfinite space  $W(\mathbb{E})$ which contains the natural extensions of the epilogic functions.  The functions in $W(\mathbb{E})$ can be well approximated by step functions of mesh $\eta$  (see Def. \ref{61} ).
    \item We take the space $\breve{V}(\mathbb{E})$ of all the step functions relative to the grid $\Gamma_\eta$  and we deform some of them in order to get a space  $\mathring{V}(\mathbb{E})$  isomorphic and very "close" to $\breve{V}(\mathbb{E})$ and which contains  $W(\mathbb{E})$.
    \item We define a step derivative on $\breve{V}(\mathbb{E})$ and we show that it satisfies some desired properties.
    \item We show that the properties of the step derivative and other useful properties of $\breve{V}(\mathbb{E})$ can be transferred to $\mathring{V}(\mathbb{E})$ .  Finally we build  $V(\Gamma)=V(\Gamma_\eta)$ over $\mathring{V}(\mathbb{E}).$
\end{enumerate}

\subsection{The hyperfinite grid}

If $A\subset\mathbb{R}$, we can define a peculiar hyperfinite set
containing $A$ by setting 
\[
A^{\circledast}:=\left\{ \lim_{\lambda\uparrow\Lambda}x_{\lambda}\ \big|\ \forall\lambda\in\mathfrak{L},\ x_{\lambda}\in A \cap \lambda\right\} .
\]
In the following the sets $\mathbb{R}^{\circledast}$ and $\mathbb{Q}^{\circledast}$will be the most relevant ones.

Given an infinite number $\omega\in\mathbb{N}^{*}$ such that $\mathbb{R}^{\circledast}\subset(-\omega,\omega)$
and an infinitesimal number $\eta=\frac{1}{\beta}$ with $\beta\in\mathbb{N}^{*}$,
the set 
\[
G_{\eta}:=\{m\eta\mid|m\eta|\le\omega\}
\]
is a hyperfinite grid with mesh $\eta$  and numerosity $2\beta \omega +1$,  i.e.,  
\[
\mathfrak{num}\,(G_{\eta}) = \lim_{\lambda\uparrow_{\mathcal{U}}\Lambda}|G_{\eta}\cap \lambda| = 2\beta \omega + 1
.\] We take $\beta$ sufficiently large so that 
\begin{equation}
\eta=\frac{1}{\beta}<\frac{1}{\omega^{2}}.\label{lilla}
\end{equation}
Moreover we can take $\beta$ so that 
\[
\mathbb{Q}\subset\mathbb{Q}^{\circledast}\subset G_{\eta}.
\]
In fact, since  $\mathbb{Q}^{\circledast}$ is hyperfinite,  the least common denominator  $\tau$  of the elements of    $\mathbb{Q}^{\circledast}$  is well defined; hence  it is  sufficient to to require that $\beta$  be a multiple of $\tau$. 

For later purposes we need a grid $\Gamma$ such that $\mathbb{R}\subset\Gamma$.
Hence we need to modify $G_{\eta}$.  For this reason, we take $\eta$ so small that every infinitesimal interval 
\[
\left[h-\frac{1}{2}\eta,\,h+\frac{1}{2}\eta\right]
\]
contains at most one element of $\mathbb{R}^{\circledast}$. Then for every $h\in G_{\eta}$ we set 
\[
\breve{h}=\begin{cases}
h & \text{if }\left[h-\frac{1}{2}\eta,h+\frac{1}{2}\eta\right]\cap\mathbb{R}^{\circledast}=\varnothing,\\[6pt]
b & \text{if }b\in\left[h-\frac{1}{2}\eta,h+\frac{1}{2}\eta\right]\cap\mathbb{R}^{\circledast}.
\end{cases}
\]
The goal of this construction is to obtain a hyperfinite grid $G_{\eta}$
which contains all real numbers  according to  \ref{linda}.  In fact,  we define 
\begin{equation}
\Gamma_{\eta}=\{\breve{h}\mid h\in G_{\eta}\}.\label{g}
\end{equation}

Notice that if $a\in\left[h-\frac{1}{2}\eta,h+\frac{1}{2}\eta\right]$
then actually 
\[
a\in\left(h-\frac{1}{2}\eta,h+\frac{1}{2}\eta\right).
\]
Indeed, if $a\in\mathbb{Q}^{\circledast}$ then $a=h$; if instead
$a\in\mathbb{R}^{\circledast}\setminus\mathbb{Q}^{\circledast}$ then
$a$ is irrational and therefore $a\neq h\pm\frac{1}{2}\eta$.

In correspondence to this grid, we put
\[
\theta_a(x)=\theta_{[h-\frac12\eta,h+\frac12\eta]}
\]
with $a\in[h-\frac12\eta,h+\frac12\eta]$.

\subsection{The approximation lemma}\label{AL}

Let $W(\mathbb{E})\subset V(\mathbb{E})$ be a vector space which satisfies
the following assumptions:

\begin{itemize}

\item (W-1) if $u_{\lambda},v_{\lambda}\in V(\mathbb{R})\cap\lambda,$ then
\[
uv=\lim_{\lambda\uparrow\Lambda}u_{\lambda}v_{\lambda}\in W(\mathbb{E}).
\]

\item (W-2) $W(\mathbb{E})$ has a basis $\{e_k\}_{k\in\mathcal K}$ with
infinitesimal support, namely $\forall k$ there exists $x_k$ such that
\begin{equation}
\mathfrak{supp}(e_k)\subset\mathfrak{mon}(x_k).
\label{K0}
\end{equation}

\end{itemize}

It is easy to check that such a space exists. For example, if
$\{\theta_{(a_j,b_j)}\}_{j\in J}$ is an epilogic partition of
$\theta_{(-\omega,\omega)}$ with $a_j\sim b_j$, then
\[
W(\mathbb{E})
:=\mathfrak{span}\{\theta_{(a_j,b_j)}u_{\lambda}v_{\lambda}
\mid j\in J,\ u_{\lambda},v_{\lambda}\in V(\mathbb{R})\cap\lambda\}
\]
satisfies (W-1) and (W-2).

We want to approximate the functions of $W(\mathbb{E})$ by
\textbf{step functions}. 
\begin{definition}
\label{61}
   The space of step functions relative to the grid   $\Gamma_\eta$ is defined as follows:
   \[
\breve{\mathfrak F}_\eta(\mathbb{E})
:=\left\{\sum_{a\in\Gamma_\eta}z(a)\theta_a(x)
\mid z\in\mathfrak F(\mathbb{E})\right\}.
\]

\end{definition}
We define the map
\begin{equation}
P_\eta:W(\mathbb{E})\rightarrow
\breve{\mathfrak F}_\eta(\mathbb{E})
\label{PR}
\end{equation}
by
\[
P_\eta u=\sum_{a\in\Gamma_\eta}u(a)\theta_a .
\]
When $\eta$ is understood we simply write  $\breve u=P_\eta u$  and  $\breve{\mathfrak F}(\mathbb{E})=
\breve{\mathfrak F}_\eta(\mathbb{E}).$
Hence $\breve u$ is a \textbf{step function} approximating $u$ in the sense
that
\begin{equation}
\forall a\in\Gamma_\eta,\qquad
\breve u(a)=u(a).
\label{paola}
\end{equation}

Since $W(\mathbb{E})$ has hyperfinite dimension, we can choose $\eta$
sufficiently small so that 
\begin{equation}
\breve u=0\Rightarrow u=0,
\qquad
P_\eta(\partial u)=0\Rightarrow u=0.
\label{paola+}
\end{equation}

If $f\in C^0(\mathbb{E})\cap W(\mathbb{E})$ is continuous, then $\breve f$
is a good approximation of $f$ since for every $x\in\mathbb{E}$
\[
|f(x)-\breve f(x)|\sim0.
\]
Actually, a good approximation property holds also for $u\in W(\mathbb{E})$.

Before proceeding we recall the following notation (well known in NSA):
for every net $f_\lambda\in L^1_{\mathrm{loc}}(\mathbb{R})$ we set
\[
\int^* f_\Lambda dx :=
\lim_{\lambda\uparrow\Lambda}\int f_\lambda dx
\]
and
\[
\|f_\Lambda\|=
\int^*|f_\Lambda|\,dx.
\]

\begin{lemma}
\label{g22}
For every $\gamma>0$ we can choose $\eta$ sufficiently small so that
$\forall u\in W(\mathbb{E})$
\[
\|u-\breve u\|
\le
\frac12\gamma\|u\|.
\]
\end{lemma}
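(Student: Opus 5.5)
The plan is to argue in the internal (transferred) setting, using the hyperfinite dimensionality of $W(\mathbb{E})$ together with the basis (W-2), and to reduce the inequality to an estimate on the unit sphere of $W(\mathbb{E})$ in the $L^1$-type norm $\|\cdot\|=\int^*|\cdot|\,dx$.

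\textbf{Step 1: reduction to basis elements.} First I would observe that the left-hand side is positively homogeneous: $P_\eta$ is linear, hence $\|u-\breve u\|$ scales like $\|u\|$. So it suffices to bound $\|u-\breve u\|$ on the set $\{u\in W(\mathbb{E})\mid \|u\|=1\}$. This set is internal and, being the unit sphere of a hyperfinite-dimensional normed space, it is ``compact'' in the transferred sense. Hence the map $\eta\mapsto\sup_{\|u\|=1}\|u-P_\eta u\|$ is an internal function of $\eta$, and its supremum is attained.

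\textbf{Step 2: convergence for fixed $u$ as $\eta\to0$.} For each fixed $u=\lim_{\lambda\uparrow\Lambda}u_\lambda$ with $u_\lambda\in BV_{loc}$ epilogic, I would use Theorem \ref{diana}-(\ref{UR}) (Riemann integrability) and Theorem \ref{diana}-(\ref{b2}) (every point is a Lebesgue point). These give, at the level of $\lambda$, that the step approximation $\sum_{h}u_\lambda(\breve h)\theta_{[h-\eta/2,h+\eta/2]}$ tends to $u_\lambda$ in $L^1([-\omega_\lambda,\omega_\lambda])$ as the mesh goes to $0$. The reason is that a Riemann sum with arbitrary tags converges, and the $L^1$ error is controlled by $\eta\cdot\mathrm{Var}(u_\lambda)$. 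Concretely, the bound to aim for is
\[
\int|u_\lambda-\breve u_\lambda|\,dx\le \eta\,\|\partial u_\lambda\|_{\mathfrak{M}([-\omega_\lambda,\omega_\lambda])},
\]
which follows because on each cell the oscillation of $u_\lambda$ is at most its variation on that cell.

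\textbf{Step 3: uniformity via a finite basis.} This is the main obstacle. The variation bound above is not controlled by $\|u\|$ in general. I would overcome this through finite dimensionality: on a finite-dimensional space all norms are equivalent, so there is a constant $C_\lambda$ with $\|\partial u\|_{\mathfrak{M}}\le C_\lambda\|u\|_{L^1}$ for all $u$ in the $\lambda$-approximating finite-dimensional space $W_\lambda$. Taking the $\Lambda$-limit gives a constant $C\in\mathbb{E}$, possibly infinite, such that $\|u-P_\eta u\|\le \eta C\|u\|$ for all $u\in W(\mathbb{E})$. If the needed norm equivalence fails for the $L^1$ seminorm, this is because of elements with $\|u\|=0$. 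I would first handle this by the choice of $\eta$ in (\ref{paola+}), which forces $P_\eta$ to be injective, and I would work with the finite basis $\{e_k\}$ and bound $\|e_k-\breve e_k\|$ individually, using the localized support (\ref{K0}).

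\textbf{Step 4: choice of $\eta$.} Since $\omega$, $C$ and the dimension are fixed once $W(\mathbb{E})$ is fixed, while $\eta=1/\beta$ with $\beta\in\mathbb{N}^*$ can be taken arbitrarily large (overspill/transfer of ``for every $\varepsilon$ there is a mesh''), I would choose $\beta\ge 2C/\gamma$, still compatible with (\ref{lilla}) and with $\beta$ a multiple of $\tau$. Then $\|u-\breve u\|\le\frac12\gamma\|u\|$. The delicate point is only that $C$ must be determined before $\eta$. This is consistent because $W(\mathbb{E})$ does not depend on $\eta$, whereas the grid $\Gamma_\eta$ does.
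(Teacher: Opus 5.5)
Your argument is correct, and it takes a genuinely different route from the paper's.

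\textbf{The paper's route.} The paper fixes a basis $\{e_k\}$ of $W(\mathbb{E})$ normalized by $\int^*|e_k|\,dx=1$. It uses Riemann integrability of each $e_k$ to get $\|e_k-\breve e_k\|\le\frac14\gamma$ simultaneously for all $k$. It then estimates $\|u-\breve u\|\le\max_k\|e_k-\breve e_k\|\sum_k|u_k|$ for $u=\sum_k u_ke_k$. The last step, passing from $\sum_k|u_k|$ to a multiple of $\|u\|$, is not justified as written. The triangle inequality only gives $\|u\|\le\sum_k|u_k|$, and the supports allowed by (W-2) may overlap. What is really needed is a constant $K$ with $\sum_k|u_k|\le K\|u\|$ on $W(\mathbb{E})$, fixed before $\eta$, and then absorbed by requiring $\max_k\|e_k-\breve e_k\|\le\gamma/(2K)$.

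\textbf{Your route.} You never pass through coefficients. The cell-by-cell oscillation estimate gives $\|u-\breve u\|\le\eta\,|\partial u|$ directly; since the tags $a\in\Gamma_\eta$ lie in the open cells and the open cells are disjoint, your constant $1$ is right. The domination of the variation seminorm by the $L^1$ norm on each finite-dimensional $W_\lambda$ then gives a single constant $C$. The order of quantifiers ($W(\mathbb{E})$, hence $C$, chosen before $\eta$) finishes the proof, and it works even for infinitesimal $\gamma$, which is what Theorem \ref{ames} later needs.

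\textbf{Comparison.} Your route is basis-free and gives an explicit rate linear in $\eta$. The paper's route, once repaired, needs only qualitative Riemann integrability of the basis elements. The price is a basis-dependent constant $K$ that plays exactly the role of your $C$.

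\textbf{Corrections to Step 3.} The contingency about elements with $\|u\|=0$ cannot occur. Your proposed remedy would not address it anyway: (\ref{paola+}) makes $P_\eta$ injective but says nothing about the $L^1$ norm. The right observation is Theorem \ref{diana}-(\ref{b2}): every point of an epilogic function is a Lebesgue point. So $\int|u_\lambda|\,dx=0$ forces $u_\lambda\equiv 0$, and $\int|\cdot|\,dx$ is a genuine norm on $W_\lambda$. The bound $|\partial u|\le C_\lambda\int|u|\,dx$ then holds because any seminorm on a finite-dimensional space is dominated by any norm. Your fallback of bounding $\|e_k-\breve e_k\|$ individually would only reintroduce the coefficient-sum problem above, so drop it.

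\textbf{Shared implicit hypothesis.} Like the paper, you tacitly need the elements of $W(\mathbb{E})$ to vanish outside $[-\omega,\omega]$. This holds for the paper's example built on a partition of $\theta_{(-\omega,\omega)}$. It is needed because $\breve u$ vanishes off the grid cells while $\|\cdot\|$ integrates over the whole line.
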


\textbf{Proof.}
Let $\{e_k\}_{k\in\mathcal K}$ be a basis of $W(\mathbb{E})$ such that
\[
\int^*|e_k(x)|dx=1.
\]
Since every $e_k\in W(\mathbb{E})\subset V(\mathbb{E})\subset BV(\mathbb{E})$, then it is Riemann integrable and we have
\[
\lim_{\eta_\lambda\to0}
\int
\left|
(e_k)_\lambda-
[P_{\eta_\lambda}(e_k)]_\lambda
\right|dx
=0.
\]
Since $\dim(W_\lambda(\mathbb{E}))$ is finite, we can choose
$\eta_{\lambda_0}$ sufficiently small so that for every $k$ and every
$\lambda\ge\lambda_0$
\[
\int
\left|
(e_k)_\lambda-
P_{\eta_\lambda}(e_k)_\lambda
\right|dx
\le
\frac14\gamma_\lambda  .
\]
where  $\{\gamma_\lambda\}$  is the net relative to $\gamma$. Taking the $\Lambda$–limit we obtain
\[
\int^*
|e_k-\breve e_k|dx
\le
\frac14\gamma .
\]
Now
\[
u-\breve u
=
\sum_{k\le\dim(W)}
u_k(e_k-\breve e_k).
\]
and therefore
\[
\|u-\breve u\|
\le
\sum_{k\le\dim(W)}
\|u_k(e_k-\breve e_k)\|
\leq 
\sum_{k\le\dim(W)}
|u_k|
\|e_k-\breve e_k\|
\leq 
\]

\[
\le
\max_k\|e_k-\breve e_k\|
\sum_{k\le\dim(W)}
|u_k|
\le
\frac14\gamma^2\|u\| .
\]
$\square$

\subsection{The space $\mathring{V}(\mathbb{E})$}

Given a space $X(\mathbb{E})\subseteq V(\mathbb{E})$, we set
\[
\breve{X}(\mathbb{E})=\left\{ \breve{u}=\sum_{a\in\Gamma_{\eta}}u(a)\theta_{a}\mid u\in X(\mathbb{E})\right\} .
\]
In particular we are interested in  $\breve{V}(\mathbb{E})$ and $\breve{W}(\mathbb{E})$  where $V(\mathbb{R})$ is the space of epilogic  and $W(\mathbb{R})$ has benn defined in section \ref{AL}.
The elements of $\breve{W}(\mathbb{E})$ are \textit{step functions}
which approximate the functions of $W(\mathbb{E})$.  Notice that 
\[
\dim\breve{V}(\mathbb{E})=|\Gamma_{\eta}|=2\beta\omega+1,
\]
while 
\[
\dim\breve{W}(\mathbb{E})=\dim W(\mathbb{E}),
\]
and hence  $\dim \breve{W}(\mathbb{E})$ is independent of $\eta$.
\begin{lemma}
    The space  $\breve{V}(\mathbb{E})$  can be split as follows:
\[
\breve{V}(\mathbb{E})=\breve{W}(\mathbb{E})\oplus \breve{Z}(\mathbb{E})
\]
where 
\[
\breve{Z}(\mathbb{E})=\left\{v=\sum_{a\in\Gamma_{Z}}v(a)\theta_{a}\mid v\in\breve{V}(\mathbb{E}) \right\} 
\]
and $\Gamma_Z\subset \Gamma_\eta $  is a suitable subset of the grid.

\end{lemma}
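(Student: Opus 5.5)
Here $\breve{V}(\mathbb{E})$ is the hyperfinite-dimensional space of all step functions $\sum_{a\in\Gamma_{\eta}}z(a)\theta_{a}$, with basis $\{\theta_{a}\}_{a\in\Gamma_{\eta}}$. Every such function is determined by its values on the grid. Indeed, the intervals $[h-\frac12\eta,h+\frac12\eta]$ overlap only at endpoints, and by construction each $a=\breve h$ lies in the open interval. Hence $\theta_{a}(b)=\delta_{ab}$ for $a,b\in\Gamma_{\eta}$, and the evaluation map
\[
E:\breve{V}(\mathbb{E})\rightarrow\mathfrak{F}(\Gamma_{\eta}),\qquad E(v)=v_{|\Gamma_{\eta}},
\]
is a linear isomorphism (internally, i.e. as the $\Lambda$-limit of isomorphisms of finite-dimensional spaces). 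The plan is to reduce the statement to a basis-completion argument in coordinates: $\breve{W}(\mathbb{E})$ is a subspace, and we complete a basis of it by coordinate vectors $\theta_{a}$.

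\textbf{Step 1.} Observe that $\breve{W}(\mathbb{E})=P_{\eta}(W(\mathbb{E}))$ is an internal linear subspace of $\breve{V}(\mathbb{E})$. By (\ref{paola+}), $P_{\eta}$ is injective on $W(\mathbb{E})$, so
\[
\dim\breve{W}(\mathbb{E})=\dim W(\mathbb{E})=:n,
\]
which is hyperfinite. Moreover $n\le|\Gamma_{\eta}|$.

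\textbf{Step 2.} Work levelwise. For each $\lambda$ we have a subspace $\breve{W}_{\lambda}$ of dimension $n_{\lambda}$ inside $\mathbb{R}^{\Gamma_{\eta,\lambda}}$, identified via $E$. Choose a basis $w_{1},\dots,w_{n_{\lambda}}$ of $\breve{W}_{\lambda}$ and form the $n_{\lambda}\times|\Gamma_{\eta,\lambda}|$ matrix of coordinates $(w_{i}(a))$. This matrix has rank $n_{\lambda}$, so some $n_{\lambda}$ columns, indexed by a set $\Gamma_{W,\lambda}$, form an invertible minor. Set
\[
\Gamma_{Z,\lambda}:=\Gamma_{\eta,\lambda}\setminus\Gamma_{W,\lambda}.
\]
By the standard Steinitz exchange argument, $\{w_{i}\}\cup\{\theta_{a}\}_{a\in\Gamma_{Z,\lambda}}$ is a basis of the whole space. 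To see this, project onto the coordinates in $\Gamma_{W,\lambda}$: the $\theta_{a}$ with $a\in\Gamma_{Z,\lambda}$ project to $0$, while the $w_{i}$ project to an invertible system, so any linear relation among these vectors must have all coefficients zero. The count of vectors, $n_{\lambda}+|\Gamma_{Z,\lambda}|=|\Gamma_{\eta,\lambda}|$, equals the dimension of the whole space.

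\textbf{Step 3.} Take the $\Lambda$-limit: $\Gamma_{Z}:=\lim_{\lambda\uparrow\Lambda}\Gamma_{Z,\lambda}\subset\Gamma_{\eta}$ is an internal set. Define $\breve{Z}(\mathbb{E})$ as the hyperfinite span of $\{\theta_{a}\}_{a\in\Gamma_{Z}}$. This is exactly the set of elements $v\in\breve{V}(\mathbb{E})$ of the form $\sum_{a\in\Gamma_{Z}}v(a)\theta_{a}$, i.e. those vanishing on $\Gamma_{\eta}\setminus\Gamma_{Z}$. By Proposition \ref{p2}, the levelwise facts $\breve{W}_{\lambda}\cap\breve{Z}_{\lambda}=0$ and $\breve{W}_{\lambda}+\breve{Z}_{\lambda}=\breve{V}_{\lambda}$ pass to the limit, which gives
\[
\breve{V}(\mathbb{E})=\breve{W}(\mathbb{E})\oplus\breve{Z}(\mathbb{E}).
\]

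The main obstacle is the internality issue. The choice of the pivot set $\Gamma_{W}$ must be made levelwise and then limited; we cannot pick it by an external argument in $\mathbb{E}$. Done levelwise, Step 2 is pure finite-dimensional linear algebra and the transfer to the limit is routine.
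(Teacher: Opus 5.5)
Your proof is correct and takes the same route as the paper. The paper takes a hyperfinite basis $\{e_1,\dots,e_\tau\}$ of $\breve{W}(\mathbb{E})$, notes that together with all the $\theta_a$ it spans $\breve{V}(\mathbb{E})$, and extracts a basis of the form $\{e_i\}\cup\{\theta_a\}_{a\in\Gamma_Z}$; your pivot-column selection and levelwise construction of $\Gamma_{Z,\lambda}$ spell out the basis-completion step and the internality of $\Gamma_Z$, which the paper leaves implicit.
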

\textbf{Proof:}  Let $\{e_1,...,e_\tau\}$  be a hyperfinite basis of $\breve{W}(\mathbb{E})$;  then $\{\theta_a\}_{a\in \Gamma_\eta }\cup\{e_1,...,e_\tau\}$  generates all $\breve{Z}(\mathbb{E})$  and hence there exists a subset  $\Gamma_Z\subset \Gamma_\eta $  such that  $\{\theta_a\}_{a\in \Gamma_Z}\cup\{e_1,...,e_\tau\}$.
$\square $

\bigskip{}
Finally we set 
\begin{equation}
\mathring{V}(\mathbb{E}):=W(\mathbb{E})\oplus \breve{Z}(\mathbb{E});  \label{sp}
\end{equation}
 So every $u\in \mathring{V}(\mathbb{E})$ can be written as follows:
\[
u=w+z\qquad\text{with }w\in W(\mathbb{E}),\;z\in \breve{Z}(\mathbb{E}).
\]
In conclusion, by this construction, we have obtained a space $\mathring{V}(\mathbb{E})$ of hyperdimension $2\beta\omega+1$ that contains $ W(\mathbb{E})$  and is complemented by a bunch of  step functions  $\breve{Z}(\mathbb{E})$. This space, is well approximated by $\breve{V}(\mathbb{E})$. 

\begin{theorem} \label{ames} $\mathring{V}(\mathbb{E})$ has a basis
$\{\sigma_{a}\}_{a\in\Gamma_{\eta}}=\{\sigma_{a}\}_{a\in\Gamma_{W}}\cup\{\theta_{a}\}_{a\in\Gamma_{Z}}$
such that
\begin{enumerate}
\par 
\item $\Gamma_{W}=\Gamma_{\eta}\backslash\Gamma_{Z}$ and $\{\sigma_{a}\}_{a\in\Gamma_{W}}$  is a basis of $\breve{W}(\mathbb{E})$;
\item for every $a\in\Gamma_{W}$ 
\[
\sigma_{a}(x)=\theta_{a}(x)+\zeta_{a}(x)
\]
where $\zeta_{a}$ is very small in the sense that 
\begin{equation}
\|\zeta_{a}\|\le\frac{1}{2}\gamma\eta\label{amesing}
\end{equation}
and 
\begin{equation}
\mathfrak{supp}(\zeta_{a})\subset\mathfrak{mon}(a)\label{sumo}
\end{equation}
\item \label{sb} for all $a,b\in\Gamma_{\eta}$ 
\[
\sigma_{a}(b)=\delta_{ab}.
\]
\end{enumerate}
\end{theorem}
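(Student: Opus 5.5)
We build the basis directly from the splitting $\breve{V}(\mathbb{E})=\breve{W}(\mathbb{E})\oplus\breve{Z}(\mathbb{E})$ and the definition (\ref{sp}) $\mathring{V}(\mathbb{E})=W(\mathbb{E})\oplus\breve{Z}(\mathbb{E})$. We define $\Gamma_{W}:=\Gamma_{\eta}\setminus\Gamma_{Z}$. The first step is a counting argument. The restriction map $u\mapsto u_{|\Gamma_{\eta}}$ is injective on $W(\mathbb{E})$ by (\ref{paola+}), and by (\ref{paola}) it coincides with $u\mapsto\breve{u}_{|\Gamma_\eta}$. The splitting lemma then gives $|\Gamma_{W}|=\dim\breve{W}(\mathbb{E})=\dim W(\mathbb{E})$. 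Moreover, the evaluation map
\[
E:\mathring{V}(\mathbb{E})\to\mathbb{E}^{\Gamma_{\eta}},\qquad E(u)=(u(b))_{b\in\Gamma_{\eta}}
\]
is a linear bijection. To see this, note that $E(w+z)=E(\breve{w}+z)$ for $w\in W(\mathbb{E})$ and $z\in\breve{Z}(\mathbb{E})$, and that $\breve{w}+z\mapsto E(\breve{w}+z)$ is an isomorphism of $\breve{V}(\mathbb{E})$ onto $\mathbb{E}^{\Gamma_\eta}$, because the $\theta_{a}$ are step functions with $\theta_a(b)=\delta_{ab}$. Since the dimensions match (hyperfinite, equal to $2\beta\omega+1$), $E$ is invertible. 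We define $\sigma_{a}:=E^{-1}(\delta_{a\cdot})$. This gives property \ref{sb} immediately and makes $\{\sigma_a\}_{a\in\Gamma_\eta}$ a basis.

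Next we identify the two families. For $a\in\Gamma_{Z}$, the function $\theta_{a}$ already lies in $\breve{Z}(\mathbb{E})\subset\mathring{V}(\mathbb{E})$ and satisfies $\theta_a(b)=\delta_{ab}$, so uniqueness under $E$ gives $\sigma_a=\theta_a$. For $a\in\Gamma_W$, we write $\sigma_{a}=w_a+z_a$ and set $\zeta_a:=\sigma_a-\theta_a$. The $\breve{\ }$-images $\breve{\sigma}_a:=\breve w_a+z_a=\theta_a$ show that $\sigma_a$ and $\theta_a$ differ exactly by $w_a-\breve w_a$, so
\[
\zeta_a=w_a-\breve{w}_a .
\]
Property 1 is to be read via this identification: the $\sigma_a$ with $a\in\Gamma_W$ span a complement of $\breve{Z}(\mathbb{E})$, and their step parts form a basis of $\breve{W}(\mathbb{E})$ modulo $\breve Z(\mathbb{E})$.

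The estimates then follow from Lemma \ref{g22} applied to $w_a$, which gives
\[
\|\zeta_a\|=\|w_a-\breve w_a\|\le\tfrac12\gamma\|w_a\|.
\]
It therefore suffices to show $\|w_a\|\lesssim\eta$. Since $\|\breve w_a\|\le\|\theta_a\|+\|z_a\|$, with $\|\theta_a\|=\eta$, we need to control $z_a$. This is the main obstacle. We would first try to absorb $z_a$ using the freedom in choosing $\Gamma_Z$, choosing it so that the projection onto $\breve{W}(\mathbb{E})$ along $\breve{Z}(\mathbb{E})$ has $L^1$ norm bounded by a finite constant. We would then shrink $\gamma$ (it is arbitrary in Lemma \ref{g22}) to absorb that constant and obtain (\ref{amesing}).

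For the support condition (\ref{sumo}), we use (W-2). We would choose the $e_k$ so that each $w_a$ is a combination of basis elements supported in $\mathfrak{mon}(a)$, which is possible by arranging $\Gamma_Z$ block-wise along the infinitesimal supports $\mathfrak{mon}(x_k)$. Then $\zeta_a=w_a-\breve w_a$ is supported in $\mathfrak{mon}(a)$, since $\breve{w}_a$ is built from the $\theta_b$ with $b\in\mathfrak{supp}(w_a)$. We expect this localisation, together with the norm control of $z_a$, to require the most care; the remaining steps are linear algebra in hyperfinite dimension.
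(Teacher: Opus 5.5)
Your construction is the paper's. Because $\Psi(\breve w+z)=w+z$ preserves values on $\Gamma_\eta$, your $E^{-1}(\delta_{a\cdot})$ is exactly the paper's $\sigma_a=\Psi(\theta_a)$. Your derivations of $\sigma_a(b)=\delta_{ab}$, of $\sigma_a=\theta_a$ for $a\in\Gamma_Z$, and of $\zeta_a=w_a-\breve w_a$ are all correct. Your weakened reading of item 1 is also the defensible one: read literally, $\sigma_a\in W$ or $\sigma_a\in\breve W(\mathbb{E})$ would force $z_a=0$.

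The genuine gap is that the two substantive claims, $\|\zeta_a\|\le\frac12\gamma\eta$ and $\mathfrak{supp}(\zeta_a)\subset\mathfrak{mon}(a)$, are only planned, and the plan for the first cannot work. Since $z_a$ vanishes on $\Gamma_W$, your $w_a$ is the Lagrange basis of $W$ on $\Gamma_W$: $w_a(b)=\delta_{ab}$ for $b\in\Gamma_W$. Every $u\in W$ equals $\sum_{a\in\Gamma_W}u(a)w_a$, because the difference vanishes on $\Gamma_W$, so its step image lies in $\breve W\cap\breve Z=\{0\}$, and (\ref{paola+}) applies. In the paper's example, $W$ contains $\theta_{(-\omega,\omega)}=\sum_j\theta_{(a_j,b_j)}$, hence
\[
2\omega=\|\theta_{(-\omega,\omega)}\|\le\sum_{a\in\Gamma_W}\|w_a\|,\qquad\text{so}\qquad\max_{a\in\Gamma_W}\|w_a\|\ge\frac{2\omega}{\dim W}=\eta\,\frac{|\Gamma_\eta|-1}{|\Gamma_W|}.
\]
The bound $2\omega/\dim W$ does not depend on $\eta$, while $\eta$ must be taken arbitrarily small with $W$ fixed. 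So $\|w_a\|\lesssim\eta$ fails in exactly the relevant regime. Equivalently, the projection of $\breve V(\mathbb{E})$ onto $\breve W(\mathbb{E})$ along $\breve Z(\mathbb{E})$ is unbounded for every choice of $\Gamma_Z$: its $L^1$-norm is $\max_a\|\breve w_a\|/\eta\ge(1-\frac{\gamma}{2})\max_a\|w_a\|/\eta$. Shrinking $\gamma$ does not help, since in Lemma~\ref{g22} $\eta$ is chosen after $\gamma$, and with it $\breve W$, $\Gamma_Z$ and your constant. You would need a bound on $\|w_a-\breve w_a\|$ that does not pass through $\|w_a\|$. The natural one, $\|u-\breve u\|\le\eta\,\mathrm{TV}(u)$, gives at best order $\eta$, because $\mathrm{TV}(w_a)\ge1$. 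Moreover, for $w_a$ nearly affine on each cell, $\|w_a-\breve w_a\|\gtrsim\frac{\eta}{4}\mathrm{TV}(w_a)$. The estimate with small $\gamma$ therefore forces every $w_a$ to be essentially a step function adapted to the cells, and nothing in (W-1)--(W-2) provides that.

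The paper's proof does not supply the missing idea. It obtains the estimate from $\|\Psi(v)-v\|=\|w-\breve w\|\le\frac14\gamma\|w\|\le\frac14\gamma\|v\|$ for $v=\breve w+z$. The last step, $\|w\|\le\|\breve w+z\|$, is exactly the control of the $\breve Z$-component you flagged as the obstacle. By the display above it actually fails for $v=\theta_a$, with $a$ maximizing $\|w_a\|$, as soon as $|\Gamma_Z|\ge2$.

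For the support condition, the paper expands $\sigma_a$ in the local basis $\{e_k\}$ of $W$, which presupposes $z_a=0$. It then infers a localized expansion of $\sigma_a$ from the pointwise identity for $\theta_a=P_\eta\sigma_a$, which does not follow.

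Your block-wise plan does not close this either. The monads $\mathfrak{mon}(x_k)$ are external, so they cannot be used to build the hyperfinite set $\Gamma_Z$; you would need an internal partition into infinitesimal intervals. In the paper's example, neighbouring generators $\theta_{(a_j,b_j)}u_\lambda v_\lambda$ and $\theta_{(a_{j+1},b_{j+1})}u_\lambda v_\lambda$ overlap at the shared endpoint. So $W$ does not split into blocks, and the Lagrange functions $w_a$ can couple neighbouring blocks, possibly along long chains. What remains to be proved is precisely the localization $\mathfrak{supp}(w_a)\subset\mathfrak{mon}(a)$; from it, $\mathfrak{supp}(\zeta_a)\subset\mathfrak{mon}(a)$ does follow as you say.
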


\textbf{Proof.}   Let $P_\eta:W(\mathbb{E})\rightarrow
\breve{\mathfrak F}(\mathbb{E})$ be the map defined by \ref{PR};  since $P_{\eta}u=0$ implies $u=0$ (see (\ref{paola})),
the map $P_{\eta}$ is injective.  Hence the map 
\[
\Psi:\breve{W}(\mathbb{E})\oplus \breve{Z}(\mathbb{E})\longrightarrow \mathring{V}(\mathbb{E})
\]
defined by 
\[
\Psi(\breve{w}+z)=w+z
\]
is an isomorphism.  Hence $$\{\sigma_{a}\}_{a\in\Gamma_{\eta}}:=\{\Psi(\theta_{a})\}_{a\in\Gamma_{\eta}}$$ is a basis of $\mathring{V}(\mathbb{E})$ and we have that
\[
\sigma_{a}=\begin{cases}
\Psi(\theta_{a}) & \text{if }a\in\Gamma_{W},\\
\theta_{a} & \text{if }a\in\Gamma_{Z}.
\end{cases}
\]
The isomorphism $\Psi$ is a small perturbation in the sense that, by Lemma \ref{g22},   $\forall v=\breve{w}+z\in\breve{V}(\mathbb{E}),$
\[
\|\Psi(v)-v\|=\|\Psi(\breve{w}+z)-(\breve{w}+z)\|
=\|w+z-\breve{w}-z)\|
= \|\breve{w}-w)\|\le\frac{1}{4}\gamma\,\|w\|\le\frac{1}{4}\gamma\,\|v\|.
\]
    
Then, if $a\in \Gamma_W$ ,
\[
\|\sigma_{a}-\theta_{a}\|=\left\Vert \Psi(\theta_{a})-\theta_{a}\right\Vert\le\frac{1}{4}\gamma\,\|\theta_{a}\|=\frac{1}{4}\gamma\,\eta
\]
Setting 
\[
\zeta_{a}(x)=\sigma_{a}(x)-\theta_{a}(x)
\]
we obtain (\ref{amesing}).

Next let us prove \ref{sumo}. If $a\in\Gamma_{W}$, take a basis
$\{e_{k}\}_{k\in\mathcal{K}}$ of $W(\mathbb{E})$ satisfying (\ref{K0}).
Then 
\[
\sigma_{a}(x)=\sum_{k\in\mathcal{K}}s_{k}e_{k}(x).
\]
Moreover 
\[
\theta_{a}(x)=P_{\eta}\sigma_{a}(x)=\sum_{k\in\mathcal{K}}s_{k}P_{\eta}e_{k}(x)=\sum_{k\in\mathcal{K}}\sum_{b\in\Gamma_{\eta}}s_{k}e_{k}(b)\theta_{b}(x).
\]
Let 
\[
B(x)=\left\{ k\in\mathcal{K}\mid\exists b\in\Gamma_{\eta},\;e_{k}(b)\theta_{b}(x)\neq0\right\} .
\]
Then 
\[
\theta_{a}(x)=\sum_{k\in B(x)}\sum_{b\in\Gamma_{\eta}}s_{k}e_{k}(b)\theta_{b}(x)
\]
and therefore 
\[
\sigma_{a}(x)=\sum_{k\in B(x)}s_{k}e_{k}(x).
\]
If $k\in B(x)$, then $\mathfrak{supp}(e_{k})\subset\mathfrak{mon}(x)$,
hence 
\[
\mathfrak{supp}(\sigma_{a})\subset\mathfrak{mon}(x).
\]

\medskip{}

Let us prove (\ref{sb}).  By (\ref{paola}), we have $\breve{w}(a)=w(a)$ and
therefore for every $v\in \mathring{V}(\mathbb{E})$ 
\[
\Psi(v)(a)=v(a).
\]
Hence for every $b\in\Gamma_{\eta}$ 
\[
\sigma_{a}(b)=\Psi(\theta_{a})(b)=\theta_{a}(b)=\delta_{ab}.
\]
$\square$

The following corollaries follow from Theorem \ref{ames}.

\begin{corollary} If $u\in \mathring{V}(\mathbb{E})$, then for every
$a\in\Gamma_{\eta}$ 
\[
u(a)=\lim_{\lambda\uparrow\Lambda}\sum_{a\in\Gamma_{\lambda}}u_{\lambda}(a_{\lambda})\sigma_{a_{\lambda}}(x_{\lambda})
\]
where $\sigma_{a_{\lambda}}$ is a net such that 
\[
\lim_{\lambda\uparrow\Lambda}\sigma_{a_{\lambda}}(x_{\lambda})=\sigma_{a}(x).
\]
\end{corollary}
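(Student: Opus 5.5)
The plan is to read the corollary as the hyperfinite expansion of $u$ in the basis of Theorem \ref{ames}, written out level by level as a $\Lambda$-limit. The displayed identity has a typo: the summation index coincides with the evaluation point $a$, and $x$ appears free on the right. I will read it as the interpolation formula
\[
u(x)=\sum_{b\in\Gamma_\eta}u(b)\sigma_b(x)=\lim_{\lambda\uparrow\Lambda}\sum_{b_\lambda\in\Gamma_\lambda}u_\lambda(b_\lambda)\sigma_{b_\lambda}(x_\lambda).
\]
Evaluated at $x=a\in\Gamma_\eta$, this gives $u(a)$.

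First step: express $u$ in the basis. By Theorem \ref{ames}, $\{\sigma_b\}_{b\in\Gamma_\eta}$ is a basis of $\mathring V(\mathbb{E})$, so $u=\sum_b c_b\sigma_b$ for unique coefficients $c_b\in\mathbb{E}$, the sum being hyperfinite in the sense of (\ref{sum}). I then evaluate at a grid point $a$ and use Theorem \ref{ames}-(\ref{sb}), $\sigma_b(a)=\delta_{ab}$. This gives $u(a)=c_a$, so the coefficients are exactly the grid values and $u=\sum_{b\in\Gamma_\eta}u(b)\sigma_b$.

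Second step: pass to the representing nets. Everything in sight is internal. $\Gamma_\eta=\lim\Gamma_\lambda$, the map $b\mapsto\sigma_b$ is the $\Lambda$-limit of maps $b_\lambda\mapsto\sigma_{b_\lambda}$, and $u=\lim u_\lambda$. I choose representatives so that, $\mathcal U$-eventually, $\{\sigma_{b_\lambda}\}$ is a basis of the $\lambda$-level space and satisfies $\sigma_{b_\lambda}(c_\lambda)=\delta_{bc}$ on $\Gamma_\lambda$. This is possible because each property in Theorem \ref{ames} is a first-order statement about the internal objects, and so it holds on a qualified set; this is where Proposition \ref{p2}-(\ref{EU1}) is used. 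On that qualified set the finite-dimensional version of the first step gives the exact identity $u_\lambda(x)=\sum_{b_\lambda\in\Gamma_\lambda}u_\lambda(b_\lambda)\sigma_{b_\lambda}(x)$. Taking $\Lambda$-limits at $x_\lambda$, with (\ref{42}) and the definition (\ref{sum}) of hyperfinite sums, yields the displayed formula.

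The main obstacle is this second step, namely justifying that the standard-level nets $\sigma_{b_\lambda}$ inherit the interpolation and basis properties eventually. The paper builds $\sigma_a$ through $\Psi$ at the internal level, not $\lambda$-wise. My first attempt would be to define the net directly by $\sigma_{b_\lambda}:=\Psi_\lambda(\theta_{b_\lambda})$, where $\Psi_\lambda$ is the finite-level isomorphism built from $P_{\eta_\lambda}$. Injectivity of $P_{\eta_\lambda}$ holds eventually by (\ref{paola+}), and then the $\lambda$-wise argument of Theorem \ref{ames}-(\ref{sb}) applies verbatim.
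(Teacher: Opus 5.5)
Your proposal is correct and follows the paper's argument. The paper only says the corollary follows from Theorem \ref{ames}, and its content is your first step: the basis property together with $\sigma_b(a)=\delta_{ab}$ forces the coefficients to be the grid values, so $u=\sum_{b\in\Gamma_\eta}u(b)\sigma_b$.

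Your second step is more than is needed. The $\Lambda$-limit form is just the definition (\ref{sum}) of a hyperfinite sum, the same convention the paper uses for (\ref{pu}), so you do not need to rebuild $\Psi_\lambda$ levelwise. Note also that pushing an internal property down to a qualified set is the ultrapower transfer principle (essentially the converse of Proposition \ref{p2}-(\ref{EU1})), not that proposition itself.
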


\begin{corollary} \label{prim} 
\[
(1-\gamma^{2})\eta\le\int^{\ast}\sigma_{a}(x)\,dx\le(1+\gamma^{2})\eta.
\]
\end{corollary}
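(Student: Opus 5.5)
The plan is to split the integral as $\int^{\ast}\sigma_{a}\,dx=\int^{\ast}\theta_{a}\,dx+\int^{\ast}\zeta_{a}\,dx$ and use the decomposition of Theorem \ref{ames}.

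\emph{Step 1: the case $a\in\Gamma_{Z}$.} Here $\sigma_{a}=\theta_{a}$ and $\theta_{a}$ is the (epilogic) characteristic function of an interval $[h-\frac12\eta,h+\frac12\eta]$ of length exactly $\eta$. Computing the integral $\lambda$-wise and taking the $\Lambda$-limit gives $\int^{\ast}\theta_{a}\,dx=\eta$, which lies in $[(1-\gamma^{2})\eta,(1+\gamma^{2})\eta]$.

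\emph{Step 2: the case $a\in\Gamma_{W}$.} Now $\sigma_{a}=\theta_{a}+\zeta_{a}$. Since $\int^{\ast}$ is the $\Lambda$-limit of Lebesgue integrals, it is linear and monotone, so
\[
\left|\int^{\ast}\sigma_{a}\,dx-\eta\right|=\left|\int^{\ast}\zeta_{a}\,dx\right|\le\int^{\ast}|\zeta_{a}|\,dx=\|\zeta_{a}\|,
\]
where $\|\cdot\|$ is the $L^{1}$-type norm introduced before Lemma \ref{g22}. It then remains to bound $\|\zeta_{a}\|\le\gamma^{2}\eta$.

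\emph{Step 3: the main obstacle.} The statement of (\ref{amesing}) only gives $\|\zeta_{a}\|\le\frac12\gamma\eta$, which is linear rather than quadratic in $\gamma$. I will use the sharper estimate available in the proof of Lemma \ref{g22}, whose final line actually yields $\|u-\breve u\|\le\frac14\gamma^{2}\|u\|$. To get this, I would re-run the argument of Lemma \ref{g22} with $\gamma$ replaced by $\gamma^{2}$: since $\gamma$ is arbitrary and $\eta$ may be chosen after $\gamma$, this costs nothing. Inserting the sharper bound into the perturbation estimate for $\Psi$ in the proof of Theorem \ref{ames} gives
\[
\|\zeta_{a}\|=\|\Psi(\theta_{a})-\theta_{a}\|\le\tfrac14\gamma^{2}\|\theta_{a}\|=\tfrac14\gamma^{2}\eta\le\gamma^{2}\eta.
\]
Combining this with Step 2 yields the claim.

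The one point to check is that the norm in (\ref{amesing}) is the same $\int^{\ast}|\cdot|$ norm used in Lemma \ref{g22}, so that the inequality in Step 2 applies directly. The support condition (\ref{sumo}) is not needed.
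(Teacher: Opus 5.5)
Your proof follows the same route the paper intends when it says the corollary follows from Theorem \ref{ames}: split $\sigma_a=\theta_a+\zeta_a$ (with $\zeta_a=0$ for $a\in\Gamma_Z$), use $\int^{\ast}\theta_a\,dx=\eta$ and $\bigl|\int^{\ast}\zeta_a\,dx\bigr|\le\|\zeta_a\|$. You are also right that (\ref{amesing}) as printed only gives $(1\pm\frac12\gamma)\eta$, so the construction must be run with a parameter of order $\gamma^{2}$ (e.g.\ $2\gamma^{2}$ in Lemma \ref{g22}), which is legitimate because $\eta$ is chosen after $\gamma$; one caveat is that the perturbation estimate you reuse rests on the step $\|w\|\le\|v\|$ in the proof of Theorem \ref{ames}, which the paper never justifies (for $v=\theta_a$, $w$ is the element of $W(\mathbb{E})$ with $w(b)=\delta_{ab}$ on $\Gamma_W$, and its norm need not be comparable to $\eta$), so your deduction is only as solid as that theorem.
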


\subsection{The step derivative}

For $z\in\breve{\mathfrak{F}}(\mathbb{E})$, the step derivative
is defined by 
\begin{equation}
D_{\eta}z(x):=\frac{z(x+\eta/2)-z(x-\eta/2)}{\eta}.\label{lisa}
\end{equation}

The step derivative satisfies the following properties.

\begin{lemma} \label{bru} If $z\in\breve{\mathfrak{F}}_{\eta}(\mathbb{E})$
and $m\in\mathbb{Z}^{\ast}$, then 
\[
z(x_{0}+m\eta+\eta/2)=z(x_{0}-\eta/2)+\eta\sum^{m}_{k=0}D_{\eta}z(x_{0}+k\eta).
\]
\end{lemma}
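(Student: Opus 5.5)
The identity is a discrete fundamental theorem of calculus, and the plan is to prove it by telescoping, using only definition (\ref{lisa}). Multiplying (\ref{lisa}) by $\eta$ and evaluating at $x=x_{0}+k\eta$ gives, for every $k$,
\[
\eta\,D_{\eta}z(x_{0}+k\eta)=z\bigl(x_{0}+k\eta+\tfrac{\eta}{2}\bigr)-z\bigl(x_{0}+k\eta-\tfrac{\eta}{2}\bigr)=z\bigl(x_{0}+k\eta+\tfrac{\eta}{2}\bigr)-z\bigl(x_{0}+(k-1)\eta+\tfrac{\eta}{2}\bigr).
\]
The second term of the $k$-th summand is therefore the first term of the $(k-1)$-th summand, so the sum over $k=0,\dots,m$ collapses:
\[
\eta\sum_{k=0}^{m}D_{\eta}z(x_{0}+k\eta)=z\bigl(x_{0}+m\eta+\tfrac{\eta}{2}\bigr)-z\bigl(x_{0}-\tfrac{\eta}{2}\bigr).
\]
Rearranging gives the claim. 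The fact that $z$ is a step function with respect to $\Gamma_{\eta}$ plays no role in the algebra; it only guarantees that $D_{\eta}z$ is defined at every point.

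The only real issue is that $m\in\mathbb{Z}^{\ast}$ may be infinite, so the sum is hyperfinite in the sense of (\ref{sum}) rather than an ordinary finite sum. I would handle this with the $\Lambda$-limit. Write $z=\lim_{\lambda\uparrow\Lambda}z_{\lambda}$, $x_{0}=\lim x_{0,\lambda}$, $\eta=\lim\eta_{\lambda}$ and $m=\lim m_{\lambda}$ with $m_{\lambda}\in\mathbb{Z}$. For each $\lambda$, the telescoping argument above is a genuine finite computation in $\mathbb{R}$ applied to the real function $z_{\lambda}$, with step $\eta_{\lambda}$ and integer $m_{\lambda}$. If $m_{\lambda}<0$, I would read the sum in the usual convention for reversed bounds; alternatively I would restrict to $m\ge0$ and treat the other case symmetrically.

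Taking the $\Lambda$-limit, the identity transfers to the hyperfinite setting. Proposition \ref{p2}, items (\ref{EU1}) and (\ref{EU2}), carries sums, products and equalities through the limit. Definition (\ref{sum}) identifies $\lim_{\lambda\uparrow\Lambda}\sum_{k=0}^{m_{\lambda}}$ with the hyperfinite sum $\sum_{k=0}^{m}$. Equation (\ref{42}) gives the pointwise evaluations $z(x_{0}+m\eta+\eta/2)=\lim_{\lambda\uparrow\Lambda}z_{\lambda}(x_{0,\lambda}+m_{\lambda}\eta_{\lambda}+\eta_{\lambda}/2)$, and likewise at $x_{0}-\eta/2$.

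I expect the transfer step to be the only place needing care, specifically checking that the hyperfinite sum with internal index bound $m$ is exactly the $\Lambda$-limit of the finite sums. Once that is granted by (\ref{sum}), the proof is immediate.
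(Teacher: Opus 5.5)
Your proposal is correct and follows the paper's proof, which is exactly the same telescoping computation $\eta\sum_{k=0}^{m}D_{\eta}z(x_{0}+k\eta)=\sum_{k=0}^{m}\bigl[z(x_{0}+k\eta+\eta/2)-z(x_{0}+k\eta-\eta/2)\bigr]=z(x_{0}+m\eta+\eta/2)-z(x_{0}-\eta/2)$. Your $\Lambda$-limit transfer for hyperfinite $m$, and your reversed-bounds convention for $m<0$, only make explicit what the paper leaves implicit.
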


\textbf{Proof.} We have 
\begin{eqnarray*}
\eta\sum^{m}_{k=0}D_{\eta}z(x_{0}+k\eta) & = & \sum^{m}_{k=0}\bigl[z(x_{0}+k\eta+\eta/2)-z(x_{0}+k\eta-\eta/2)\bigr]\\
 & = & z(x_{0}+m\eta+\eta/2)-z(x_{0}-\eta/2),
\end{eqnarray*}
which gives the desired identity. 

$\square$

\begin{theorem} \label{puffo} If $z\in\breve{V}(\mathbb{E})$ and
$a,b\in\Gamma_{\eta}$, then 
\[
\left|z(b)-z(a)\right|\le\int^{\ast}_{[a-\eta/2,b+\eta/2]}\left|D_{\eta}z(y)\right|\,dy.
\]
\end{theorem}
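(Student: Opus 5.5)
The plan is to reduce the inequality to the telescoping identity of Lemma \ref{bru} and then compare the resulting hyperfinite sum with the integral of the step function $|D_{\eta}z|$. Without loss of generality we take $a\le b$; if $a=b$ there is nothing to prove. Let $h_a,h_b\in G_{\eta}$ be the grid nodes with $a\in(h_a-\eta/2,h_a+\eta/2)$ and $b\in(h_b-\eta/2,h_b+\eta/2)$, and write $h_b=h_a+m\eta$ with $m\in\mathbb{N}^{\ast}$. Since $z=\sum_{c\in\Gamma_\eta}z(c)\theta_c$ is constant on each open cell $(h-\eta/2,h+\eta/2)$, we have $z(a)=z(h_a)$ and $z(b)=z(h_b)$. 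Here we use that $a$ and $b$ lie strictly inside their cells, which was checked right after (\ref{g}).

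The first step is to apply Lemma \ref{bru} with $x_0=h_a$, using points just inside the cells rather than the cell endpoints. Concretely, we evaluate $z(h_a+k\eta\pm\eta/2)$ at the cell boundaries, where $z$ takes the epilogic average of the two adjacent values. Telescoping over half-steps is cleaner. We introduce the node values $z_k:=z(h_a+k\eta)$ and observe that $D_\eta z(h_a+k\eta+\eta/2)=(z_{k+1}-z_k)/\eta$. This quantity is the value of $D_\eta z$ on the whole open interval $(h_a+k\eta,h_a+(k+1)\eta)$, because shifting $x$ inside it keeps $x\pm\eta/2$ in the same two cells. Hence
\[
z(b)-z(a)=z_m-z_0=\sum_{k=0}^{m-1}(z_{k+1}-z_k)=\eta\sum_{k=0}^{m-1}D_\eta z(h_a+k\eta+\eta/2).
\]
This is the same telescoping as in Lemma \ref{bru}, with the grid shifted by $\eta/2$.

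The second step is the comparison with the integral. $D_\eta z$ is a step function, constant on each interval $I_k:=(h_a+k\eta,h_a+(k+1)\eta)$ of length $\eta$. So by the triangle inequality
\[
|z(b)-z(a)|\le\sum_{k=0}^{m-1}\eta\,|D_\eta z|_{I_k}=\int^{\ast}_{[h_a,h_b]}|D_\eta z(y)|\,dy.
\]
The remaining check is that $[h_a,h_b]\subset[a-\eta/2,b+\eta/2]$, which follows from $|a-h_a|<\eta/2$ and $|b-h_b|<\eta/2$. Since the integrand is nonnegative, enlarging the domain of integration only increases the right-hand side, and this gives the claim. All steps are internal statements about finite sums, so they can be justified $\lambda$-wise and transferred by the $\Lambda$-limit (Prop.~\ref{p2}).

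The main obstacle is the bookkeeping caused by the modified grid $\Gamma_\eta$, whose points $\breve h$ are not exactly at the nodes $h$, together with the epilogic values at cell endpoints. The argument must evaluate $z$ only at interior points of cells, where $z$ agrees with its node value. It must also verify that $D_\eta z$ is genuinely constant on each $I_k$ apart from a null set of endpoints, which the integral ignores.
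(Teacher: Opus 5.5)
Your proof is correct. It follows the same skeleton as the paper's: a telescoping sum as in Lemma~\ref{bru}, the triangle inequality, and reading $\eta$ times a hyperfinite sum of values of the step function $|D_\eta z|$ as an $\int^{\ast}$. The bookkeeping, however, is genuinely different, and it is more accurate than the paper's.

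The paper applies Lemma~\ref{bru} with $x_0=a$, writes $b=a+m\eta$, and asserts $z(b)-z(a)=\eta\sum_{k=0}^{m}D_\eta z(a+k\eta)$. Lemma~\ref{bru} actually gives $z(b+\eta/2)-z(a-\eta/2)$. That is a difference of epilogic averages at cell boundaries, and it is not $z(b)-z(a)$ in general. For example, take two consecutive unmodified nodes $a$ and $b=a+\eta$ with neighbouring values $z_{-1}=z_0=0$, $z_1=1$, $z_2=0$. The nodal sum is $\tfrac12$, while $z(b)-z(a)=1$.

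The paper has two further inaccuracies:
\begin{itemize}
\item It tacitly assumes $b-a\in\eta\mathbb{Z}^{\ast}$, which can fail on the modified grid $\Gamma_\eta$.
\item It says $D_\eta z$ is constant on the cells $[x_k-\eta/2,x_k+\eta/2]$. In fact $D_\eta z$ is constant on the intervals between consecutive nodes, and at a node it takes the centred value $(z_{k+1}-z_{k-1})/(2\eta)$.
\end{itemize}

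You telescope forward differences at the midpoints $h_a+k\eta+\eta/2$ between the underlying nodes $h_a,h_b\in G_\eta$. This avoids all three problems, gives an exact identity for $z(b)-z(a)$, and even yields the sharper domain $[h_a,h_b]$. The paper's nodal route, once repaired with $\eta|D_\eta z(x_k)|\le\tfrac12\bigl(|z_{k+1}-z_k|+|z_k-z_{k-1}|\bigr)$, does produce exactly the interval $[a-\eta/2,b+\eta/2]$. But it then controls $z(b+\eta/2)-z(a-\eta/2)$ rather than $z(b)-z(a)$.

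Two minor points about your write-up:
\begin{itemize}
\item Delete the false start at the beginning of your first step, where you invoke Lemma~\ref{bru} at cell boundaries. The argument you then carry out does not use it.
\item $a\le b$ is not a without-loss-of-generality reduction, because the statement is not symmetric in $a,b$: for $a-b>\eta$ the interval $[a-\eta/2,b+\eta/2]$ is empty and the claim fails. State it instead as the theorem's tacit hypothesis.
\end{itemize}
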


\textbf{Proof.} By Lemma \ref{bru}, taking $x_{0}=a$ and $b=a+m\eta$,
we get 
\[
z(b)-z(a)=\eta\sum^{m}_{k=0}D_{\eta}z(a+k\eta).
\]
Hence 
\[
|z(b)-z(a)|\le\eta\sum^{m}_{k=0}|D_{\eta}z(a+k\eta)|.
\]
Since $D_{\eta}z$ is a step function which is constant on each interval
$[x_{k}-\eta/2,x_{k}+\eta/2]$,  the hyperfinite sum corresponds to
the natural extension of the integral, and therefore 
\[
|z(b)-z(a)|\le\int^{\ast}_{[a-\eta/2,b+\eta/2]}|D_{\eta}z(y)|\,dy.
\]
$\square$

\begin{corollary} \label{cor1} If $z\in\breve{V}(\mathbb{E})$,
then 
\[
\left\Vert z\right\Vert \le2\omega\left(z(0)+\left\Vert D_{\eta}z\right\Vert \right)
\]

\end{corollary}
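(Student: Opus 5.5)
The plan is to derive Corollary \ref{cor1} from Theorem \ref{puffo} by a pointwise bound followed by integration over the hyperfinite interval $[-\omega,\omega]$. The norm here is $\|z\|=\int^{\ast}|z|\,dx$, and $z(0)$ is to be read as $|z(0)|$, since the inequality cannot hold otherwise for $-z$.

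\textbf{Pointwise bound.} I would apply Theorem \ref{puffo} with $a=0\in\Gamma_{\eta}$ (recall $\mathbb{Q}\subset\Gamma_\eta$) and an arbitrary $b\in\Gamma_{\eta}$. When $b<0$, swap the roles of the endpoints, or equivalently reverse the summation in Lemma \ref{bru}. This gives
\[
|z(b)|\le|z(0)|+\int^{\ast}_{[-\omega-\eta/2,\,\omega+\eta/2]}|D_{\eta}z(y)|\,dy\le|z(0)|+\|D_{\eta}z\|.
\]
The integration interval lies inside the full window. Since $z$ is a step function that is constant on each cell $[h-\eta/2,h+\eta/2]$ and equal there to $z(\breve h)$, the same bound holds for every $x$ in the support window.

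\textbf{Integration.} Next I integrate this bound. The function $z$ is supported in the union of the cells, that is, in $[-\omega-\eta/2,\omega+\eta/2]$, a set of length $2\omega+\eta$. So
\[
\|z\|\le(2\omega+\eta)\bigl(|z(0)|+\|D_\eta z\|\bigr).
\]

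\textbf{Removing the $\eta$.} The main obstacle is purely bookkeeping: the constant produced is $2\omega+\eta$, not $2\omega$. I would handle it in one of two ways. The first is to observe that the cells at $\pm\omega$ are only partially counted inside the interval $[-\omega,\omega]$ over which the norm is taken, so the relevant length is exactly $2\omega$. The second is to absorb $\eta$ into a harmless adjustment of the constant. Given the crude nature of the estimate, I would take the first route: restrict the norm to $[-\omega,\omega]$, so that $\int^{\ast}_{[-\omega,\omega]}dx=2\omega$, and then apply the pointwise bound on that interval.
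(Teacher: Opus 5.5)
Your main line is the paper's own: Theorem \ref{puffo} with $a=0$ gives the pointwise bound $|z(x)|\le|z(0)|+\|D_\eta z\|$, and you then multiply by the length of the window. The last line of the paper's proof confirms that $z(0)$ is meant as $|z(0)|$. The paper, however, just writes $\|z\|\le 2\omega\max|z|$ and never deals with the extra $\eta$, and your suspicion is justified. Indeed $\|z\|=\int^{\ast}|z|\,dx=\eta\sum_{a\in\Gamma_\eta}|z(a)|$ with $|\Gamma_\eta|=2\beta\omega+1$. So for $z$ constant on all cells, $\|z\|=(2\omega+\eta)\max|z|$, and the paper's intermediate inequality is false as written.

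The gap is in your patch. The norm in this section is $\int^{\ast}|z|\,dx$ over all of $\mathbb{E}$ (equivalently $\sum_{a}|z(a)|\,d(a)$ with $d(\pm\omega)=\eta$), not an integral over $[-\omega,\omega]$. Restricting it to $[-\omega,\omega]$ therefore bounds a different, smaller quantity, and absorbing $\eta$ into the constant changes the statement.

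What does close the gap is that $\|D_\eta z\|$ is also a whole-line integral, so it registers the jumps of $z$ to $0$ at both ends of the window. Write $z_h$ for the value of $z$ on the cell centred at $h\in\eta\mathbb{Z}^{\ast}$, so that $z_h=0$ for $|h|>\omega$. Then $D_\eta z=(z_{h+\eta}-z_h)/\eta$ on $(h,h+\eta)$. Telescoping from $-\omega-\eta$ and from $\omega+\eta$ gives $|z_h|\le\int^{\ast}_{(-\infty,h)}|D_\eta z|\,dy$ and $|z_h|\le\int^{\ast}_{(h,+\infty)}|D_\eta z|\,dy$. Hence $\max|z|\le\frac12\|D_\eta z\|$, and $\|z\|\le\frac12(2\omega+\eta)\|D_\eta z\|\le2\omega\|D_\eta z\|$, which is even stronger than the corollary.

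Alternatively, keep the honest constant $2\omega+\eta$. The only use of the corollary, in verifying the derivative axiom, needs just $(2\omega+\eta)\eta<1$, which still follows from (\ref{lilla}).
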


\textbf{Proof.} By Theorem \ref{puffo}, taking $a=0$ and $b=x$,
we obtain 

\begin{eqnarray*}
|z(x)|\le\left|z(0)\right|+\int^{\ast}_{[-\omega-\eta/2,x+\eta/2]}|D_{\eta}z(y)|\,dy.
\end{eqnarray*}
Then 
\[\left\Vert z\right\Vert \leq\int^{\ast}|D_{\eta}z(y)|\,dy\le2\omega\cdot\max\left|z(x)|\le2\omega\right|z(0)|+2\omega\left\Vert D_\eta z\right\Vert \]

$\square$

\subsection{The space $V(\Gamma)$}

Given
a function $u\in V(\mathbb{E})$, we denote by $u^{\circ}$ the restriction
of $u$ to $\Gamma_{\eta}$; notice that this definition extends the
definition of $(^{\circ})$ which in Section \ref{BA} has been given
only for $f\in V(\mathbb{R})$. Finally we can define the space of ultrafunctions $V(\Gamma)$, the function $\chi_{a},$
the pointwise integral and the generalized derivative as follows:
\begin{itemize}
\item (V-1) $\Gamma:=\Gamma_{\eta};$
\item (V-2) $V(\Gamma):=\left\{ u^{\circ}\ |\ u\in \mathring{V}(\mathbb{E})\right\} ;$
\item (V-3) $\chi_{a}=\sigma^{\circ}_{a};$
\item (V-4) $\forall u\in V(\Gamma),$\[\int^{\circ}u(x)dx=\sum_{a\in\Gamma}u(a)\,d(a),\;\;d(a)=\int^{*}\sigma_{a}(x)\,dx\]
\item (V-5)$\forall u\in V(\Gamma),$  $Du(a)=\lim_{\lambda\uparrow\Lambda}\ \left\langle \partial u_{\lambda},\delta_{a_{\lambda}}\right\rangle .$
\item  (V-6) $V_{\lambda}\left(\mathbb{R}\right)$ is a net of subspaces of  $V(\mathbb{R})$ which $\Lambda$-converges to  $\mathring{V}(\mathbb{E})$
\end{itemize}
\bigskip{}

By our construction, $\Gamma,$ $V(\mathbb{R})$ and the net $V_{\lambda}\left(\mathbb{R}\right)$
satisfy the request of the theory listed in section \ref{BA}. Before proving that also the axioms 1,..,4 are satisfied, we need this last
lemma:

\begin{lemma} \label{lulu}If $u\in V(\mathbb{E}),$ then,
\[
\left\Vert Du-D_{\eta}\breve{u}\right\Vert \leq\eta\left\Vert u\right\Vert 
\]
where 
\[
\left\Vert u\right\Vert= \sum_{a\in\Gamma}|u(a)|\,d(a).
\]
\end{lemma}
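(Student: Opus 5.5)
The plan is to compare both derivatives through the basis of Theorem~\ref{ames}. I would write $u=\sum_{a\in\Gamma}u(a)\sigma_a$ and $\breve u=\sum_{a\in\Gamma}u(a)\theta_a$; this uses (\ref{paola}) and property (\ref{sb}), and I identify $u$ with its representative in $\mathring V(\mathbb{E})$ via (V-2). By linearity of $D$ and of $D_\eta$ the difference then splits as
\[
Du-D_\eta\breve u=\sum_{a\in\Gamma}u(a)\bigl(D\sigma_a-D_\eta\theta_a\bigr).
\]
The triangle inequality for the norm $\|v\|=\sum_{b\in\Gamma}|v(b)|\,d(b)$ then gives
\[
\|Du-D_\eta\breve u\|\le\sum_{a\in\Gamma}|u(a)|\,\|D\sigma_a-D_\eta\theta_a\|.
\]
Since $\|u\|=\sum_a|u(a)|d(a)$ and $d(a)\ge(1-\gamma^2)\eta$ by Corollary~\ref{prim}, it is enough to show the basis-level estimate
\[
\|D\sigma_a-D_\eta\theta_a\|\le (1-\gamma^2)\eta^2\quad\text{for every }a\in\Gamma. \tag{$\ast$}
\]

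For $a\in\Gamma_Z$ we have $\sigma_a=\theta_a$, a step function on the grid. Here I would check directly from (V-5) that $D\theta_a$ agrees with $D_\eta\theta_a$ on $\Gamma$, so that the contribution vanishes. The distributional derivative $\partial\theta_a$ equals $\delta_{a-\eta/2}-\delta_{a+\eta/2}$, as in (\ref{gru}). Pairing it with $\delta_{b_\lambda}$ gives $(\delta_b(a-\eta/2)-\delta_b(a+\eta/2))$, and the aim is to match this, after dividing by the weights, with the centred difference (\ref{lisa}). This matching is exactly the place where the choice of half-grid points in $D_\eta$ was made. Only grid-neighbouring points contribute, so in this case $(\ast)$ holds trivially, or at worst with an error controlled by $\gamma$ through $d(b)$ versus $\eta$.

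For $a\in\Gamma_W$ I write $\sigma_a=\theta_a+\zeta_a$. The remaining term is then $\|D\zeta_a\|$, possibly together with the $\Gamma_Z$-type error for $\theta_a$. To bound it I would use the fact that $\zeta_a$ lies in $W(\mathbb{E})+\breve W(\mathbb{E})$, a space of hyperfinite dimension independent of $\eta$. On such a space the operator $D$ is bounded by some constant $K$, and that constant does not depend on $\eta$. Combining this with (\ref{amesing}), where $\gamma$ may be chosen depending on $K$ and $\eta$ (we are free to pick $\gamma$ before fixing $\eta$, as in Lemma~\ref{g22}), gives $\|D\zeta_a\|\le K\gamma\eta\le\eta^2/2$. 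The support condition (\ref{sumo}), together with Axiom~\ref{AD}-(\ref{2+}), keeps everything localized in $\mathfrak{mon}(a)$, so the sum over $b$ in the norm does not accumulate.

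The main obstacle is the order of quantifiers. The bound on $D$ restricted to $W(\mathbb{E})$ must not depend on $\eta$, yet $\gamma$ is allowed to depend on that bound. I would first try to handle this using the second condition in (\ref{paola+}), which says $P_\eta(\partial u)=0\Rightarrow u=0$, together with the finite-dimensional equivalence of norms on $W(\mathbb{E})$ at level $\lambda$. Choosing $\gamma_\lambda$ and then $\eta_\lambda$ along the net, and passing to the $\Lambda$-limit, should produce the required inequality.
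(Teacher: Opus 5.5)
Your reduction to $(\ast)$ is valid and in fact loses nothing. For the weighted norm $\|v\|=\sum_b|v(b)|\,d(b)$, the operator norm of $u\mapsto Du-D_\eta\breve u$ is its largest normalised column, so $(\ast)$ is essentially the lemma for $u=\chi_a$. Everything therefore rests on $(\ast)$: you must show that $D\sigma_a$ and $D_\eta\theta_a$, both of norm of order $1$, agree to within $\eta^2$ at every one of the $2\beta\omega+1$ grid points. The argument you give for this fails in three places.

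\textbf{The constant $K$ is not independent of $\eta$.} Only the \emph{dimension} of $\breve W(\mathbb{E})=P_\eta W(\mathbb{E})$ is independent of $\eta$, not the space itself. Moreover $W(\mathbb{E})+\breve W(\mathbb{E})$ contains the differences $w-\breve w$, which Lemma~\ref{g22} makes small while their derivatives stay large. Take $w=H(\cdot-r)\varphi$, with $r\in\mathbb{R}\subset\Gamma$ strictly inside the cell $[h-\tfrac\eta2,h+\tfrac\eta2]$ and $\varphi$ a Lipschitz cut-off equal to $1$ near $r$; then $w\in W(\mathbb{E})$ by (W-1). Here $\int^{*}|w-\breve w|\,dx$ is of order $\eta$. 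Near $r$, however, the grid function $b\mapsto\langle\partial(w-\breve w),\delta_b\rangle$ is $\bigl[\sigma_b(r)-\tfrac12\sigma_b(h-\tfrac\eta2)-\tfrac12\sigma_b(h+\tfrac\eta2)\bigr]/d(b)$. Its norm is of order $1$, and equals $1$ when $\sigma_b=\theta_b$ on the three cells around $r$. So $D$ on that space has bound at least of order $1/\eta$.

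\textbf{The order of quantifiers is circular.} Even granting some $K$, the inequality $K\gamma\eta\le\eta^2/2$ means $\gamma\le\eta/(2K)$, so $\gamma$ would have to be chosen after $\eta$. Lemma~\ref{g22}, and hence (\ref{amesing}), does the opposite: it picks $\eta$ as a function of $\gamma$. The same jump example shows the step-approximation error is of order $\eta$ times the jump, so $\gamma$ cannot be made small compared with $\eta$. Neither (\ref{paola+}) nor norm equivalence on $W(\mathbb{E})$ changes this.

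\textbf{The $\Gamma_Z$ case is not $O(\eta^2)$.} An error ``controlled by $\gamma$'' is too large. It comes from $d(b)$ versus $\eta$ at neighbours $b\in\Gamma_W$, of relative size up to $\gamma^2$ by Corollary~\ref{prim}. It also comes from the point values $\sigma_b(h\pm\eta/2)$, which the integral bound (\ref{amesing}) does not control.

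The paper takes a different route: it estimates at the output points rather than column by column. For epilogic $u$, Theorem~\ref{diana}-(\ref{US}) gives $\langle\partial u,\theta_a\rangle=u(a+\eta/2)-u(a-\eta/2)$. Writing $\sigma_a=\theta_a+\zeta_a$, this yields the exact identity $Du(a)=D_\eta u(a)+\frac1\eta\langle\partial u,\zeta_a\rangle$. The discrepancy is therefore confined to the exceptional set $\tilde\Gamma_W$, consisting of points of $\Gamma_W$ and their grid neighbours. Its cardinality is at most $2|\Gamma_W|=2\dim W(\mathbb{E})$, a number independent of $\eta$. This is what makes the quantifiers consistent: $\gamma$ is fixed with $2|\Gamma_W|\gamma\le1$ before $\eta$ is chosen. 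The factor $\eta$ then comes from $\|\zeta_a\|\le\frac12\gamma\eta$ combined with $\|\theta_a\|=\eta$, not from making $\gamma$ small relative to $\eta$. Note also that the paper keeps the size of $\partial$ as an explicit factor: its chain of inequalities ends with $\eta\|\partial\|\,\|u\|$. So the bare bound $\eta\|u\|$ you target via $(\ast)$ is stronger than what that argument delivers. Since your reduction is lossless, nothing in Theorem~\ref{ames} controls $D\sigma_a$ for $a\in\Gamma_W$ at scale $\eta^2$.
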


\textbf{Proof}: We set
\[
\tilde{\Gamma}_{Z}:=\left\{ a\in\Gamma\ |\ \sigma_{a}=\theta_{a}\ \ and\ \ D\sigma^{\circ}_{a}(a)=D_{\eta}\sigma_{a}(a)\right\} ;\ \ \tilde{\Gamma}_{W}=\Gamma\backslash\tilde{\Gamma}_{Z}
\]
We claim that
\begin{equation}
\left\vert \tilde{\Gamma}_{W}\right\vert \leq2\left\vert \Gamma_{W}\right\vert \label{gz}
\end{equation}
In order to prove (\ref{gz}), we set $a^{-}:=\max\left\{ b\in\Gamma_{Z}\ |\ b<a\right\} $
and $a^{+}:=\min\left\{ b\in\Gamma_{Z}\ |\ b<a\right\} ;$ then $a\in\tilde{\Gamma}_{W}$
only if $a$\ or$\ a^{-}\ $or$\ a^{+}\in\Gamma_{W}.$ Hence $\left\vert \tilde{\Gamma}_{W}\right\vert \leq2\left\vert \Gamma_{W}\right\vert .$

If $a\in\Gamma,$ we have that
\[
\left\Vert Du-D_{\eta}u\right\Vert   =  \left\Vert \sum_{a\in\Gamma}Du(a)\sigma_{a}-D_{\eta}u(a)\theta_{a}\right\Vert =\left\Vert \sum_{a\in\tilde{\Gamma}_{W}}Du(a)\sigma_{a}-D_{\eta}u(a)\theta_{a}\right\Vert  \label{kaz}
\]
Since $u$ is epilogic, by Prop.  \ref{diana}-\ref{US},
\[
u(a+\eta/2)-u(a-\eta/2)=\left\langle \partial u,\theta_{a}\right\rangle 
\]
then
\begin{align*}
  Du(a)&=\left\langle Du(a),\delta_{a}\right\rangle=\frac{1}{\eta}\left\langle Du(a),\sigma_{a}\right\rangle =\frac{1}{\eta}\left[\left\langle \partial u,\theta_{a}\right\rangle +\left\langle \partial u,\zeta_{a}\right\rangle \right]\\
	&=\frac{1}{\eta}\left[u(a+\eta/2)-u(a-\eta/2)\right]+\frac{1}{\eta}\left\langle \partial u,\zeta_{a}\right\rangle =D_{\eta}u(a)+\frac{1}{\eta}\left\langle \partial u,\zeta_{a}\right\rangle .  
\end{align*}
Then
\[
D_{\eta}u(a)=Du(a)-\frac{1}{\eta}\langle \partial u,\zeta_{a}\rangle . \]
Replacing  $D_\eta u(a)$  in (\ref{kaz}), 
\begin{eqnarray*}
\left\Vert Du-D_{\eta }u\right\Vert &=&\left\Vert \sum_{a\in\tilde \Gamma_w
}Du(a)\sigma _{a}-D_{\eta }u(a)\theta _{a}\right\Vert \\
&=&\left\Vert \sum_{a\in \tilde{\Gamma}_{W}}\left[ Du(a)\left( \theta
_{a}+\zeta _{a}\right) -\left( Du(a)-\frac{1}{\eta }\left\langle \partial
u,\zeta _{a}\right\rangle \right) \theta _{a}\right] \right\Vert \\
&=&\left\Vert \sum_{a\in \tilde{\Gamma}_{W}}Du(a)\zeta _{a}+\frac{1}{\eta }%
\left\langle \partial u,\zeta _{a}\right\rangle \theta _{a}\right\Vert \\
&\leq &\sum_{a\in \tilde{\Gamma}_{W}}\left( \left\Vert Du(a)\zeta
_{a}\right\Vert +\left\Vert \frac{1}{\eta }\left\langle \partial u,\zeta
_{a}\right\rangle \theta _{a}\right\Vert \right)
\end{eqnarray*}%
Moreover,
\begin{eqnarray*}
\left\Vert Du(a)\zeta_{a}\right\Vert  & = & \left\Vert \left\langle \partial u,\theta_{a}+\zeta_{a}\right\rangle \zeta_{a}\right\Vert \leq\left\Vert \partial\right\Vert \left\Vert u\right\Vert \left\Vert \theta_{a}\right\Vert \left\Vert \zeta_{a}\right\Vert ^{2}\\
 & \leq & \frac{1}{4}\gamma^{3}\eta^{2}\left\Vert \partial\right\Vert \left\Vert u\right\Vert \leq\frac{1}{2}\gamma\eta\left\Vert \partial\right\Vert \left\Vert u\right\Vert 
\end{eqnarray*}
and
\[
\left\Vert \frac{1}{\eta}\left\langle \partial u,\zeta_{a}\right\rangle \theta_{a}\right\Vert \leq\frac{1}{\eta}\left\Vert \partial\right\Vert \left\Vert u\right\Vert \left\Vert \theta_{a}\right\Vert \left\Vert \zeta_{a}\right\Vert \leq\frac{1}{2}\gamma\eta\left\Vert \partial\right\Vert \left\Vert u\right\Vert 
\]
In conclusion
\begin{eqnarray*}
\left\Vert Du-D_{\eta}u\right\Vert  & \leq & \left\vert \tilde{\Gamma}_{W}\right\vert \cdot\max\left(\left\Vert Du(a)\zeta_{a}\right\Vert +\left\Vert \frac{1}{\eta}\left\langle \partial u,\zeta_{a}\right\rangle \theta_{a}\right\Vert \right)\\
 & \leq & 2\left\vert \Gamma_{W}\right\vert \cdot\gamma\eta\left\Vert \partial\right\Vert \left\Vert u\right\Vert \leq\eta\left\Vert \partial\right\Vert \left\Vert u\right\Vert 
\end{eqnarray*}
$\square$

\bigskip{}
Now we can show that axioms 1,...,4 are satisfied:

\bigskip{}
\textbf{Approximation Axiom} - It is an immediate consequence of the
definition of $V(\Gamma).$

\medskip{}
$\mathbf{\chi}_{a}$-\textbf{Axiom} - For every point $a\in\Gamma,$
$\sigma_{a}\in \mathring{V}(\mathbb{E})$ and hence $\chi_{a}=\left(\sigma_{a}\right)_{|_{\Gamma}}\in V(\Gamma).$

\medskip{}
\textbf{Integral Axiom} - It is an immediate consequence of Cor. \ref{prim}.
\medskip{}

\textbf{Derivative Axiom} - Axiom \ref{AD}-(\ref{2+}) follows from
Th.\ref{ames}-(\ref{sumo}); in fact, we have that $D\chi_{a}(b)=\left\langle \partial\sigma_{a},\sigma_{b}\right\rangle \neq0$
if and only if $b\in\mathfrak{mon}(a).$

It remais to prove \ref{AD}-(\ref{4}).  Assume that $u\in \mathring{V}(\mathbb{E}),$
and that $Du^{\circ}=0.\ $ We set
\[
w(x)=u(x)-u(0)
\]
 and we have to prove  that $w=0$.
By lemma \ref{lulu}  and  Cor.\ref{cor1},  we have that
\[
\left\Vert D_{\eta}\breve{w}\right\Vert =\left\Vert Dw-D_{\eta}\breve{w}\right\Vert \leq\eta\left\Vert w\right\Vert 
\leq 2\omega\eta \left\Vert D_{\eta}\breve{w}\right\Vert.
\]
Since $2\omega\eta<1$,  it follows that $\left\Vert D_{\eta}\breve{u}\right\Vert =0$ and, by Th. \ref{puffo},
$\breve{w}\ $is constant and since $\breve w(0)=0$, it is identically 0.

\section{Appendix --- Relation with other theories of generalized functions}

The theory of ultrafunctions is part of a long-standing effort to
extend the notion of function beyond the classical frameworks of analysis.
Several successful theories have been proposed in the past century,
each with its own motivations, strengths, and limitations. In this
section we briefly compare ultrafunctions with some of the most influential
ones: Schwartz distributions, Colombeau algebras, Sato hyperfunctions,
and Young measures. This comparison helps clarify the position of
ultrafunctions within the landscape of generalized functions.

\subsection{Distributions (L. Schwartz)}

The theory of distributions \cite{SW} provided a rigorous framework
for objects such as the Dirac delta and made possible the systematic
treatment of linear PDEs with non-smooth data.
\begin{itemize}
\par 
\item \textbf{Algebraic structure}. Distributions do not form an algebra:
the product of two distributions is generally not defined. Ultrafunctions,
instead, form an algebra over the Euclidean field $\mathbb{E}$; in
particular, expressions such as $\delta^{2}_{a}$ or $\sqrt{\delta_{a}}$
can be meaningfully defined within this algebraic framework.
\item \textbf{Differential calculus}. The distributional derivative is a
linear operator satisfying the Leibniz rule only when one of the factors
is smooth. In the theory of ultrafunctions the derivative $D$ satisfies
the Leibniz rule for regular ultrafunctions (Th.~\ref{TU}), but
not for all pairs, in accordance with the well-known Schwartz impossibility
theorem.
\item \textbf{Pointwise characterization}. Distributions are not defined
pointwise; they are linear functionals acting on spaces of test functions.
Ultrafunctions, on the contrary, are defined pointwise on the hyperfinite
grid $\Gamma^{N}$ and therefore admit evaluation at every point of
the grid.
\item \textbf{Integration}. The pairing between a distribution and a test
function is a linear functional rather than an integral in the classical
sense. Ultrafunctions possess a pointwise integral $\int^{\circ}$
which reduces to the Lebesgue integral for standard $C^{0,1}$-functions
and assigns a positive ``measure'' to each point of the grid.
\end{itemize}

\subsection{Colombeau algebras}

Colombeau's theory \cite{col85} aims to overcome the multiplication
problem of distributions by embedding them into a differential algebra
of equivalence classes of smooth regularizations.
\begin{itemize}
\par 
\item \textbf{Construction}. Colombeau algebras are built from nets of
smooth functions modulo suitable asymptotically vanishing ideals.
Ultrafunctions also arise as $\Lambda$-limits of nets of functions,
but the limit is taken in a fixed non-Archimedean field $\mathbb{E}$,
and the result is a single function defined on the hyperfinite grid
$\Gamma^{N}$ rather than an equivalence class.
\item \textbf{Point values}. In Colombeau theory, point values are not canonical
in general since they depend on the chosen representatives. Ultrafunctions
instead possess canonical point values in $\mathbb{E}$.
\item \textbf{Derivative}. Both theories provide a derivative that coincides
with the classical derivative on smooth functions and extends the
distributional derivative. In the ultrafunction framework the derivative
is defined for every ultrafunction by formula (\ref{lillina}), which
directly involves the limit of duality pairings with delta-type approximations.
\item \textbf{Scope}. Colombeau algebras are particularly effective in the
treatment of nonlinear PDEs with singular data. Ultrafunctions share
this goal but also allow one to treat certain problems that are ill-posed
even in the distributional sense, thanks to the hyperfinite setting
and the compactness properties inherent in the construction.
\end{itemize}

\subsection{Hyperfunctions (M. Sato)}

Hyperfunctions extend the concept of function using analytic functionals
and are particularly powerful in the context of linear PDEs with analytic
coefficients \cite{sa59,sa60}.
\begin{itemize}
\par 
\item \textbf{Domain}. Hyperfunctions are defined on real domains as boundary
values of holomorphic functions. Ultrafunctions instead are defined
on a hyperfinite discrete set $\Gamma^{N}$ embedded in the non-Archimedean
field $\mathbb{E}$.
\item \textbf{Analyticity vs.\ flexibility}. Hyperfunctions are primarily
designed for analytic problems and therefore are less suited to highly
nonlinear or non-smooth situations. Ultrafunctions, being pointwise
defined and closed under algebraic operations, allow one to handle
arbitrary nonlinear expressions.
\item \textbf{Infinitesimals}. Hyperfunctions do not use infinitesimals.
Ultrafunctions rely on the infinitesimal structure of $\mathbb{E}$
in order to localize singularities and define the pointwise integral.
\end{itemize}

\subsection{Young measures}

Young measures \cite{You}, introduced by L.~C.~Young, are a nonlinear
extension of distributions designed to treat certain nonlinear PDEs
and variational problems involving oscillations and concentration
phenomena.
\begin{itemize}
\par 
\item \textbf{Nonlinear duality}. While distributions are defined through
linear duality, Young measures use a nonlinear duality with respect
to spaces of nonlinear test functions. This allows them to capture
nonlinear effects associated with singular limits. Ultrafunctions
instead retain a linear algebraic structure but obtain nonlinear capabilities
through their algebra structure and pointwise product.
\item \textbf{Representation of singularities}. Young measures can represent
concentration phenomena that cannot be described by classical Radon
measures. Ultrafunctions represent singularities through Dirac ultrafunctions
$\delta_{a}$ and their algebraic combinations inside the same functional
framework.
\item \textbf{Calculus}. The differential calculus associated with Young
measures is less explicit than in the ultrafunction framework, where
a generalized derivative $D$ is defined for every ultrafunction and
satisfies a generalized version of the fundamental theorem of calculus.
\item \textbf{Scope}. Young measures are particularly suited to problems
where oscillations or energy concentrations occur on sets of lower
dimension. Ultrafunctions, via their hyperfinite grid and infinitesimal
localization, can also describe concentration phenomena, but in a
more discrete and algebraic setting.
\end{itemize}

\subsection{Non-Archimedean generalized functions}

Several theories of generalized functions based on non-standard analysis
have been proposed, for instance by Robinson, Luxemburg, and later
by Albeverio--Fenstad--H{ø}egh-Krohn \cite{ALBE}, as well as
by Todorov \cite{todo2011}. These internal generalized functions
are defined on hyperreal fields and share with ultrafunctions the
use of infinitesimals and infinite numbers. The main differences lie
in
\begin{itemize}
\par 
\item the choice of the base field $\mathbb{E}$ (the Euclidean numbers)
and the systematic use of the $\Lambda$-limit, which allows the construction
of a canonical hyperfinite grid $\Gamma$;
\item the explicit axiomatic definition of the space $V(\Gamma)$ and of
the operators $\int^{\circ}$ and $D$, designed to preserve as many
classical properties as possible while maintaining an algebra structure;
\item the hyperfinite representation of functions, which often transforms
PDE problems into finite-dimensional or hyperfinite systems and therefore
ensures existence of solutions under relatively mild assumptions.
\end{itemize}

\subsection{Summary}

Ultrafunctions can thus be viewed as a synthesis of several ideas
appearing in different theories of generalized functions: algebraic
flexibility (as in Colombeau algebras), pointwise intuition (as in
classical functions), infinitesimal localization (as in non-standard
analysis), and a hyperfinite structure that often ensures solvability
of equations. Their discrete--continuous nature, mediated by the
hyperfinite grid $\Gamma^{N}$, makes them particularly suitable for
studying problems that are difficult to handle in standard functional
settings.

Ultrafunctions should not be seen as a replacement for existing theories
of generalized functions, but rather as a complementary framework
that is particularly effective when algebraic manipulation, pointwise
interpretation, and existence results are simultaneously required.

\bigskip{}

The following table summarizes some of the main features of the theories
mentioned above: 
\begin{center}
{\scriptsize
\[
\begin{tabular}{||l||l||l||l||}
\hline\hline \textbf{Theory}  &  \textbf{Algebraic structure}  &  \textbf{Pointwise values}  & \textbf{Derivative} \\
\hline\hline Distributions  &  Linear space  &  No  &  Weak derivative \\
\hline\hline Colombeau algebras  &  Differential algebra  &  Not canonical  &  Algebraic \\
\hline\hline Hyperfunctions  &  Linear analytic space  &  No  &  Analytic \\
\hline\hline Young measures  &  Nonlinear dual space  &  No  &  Implicit / weak \\
\hline\hline Ultrafunctions  &  Algebra over \ensuremath{\Gamma\subset\mathbb{E}}  &  Yes, in \ensuremath{\mathbb{E}}  & \ensuremath{\Lambda}-limit of duality 
\\\hline\hline \end{tabular}
\]
}{\scriptsize\par}
\par\end{center}

\bigskip{}
\bibliographystyle{amsplain}
\bibliography{Bibliography}
 
\end{document}